\documentclass[11pt,leqeq]{article}
\usepackage{amssymb,amsthm}
\usepackage{amsmath}
\usepackage{amscd}
\usepackage{xy}
\xyoption{all}
\usepackage[dvips]{graphicx}
\setlength{\topmargin}{-0.2in}
\setlength{\oddsidemargin}{0.2in}
\setlength{\textwidth}{6.25in}
\setlength{\textheight}{8.5in}
\setlength{\unitlength}{0.6pt}
\newtheorem{thm}{Theorem}
\newtheorem{lem}[thm]{Lemma}
\newtheorem{cor}[thm]{Corollary}
\newtheorem{prop}[thm]{Proposition} 
\newtheorem{rem}[thm]{Remark}

\newtheorem{defn}[thm]{Definition}
\newtheorem{exmp}[thm]{Example}

\date{}
\begin{document}
\setlength{\baselineskip}{16pt}
\title{Combinatorial Micro-Macro Dynamical Systems}
\author{Rafael D\'\i az \ \ and \ \ Sergio Villamar\'\i n}

\maketitle

\begin{abstract}
The second law of thermodynamics states that the entropy of an isolated system is almost always increasing. We propose
combinatorial formalizations of the second law  and explore their conditions of possibilities.
\end{abstract}

\section{Introduction}

The second law of thermodynamics is one of the pillars of modern science, enjoying a fundamental status comparable only
to that of the law of conservation of energy. The range of applicability of the second law extends well-beyond the confines
of its original formulation  within thermodynamics and statistical mechanics \cite{rf, j2, j3, op, es}, and for that
 reason we further refer
to it simply as the second law. Despite the efforts of many distinguished researchers a definitive mathematical
formulation of the second law,  as clear say as the symplectic geometry (Poisson brackets) formulation of the conservation
of energy law, has not yet been achieved. With this long term goal in mind  we propose combinatorial embodiments of the second law,
which give rise to  combinatorial problems interesting in their own right. \\

There are several equivalent formulations of  second law, as well as some formulations
whose equivalence is not fully understood. The reader will find in \cite{fr, gil, g, j1, l, lie, dr, ru} mathematically 
inclined introductions to the subject from quite different viewpoints.
Our departure point is the following formulation of  the Clausius' second law due to Boltzmann: \
"the entropy of an isolated system is almost always increasing."  To make sense of this statement several
precisions are in order:\\

\noindent - As formulated the second law    is meant to cover transitions between equilibria macrostates,
as well as transitions between non-equilibrium macrostates \cite{j15}.
The word "increasing" is taken in the weak sense, i.e. entropy tends to  grow or to remain constant.
A  system is isolated if it doesn't interchange neither
matter nor heat with its surroundings. We typically think of it as a system enclosed in an insulating box;
the universe as whole is an isolated system \cite{ca, rp}. Entropy is understood in the Boltzmann's sense, i.e. as  a logarithmic measure of the number of
micro-realizations of a macrostate. The Boltzmann approach to the second law has been studied by a number of authors, among them
\cite{cer, chi, de2, fr1, fr2, gol, gol2, j, lebo2, lebo, lebo3, lebo1}.  For equilibria macrostates Boltzmann and Clausius entropies
agree up to an additive constant, in the thermodynamic limit \cite{j, es}.\\

\noindent -   The restriction of the second law to transitions between equilibria macrostates is quite well-understood in the thermodynamic
limit.  A straightforward approach to this case has been developed by Jaynes \cite{j} where the key facts are, first,
the equilibrium macrostate can be identified with the probability distribution of maximum Shannon entropy under
the available constraints, and, second,  Bolztmann and Shannon entropies agree up to a positive multiplicative factor. The second law is thus a consequence of the
obvious fact that if constrains are lifted the maximum Shannon entropy increases.  Theorem \ref{j2l} provides a
combinatorial analogue for the latter argument within the Boltzmann entropy context. It is thus the increase of entropy for  transitions between non-equilibrium macrostates, and in particular
the transitions of such kind that arise as a non-equilibrium macrostate gradually approaches the
equilibrium, that remains an open  problem. The Clausius entropy is defined only for
equilibrium macrostates, so the actual problem is to extend the notion of entropy away from the equilibrium.
Bolztmann entropy is such an extension, and thus it make sense to ask under what conditions, if any, it possess the properties
expected from the second law within a combinatorial setting.\\

\noindent -  In contrast with energy,  an always conserved quantity, entropy increases almost surely.
Strictly decreasing entropy is not ruled out, on the contrary,
it is actually predicted by the second law; otherwise the word "almost" should be removed from
the law according to Occam's razor principle. Decreasing entropy is however,  according to the second law,
an extremely low probability event, turning it into a non-option for most practical purposes. The second law
itself does not provide bounds for the probability of decreasing entropy, neither  it provides estimates
for the  rate of entropy production.\\

We  also consider, see Section \ref{op}, combinatorial embodiments for stronger versions of the second law, such as
the following formulation  due to Gibbs in which the equilibrium plays a main role:
"entropy tends to strictly increase until the system reaches the equilibrium, i.e. the state of highest entropy,
and then it remains in the equilibrium for a very long period of time." Indeed we are going to propose several
combinatorial properties each covering some aspect of the second law;  we are not claiming that any of
this properties is the ultimate combinatorial formulation of the second law, but we do claim that understanding
these properties, both individually and collectively, provides a deep insight towards grasping the second
law as a combinatorial statement.\\

Although Boltzmann himself was aware of the combinatorial  nature of the
second law, arising from coarse-graining, it seems that this idea with powerful potential applications hasn't had the impact that
it deserves. The recent works of Niven \cite{ni, ni2} may be regarded as fundamental steps towards making the connection combinatorics/second
law more explicit. It is our believe that the combinatorial approach to the second law should be pursued in all its depth. In this
contribution, we lay down some foundational ideas, discuss the main problems of study, and establish some basic results in the
combinatorial approach to the second law. The main advantage of working within a combinatorial context is that we can rigorously define and compute with certain objects
whose higher dimensional analogues  may be elusive. For example, the set of invertible dynamical systems on a finite set
is just the set of permutations on it. Also combinatorial methods often lead to algorithms suitable for numerical computation,
allowing hypothesis and conjectures to be probed. Moreover, it is expected that by considering finite sets of
large cardinalities the combinatorial models can be used to understand infinite phase spaces. Thus combinatorial models
may be useful both as a conceptual guide, and as a computational tool for attacking the more involved cases allowing
infinitely many microstates. Along this work we argue that the combinatorial viewpoint  leads to a picture of the
second law as a subtle balance among six principles:\\

\noindent \textbf{Micro/Macro Duality.} Boltzmann entropy relies on the distinction between microstates and macrostates.
 The micro-macro divide gives rise to dual interpretations. An ontological interpretation where microstates are
 primordial entities,  and  macrostates are what the observer measures when the system is in a given microstate.
 This approach is often referred by phrases such as "subjective or anthropomorphic macrostates" \cite{j, j1, ro}.
 A phenomenological interpretation where macrostates are primordial, being what is actually accessible to the scientist, and microstates are
 theoretical constructs whose non-observable individual behaviour is postulated so that it gives rise to the observable
 behaviour of  macrostates. In Boltzmann's days it was microstates that were regarded as
 subjective or anthropomorphic, just as today some microstates beyond the standard model are often regarded
 as lacking an objective basis; in the last few years we have  witness the Higgs'  field transition
 from theoretical construct to experimental fact. It seems that the subjective/objective knowledge qualification
 correlates weakly with the micro/macro scale division.  The choice of interpretation leads to different but
 ultimately equivalent mathematical models, in their common domain of reference.\\

\noindent \textbf{Proportionality.} The idea is that probabilities are proportional to possibilities, the more microstates
  within a macrostate the higher its probability.  Entropy grows simply because the are more microstates with higher
  entropy than microstates with lower entropy. The proportionality principle may be thought as an application to microstates
  of the Laplace principle of insufficient reason: a probability is uniform unless we
  have reasons to claim the contrary. The probabilistic symmetry of microstates arises from the usual methodological division between
  law of motion and initial conditions, where a theory provides the evolution law for microstates but leaves
  the choice of initial microstate to the applied scientist. The proportionality principle is so
  intuitively appealing that it is tempting   to identify it with the second law itself, as some authors seem to do.
  However the further principles  introduced below show the need to complement and restrict the applicability of
  the proportionality  principle in order to understand  the second law.\\

  \noindent  \textbf{Large Differences.} In science once a scale is fixed the relevant numbers are
  often of comparable size. In the realm of the second law however the normal is just the opposite:
  huge differences in numbers, so pronounced indeed, that they are reminiscent of the mathematical distinction
  between measure zero and full measure sets. With huge differences
  low entropy microstates properties  likely have a negligible impact  on the global properties of a system.
  However simply disregarding low or decreasing entropy microstates
  is like disregarding the rational numbers  because they have zero Lebesgue measure. In the realm
  of large differences, small may be huge: suppose a microstate have probability   $10^{-10^{23}}$ of being
  non-equilibrium,  a probability so low that studying such microstates seems pointless; nevertheless if
  the total number of microstates is say $10^{10^{23} +10^{10}},$ then there are about  $10^{10^{10}}$ non-equilibrium
  microstates leaving plenty of room for interesting behaviour.   Assuming large differences  the main obstacle towards
  the "nowhere to go but up" effect are constant entropy microstates. Large differences imply
  a dominant equilibrium but in general it is a much stronger condition.  We talk about large rather than infinite
  differences, as one of the main aims of the combinatorial approach is to estimate the transition point where
  differences become dominant.\\

  \noindent  \textbf{Continuity.} Proportionality implies that starting from generic initial conditions
  the equilibrium will eventually be reached,   but against all empirical evidence, it also implies that at any time
  the most likely move for a microstate is to jump to the equilibrium. Unrestricted
  proportionality violates the law of gradual changes, a most cherished principle of physics. Continuity
  places restrictions on  proportionality in a couple of ways:
  it limits the allowed dynamics on microstates,  and it demands that macrostates couple to the dynamics in  such a way
  that sudden long jumps in entropy are unlikely, although not completely rule out.\\

  \noindent  \textbf{Microstates Asymmetry.}  Reversible systems have as many entropy decreasing
  as entropy increasing microstates, indeed this is the basic fact behind the  Loschmidt's
  paradox.  Within our combinatorial formalizations  of the second law  equal increases and decreases in
   entropy by itself does not give rise to contradictions, but it does point towards a fundamental fact:
  the second law is a sufficient reason to break the probabilistic   symmetry of microstates,
  a fact materialized with the introduction of not reversion invariant macrostates. As a rule one may expect
  the equilibrium to be reversion invariant, but it is  quite unnatural to demand this property for all macrostates;
  in particular entropy itself may not be invariant under reversion. The outshot is that any mathematical formalization of the second
  law must in some way or another  break the probabilistic symmetry of microstates. Microstates asymmetry
  plays a major role in our combinatorial renderings of the fluctuation theorems.\\

 \noindent  \textbf{Localization to Orbits.} From Gibbs' viewpoint  properties formalizing
  the second law should apply orbitwise, allowing a relative small number of microstates
  to live in badly behaved orbits. Again several more o less related reasonable properties
  may be proposed. As an example we are going to consider a particularly powerful one:
  the existence of a reversion invariant equilibrium such that most microstates on each orbit belong to
  the equilibrium. In such cases the equilibrium reaching time  is a strictly
  decreasing not reversion invariant function on non-equilibrium  microstates. Looking at the macrostates
  associated to this function one obtains, under reasonable hypothesis on  the image of the equilibrium down sets,
  a micro-macro dynamical systems with strictly increasing entropy on non-equilibrium macrostates, i.e. for such systems
  irreversibility arises naturally from reversibility,  the origin of any microstate is a low entropy microstate, and
  the longer the (past and future) history of a microstate, the lower the entropy of its origin.\\

Let us describe in details a standard construction given rise to combinatorial models from  familiar
smooth models through a couple of coarse-graining procedures.
Let $\ (M, \omega)\ $ be  a compact symplectic manifold
and $\  M  \longrightarrow B \ $ be a coarse-graining map  with $ \ B \ $ a finite set.  For $\ n \in \mathbb{N}_{\geq 1}\ $
the Hamiltonian map $\ H_n:M^n \longrightarrow \mathbb{R} \ $ generates the dynamics
$\ \phi_t: M^n \longrightarrow M^n \ $ via the identity
$\ \omega_n(\dot{\phi}, \ ) = dH_n,  \ $ where  $\ \omega_n \ $ is the product symplectic structure
on $M^n. \ $ For  $\ u \geq 0 \ $ consider the energy shell $\ H_n^{-1}(nu) \subseteq M^n \ $ and its image
$\ B_u^n \ $ under the  coarse graining map $\ M^n  \longrightarrow B^n. \ $ Assume we have  a second
coarse-graining map $\ \mathrm{prop}_B \longrightarrow A, \ $
where $\ \mathrm{prop}_B \ $ is the space of probability distributions on $\ B, \ $ and $\ A \ $ is another finite set.
We obtain  the chain of maps
$$ H_n^{-1}(nu) \ \longrightarrow  \ B_u^n \ \longrightarrow \
\mathrm{prop}_B  \ \longrightarrow A.\ $$
In this work we focus on the (composition) map
$\ B_u^n  \longrightarrow A \ $ since it only involves finite sets;
so $\ B_u^n \ $ will be our  set of microstates and $\ A \ $ will be our set of macrostates.
Under reasonable hypothesis $\ B_u^n \ $ inherits a measure and
a stochastic dynamics from the corresponding structures on $\ M_u^n  \ $ via
the map $\ M_u^n \longrightarrow  B_u^n .\ $ In this work however we  only consider the case where
the induced measure is uniform and the dynamics is deterministic. Although one should really
start with a stochastic dynamics on microstates we refrain to do so for several reasons.
First, it is worth it to see random processes  arising straight out of fully deterministic processes;
second, the deterministic case is interesting in itself and deserves its own study; third, studying
the deterministic case should be though as preparation for dealing with the more general stochastic case.\\

In Section \ref{mmds} we introduce micro-macro dynamical systems and formulate some of the main problems in
the combinatorial approach to the second law, e.g. counting the number of strict decreases in  entropy
for arbitrary permutations and partitions on finite sets.  The partition of microstates into macrostates gives
us the notion of Boltzmann entropy, and also a probability distribution and a stochastic dynamics
on macrostates. In Section \ref{atrs} we formalize  the notions of  reversible
micro-macro dynamical systems and  global arrow of time.   We provide a couple of  general construction showing that
there are plenty of (invariant, equivariant) reversible micro-macro dynamical systems, and provide formulae for
these systems.  We show that each (invariant, equivariant)  reversible micro-macro dynamical systems
can be canonically decomposed into four components, one coming from the constructions just mentioned, and the other
ones  quite easy to grasp. We also discuss fluctuation theorems \cite{sb, c, de1, de3, de2, es, s}  for combinatorial  micro-macro dynamical systems,
and study with a global arrow of time, with  emphasis on systems with the
same number of strict increases and strict decreases in entropy. \\

In Section \ref{spmms}  we review some of the structural operations on micro-macro dynamical system such as the product,
disjoint union, restriction, coarse-graining, meet and joint; and introduce five general constructions of
micro-macro phase spaces. We formulate an analogue of the asymptotic equipartition theorem applicable for
micro-macro phase spaces, and provided a couple of interesting examples of  coarse-graining.
In Section \ref{fsl} we consider the applicability,  within our combinatorial framework, of the
second  law with the world "almost" removed, i.e. we study invertible micro-macro dynamical
systems with no strictly decreasing entropy, and show that a generic system has  low probability of having this property.
This case is nonetheless interesting because we are able to fully explore for it the dual viewpoints:
the partition-based viewpoint where  macrostates are fixed and the dynamics vary, and the permutation-based
viewpoint where the dynamics is fixed and  macrostates vary. As the two viewpoints lead to
equivalent results, we obtain an interesting combinatorial identity. In Section \ref{mfsl} we
consider invertible  micro-macro dynamical systems with the highest possible number of strict decreases
in entropy. We introduce a sharp upper bound with a simple combinatorial meaning on
the number of such decreases, adopting a partition-based viewpoint, and
provide conditions on a partition implying that any  permutation coupled to it defines a
system satisfying a combinatorial formalization of the second law. \\

In Section \ref{sllip2} we introduce a pair of new combinatorial  formulations of the arrow of time,
define the jump of a map from a set provided with a partition to itself, and study
combinatorial formulations of the second law for zero jump systems. In Section \ref{op} we adopt Gibbs' viewpoint and
study  combinatorial formalizations the second law through properties localized to orbits. We introduce equilibrium bound systems and study the
equilibrium reaching time for such systems.  In Section \ref{sllip} we reformulate some of
the main problems in the combinatorial approach to the second law in terms of sums over integer points in convex
polytopes; this approach allows to fully analyze some simple but revealing cases and  opens the door
for numerical computations. In the final Section \ref{tl} we consider thermodynamic limits.
Although based on the previous sections,  readers familiar with maximum entropy methods may feel at home
with the techniques and results of this section.  At various points through out this work we  use the language
of category theory but only basic notions are required \cite{law, mc}.  \\

\section{Micro-Macro Dynamical Systems}\label{mmds}

Let $\ \mathrm{set}\ $ be the category of finite sets and maps, and $\ \mathrm{map}\ $ be the category of morphisms in $\ \mathrm{set}, \ $ i.e.
objects in $\ \mathrm{map}\ $ are functions between finite sets. A morphism in $\ \mathrm{map}\ $ from $\ f_1: X_1 \longrightarrow A_1\ $ to $\ f_2: X_2 \longrightarrow A_2\ $  is given by  maps $\ k:X_1 \longrightarrow X_2\ $ and
$\ \bar{k}:A_1 \longrightarrow A_2\ $  such that $\ \overline{k}f_1 = f_2k, \ $ i.e. the following diagram commutes
\[\xymatrix @R=.4in  @C=.8in
{X_1 \ar[r]^{k} \ar[d]_{f_1} &  X_2 \ar[d]^{f_2}
\\ A_1 \ar[r]^{\overline{k}} & A_2  } \]

\begin{defn}{\em \
\begin{enumerate}
  \item A micro-macro phase space is a tuple $\ (X,A,f) \ $ where  $\ f:X \longrightarrow A\ $ is a surjective map,
   $\ X \ $ is the set of microstates, and $\ A \ $ is the set of macrostates. If $\ f(i)=a\ $ we say that the microstate $\ i \ $ belongs to the macrostate $\ a, \ $ and write $\ i\in a\ $ instead of $\ i \in f^{-1}(a) \ $ whenever $ f  $ is understood.

  \item A micro-macro dynamical system is a tuple $\ (X,A, f, \alpha) \ $ where $\ (X,A,f) \ $ is a micro-macro phase space, and $\ \alpha:X \longrightarrow X\ $ is a map defining the dynamics on  microstates, i.e. it sends a microstate $\ i \ $ to the microstate $\ \alpha(i) \ $ in a unit of time. We let $\ \mathrm{mmds} \ $ be the category of micro-macro dynamical systems.

\item A micro-macro dynamical system  $\ (X,A, f, \alpha) \ $ is called invertible
 if the map  $\ \alpha:X \longrightarrow X\ $ is bijective. We let $\ \mathrm{immds} \ $ be the full subcategory of $\ \mathrm{mmds}\ $  whose objects are micro-macro dynamical systems with invertible dynamics.
 \end{enumerate}
}
\end{defn}

The category of micro-macro phase spaces is the full subcategory  $\ \mathrm{surj} \ $ of $\  \mathrm{map} \ $ whose objects are surjective maps. A morphism $ \  (X_1,A_1,f_1,\alpha_1)  \longrightarrow  (X_2,A_2,f_2,\alpha_2) \ $   in   $\  \mathrm{mmds} \  $ is  a morphism $\ k \ $ of micro-macro phase spaces such that the following diagram commutes: \[\xymatrix @R=.4in  @C=.8in
{X_1 \ar[r]^{k} \ar[d]_{\alpha_1} &  X_2 \ar[d]^{\alpha_2}
\\ X_1 \ar[r]^{k} & X_2  } \] Given a finite set $ \ X, \  $ we let $\ \mathrm{Par}X\ $ and $\ \mathrm{S}_X\ $ be,
respectively, the
 set of partitions and permutations on $\ X. \ $  A partition on $\ X \ $ is a family of non-empty disjoint subsets
 (called blocks) of $\ X \ $ with union equal to $\ X.\ $   A surjective map $\ f:X \longrightarrow A\ $ defines
 the partition on $\ X \ $ given by $\ \pi  =  \{f^{-1}(a) \ | \  a \in A \}, \ $
  which gives rise to the surjective map $ \  \overline{(\ )}:X  \longrightarrow  \pi  \  $
  sending $\ i \in X \ $ to the block  containing it.  The maps $\ f:X \longrightarrow A\ $ and $\ \overline{(\ )}:X \longrightarrow \pi\ $ are isomorphic objects in the category $\ \mathrm{surj}, \ $ i.e. we have a commutative isomorphism triangle
\[
\xymatrix @R=.3in  @C=.5in
{& X \ar[dl]_{f}\ar[dr]^{\overline{(\ )}} & \\
A \ar[rr]  & & \pi   }
\]
where the bottom arrow sends $\ a \ $ to $\ f^{-1}(a). \ $
So, up to isomorphism, a surjective map and a partition on $\ X\ $  define the same structure,
thus any   micro-macro dynamical system $ \ (X,A,f, \alpha)$ is isomorphic
to a micro-macro dynamical system of the form $\ (X, \pi, \overline{(\ )}, \alpha),\ $   henceforth denoted  by
$\ (X, \pi, \alpha).\ $ Next we show that $ \ \mathrm{immds} \ $ is a coreflective subcategory of $\ \mathrm{mmds}.$

\begin{prop}{\em \ The inclusion functor $\ i: \mathrm{immds} \longrightarrow \mathrm{mmds}\ $ has a right
 adjoint functor $\ i_{\ast}: \mathrm{mmds} \longrightarrow \mathrm{immds}\ $ given on objects by
$ \  i_{\ast}(X,A,f,\alpha) =  (X_{r}, f(X_r),f,\alpha) , \ $
where  $\ X_r \ $ is the set  of recurrent microstates $\ \{ i \in X \ | \ \alpha^n(i)=i \ \  \mbox{for some} \ \ n>0 \}, \ $
and the restrictions of $\ f \ $ and $ \ \alpha \ $ to  $ \ X_r \ $ are denoted with the same symbols.}
\end{prop}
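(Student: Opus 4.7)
The plan is to verify that $i_{\ast}$ is well-defined as a functor landing in $\mathrm{immds}$, exhibit the counit of the adjunction as the obvious inclusion $i\, i_{\ast}(X,A,f,\alpha)\hookrightarrow (X,A,f,\alpha)$, and check its universal property by exploiting the fact that on a finite set any bijection has every point recurrent.

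First I would show that $X_r$ is $\alpha$-invariant and that the restriction of $\alpha$ to $X_r$ is a bijection. Invariance is immediate: $\alpha^n(i)=i$ implies $\alpha^n(\alpha(i))=\alpha(i)$, so $\alpha(i)\in X_r$. Surjectivity of $\alpha|_{X_r}$ holds because $i=\alpha(\alpha^{n-1}(i))$ with $\alpha^{n-1}(i)\in X_r$. For injectivity, if $\alpha(i)=\alpha(j)$ with $\alpha^n(i)=i$ and $\alpha^m(j)=j$, take $N=nm$ to get $i=\alpha^N(i)=\alpha^{N-1}\alpha(i)=\alpha^{N-1}\alpha(j)=\alpha^N(j)=j$. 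Surjectivity of $f|_{X_r}$ onto $f(X_r)$ is tautological, so $i_{\ast}(X,A,f,\alpha)$ is a legitimate object of $\mathrm{immds}$.

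Functoriality is routine. Given a morphism $(k,\overline{k}):(X_1,A_1,f_1,\alpha_1)\to (X_2,A_2,f_2,\alpha_2)$ in $\mathrm{mmds}$, the identity $\alpha_2^n k=k\alpha_1^n$ gives $k(X_{1,r})\subseteq X_{2,r}$, and then $\overline{k} f_1=f_2 k$ yields $\overline{k}(f_1(X_{1,r}))=f_2(k(X_{1,r}))\subseteq f_2(X_{2,r})$, so the restrictions of $k$ and $\overline{k}$ assemble into a morphism in $\mathrm{immds}$ in a way compatible with compositions and identities. Taking $\epsilon_{(X,A,f,\alpha)}$ to be the inclusion morphism, naturality of $\epsilon$ is a direct consequence of this same computation.

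The universal property then reduces to showing that any morphism $(k,\overline{k}):i(Y,B,g,\beta)\to (X,A,f,\alpha)$ factors uniquely through $\epsilon$. Since $Y$ is finite and $\beta$ is a bijection, every $\beta$-orbit is a cycle, so $Y_r=Y$ and in particular $i_{\ast}i(Y,B,g,\beta)=(Y,B,g,\beta)$. The functoriality computation applied to $(k,\overline{k})$ gives $k(Y)=k(Y_r)\subseteq X_r$ and $\overline{k}(B)\subseteq f(X_r)$, providing the required factorization; uniqueness is automatic because $\epsilon$ is componentwise injective. This establishes $i\dashv i_{\ast}$. The only step where anything beyond categorical bookkeeping occurs is the verification that $\alpha|_{X_r}$ is a bijection, where finiteness is essential through the pigeonhole-style argument for injectivity; I expect no further obstacle.
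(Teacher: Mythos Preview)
Your proof is correct and follows essentially the same approach as the paper: both hinge on the observation that a morphism out of an invertible system on a finite set must land in the recurrent part of the target, since every point under a bijection of a finite set is periodic. The paper states this directly via the hom-set bijection, while you package it through the counit and its universal property and additionally spell out well-definedness and functoriality of $i_\ast$; these are equivalent formulations of the same argument, with your version simply filling in details the paper leaves implicit.
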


\begin{proof}
We need to show that there is a natural bijection
$$\mathrm{immds}\big((X_1,A_1,f_1,\alpha_1), i_{\ast}(X_2,A_2,f_2,\alpha_2) \big) \ \simeq \
 \mathrm{mmds}\big((X_1,A_1,f_1,\alpha_1), (X_2,A_2,f_2,\alpha_2) \big).$$
Indeed if $\ \alpha_1 \ $ is invertible, then the image of a morphism in the right-hand
set above is necessarily contained in $ \ (X_2)_r, \ $ and therefore it is also a morphism in the left-hand set above.
\end{proof}

\begin{defn}\label{bs}
{\em Let $\ (X,A,f,\alpha) \ $ be a micro-macro dynamical system.  The following structures arise on $ \ A \ $ and $\ X\ $:
\begin{enumerate}
  \item A measure $\ |\ |:A \longrightarrow \mathbb{N}\ $ given by $\ |a|  = |f^{-1}(a)|, \ $ inducing
  the measure $\ |\ |:X \longrightarrow \mathbb{N}\ $ given by $\ |i|=|f(i)|, \ $ and
  the probability measure $\ p:A \longrightarrow  [0,1] \ $ given by $\ p_a   =  \frac{|a|}{|X|}.$
  \item A stochastic map $\ T:A \longrightarrow A \ $ with $\ T_{ab} \in [0,1] \ $ giving the probability
  that a macro-state $\ b \ $ moves to a macro-state $\ a \ $ in a unit of time
 $$T_{ab}  \ = \  \frac{ |\{i\in b\ | \ \alpha(i) \in a \}|}{|b|}. \ \ \ \ \mbox{Note that}\ \ T_{ab}\geq 0 \ \ \
  \mbox{and}
\ \ \ \sum_{a\in A}T_{ab}=1.\ $$
\item  The uniform probability on $\ A \ $ and the uniform probabilities on each block
   $\ a\in A \ $ induce the probability measure $\ q \ $ on $\ X \ $ given by $\ q(i)=\frac{1}{|A||i|}.$

\end{enumerate}}
\end{defn}

\begin{defn}{\em Let $\ (X,A,f,\alpha)\ $ be a micro-macro dynamical system.

\begin{enumerate}
  \item The Boltzmann entropy on macrostates $\ S:A \longrightarrow \mathbb{R}\ $ is given by
  $\ S(a) =   \mathrm{ln}|a|  .$
  \item The Boltzmann entropy  on microstates $\ S:X \longrightarrow \mathbb{R} \ $ is given by
  $\ S(i)   =   \mathrm{ln}|i|.$
  \item The Boltzmann entropy of $\ A \ $ is given by
  $ \ \displaystyle S(A) =   \sum_{a\in A}S(a)p_a  =   \frac{1}{|X|}\sum_{i \in X}S(i).  $
  \item The Shannon entropy of $\ p \ $ is given by
  $\ \displaystyle  H(p)=   -\sum_{a\in A} \mathrm{ln}(p_a)p_a  =   \mathrm{ln}|X|   -   S(A)  .$
  \item The Shannon entropy of the stochastic map $\ T: A \longrightarrow A\ $ is given by
  $$ \displaystyle H(T)   =  \sum_{b \in A}H(T_{\bullet b})p_b   =   -\sum_{a,b \in A}\mathrm{ln}(T_{ab})T_{ab}p_b .$$
\end{enumerate}
}
\end{defn}

\begin{rem}
{\em Boltzmann's actual definition of the entropy of a macrostate is  $\ k\mathrm{ln}|a|, \ $ where $\ k\ $ is the
Boltzmann constant. For simplicity we  set $\ k=1,\ $ or equivalently, work with the logarithmic function
$\  \mathrm{log}_{e^{1/k}}(x).\ $ The  entropy of a partition with respect to an automorphism has been studied
in ergodic theory \cite{aa}. Although related to our constructions,  we will not use this notion.
Shannon entropy and Boltzmann entropy play complementary roles as
$$ \  H(p) \ + \ S(A)     \ = \  \mathrm{ln}|X|.  \ $$ Shannon entropy $ \ H(p)\  $ measures the mean
uncertainty in choosing  a macrostate. Boltzmann entropy $\ S(A) \ $ measures the mean uncertainty in choosing
a microstate given that a macrostate has already been chosen.
}
\end{rem}

Given a micro-macro phase space $\ (X,A, f) \ $ the set of equilibria macrostates
$\ A^{\mathrm{eq}} \subseteq A \ $ is the set of macrostates with maximum Boltzmann entropy.
In the applications, usually $ \ A^{\mathrm{eq}} \ $ has a unique element called the equilibrium.
We let $\  X^{\mathrm{eq}}   \subseteq X  \ $ be the set of microstates in an equilibrium macrostate,
and $ \ X^{\mathrm{neq}} = X \setminus  X^{\mathrm{eq}}\ $ be the set of non-equilibrium microstates.
For $\ L \subseteq X \ $ set $\ L^{\mathrm{eq}} = L \cap X^{\mathrm{eq}} \ $ and
$\ L^{\mathrm{neq}} = L \cap X^{\mathrm{neq}}. $ \\

Let $ (X,A,f, \alpha) $ be a micro-macro dynamical system. The sets of microstates where entropy
is decreasing, increasing, and constant are respectively given by:
$$ D   =   DX   =   \{i \in X \  |  \ S(\alpha(i)) < S(i) \}, \ \ \ \ I   =   IX   =   \{i \in X \  | \  S(\alpha(i)) > S(i)\}, \ $$
$$C  =   CX   =  \{ i \in X \  | \  S(\alpha(i)) = S(i)  \}. \ $$
More generally, for $\ L  \subseteq  X \ $ set $ \  DL=D\cap L, \ \ IL=I\cap L, \ $ and $ \ CL=C \cap L.\ $
We have that
$$ \frac{|DL|}{|L|} \ + \ \frac{|IL|}{|L|} \ + \ \frac{|CL|}{|L|} \ = \ 1 .$$

Throughout this work we use a parameter $\ \varepsilon \ $  allowed to be in the interval $\ [0,1] \ $
in order to exhaust all logical possibilities, but meant to be a fairly small positive real number.
Next we introduce our first formalization of the second law.   We will subsequently provide further formalizations demanding stronger conditions making the
  systems more closely resemble those likely to be relevant in nature.

\begin{defn}\label{1m}
{\em A micro-macro dynamical system $\ (X,A,f, \alpha)\ $ satisfies  property $\ \mathrm{L}_1(\varepsilon) \ $
  if and only if  $\ \ \displaystyle \frac{|D|}{|X|}  \leq  \varepsilon, \ \ $ and in this case we write
  $\ (X,A,f, \alpha) \in  \mathrm{L}_1(\epsilon).\ $ A  sequence $\ (X_n, A_n, f_n, \alpha_n)\ $ of micro-macro dynamical systems satisfies  property $\ \mathrm{L}_1, \ $
  and we write $\ (X_n, A_n, f_n, \alpha_n) \in \mathrm{L}_1, \ $ if for any $\ \varepsilon > 0 \ $ there exits $\ N \in \mathbb{N}\ $ such
  that $\ (X_n, A_n, f_n, \alpha_n)\in \mathrm{L}_1(\varepsilon) \ $ for $\ n \geq N.$
 }
\end{defn}

Next result allows to understand property $\ \mathrm{L}_1(\epsilon)\ $  in terms of macrostates.
Given $\ b \in A \ $ we set
$\ A_{<b} = \{a\in A \ | \ |a| < |b| \} \ \ $ and $\ \ A_{\leq b} = \{a\in A \ | \ |a| \leq |b| \} .\ $

\begin{prop}\label{nt}
{\em Let $\ (X,A,f, \alpha) \ $  be a micro-macro dynamical system. We have that:
\begin{enumerate}
  \item
$ \displaystyle \frac{|D|}{|X|}   =   \sum_{S(a)<S(b)}T_{ab} p_b, \ \ \ \ \ \ \frac{|I|}{|X|}  =
\sum_{S(a)>S(b)}T_{ab} p_b,  \ \ \ \ \ \mbox{and} \ \ \ \ \ \frac{|C|}{|X|}  =
\sum_{S(a)=S(b)}T_{ab} p_b. \ \ \ \ $
  \item $(X,A,f, \alpha) \in \mathrm{L}_1(\varepsilon) \ \ $ if and only if $\ \ \displaystyle \sum_{S(a)<S(b)}T_{ab} p_b \ \leq\  \varepsilon.$
\item If $ \displaystyle \ T_{ab} \leq  \frac{\varepsilon}{|A_{<b}|} \ $ for $\ |a| < |b|, \ $ then $\ (X,A,f, \alpha) \in \mathrm{L}_1(\varepsilon). \ $
\end{enumerate}
}
\end{prop}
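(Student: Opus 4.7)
The plan is to prove (1) first by a double-counting argument, then deduce (2) immediately from the definition of $\mathrm{L}_1(\varepsilon)$, and finally derive (3) from (2) by swapping the order of summation and applying the hypothesis.

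For (1) the key observation is that the entropy $S(i)=\ln|f(i)|$ of a microstate depends only on the macrostate it sits in, so membership of $i\in X$ in $D$, $I$ or $C$ is determined by the pair $(b,a)=(f(i),f(\alpha(i)))$. I would therefore partition $X$ according to this pair and write
\[
D \ = \ \bigsqcup_{b\in A}\ \bigsqcup_{\substack{a\in A\\ S(a)<S(b)}} \bigl\{i\in b\ |\ \alpha(i)\in a\bigr\}.
\]
By the very definition of the stochastic map $T$ in Definition \ref{bs}, the innermost set has cardinality $|b|\,T_{ab}$. Summing and dividing by $|X|$, the factor $|b|/|X|$ becomes $p_b$, giving the stated formula for $|D|/|X|$. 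The identical argument with $S(a)>S(b)$ and $S(a)=S(b)$ yields the formulae for $|I|/|X|$ and $|C|/|X|$; as a sanity check their sum collapses via $\sum_a T_{ab}=1$ and $\sum_b p_b=1$ to $1$, matching the identity just before Definition \ref{1m}.

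Part (2) is then a direct translation: Definition \ref{1m} says $(X,A,f,\alpha)\in\mathrm{L}_1(\varepsilon)$ exactly when $|D|/|X|\le \varepsilon$, and by (1) this is the inequality $\sum_{S(a)<S(b)}T_{ab}p_b\le\varepsilon$.

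For (3), I would rewrite the sum from (2) as iterated sums, first over $b$ and then over those $a$ with $|a|<|b|$, i.e.\ $a\in A_{<b}$, and apply the hypothesis $T_{ab}\le \varepsilon/|A_{<b}|$ inside the inner sum:
\[
\sum_{S(a)<S(b)}T_{ab}p_b \ =\ \sum_{b\in A} p_b \sum_{a\in A_{<b}} T_{ab} \ \le\ \sum_{b\in A} p_b\cdot |A_{<b}|\cdot \frac{\varepsilon}{|A_{<b}|} \ =\ \varepsilon\sum_{b\in A}p_b \ =\ \varepsilon,
\]
so (2) gives the conclusion. I do not foresee a substantive obstacle here; the only mild care is in (3), where one should note that if $A_{<b}=\emptyset$ the inner sum is empty and the hypothesis is vacuous, so no division-by-zero issue arises.
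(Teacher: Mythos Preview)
Your proof is correct and follows essentially the same approach as the paper: the same double-counting/partitioning argument for (1), the same immediate deduction for (2), and the same inner-sum bound for (3). Your explicit handling of the edge case $A_{<b}=\emptyset$ in (3) is a nice touch that the paper leaves implicit.
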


\begin{proof}Item 2 is a direct consequence of item 1. We show the leftmost identity in item 1:
 {\footnotesize $$\sum_{S(a)<S(b)}T_{ab}p_b  \   =   \ \sum_{S(a)<S(b)}\frac{ |\{i\in b\ | \ \alpha(i) \in a \}|}{|b|}\frac{|b|}{|X|} \ =  \ \frac{1}{|X|}\Big| \coprod_{S(a)<S(b)}\{i\in b\ | \ \alpha(i) \in a \} \Big| \  =  \ \frac{|D|}{|X|}.$$}
Item 3 is shown as follows:
$$\frac{|D|}{|X|} \  =  \ \sum_{S(a)<S(b)}T_{ab} p_b \   \leq  \ \varepsilon \sum_{S(a)<S(b)} \frac{p_b}{|A_{<b}|}  \  =  \  \varepsilon \sum_{b} \frac{|A_{<b}|p_b}{|A_{<b}|} \  \leq  \ \varepsilon.$$
\end{proof}

According to Jaynes \cite{de2, j} a transition  on macrostates $ \ b \rightarrow a \ $
is experimentally reproducible if and only if $\ T_{ab}\ $ is nearly equal to $1$,
meaning that the images under $\ \alpha\ $ of almost all microstates in $\ b \ $
 lie in the macrostate $\ a.\ $ Accordingly, the stochastic map $\ T \ $ is experimentally
  reproducible if and only if it is nearly deterministic, i.e. if and only if there
  is a map $\ t:A   \longrightarrow A\ $ such that  $\ T_{t(b)b}\ $ is nearly equal to $\ 1, \ $
  say $\ T_{t(b)b} \geq  1- \epsilon \ $ for $\  \varepsilon \geq 0 \ $ fairly small.

\begin{prop}{\em Let $\ (X,A,f, \alpha)\ $ be an invertible micro-macro dynamical system.
\begin{enumerate}
  \item If the entropy  $\ H(T)\ $ of the stochastic map $\ T :A \longrightarrow A \ $ is nearly vanishing,
   then  property $\ \mathrm{L}_1(\varepsilon) \ $ holds for suitable $\ \varepsilon >0 \ $ specified below.
  \item If $\ H(T)=0, \ $ then property $\ \mathrm{L}_1(0) \ $ holds.
\end{enumerate}
}
\end{prop}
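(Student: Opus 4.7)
The plan is to prove part 2 directly and then refine the argument quantitatively to obtain part 1 with an explicit $\varepsilon$.

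For part 2, I would argue that $H(T)=0$ forces $T$ to be deterministic in a way compatible with entropy non-decrease. Since $H(T)=\sum_b H(T_{\bullet b}) p_b$ is a sum of non-negative terms with strictly positive $p_b$ (surjectivity of $f$ gives $|b|\geq 1$), the hypothesis $H(T)=0$ forces $H(T_{\bullet b})=0$ for every $b$, so $T_{\bullet b}$ is a point mass at some $t(b)\in A$ and hence $\alpha(b)\subseteq t(b)$. Invertibility of $\alpha$ then gives $|t(b)|\geq |\alpha(b)|=|b|$, so $S(t(b))\geq S(b)$, and therefore every microstate $i$ satisfies $S(\alpha(i))\geq S(i)$. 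Thus $D=\emptyset$ and $\mathrm{L}_1(0)$ holds.

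For part 1, I would use Proposition \ref{nt}(1) to rewrite $|D|/|X| = \sum_b p_b \sigma_b$ with $\sigma_b = \sum_{S(a)<S(b)} T_{ab}$, and then estimate each $\sigma_b$ in terms of $H(T_{\bullet b})$. The key observation is that invertibility of $\alpha$ gives
\[
T_{ab} \ = \ \frac{|a \cap \alpha(b)|}{|b|} \ \leq \ \frac{|a|}{|b|} \ \leq \ 1-\frac{1}{|b|} \quad \text{whenever } S(a) < S(b);
\]
combined with the elementary inequality $-\ln(1-1/|b|)\geq 1/|b|$, this yields $T_{ab}\leq |b|\,(-T_{ab}\ln T_{ab})$ for each such $a$. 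Summing over $\{a:S(a)<S(b)\}$ gives $\sigma_b \leq |b|\,H(T_{\bullet b})$, and averaging against $p_b$:
\[
\frac{|D|}{|X|} \ \leq \ \sum_b p_b\,|b|\,H(T_{\bullet b}) \ \leq \ M \cdot H(T), \qquad M := \max_{b\in A}|b|.
\]
Hence property $\mathrm{L}_1(\varepsilon)$ holds with $\varepsilon = M\cdot H(T)$, which is small whenever $H(T)\ll 1/M$.

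The main obstacle I anticipate is identifying the correct way to link the entropy-decreasing weights $T_{ab}$ to their information contribution $-T_{ab}\ln T_{ab}$. Without invertibility there is no universal upper bound $T_{ab}\leq 1-1/|b|$, and then the comparison $T_{ab}\lesssim (-T_{ab}\ln T_{ab})$ can fail near $T_{ab}=1$; invertibility is precisely what forces a uniform gap away from $1$ in the entropy-decreasing directions, and this gap is the quantitative mechanism underlying the clean implication of part 2.
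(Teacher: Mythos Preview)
Your proof is correct, but for part 1 it follows a genuinely different route from the paper. The paper argues that when $H(T)$ is nearly zero there is a near-deterministic map $t:A\to A$ with $T_{t(b)b}\geq 1-\varepsilon$ for every $b$; invertibility then gives $|t(b)|\geq (1-\varepsilon)|b|$, so $S(t(b))\geq S(b)+\ln(1-\varepsilon)$, and under the additional discrete-gap assumption that $S(a)<S(b)$ already implies $S(a)<S(b)+\ln(1-\varepsilon)$, every entropy-decreasing $a$ satisfies $a\neq t(b)$, whence $\sum_{S(a)<S(b)}T_{ab}\leq 1-T_{t(b)b}\leq\varepsilon$. Your argument instead bounds each entropy-decreasing weight $T_{ab}\leq |a|/|b|\leq 1-1/|b|$ directly from invertibility and converts this into an entropy contribution via $T_{ab}\leq |b|(-T_{ab}\ln T_{ab})$, yielding the explicit estimate $|D|/|X|\leq M\cdot H(T)$ with $M=\max_b|b|$. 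The paper's version is closer in spirit to the Jaynes ``experimentally reproducible'' picture and avoids the factor $M$, but pays for this with an implicit dependence of $\varepsilon$ on $H(T)$ and an extra gap hypothesis on the entropy spectrum; your version is fully self-contained and quantitative, at the cost of the factor $M$. For part 2 your direct argument and the paper's $\varepsilon\to 0$ limit coincide.
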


\begin{proof}
Under the hypothesis of item 1, $\ \alpha \ $ induces a map $\ \alpha:A \longrightarrow A\ $
such that  $\ T_{\alpha(b)b}\ $ is nearly equal to $\ 1, \ $ say $\ \ T_{\alpha(b)b} \geq   1- \varepsilon \ $ with $\  \varepsilon > 0 \ $ fairly small. Then
$$\frac{|\alpha(b)|}{|b|}\ \geq  \ \frac{ |\{i\in \alpha(b)\ | \ \alpha^{-1}(i) \in b \}|}{|b|} \  =
 \ \frac{ |\{i\in b\ | \ \alpha(i) \in \alpha(b) \}|}{|b|}\  =  \ T_{\alpha(b)b} \ \geq  \ 1 - \varepsilon,$$
and thus $\ S(\alpha(b)) =   \mathrm{ln}|\alpha(b)|  \geq  \mathrm{ln}|b| +  \mathrm{ln}(1-\epsilon)  =   S(b)  + \mathrm{ln}(1-\varepsilon).  \ $
Assuming in addition  that $\ \varepsilon \  $ is small enough  that $\ S(a)<S(b) \ $ implies that $\ S(a)<S(b)+ \mathrm{ln}(1-\varepsilon) ,\ $
then by Proposition \ref{nt} we have that:
$$ \frac{|D|}{|X|}\   =  \ \sum_{S(a)<S(b)}T_{ab} p_b \ \leq \ \varepsilon \sum_{b} p_b \ = \ \varepsilon .$$
Item 2 follows from item 1, since in this case we can actually set $\ \varepsilon=0.$
\end{proof}

The following result is a direct consequence of the definitions.

\begin{lem}\label{at}
{\em Let $\ (X,A,f,\alpha)\ $ be an invertible micro-macro dynamical system. We have that
1) $\alpha  D  =  I_{\alpha^{-1}}, \ \ \alpha I  =  D_{\alpha^{-1}}, \ \ \alpha C =  C_{\alpha^{-1}} .\ \ $
2) $|D|  =  |I_{\alpha^{-1}}|, \ \ |I| =  |D_{\alpha^{-1}}|, \ \   | C|  =  |C_{\alpha^{-1}}| .\ \ $
3) $ |D|  +   |D_{\alpha^{-1}}|   +    |C| =    |X|. \ $
}
\end{lem}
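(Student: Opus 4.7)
The plan is to reduce everything to an unfolding of definitions combined with the bijectivity of $\alpha$; no serious work is needed, but one has to be careful with the role of $\alpha^{-1}$ in the subscripted sets.

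For item 1, I would establish the first identity $\alpha D = I_{\alpha^{-1}}$ by a direct element chase. A microstate $i \in X$ belongs to $D$ iff $S(\alpha(i)) < S(i)$. Setting $j = \alpha(i)$, so that $i = \alpha^{-1}(j)$, this condition reads $S(j) < S(\alpha^{-1}(j))$, which is precisely the defining condition for $j \in I_{\alpha^{-1}}$. Thus $\alpha$ restricts to a bijection $D \to I_{\alpha^{-1}}$. The remaining equalities $\alpha I = D_{\alpha^{-1}}$ and $\alpha C = C_{\alpha^{-1}}$ are obtained by the same substitution $j=\alpha(i)$, just flipping the inequality or replacing it by equality.

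Item 2 is then immediate: since $\alpha$ is a bijection on $X$, it preserves cardinalities of subsets, so $|D| = |\alpha D| = |I_{\alpha^{-1}}|$, and analogously for the other two pairs.

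For item 3, I would invoke the trichotomy of real numbers applied to the pair $(S(i), S(\alpha(i)))$: each $i \in X$ satisfies exactly one of $S(\alpha(i)) < S(i)$, $S(\alpha(i)) > S(i)$, $S(\alpha(i)) = S(i)$. Hence $\{D, I, C\}$ is a partition of $X$, giving $|D| + |I| + |C| = |X|$. Substituting $|I| = |D_{\alpha^{-1}}|$ from item 2 yields the stated identity. There is no real obstacle here; the lemma just records how the trichotomy partition transforms under reversing the dynamics.
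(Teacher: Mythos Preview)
Your proposal is correct and matches the paper's approach: the paper states only that the lemma ``is a direct consequence of the definitions'' without giving any details, and your element chase via the substitution $j=\alpha(i)$ together with trichotomy is exactly the intended unfolding.
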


Next we show that a micro-macro dynamical system and its inverse satisfy property
 $ \ \mathrm{L}_1(\epsilon)\ $  if and only if entropy is nearly constant.

\begin{prop}
{\em Let $\ (X,A,f, \alpha)\ $ be an invertible micro-macro dynamical system.
\begin{enumerate}
  \item If $\ (X,A,f,\alpha) \in \mathrm{L}_1(\varepsilon_1) \ $ and
$\ (X,A,f,\alpha^{-1}) \in \mathrm{L}_1(\varepsilon_2), \ $ then $\ \displaystyle \frac{|C|}{|X|}\geq   1- \varepsilon_1 - \varepsilon_2.$
  \item If $\  \displaystyle \frac{|C|}{|X|}\geq  1- \varepsilon, \  $ then  $\ (X,A,f,\alpha) \in \mathrm{L}_1(\varepsilon) \ $ and
$\ (X,A,f,\alpha^{-1}) \in \mathrm{L}_1(\varepsilon). \ $
\item If $\ |I|=|D|, \ $ then $\ (X,A,f,\alpha) \in \mathrm{L}_1(\varepsilon)\ \ $ if and only if  $ \ \ \displaystyle \frac{|C|}{|X|} \geq   1- 2\varepsilon.$
\end{enumerate}
}
\end{prop}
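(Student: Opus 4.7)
The plan is to derive everything from Lemma \ref{at}, whose item 3 gives the decomposition $|D| + |D_{\alpha^{-1}}| + |C| = |X|$, and whose item 2 gives $|I| = |D_{\alpha^{-1}}|$. All three claims reduce to simple arithmetic manipulations of this identity, so the proof is essentially bookkeeping rather than requiring any genuine new idea.

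For item 1, I would just divide the identity from Lemma \ref{at} item 3 by $|X|$ to obtain
\[
\frac{|C|}{|X|} \ = \ 1 \ - \ \frac{|D|}{|X|} \ - \ \frac{|D_{\alpha^{-1}}|}{|X|},
\]
and then observe that the hypothesis $(X,A,f,\alpha) \in \mathrm{L}_1(\varepsilon_1)$ gives $|D|/|X| \leq \varepsilon_1$, while $(X,A,f,\alpha^{-1}) \in \mathrm{L}_1(\varepsilon_2)$ gives $|D_{\alpha^{-1}}|/|X| \leq \varepsilon_2$, yielding the lower bound $1 - \varepsilon_1 - \varepsilon_2$.

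For item 2, the same identity rearranged gives $|D|/|X| + |D_{\alpha^{-1}}|/|X| = 1 - |C|/|X| \leq \varepsilon$, and since both summands are nonnegative each is individually bounded by $\varepsilon$. That immediately says $(X,A,f,\alpha)\in\mathrm{L}_1(\varepsilon)$; for $(X,A,f,\alpha^{-1})\in\mathrm{L}_1(\varepsilon)$ I invoke Lemma \ref{at} item 2 to rewrite the set of decreasing microstates for $\alpha^{-1}$ in terms of $|I|$ if needed, but the cleanest route is to note that $|D_{\alpha^{-1}}|/|X| \leq \varepsilon$ is literally the defining inequality of $\mathrm{L}_1(\varepsilon)$ applied to the inverse system.

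For item 3, the hypothesis $|I| = |D|$ combined with Lemma \ref{at} item 2 gives $|D_{\alpha^{-1}}| = |I| = |D|$, so the partition identity collapses to $2|D| + |C| = |X|$, i.e.\ $|D|/|X| = (1 - |C|/|X|)/2$. From this single equation the biconditional $|D|/|X| \leq \varepsilon \iff |C|/|X| \geq 1 - 2\varepsilon$ is immediate. The only step requiring any care is making sure Lemma \ref{at} is invoked correctly; there is no genuine obstacle, as the proposition is essentially a formal consequence of the earlier lemma together with the defining inequality of $\mathrm{L}_1(\varepsilon)$.
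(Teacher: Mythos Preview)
Your proof is correct and follows essentially the same approach as the paper: both arguments rest on the partition identity $|D| + |D_{\alpha^{-1}}| + |C| = |X|$ from Lemma~\ref{at} (equivalently $|D| + |I| + |C| = |X|$ via $|I| = |D_{\alpha^{-1}}|$) and then read off each item by elementary arithmetic. The paper's write-up is slightly terser but there is no substantive difference in strategy or in the ingredients used.
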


\begin{proof}Recall that $\ |D_{\alpha^{-1}}|  =  |I|. \ $ The hypothesis of item 1 implies that
$$\varepsilon_1 \ + \ \varepsilon_2 \ + \ \frac{|C|}{|X|} \ \geq \
 \frac{|D|}{|X|} \ + \  \frac{|I|}{|X|} \ + \ \frac{|C|}{|X|} \ = \ 1.$$ Under the hypothesis of item 2 we  have that
$ \ \displaystyle \frac{|D|}{|X|} \ + \  \frac{|D_{\alpha^{-1}}|}{|X|} \ \leq \ \varepsilon .\ $ Item 3 follows from
the identity $\ \displaystyle 2\frac{|D|}{|X|} \ + \ \frac{|C|}{|X|} \ = \ 1.$
\end{proof}

The set of isomorphism classes of invertible micro-macro dynamical systems on a set
 $\ X \ $ of micro-states can be identified with the quotient set $\ (\mathrm{Par}X\times S_X)/S_X \ $ where:
\begin{enumerate}
\item $\beta \in S_X\ $ acts on  $ \ \pi \in \mathrm{Par}X \ $  by   $ \   \beta\pi{} =  \{\beta a \ | \  a\in{}\pi{} \}.$
\item $\beta \in S_X \ $ acts on $\alpha \in S_X \ $  by conjugation $ \  \beta(\alpha)  =  \beta\alpha \beta^{-1}.$
\item $S_X\ $ acts diagonally on $\ \mathrm{Par}X\times S_X.$
\end{enumerate}
Isomorphic invertible micro-macro dynamical systems have strictly decreasing entropy sets of
the same cardinality, thus we get the map
$$|D|: (\mathrm{Par}X\times S_X)/S_X  \longrightarrow  [0,d_X] \ \ \ \   \mbox{given by} \ \ \ \
|D|(\pi,\alpha)   =  |D_{\pi, \alpha}| = \big|\{\ i \in X \ | \  S(\alpha(i))< S(i) \ \}\big|,$$
where $\ d_X \ $ is maximum number of strict decreases for a micro-macro dynamical system on $ X. \ $
We show in Section \ref{mfsl}  that $\ \displaystyle d_X =  |X|  - \underset{l \vdash |X|}{\mathrm{min}}\
\underset{1 \leq i \leq |X|}{\mathrm{max}}\ il_i, \ $
where $l$ runs over the numerical partitions of $|X|: \ l=(l_1,...,l_{|X|}) \ $  and
  $ \ \displaystyle \sum_{i=1}^{|X|}il_i  =  |X|. \ $
The first 40 entries of the sequence $\ d_n \ $ are:
$\ 0, \ 0, \ 1, \ 2, \ 2, \ 3,  \ 4, \ 4, \ 5, \ 6, \ 7, \ 8, \ 9, \ 10, \ 11, \ 11, \ 12, \ $
$13, \ 14, \ 15, \ 16, \ 16, \ $$
17, \ 18, \ 19, \ 20, \ 21,  \ 22, \ 23, $
$\ 24, \ 25, \ 26, \ 27, \ 27, \ 28, \ 29, \ 30, \ 31, \ 32, \ 33. \ $
In Section \ref{fsl} we consider micro-macro dynamical systems with always increasing entropy, i.e. systems in $\ |D|^{-1}(0). \ $  In Section \ref{mfsl} we consider  micro-macro dynamical systems with the maximum number  of strict decreases  allowed, i.e. systems in $\ |D|^{-1}(d_X). \ $

\begin{lem}\label{me}{\em
The uniform probability on $\ \mathrm{Par}X\times S_X\ $ induces a probability  on $\ (\mathrm{Par}X\times S_X)/S_X\ $ for which the expected value of $\ |D| \ $ is given by
$$\overline{|D|} \  =  \ \frac{1}{|X|!B_{| X|}}\sum_{(\pi, \alpha)\in \mathrm{Par}X\times S_X}|D_{\pi,\alpha}|,$$
where $\ B_n \ $ are the Bell numbers given by
$\ \ \displaystyle B_{n+1}=\sum_{k=0}^{n}{n \choose k}B_k\ \ $ and $\ \ B_1=1.$}
\end{lem}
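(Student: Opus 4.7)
The plan is to reduce the statement to a standard averaging identity for functions that are constant on orbits. First I would verify the denominator: since the partitions of $X$ are counted by the Bell numbers, $|\mathrm{Par}X| = B_{|X|}$, which can be established by noting that the stated recursion $B_{n+1} = \sum_{k=0}^{n} \binom{n}{k} B_k$ records how a partition of an $(n+1)$-element set is determined by choosing which $k$ of the remaining $n$ elements lie outside the block of a distinguished element, and then partitioning them. Combined with $|S_X| = |X|!$, this gives $|\mathrm{Par}X \times S_X| = |X|! \, B_{|X|}$, accounting for the normalizing factor.

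Next I would invoke the general principle: if a finite group $G$ acts on a finite set $Y$, the uniform probability on $Y$ pushes forward to the probability on $Y/G$ assigning mass $|[y]|/|Y|$ to each orbit $[y]$. For any $G$-invariant function $F: Y \to \mathbb{R}$, the induced function $\overline{F}: Y/G \to \mathbb{R}$ has expected value
$$E[\overline{F}] \ = \ \sum_{[y] \in Y/G} \overline{F}([y]) \cdot \frac{|[y]|}{|Y|} \ = \ \frac{1}{|Y|} \sum_{y \in Y} F(y),$$
where the second equality regroups the sum orbitwise and uses that $F$ is constant on each orbit.

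Finally I would apply this principle with $Y = \mathrm{Par}X \times S_X$, $G = S_X$ acting diagonally as specified, and $F = |D|$. The invariance of $|D|$ under this diagonal action is exactly the statement, already observed in the paragraph preceding the lemma, that isomorphic invertible micro-macro dynamical systems have strictly decreasing entropy sets of equal cardinality; concretely, $|D_{\beta \pi, \beta \alpha \beta^{-1}}| = |D_{\pi,\alpha}|$ because $\beta$ restricts to a bijection between $D_{\pi,\alpha}$ and $D_{\beta \pi, \beta \alpha \beta^{-1}}$. Substituting into the general formula yields the asserted expression for $\overline{|D|}$.

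The only non-routine step is the $S_X$-invariance of $|D|$, but this is immediate from the construction, so the proof is essentially a bookkeeping argument and I do not anticipate a genuine obstacle.
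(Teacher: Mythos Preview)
The paper states this lemma without proof, treating it as an immediate observation. Your argument is correct and supplies exactly the details the paper omits: you identify the induced probability on the quotient as the pushforward of the uniform measure, use the $S_X$-invariance of $|D|$ (which the paper notes just before the lemma), and reduce the computation to a sum over $\mathrm{Par}X\times S_X$ normalized by $|X|!\,B_{|X|}$. There is nothing to compare against, and your proof stands on its own.
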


\begin{exmp}{\em  For $\ X=[3] \ $  there are three non-uniform partitions $ \ 1|23, \ 2|13, \ 3|12, \ $
and for each of these partitions there are four permutations with $\ |D|=1. \ $ All other choices lead
to$\ D = \emptyset. \ $  Therefore
$\ \displaystyle \overline{\frac{|D|}{|X|}} =  \frac{12}{90}=0.13 . \ $ See Figure \ref{t}.

\begin{figure}[t]
    \centering
    \includegraphics[width=11cm, height=5cm]{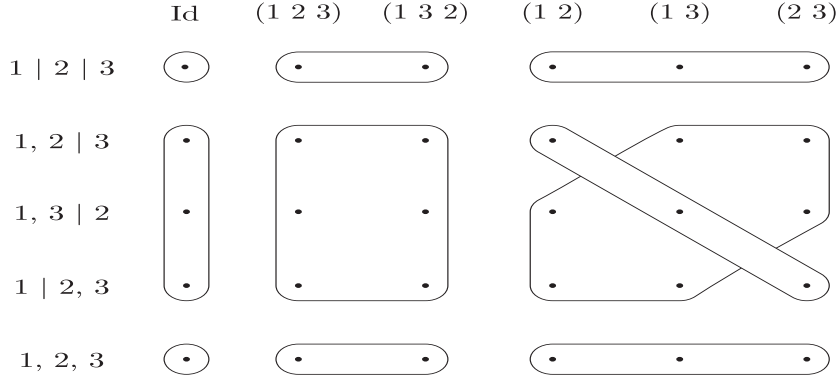}
    \caption{Orbits of the $\ S_3$-action on  $\ \mathrm{Par}[3]\times S_3. \ $}
    \label{t}
\end{figure}
}
\end{exmp}

Next we show that, in average, a random invertible micro-macro dynamical system has as many strict increases as strict decreases in entropy.

\begin{thm}{\em   The random variables
$\ \displaystyle |I| \  $ and $\  \displaystyle |D| \ $ on $\ (\mathrm{Par}X\times S_X)/S_X\ $ have the same mean.}
\end{thm}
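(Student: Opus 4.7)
The plan is to leverage the inversion symmetry $\alpha \mapsto \alpha^{-1}$ on the permutation factor of $\mathrm{Par}X\times S_X$, combined with Lemma \ref{at}, which already tells us that reversing time swaps strict decreases with strict increases on a fixed system.

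First I would observe that both $|D|$ and $|I|$ are constant on $S_X$-orbits of $\mathrm{Par}X\times S_X$: for $\beta\in S_X$ the conjugation action sends $(\pi,\alpha)$ to $(\beta\pi,\beta\alpha\beta^{-1})$, and $\beta$ restricts to a bijection between the strict-decrease (respectively strict-increase) sets of the two systems, so the cardinalities agree. Hence both functions descend to well-defined random variables on $(\mathrm{Par}X\times S_X)/S_X$, and Lemma \ref{me} applies to each, giving
\[
\overline{|D|} \ =\ \frac{1}{|X|!\,B_{|X|}}\!\!\sum_{(\pi,\alpha)\in\mathrm{Par}X\times S_X}\!\!|D_{\pi,\alpha}|, \qquad
\overline{|I|} \ =\ \frac{1}{|X|!\,B_{|X|}}\!\!\sum_{(\pi,\alpha)\in\mathrm{Par}X\times S_X}\!\!|I_{\pi,\alpha}|,
\]
so it suffices to compare the two sums on the right.

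Next I would apply Lemma \ref{at}(2), which asserts $|I_{\pi,\alpha}| = |D_{\pi,\alpha^{-1}}|$, term by term, to rewrite the $|I|$-sum as a sum of $|D_{\pi,\alpha^{-1}}|$ over all $(\pi,\alpha)$. Then, noting that the map $\mathrm{Par}X\times S_X \to \mathrm{Par}X\times S_X$ defined by $(\pi,\alpha)\mapsto(\pi,\alpha^{-1})$ is a bijection (since inversion is a bijection of $S_X$ and the partition factor is untouched), I would change variables $\beta=\alpha^{-1}$ to obtain
\[
\sum_{(\pi,\alpha)} |I_{\pi,\alpha}| \ =\ \sum_{(\pi,\alpha)} |D_{\pi,\alpha^{-1}}| \ =\ \sum_{(\pi,\beta)} |D_{\pi,\beta}|,
\]
which is exactly the $|D|$-sum. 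Dividing by $|X|!\,B_{|X|}$ yields $\overline{|I|}=\overline{|D|}$.

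There is no real obstacle: the argument is just an instance of the principle that a global involution on the sample space that swaps two statistics forces those statistics to have equal mean. The only thing worth pausing over is that Lemma \ref{me} is phrased as an average over orbits rather than elements; this is harmless because $|D|$ and $|I|$ are $S_X$-invariant and the formula stated in Lemma \ref{me} is an unweighted sum over $\mathrm{Par}X\times S_X$, so the orbit-size subtleties cancel out of both expressions identically.
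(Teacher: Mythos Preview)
Your proof is correct and follows essentially the same approach as the paper: both reduce to the identity $|D_{\pi,\alpha}| = |I_{\pi,\alpha^{-1}}|$ from Lemma \ref{at} combined with the averaging formula of Lemma \ref{me}, after which the inversion bijection $(\pi,\alpha)\mapsto(\pi,\alpha^{-1})$ on $\mathrm{Par}X\times S_X$ equates the two sums. The paper's version is simply more terse, omitting the explicit verification that $|D|$ and $|I|$ are $S_X$-invariant and the change-of-variables step that you spell out.
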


\begin{proof}Consider the uniform probability on $\ \mathrm{Par}X\times S_X. \ $ The result follows from  Lemma \ref{me} since
$\ \ |D_{\pi,\alpha}|  =   |I_{\pi,\alpha^{-1}}|\ \ \ \mbox{for} \ \ \ (\pi,\alpha) \in \mathrm{Par}X\times S_X. \ $
\end{proof}

Next we show that whenever $\ \pi \ $ has a dominant equilibrium, then property
$\ \mathrm{L}_1(\varepsilon) \ $ holds for all invertible systems of the form $\  (X, \pi, \alpha).$

\begin{thm}\label{i1}
{\em  Let $\ (X, \pi) \ $ be a micro-macro phase space and
$\ \delta_1 \leq  \delta_2 \leq \varepsilon (\delta_1+1)\ $ in $ \ \mathbb{R}_{\geq 0}\ $ be such that
 $\ \delta_1 |X^{\mathrm{eq}}|  \leq  |X^{\mathrm{neq}}|  \leq \delta_2 |X^{\mathrm{eq}}| . \ $
 Then
 $\  (X, \pi, \alpha) \in \mathrm{L}_1(\varepsilon) \ $ for any permutation $\ \alpha \in \mathrm{S}_X.$
}
\end{thm}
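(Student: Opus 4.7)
The plan is to bound the strictly decreasing set $D$ by $|X^{\mathrm{neq}}|$ and then translate the assumed two-sided control on $|X^{\mathrm{neq}}|/|X^{\mathrm{eq}}|$ into the bound $|D|/|X| \leq \varepsilon$.

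First I would split $D = (D \cap X^{\mathrm{eq}}) \sqcup (D \cap X^{\mathrm{neq}})$ and analyze each piece using that $X^{\mathrm{eq}}$ consists, by definition, of microstates sitting in macrostates of maximum Boltzmann entropy. If $i \in X^{\mathrm{eq}}$ and $S(\alpha(i)) < S(i)$, then $\alpha(i)$ cannot itself lie in $X^{\mathrm{eq}}$, so $\alpha(i) \in X^{\mathrm{neq}}$; hence
\[
|D \cap X^{\mathrm{eq}}| \ \leq \ \bigl|\{\,i \in X^{\mathrm{eq}} \,:\, \alpha(i) \in X^{\mathrm{neq}}\,\}\bigr|.
\]
Symmetrically, if $i \in X^{\mathrm{neq}}$ and $S(\alpha(i)) < S(i)$, then $\alpha(i) \notin X^{\mathrm{eq}}$ (the equilibrium has the largest entropy), so $\alpha(i) \in X^{\mathrm{neq}}$, and
\[
|D \cap X^{\mathrm{neq}}| \ \leq \ \bigl|\{\,i \in X^{\mathrm{neq}} \,:\, \alpha(i) \in X^{\mathrm{neq}}\,\}\bigr|.
\]

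Next I would add these two estimates and invoke the assumption that $\alpha$ is a permutation. The sum on the right is exactly $|\alpha^{-1}(X^{\mathrm{neq}})|$, which by bijectivity of $\alpha$ equals $|X^{\mathrm{neq}}|$. So
\[
|D| \ \leq \ |X^{\mathrm{neq}}|.
\]
This is the step where bijectivity does the essential work: it prevents the naive ``two contributions of size $|X^{\mathrm{neq}}|$'' double count and gives the sharper single factor.

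Finally I would feed in the hypothesis. From $|X^{\mathrm{neq}}| \leq \delta_2 |X^{\mathrm{eq}}|$ and $|X^{\mathrm{eq}}| + |X^{\mathrm{neq}}| \geq (1+\delta_1)|X^{\mathrm{eq}}|$ (the latter using the lower bound $\delta_1 |X^{\mathrm{eq}}| \leq |X^{\mathrm{neq}}|$) one obtains
\[
\frac{|X^{\mathrm{neq}}|}{|X|} \ = \ \frac{|X^{\mathrm{neq}}|}{|X^{\mathrm{eq}}| + |X^{\mathrm{neq}}|} \ \leq \ \frac{\delta_2 |X^{\mathrm{eq}}|}{(1+\delta_1)|X^{\mathrm{eq}}|} \ = \ \frac{\delta_2}{1+\delta_1} \ \leq \ \varepsilon,
\]
where the last inequality is exactly the assumed $\delta_2 \leq \varepsilon(1+\delta_1)$. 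Combining with the bound $|D| \leq |X^{\mathrm{neq}}|$ gives $|D|/|X| \leq \varepsilon$, i.e.\ $(X,\pi,\alpha) \in \mathrm{L}_1(\varepsilon)$.

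There is no serious obstacle here; the only place one must be careful is the bijectivity step, where a sloppy counting gives $|D| \leq 2|X^{\mathrm{neq}}|$ and then the constant $\varepsilon(1+\delta_1)$ in the hypothesis is off by a factor of two. Recognizing that the two pieces of $D$ both lie in $\alpha^{-1}(X^{\mathrm{neq}})$ (i.e.\ their union, not their sum, is what matters) removes that slack.
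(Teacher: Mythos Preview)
Your proof is correct and follows essentially the same route as the paper. The paper's version is the one-liner $\frac{|D|}{|X|} = \frac{|\alpha D|}{|X|} \leq \frac{|X^{\mathrm{neq}}|}{|X^{\mathrm{neq}}|+|X^{\mathrm{eq}}|} \leq \frac{\delta_2}{\delta_1+1} \leq \varepsilon$, which is exactly your argument without the case split: your two cases establish the single fact $\alpha(D)\subseteq X^{\mathrm{neq}}$ (equivalently $D\subseteq\alpha^{-1}(X^{\mathrm{neq}})$), and this holds uniformly for every $i\in D$ since $S(\alpha(i))<S(i)\leq\max S$ already forces $\alpha(i)\notin X^{\mathrm{eq}}$ regardless of where $i$ sits.
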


\begin{proof}Under the given hypothesis we have that
$$\frac{|D|}{|X|} \ = \ \frac{|\alpha D|}{|X|} \ \leq \ \frac{|X^{\mathrm{neq}}|}{|X^{\mathrm{neq}}|  + |X^{\mathrm{eq}}|}
\ \leq \ \frac{\delta_2|X^{\mathrm{eq}}|}{(\delta_1+1)|X^{\mathrm{eq}}|} \  \leq \ \varepsilon.$$
\end{proof}

\section{Reversible Systems and the Arrow of Time}\label{atrs}

In this section we begin to formalize the arrow of time concept  \cite{ca, lebo2, rp} within our
combinatorial framework; stronger formalizations will be developed subsequently. We also introduce reversible
micro-macro dynamical systems and study some of their main properties. Let $\ (X,A,f,\alpha) \ $ be a  micro-macro dynamical system,  $ i \in X, \  $ and  $ \ N, M \in \mathbb{N}. \ $
Entropy defines a $\ [N,M]$-arrow of time around $ \ i \ $ of time length $\ N+M+1, \ $ if it is strictly increasing
at the microstates   $\ \alpha^{n}(i)\ $ for all $\ n \in  [-N,M].\ $
In this case the $\ \alpha$-orbit of $\ i \ $ must have at least $\ N+M+2 \ $ elements. As with the second law itself, it is convenient to introduce a less strict condition for the arrow
of time, allowing a relative small number of decreases or constant entropy among the microstates
$\ \alpha^{n}(i). \ $
\begin{defn}\label{att}
{\em  Entropy defines an $\ [\varepsilon,N,M]$-arrow of time around $\ i \ $ of time length $N+M+1$  if the following inequality holds
$$ \frac{ \big|\big\{n\in [-N,M] \ \big| \  S(\alpha^{n+1}(i))  \leq  S(\alpha^{n}(i))  \big\} \big|}{N+M+1} \
  \leq \  \varepsilon.\ $$
}
\end{defn}

Definition \ref{att} can be extended for other functions on $\ X \ $ in place of entropy,
in particular, one can apply it to negative entropy.  The foregoing considerations motivate our next definition.

\begin{defn}{\em   Entropy defines a global $\varepsilon$-arrow of time on  $\ (X,A,f,\alpha),\ $ and we write
  $\ (X,A,f,\alpha)\in \mathrm{GAT}(\varepsilon), \ $ if
$$\frac{|DX^{\mathrm{neq}} \sqcup  CX^{\mathrm{neq}} | }{|X^{\mathrm{neq}}|} \ \leq \
 \varepsilon, \ \ \ \ \mbox{or \ equivalently} \ \ \ \
  \frac{| IX^{\mathrm{neq}} | }{|X^{\mathrm{neq}}|}  \geq   1 -  \varepsilon.$$
}
\end{defn}

\

\begin{thm}{\em Let $\ (X,A,f,\alpha)\ $ be a micro-macro dynamical system such that $\ T_{ab}  \leq
\frac{\varepsilon}{|A_{\leq b}|} \ $ whenever $\ |a| \leq |b| \ $ and $ \ b\in  A^{\mathrm{neq}},\ $ then
$\ (X,A,f,\alpha)\in \mathrm{GAT}(\varepsilon). $
}
\end{thm}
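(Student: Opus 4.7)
The plan is to mimic the proof of Proposition \ref{nt}, item 3, but with two adjustments to match the definition of $\mathrm{GAT}(\varepsilon)$: the sum must range over non-equilibrium source macrostates $b \in A^{\mathrm{neq}}$, and it must include the equality case $S(a) = S(b)$ (since $\mathrm{GAT}(\varepsilon)$ penalizes $DX^{\mathrm{neq}} \sqcup CX^{\mathrm{neq}}$, not just strict decreases). Both adjustments are exactly what the hypothesis is designed to accommodate.

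First, I would rewrite $|DX^{\mathrm{neq}} \sqcup CX^{\mathrm{neq}}|$ as a disjoint union over pairs $(a,b)$ with $b \in A^{\mathrm{neq}}$ and $|a| \leq |b|$. Exactly as in the computation for Proposition \ref{nt}(1), this yields
\[
|DX^{\mathrm{neq}} \sqcup CX^{\mathrm{neq}}| \ = \ \sum_{b \in A^{\mathrm{neq}}} \sum_{a\,:\,|a|\leq|b|}\big|\{i \in b \ | \ \alpha(i) \in a\}\big| \ = \ \sum_{b \in A^{\mathrm{neq}}} \sum_{a\,:\,|a|\leq|b|} T_{ab}|b|.
\]
Dividing by $|X^{\mathrm{neq}}| = \sum_{b \in A^{\mathrm{neq}}}|b|$ and letting $p_b^{\mathrm{neq}} = |b|/|X^{\mathrm{neq}}|$ gives a probability measure on $A^{\mathrm{neq}}$, so
\[
\frac{|DX^{\mathrm{neq}} \sqcup CX^{\mathrm{neq}}|}{|X^{\mathrm{neq}}|} \ = \ \sum_{b \in A^{\mathrm{neq}}} \sum_{a \in A_{\leq b}} T_{ab}\,p_b^{\mathrm{neq}}.
\]

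Next, I would apply the bound $T_{ab} \leq \varepsilon/|A_{\leq b}|$ (valid because $b \in A^{\mathrm{neq}}$ and $a \in A_{\leq b}$) and telescope the inner sum, mirroring the last line of the proof of Proposition \ref{nt}(3):
\[
\sum_{b \in A^{\mathrm{neq}}} \sum_{a \in A_{\leq b}} T_{ab}\,p_b^{\mathrm{neq}} \ \leq \ \varepsilon \sum_{b \in A^{\mathrm{neq}}} \frac{|A_{\leq b}|\,p_b^{\mathrm{neq}}}{|A_{\leq b}|} \ = \ \varepsilon \sum_{b \in A^{\mathrm{neq}}} p_b^{\mathrm{neq}} \ = \ \varepsilon,
\]
which is precisely the $\mathrm{GAT}(\varepsilon)$ condition.

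There is no real obstacle here: the argument is essentially a direct transcription of Proposition \ref{nt}(3) with the index set shifted from $A_{<b}$ (source arbitrary) to $A_{\leq b}$ (source in $A^{\mathrm{neq}}$). The only point that requires care is the bookkeeping that justifies the first display as a genuine disjoint union, but since distinct macrostates $a$ have disjoint preimages and distinct blocks $b$ are disjoint, this is automatic. The hypothesis is tailored so that the telescoping in the last display works, and the whole argument fits in a few lines.
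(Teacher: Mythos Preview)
Your proof is correct and takes essentially the same approach as the paper: both express $|DX^{\mathrm{neq}} \sqcup CX^{\mathrm{neq}}|/|X^{\mathrm{neq}}|$ as a weighted sum of $T_{ab}$ over $b\in A^{\mathrm{neq}}$ and $a\in A_{\leq b}$, apply the hypothesis, and telescope. The only cosmetic difference is that you normalize directly by $|X^{\mathrm{neq}}|$ via $p_b^{\mathrm{neq}}$, whereas the paper keeps $p_b=|b|/|X|$ and carries the extra factor $|X|/|X^{\mathrm{neq}}|$ through the chain of inequalities.
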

\begin{proof} We have that
$$\frac{| DX^{\mathrm{neq}} \sqcup  CX^{\mathrm{neq}} |}{|X^{\mathrm{neq}}|} \   =
 \ \frac{|X|}{|X^{\mathrm{neq}}|} \sum_{S(a)\leq S(b), \ b \in A^{\mathrm{neq}} }T_{ab} p_b \  \leq  \
 \frac{\varepsilon |X|}{|X^{\mathrm{neq}}|} \sum_{S(a)\leq S(b), \ b \in A^{\mathrm{neq}}}\frac{p_b}{|A_{\leq b}|}
\ = $$
$$ \frac{\varepsilon |X|}{|X^{\mathrm{neq}}|}\sum_{b \in A^{\mathrm{neq}}} \frac{|A_{\leq b}|}{|A_{\leq b}|}p_b \  =
 \ \frac{\varepsilon |X||X^{\mathrm{neq}}|}{|X^{\mathrm{neq}}||X|} \  =   \  \varepsilon.$$
\end{proof}

\begin{defn}
{\em Let $ \ \varepsilon_1, \varepsilon_2 \in [0,1].\ $
 A  micro-macro dynamical system $\ (X,A,f, \alpha)\ $ satisfies property $\ \mathrm{L}_2(\varepsilon_1,\varepsilon_2), \ $  and we write $\ (X,A,f, \alpha) \in  \mathrm{L}_2(\varepsilon_1,\varepsilon_2),\ $ if it satisfies $ \mathrm{L}_1(\varepsilon_1) $ and $ \mathrm{GAT}(\varepsilon_2).$
 A  sequence $ (X_n, A_n, f_n, \alpha_n)$ of micro-macro dynamical systems satisfies property $\ \mathrm{L}_2 ,\ $
  and we write $\ (X_n, A_n, f_n, \alpha_n)\in \mathrm{L}_2, \ $ if for any $\ \varepsilon_1, \varepsilon_2 > 0 \ $ there exits $\ N \in \mathbb{N}\ $ such
  that $\ (X_n, A_n, f_n, \alpha_n) \in  \mathrm{L}_2(\varepsilon_1,\varepsilon_2)\ $ for $\ n \geq N.$
 }
\end{defn}

Let $(X, \pi)$ be micro-macro phase space.  The partition $\ \pi \ $ induces another partition $\ Z \ $ of $ \ X \ $ into zones.
For $\ k\in \mathbb{N}_{\geq 1} \ $ let $\ \pi_k \subseteq \pi\ $ be the subset of $\ \pi \ $ consisting of
all blocks of cardinality $\ k, \ $ and let the $k$-zone $\ \widehat{\pi}_k \subseteq  X \ $ be given by
$$\widehat{\pi}_k \  =  \ \bigsqcup_{a\in \pi_k}a.$$
We set $\ O_{\pi}=\{k \in \mathbb{N}_+ \ | \ \widehat{\pi}_k \neq \emptyset  \}. \ $
A zone of $\ X \ $ is a subset of the form  $\ \widehat{\pi}_k \ $ for $\ k\in O_{\pi}. \ $
Typically we write $\ O_{\pi}=\{k_1 < \cdots < k_o \} \ $ and set $\ \widehat{\pi}_j = \ \widehat{\pi}_{k_j} \ $
for $\ j \in [o].$

\begin{thm}{\em  Let $\ (X, \pi) \ $ be a micro-macro phase-space with $\ O_{\pi}=\{k_1 < \cdots < k_o \} \ $ and
 $\ \delta_1,  \delta_2,   \delta_3, \delta_4, \delta_5 \in \mathbb{R}_{\geq 0}\ $ be such that
 \begin{enumerate}
   \item $ \displaystyle  \delta_1 |X^{\mathrm{eq}}|  \leq  |X^{\mathrm{neq}}|
     \leq \delta_2 |X^{\mathrm{eq}}| \ \ \  \mbox{with} \ \ \ \delta_2 \leq \varepsilon_1(\delta_1+1),$

   \item $ \displaystyle   \delta_3 |\widehat{\pi}_{o-1}|\leq  \sum_{i=1}^{o-2}|\widehat{\pi}_{i}|
   \leq \delta_4 |\widehat{\pi}_{o-1}|\ \ \  \mbox{with} \ \ \ \frac{\delta_4}{\delta_3+1} +
       \delta_5 \leq \varepsilon_2,$

 \end{enumerate}

then  $\ (X, \pi, \alpha) \in \mathrm{L}_2(\varepsilon_1,\varepsilon_2 )\ $ for any permutation
 $\ \alpha \in \mathrm{S}_X \ $  with $\ |CX^{\mathrm{neq}}| \leq \delta_5|X^{\mathrm{neq}}|.$
}
\end{thm}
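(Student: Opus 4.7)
The plan is to establish $\mathrm{L}_2(\varepsilon_1,\varepsilon_2)$ by verifying its two defining conditions, $\mathrm{L}_1(\varepsilon_1)$ and $\mathrm{GAT}(\varepsilon_2)$, separately. Hypothesis (1) is precisely the input needed to invoke the previously proved Theorem \ref{i1}, so $\mathrm{L}_1(\varepsilon_1)$ is immediate; the real content of the proof lies in extracting $\mathrm{GAT}(\varepsilon_2)$ from hypothesis (2) together with the bound on $|CX^{\mathrm{neq}}|$.

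The key structural observation for the $\mathrm{GAT}$ bound is that, because $X^{\mathrm{neq}} = \widehat{\pi}_1 \sqcup \cdots \sqcup \widehat{\pi}_{o-1}$, a microstate $i \in X^{\mathrm{neq}}$ with $S(\alpha(i)) < S(i)$ must lie in some zone $\widehat{\pi}_j$ with $j \leq o-1$ and send $\alpha(i)$ into a strictly smaller zone, hence $\alpha(i) \in \widehat{\pi}_1 \sqcup \cdots \sqcup \widehat{\pi}_{o-2}$. This gives the inclusion
\[ DX^{\mathrm{neq}} \ \subseteq \ \alpha^{-1}\bigl(\widehat{\pi}_1 \sqcup \cdots \sqcup \widehat{\pi}_{o-2}\bigr), \]
and since $\alpha \in \mathrm{S}_X$ is a bijection, we conclude $|DX^{\mathrm{neq}}| \leq \sum_{i=1}^{o-2}|\widehat{\pi}_i|$. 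Note that bijectivity of $\alpha$ is what lets us dispense with any hypothesis on how $\alpha$ interacts with $\pi$ beyond the bound on $|CX^{\mathrm{neq}}|$; this is the only place in the proof where invertibility is used in an essential way.

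Next I would plug in hypothesis (2). Its left inequality $\delta_3|\widehat{\pi}_{o-1}| \leq \sum_{i=1}^{o-2}|\widehat{\pi}_i|$ yields $|X^{\mathrm{neq}}| \geq (\delta_3+1)|\widehat{\pi}_{o-1}|$, and its right inequality then gives
\[ \frac{|DX^{\mathrm{neq}}|}{|X^{\mathrm{neq}}|} \ \leq \ \frac{\sum_{i=1}^{o-2}|\widehat{\pi}_i|}{|X^{\mathrm{neq}}|} \ \leq \ \frac{\delta_4|\widehat{\pi}_{o-1}|}{(\delta_3+1)|\widehat{\pi}_{o-1}|} \ = \ \frac{\delta_4}{\delta_3+1}. \]
Combining this with the assumption $|CX^{\mathrm{neq}}| \leq \delta_5|X^{\mathrm{neq}}|$ produces
\[ \frac{|DX^{\mathrm{neq}} \sqcup CX^{\mathrm{neq}}|}{|X^{\mathrm{neq}}|} \ \leq \ \frac{\delta_4}{\delta_3+1} + \delta_5 \ \leq \ \varepsilon_2, \]
which is $\mathrm{GAT}(\varepsilon_2)$.

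The only mild subtlety is identifying the correct zone structure of entropy decrease (i.e., that the image of $DX^{\mathrm{neq}}$ under $\alpha$ sits in the union of all zones below the penultimate one, not just below the current one), but once this is observed the rest is routine inequality manipulation. I do not expect any serious obstacle; the proof is essentially a bookkeeping argument built on top of Theorem \ref{i1} and the counting trick supplied by $\alpha$ being a permutation.
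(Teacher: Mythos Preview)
Your proposal is correct and follows essentially the same route as the paper: invoke Theorem~\ref{i1} for $\mathrm{L}_1(\varepsilon_1)$, then bound $|DX^{\mathrm{neq}}|$ by $\sum_{i=1}^{o-2}|\widehat{\pi}_i|$ via the observation that $\alpha(DX^{\mathrm{neq}})\subseteq \widehat{\pi}_1\sqcup\cdots\sqcup\widehat{\pi}_{o-2}$, and combine with the $|CX^{\mathrm{neq}}|$ hypothesis to obtain $\mathrm{GAT}(\varepsilon_2)$. The paper's proof is just a more compressed version of the same chain of inequalities.
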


\begin{proof} Theorem \ref{i1} implies that
$\ \displaystyle \frac{|D|}{|X|}  \leq  \varepsilon_1. \ $
The desired result follows from
$$\frac{|DX^{\mathrm{neq}} \sqcup CX^{\mathrm{neq}}|}{|X^{\mathrm{neq}}|} \ \leq \
\frac{\sum_{i=1}^{o-2}|\widehat{\pi}_{i}|}{\sum_{i=1}^{o-2}|\widehat{\pi}_{i}|  +
|\widehat{\pi}_{o-1}|} + \frac{|CX^{\mathrm{neq}}|}{|X^{\mathrm{neq}}|} \ \leq \
 \frac{\delta_4|\widehat{\pi}_{o-1}|}{(\delta_3+1)|\widehat{\pi}_{o-1}|} + \delta_5 \  \leq \ \varepsilon_2.$$

\end{proof}

Next we show that if an invertible system has as many strict increases as strict decreases
 in entropy, and satisfies  $\ \mathrm{L}_2(\varepsilon_1,\varepsilon_2), \ $ then most microstates
  are equilibrium microstates with constant entropy. Note that if $\ |D|=|I|,\ $ then
  $\ \mathrm{GAT}(\varepsilon)\ $ implies
  $\ \mathrm{L}_1(\varepsilon)\ $ since
$\ \displaystyle \frac{|D|}{|X|}  \leq  \frac{|I|}{|X^{\mathrm{neq}}|}  \leq  \varepsilon.$
\begin{thm}\label{jj}
{\em Let $\ (X,A,f,\alpha) \ $ be an invertible micro-macro dynamical system such that $\ |D|= |I| \ $ and $\ (X,A,f,\alpha) \in \mathrm{L}_2(\varepsilon_1,\varepsilon_2). \ $ We have that:
\begin{enumerate}
  \item $ \displaystyle \frac{|X^{\mathrm{eq}}|}{|X|}  \geq  1  -  \frac{\varepsilon_1}{1-\varepsilon_2}.$
  \item $\displaystyle \frac{|CX^{\mathrm{eq}}|}{|X|}  \geq  1  -  \frac{\varepsilon_1(2 - \varepsilon_2)}{1-\varepsilon_2}.$
  \item $\displaystyle (1-2\varepsilon_2)|X^{\mathrm{neq}}|  \leq   |DX^{\mathrm{eq}}| \leq |X^{\mathrm{neq}}|.$
  \item $\displaystyle (1-2\varepsilon_2)|X^{\mathrm{neq}}|  \leq   |\{i \in X^{\mathrm{neq}} \ | \ \alpha(i) \in X^{\mathrm{eq}} \}|  \leq   |X^{\mathrm{neq}}|.$
\end{enumerate}

}
\end{thm}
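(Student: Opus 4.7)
The whole argument rests on a small bookkeeping identity together with the elementary observation that $IX^{\mathrm{eq}}=\emptyset$: any $i\in X^{\mathrm{eq}}$ already sits at the maximum of entropy, so $S(\alpha(i))\leq S(i)$ and $i\notin I$. Consequently $|I|=|IX^{\mathrm{neq}}|$, and the partition $X^{\mathrm{eq}}=CX^{\mathrm{eq}}\sqcup DX^{\mathrm{eq}}$ is likewise immediate (an equilibrium microstate either stays at maximum entropy or drops below it). I will use these two decompositions throughout.

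For \emph{item 1}, combine $|D|\leq\varepsilon_1|X|$ with $|D|=|I|=|IX^{\mathrm{neq}}|$ and the $\mathrm{GAT}$ estimate $|IX^{\mathrm{neq}}|\geq(1-\varepsilon_2)|X^{\mathrm{neq}}|$ to get $(1-\varepsilon_2)|X^{\mathrm{neq}}|\leq\varepsilon_1|X|$, which rearranges to the desired lower bound on $|X^{\mathrm{eq}}|/|X|$. \emph{Item 2} then follows by writing $|CX^{\mathrm{eq}}|=|X^{\mathrm{eq}}|-|DX^{\mathrm{eq}}|$, bounding $|DX^{\mathrm{eq}}|\leq|D|\leq\varepsilon_1|X|$, substituting the estimate from item 1 and simplifying $1-\varepsilon_1/(1-\varepsilon_2)-\varepsilon_1=1-\varepsilon_1(2-\varepsilon_2)/(1-\varepsilon_2)$.

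\emph{Item 3} is the crux and where the hypothesis $|D|=|I|$ really matters. Split $|D|=|DX^{\mathrm{eq}}|+|DX^{\mathrm{neq}}|$ and equate with $|I|=|IX^{\mathrm{neq}}|$ to obtain $|DX^{\mathrm{eq}}|=|IX^{\mathrm{neq}}|-|DX^{\mathrm{neq}}|$. Now bound $|IX^{\mathrm{neq}}|\geq(1-\varepsilon_2)|X^{\mathrm{neq}}|$ from $\mathrm{GAT}(\varepsilon_2)$ and $|DX^{\mathrm{neq}}|\leq\varepsilon_2|X^{\mathrm{neq}}|$ from the same hypothesis (since $DX^{\mathrm{neq}}\subseteq DX^{\mathrm{neq}}\sqcup CX^{\mathrm{neq}}$), which yields the lower bound $(1-2\varepsilon_2)|X^{\mathrm{neq}}|$. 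The upper bound $|DX^{\mathrm{eq}}|\leq|X^{\mathrm{neq}}|$ is just injectivity of $\alpha$: the map $\alpha$ sends $DX^{\mathrm{eq}}$ injectively into $X^{\mathrm{neq}}$ (entropy strictly drops, so the image leaves equilibrium).

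Finally, \emph{item 4} is item 3 transported across $\alpha$. Setting $E=\{i\in X^{\mathrm{neq}}\mid\alpha(i)\in X^{\mathrm{eq}}\}$, invertibility of $\alpha$ gives the partition $X^{\mathrm{eq}}=\alpha^{-1}(X^{\mathrm{eq}})\cap X^{\mathrm{eq}}\ \sqcup\ \alpha(E)$ when one decomposes preimages, which forces $|E|=|DX^{\mathrm{eq}}|$; the bounds of item 3 then transfer verbatim. There is no genuine obstacle—every step is a one-line count—so the only thing to be careful about is keeping track of the equality $|D|=|I|$ and the fact that $IX^{\mathrm{eq}}$ is automatically empty, since these are what convert the two pieces of $\mathrm{L}_2(\varepsilon_1,\varepsilon_2)$ into the stated quantitative control on $X^{\mathrm{eq}}$, $CX^{\mathrm{eq}}$, $DX^{\mathrm{eq}}$, and $E$.
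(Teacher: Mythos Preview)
Your arguments for items 1--3 are correct and essentially identical to the paper's. Your item~4 is where you diverge, and it contains a small slip: the displayed decomposition $X^{\mathrm{eq}}=(\alpha^{-1}(X^{\mathrm{eq}})\cap X^{\mathrm{eq}})\sqcup\alpha(E)$ is not a partition---the first set consists of those $j\in X^{\mathrm{eq}}$ with $\alpha(j)\in X^{\mathrm{eq}}$, while $\alpha(E)$ consists of those $j\in X^{\mathrm{eq}}$ with $\alpha^{-1}(j)\in X^{\mathrm{neq}}$, and these conditions are neither disjoint nor complementary. What you want is $X^{\mathrm{eq}}=(\alpha(X^{\mathrm{eq}})\cap X^{\mathrm{eq}})\sqcup\alpha(E)$ (decompose by where $j$ \emph{came from}), together with the observation that $\alpha$ restricts to a bijection $CX^{\mathrm{eq}}\to\alpha(X^{\mathrm{eq}})\cap X^{\mathrm{eq}}$; then indeed $|E|=|X^{\mathrm{eq}}|-|CX^{\mathrm{eq}}|=|DX^{\mathrm{eq}}|$. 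Equivalently and more directly: since $\alpha$ is a bijection on $X$, the number of points it carries from $X^{\mathrm{eq}}$ into $X^{\mathrm{neq}}$ equals the number it carries the other way, and the former is exactly $|DX^{\mathrm{eq}}|$ by your opening observation that $i\in X^{\mathrm{eq}}$ strictly drops in entropy iff $\alpha(i)\in X^{\mathrm{neq}}$. The paper takes a different route here, building an explicit bijection $DX^{\mathrm{eq}}\to E$ that sends $i$ to $\alpha^{e-1}(i)$ with $e$ the least positive integer such that $\alpha^{e}(i)\in X^{\mathrm{eq}}$ (i.e.\ follow the orbit through $X^{\mathrm{neq}}$ until just before re-entry). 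Your counting argument, once the partition is corrected, is shorter and avoids the orbit-tracking; the paper's bijection, on the other hand, makes the correspondence between exit and re-entry points explicit.
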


\begin{proof} From hypothesis we have that $ \displaystyle (1-\varepsilon_2)|X^{\mathrm{neq}}| \leq   |D|
 \leq  \varepsilon_1 |X| $  which implies item 1. From item 1 and the identity
  $$\frac{|X^{\mathrm{eq}}|}{|X|} \ = \ \frac{|DX^{\mathrm{eq}}|}{|X|}  +  \frac{|CX^{\mathrm{eq}}|}{|X|}, \ \ \  \mbox{we get that} \ \ \ 1  -  \frac{\varepsilon_1}{1-\varepsilon_2} \ \leq \ \frac{|CX^{\mathrm{eq}}|}{|X|} \ +\ \varepsilon_1 ,$$ which is equivalent to the inequality from item 2. Item 3 follows from $$(1 -  \varepsilon_2)|X^{\mathrm{neq}}| \ \leq \ | IX^{\mathrm{neq}} | \ = \  | I |  \ = \  | D |  \ = \ |DX^{\mathrm{eq}} |  +  | DX^{\mathrm{neq}} | \ \leq \   |DX^{\mathrm{eq}}|  +   \varepsilon_2 |X^{\mathrm{neq}}|.$$ Item 4 follows from the identity $ \ |DX^{\mathrm{eq}}|  =  |\{i \in X^{\mathrm{neq}} \ | \ \alpha(i) \in X^{\mathrm{eq}} \}|. \  \ $Indeed we have a bijection  $ \  DX^{\mathrm{eq}} \longrightarrow  \{i \in X^{\mathrm{neq}} \ | \ \alpha(i) \in X^{\mathrm{eq}} \} \ $ sending $\ i \in DX^{\mathrm{eq}}\ $ to $\ \alpha^{e-1}(i)\ $ where $\ e \ $ is the least positive integer with $\ \alpha^{e}(i) \in X^{\mathrm{eq}}. \ $ The inverse map sends $\ i \ $ to $\ \alpha^{l}(i),\ $ where $\ l \ $ is the largest positive integer such that $\ \alpha^l(i) \in X^{\mathrm{eq}}.$
\end{proof}

\

\begin{thm}\label{kk}
{\em Let $ \ (X,A,f, \alpha)\ $ be an invertible micro-macro dynamical system.
\begin{enumerate}
  \item If $\ |X^{\mathrm{eq}}| \geq  (1 - \varepsilon_1)|X|,  \ $ then $ \ (X,A,f, \alpha) \in   \mathrm{L}_1(\varepsilon_1).\ $
  \item If  $\ | DX^{\mathrm{eq}}|  \geq   (1 - \varepsilon_2)|X^{\mathrm{neq}}|, \ $ then $ \ (X,A,f, \alpha) \in \mathrm{GAT}(\varepsilon_2). $
  \item If $\  |X^{\mathrm{eq}}| \geq  (1 - \varepsilon_1)|X|  \ $ and  $ \ | DX^{\mathrm{eq}}|  \geq   (1 - \varepsilon_2)|X^{\mathrm{neq}}|, \ $  then $\ (X,A,f, \alpha) \in \mathrm{L}_2(\varepsilon_1,\varepsilon_2). $
\end{enumerate}
}
\end{thm}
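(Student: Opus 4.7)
The plan is to establish items 1 and 2 independently; item 3 then follows immediately from the definition of $\mathrm{L}_2(\varepsilon_1,\varepsilon_2)$ as the conjunction of $\mathrm{L}_1(\varepsilon_1)$ and $\mathrm{GAT}(\varepsilon_2)$.

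For item 1, my key observation is that $\alpha(D)\subseteq X^{\mathrm{neq}}$. Indeed, if $i\in D$ then $S(\alpha(i))<S(i)\le \max_{a\in A}S(a)$, so $\alpha(i)$ cannot lie in a macrostate of maximum entropy, forcing $\alpha(i)\in X^{\mathrm{neq}}$. Since $\alpha$ is bijective, $|D|=|\alpha(D)|\le|X^{\mathrm{neq}}|$, and the hypothesis $|X^{\mathrm{eq}}|\ge(1-\varepsilon_1)|X|$ is equivalent to $|X^{\mathrm{neq}}|\le\varepsilon_1|X|$, yielding $|D|/|X|\le\varepsilon_1$, which is precisely $\mathrm{L}_1(\varepsilon_1)$.

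For item 2, I plan to invoke the bijection $DX^{\mathrm{eq}}\longrightarrow\{i\in X^{\mathrm{neq}}\mid\alpha(i)\in X^{\mathrm{eq}}\}$ constructed in the proof of item 4 of Theorem \ref{jj}, which is available because, $\alpha$ being invertible on a finite set, every orbit is periodic and hence returns to the equilibrium after finitely many steps. The hypothesis $|DX^{\mathrm{eq}}|\ge(1-\varepsilon_2)|X^{\mathrm{neq}}|$ transfers through this bijection to give $|\{i\in X^{\mathrm{neq}}\mid\alpha(i)\in X^{\mathrm{eq}}\}|\ge(1-\varepsilon_2)|X^{\mathrm{neq}}|$. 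Since any such $i$ satisfies $S(\alpha(i))>S(i)$ (equilibrium entropy strictly exceeds non-equilibrium entropy), this set sits inside $IX^{\mathrm{neq}}$, so $|IX^{\mathrm{neq}}|\ge(1-\varepsilon_2)|X^{\mathrm{neq}}|$, which is the definition of $\mathrm{GAT}(\varepsilon_2)$.

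The only mildly delicate step is item 2, as one has to recognize that the already-established bijection from Theorem \ref{jj} does exactly the work needed and that its construction genuinely required $\alpha$ to be invertible; once invoked, the argument reduces to a one-line entropy comparison. Item 1 is a direct image-counting argument using invertibility, and item 3 is pure bookkeeping.
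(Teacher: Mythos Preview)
Your proof is correct and follows essentially the same route as the paper: item 1 via $\alpha(D)\subseteq X^{\mathrm{neq}}$ and invertibility, item 2 via the identity $|DX^{\mathrm{eq}}|=|\{i\in X^{\mathrm{neq}}\mid \alpha(i)\in X^{\mathrm{eq}}\}|$ from Theorem~\ref{jj} together with the inclusion of that set in $IX^{\mathrm{neq}}$, and item 3 by definition. One small wording point: in your justification of the bijection you write that ``every orbit \ldots returns to the equilibrium,'' but what is actually needed (and true) is only that orbits through points of $DX^{\mathrm{eq}}\subseteq X^{\mathrm{eq}}$ return to $X^{\mathrm{eq}}$, which is automatic by periodicity; arbitrary orbits need not meet $X^{\mathrm{eq}}$ at all.
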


\begin{proof} If $\ \displaystyle |X^{\mathrm{eq}}| \geq (1 - \varepsilon_1)|X|, \ $ then
$\  \displaystyle \frac{|D|}{|X|} \ =  \ \frac{| \alpha D|}{|X|} \ \leq \ \frac{|X^{\mathrm{neq}}|}{|X|} \ \leq \
\varepsilon_1. \ $
Suppose now that $\ | DX^{\mathrm{eq}}| \geq  (1 - \varepsilon_2)|X^{\mathrm{neq}}|,\ $ then
$$\ |IX^{\mathrm{neq}}| \ \geq \ |\{i\in X^{\mathrm{neq}} \ | \ \alpha(i) \in X^{\mathrm{eq}} \}| \ =
 \ | DX^{\mathrm{eq}}|  \ \geq \ (1 - \varepsilon_2)|X^{\mathrm{neq}}|.$$
\end{proof}

\begin{defn}\label{rs}
{\em \ A reversible micro-macro dynamical system is a tuple $ \ (X,A,f,\alpha,r) \ $  such that
 $\ r:X \longrightarrow X \ $ is an involution $ \ (r^2 =  1),\ $ and
$ \ r \ $ conjugates $\ \alpha\ $ and $\ \alpha^{-1} \ $  $\ ( r\alpha r=\alpha^{-1}).\ $
We say that  $ \ (X,A,f,\alpha,r) \ $ is invariant if $\ fr=f;\ $
equivariant if $\ r \ $ induces an involution  $ r:A \longrightarrow A \ $ such that $\ fr=rf;\ $ entropy
preserving if $\ S(ri)=S(i) \ $ for $\ i \in X.$
}
\end{defn}

\

\begin{lem}
{\em Let $ (X,A,f,\alpha,r)  $ be a reversible micro-macro dynamical system,
$\ a\in A,\ k \in \mathbb{N}_{\geq 1}. \ $
\begin{itemize}
 \item The system is invariant iff  $\ r \ $ induces bijections $\ r: a \longrightarrow a;\ $ the system is
 equivariant iff $\ r \ $
 induces an involution  $ r:A \longrightarrow A \ $ together with bijections $\ r: a \longrightarrow ra;\ $ the system is entropy preserving iff $\ r \ $ induces bijections
$\ r: \widehat{\pi}_k \longrightarrow \widehat{\pi}_k.\ $
\item The reversion map $\ r: X \longrightarrow X\ $ induces stochastic maps $\ r_A: A \longrightarrow A \ $
and $\ r_Z: Z \longrightarrow Z \ $ on macrostates and zones, respectively. We have that
$\ r \ $ is invariant iff $\ r_A \ $ is the identity map;
$\ r \ $ is equivariant iff $ \ H(r_A)= 0; \ $ and   $\ r \ $ is entropy preserving
iff $ \ H(r_Z)= 0. \ $
\end{itemize}
}
\end{lem}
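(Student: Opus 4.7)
The plan is to verify each equivalence in turn by directly unpacking the relevant definition; none of these requires a novel idea beyond translating conditions between the microstate level and the induced block/zone level. The only non-trivial structural fact I will lean on is the standard characterization: for any stochastic map $M$ (with source distribution of full support), $H(M)=0$ if and only if $M$ is deterministic, i.e.\ each column $M_{\bullet b}$ is a point mass, since $-\sum_a M_{ab}\ln M_{ab}\geq 0$ with equality iff every $M_{ab}\in\{0,1\}$.

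For the first bullet, I would argue as follows. Invariance $fr=f$ says exactly that $r(i)$ lies in the same block as $i$ for every microstate $i$; since $r$ is a bijection on the finite set $X$, its restriction to each (finite) fiber $a=f^{-1}(a)$ is automatically a bijection, and conversely block-preserving restrictions give $fr=f$. For equivariance, $fr=rf$ together with $r$ an involution on $A$ unpacks to: $r$ sends the block over $a$ into the block over $ra$; again the bijection on $X$ forces bijections $a\to ra$, and conversely these restrictions assemble into a map $A\to A$ conjugate to $r$ under $f$. For entropy preservation, $S(ri)=S(i)$ means $|ri|=|i|$, hence $r(i)\in\widehat{\pi}_{|i|}$; since $r$ is a bijection on $X$, its restriction to the finite set $\widehat{\pi}_k$ is a bijection of $\widehat{\pi}_k$ onto itself, and the converse is immediate from $|r(i)|=k=|i|$ whenever $i\in\widehat{\pi}_k$.

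The second bullet is then a translation of the first through the stochastic-map/entropy dictionary. We have $r_A=\mathrm{id}$ iff $(r_A)_{bb}=1$ for every $b$, iff $r(b)\subseteq b$ for every $b$, iff (by bijectivity) $r$ restricts to bijections on blocks, which by the first bullet is invariance. Next, $H(r_A)=0$ iff each column $(r_A)_{\bullet b}$ is a point mass, iff for each $b$ there is a unique $a=\rho(b)$ with $r(b)\subseteq a$; the assignment $\rho$ is a map $A\to A$ and, using $r^2=1_X$ together with the bijectivity of $r$ on $X$, one checks $\rho^2=1_A$, so $\rho$ is an involution and $r$ is equivariant with induced map $\rho=r_A$; the converse is immediate from the equivariant restrictions $r:b\to\rho(b)$. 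The argument for $H(r_Z)=0$ iff entropy preservation follows the same template with zones $\widehat{\pi}_k$ in place of blocks. The only mildly delicate step throughout is observing that the induced maps on $A$ and on $Z$ are indeed involutions, which is what forces the $r(b)\subseteq\rho(b)$ (resp.\ the zone analogue) inclusions to be equalities.
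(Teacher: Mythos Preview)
The paper states this lemma without proof. Your argument for the first bullet is correct and is the natural unpacking of the definitions; likewise your treatment of the first two equivalences in the second bullet (invariance $\Leftrightarrow r_A=\mathrm{id}$, and equivariance $\Leftrightarrow H(r_A)=0$) is correct, including the step that uses $r^2=1_X$ together with disjointness of blocks to upgrade $b\subseteq\rho^2(b)$ to $b=\rho^2(b)$ and then $r(b)\subseteq\rho(b)$ to $r(b)=\rho(b)$.

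The gap is in the last equivalence. Running the equivariant template with zones in place of blocks shows only that $H(r_Z)=0$ forces $r$ to permute the zones via some involution $\sigma$ on $O_\pi$, with $r$ restricting to bijections $\widehat{\pi}_k\to\widehat{\pi}_{\sigma(k)}$. Entropy preservation is the strictly stronger condition $\sigma=\mathrm{id}$: if $\sigma(k)=l\neq k$ then every $i\in\widehat{\pi}_k$ has $|r(i)|=l\neq k=|i|$, so $S(ri)\neq S(i)$. Nothing in your argument, nor in the hypotheses of a reversible system, forces $\sigma=\mathrm{id}$, and in fact the implication $H(r_Z)=0\Rightarrow$ entropy preserving fails as stated. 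For a concrete counterexample take $\alpha=\mathrm{id}_X$ (so that every involution $r$ satisfies $r\alpha r=\alpha^{-1}$) and a partition with three blocks of size $2$ and two blocks of size $3$; then $|\widehat{\pi}_2|=|\widehat{\pi}_3|=6$, and any involution $r$ swapping $\widehat{\pi}_2$ with $\widehat{\pi}_3$ bijectively gives a deterministic $r_Z$ (hence $H(r_Z)=0$) while $S(ri)\neq S(i)$ for every $i$. What your template \emph{does} establish cleanly is the parallel of the invariance clause: $r$ is entropy preserving iff $r_Z$ is the identity map on zones. So either the lemma's final clause is a misprint for that, or it requires the extra hypothesis that the zone cardinalities $|\widehat{\pi}_k|$ are pairwise distinct; in either case your ``same template'' sentence does not cover it.
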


\

\begin{prop}\label{cc}
{\em Let  $\ (X,A,f,\alpha,r) \ $ be an entropy preserving reversible micro-macro dynamical system, then $ \ |I|=|D|.$ }
\end{prop}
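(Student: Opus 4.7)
The plan is to exhibit an explicit bijection between $D$ and $I$ built from $r$ and $\alpha$. The natural candidate is the map $\sigma = r\alpha : X \longrightarrow X$, which I claim is an involution that swaps $D$ and $I$.

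First I would record two algebraic consequences of the relation $r\alpha r = \alpha^{-1}$: multiplying by $r$ on the left and on the right, and using $r^2 = 1$, one obtains $\alpha r = r\alpha^{-1}$ and $r\alpha = \alpha^{-1}r$. From either of these identities it follows immediately that
\[
(r\alpha)^2 \ = \ r\alpha r \alpha \ = \ \alpha^{-1}\alpha \ = \ 1,
\]
so $\sigma = r\alpha$ is an involution on $X$.

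Next I would check that $\sigma$ carries $D$ into $I$. Fix $i \in D$, and set $j = \sigma(i) = r\alpha(i)$. By the entropy preserving hypothesis $S(r x) = S(x)$ for all $x$, so $S(j) = S(r\alpha(i)) = S(\alpha(i))$. On the other hand, using $\alpha r = r\alpha^{-1}$,
\[
\alpha(j) \ = \ \alpha r \alpha(i) \ = \ r\alpha^{-1}\alpha(i) \ = \ r(i),
\]
hence $S(\alpha(j)) = S(ri) = S(i)$. Combining, $S(\alpha(j)) - S(j) = S(i) - S(\alpha(i)) > 0$ since $i \in D$, so $j \in I$. Thus $\sigma(D) \subseteq I$. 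The symmetric argument with the roles of $D$ and $I$ interchanged (or equivalently, applying $\sigma$ to an element of $I$ and running the same computation) gives $\sigma(I) \subseteq D$.

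Finally, since $\sigma$ is an involution and sends $D$ into $I$ and $I$ into $D$, it restricts to mutually inverse bijections $D \leftrightarrow I$, so $|I| = |D|$. The only subtle point is keeping the algebra of $r$, $\alpha$, $\alpha^{-1}$ straight; once the identities $\alpha r = r\alpha^{-1}$ and $(r\alpha)^2 = 1$ are in hand, entropy preservation does all the remaining work and no counting or averaging is needed.
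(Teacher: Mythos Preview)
Your proof is correct and follows essentially the same route as the paper: both use the bijection $r\alpha$ to match $D$ with $I$, exploiting entropy preservation twice. The paper argues via the chain $\alpha I = D_{\alpha^{-1}} = D_{r\alpha r} = D_{\alpha r}$ and then $rD_{\alpha r} = D$, whereas you verify elementwise that $r\alpha$ exchanges $D$ and $I$; your additional observation that $r\alpha$ is an involution is a pleasant extra that the paper does not make explicit.
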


\begin{proof} We show that $\ r\alpha I  =  D,\ $ using that $\ r \ $ preserves entropy a couple of times.
We have that $\ \alpha I  =   D_{\alpha^{-1}} =  D_{r\alpha r}=  D_{\alpha r}, \ $ so
the desired result follows from the identity $\ rD_{\alpha r} =  D.\ $
\end{proof}

Let $\ \mathrm{rmmds} \ $ be the category of reversible micro-macro dynamical systems. A morphism
$\ (X_1,A_1,f_1,\alpha_1, r_1)  \longrightarrow (X_2,A_2,f_2,\alpha_2, r_2) \ \  \mbox{in}\ \ \mathrm{rmmds} \ $
is a morphism $\ k\ $ of the subjacent micro-macro dynamical systems such that the following diagram commutes
\[\xymatrix @R=.4in  @C=.8in
{X_1 \ar[r]^{k} \ar[d]_{r_1} &  X_2 \ar[d]^{r_2}
\\ X_1 \ar[r]^{k} & X_2  } \]

Our next constructions show that there are plenty of reversible systems, indeed we associate an invariant reversible micro-macro
dynamical system, and an equivariant reversible micro-macro
dynamical system to each invertible micro-macro dynamical system. Let $\ \mathrm{irmmds} \ $ and $\ \mathrm{ermmds} \ $ be the full
subcategories of $\ \mathrm{rmmds} \ $ whose objects are, respectively, invariant and equivariant reversible micro-macro dynamical system.
We have inclusion functors $\ \mathrm{irmmds} \longrightarrow \mathrm{ermmds} \longrightarrow \mathrm{rmmds}, \ $ and the forgetful functor
$\ u:\mathrm{rmmds} \longrightarrow \mathrm{immds}\ $
given  by
$\ u(X,A,f,\alpha,r)  =  (X,A,f,\alpha) . \ $ Set  $\ \mathbb{Z}_2 = \{1,-1 \}.$

\begin{thm}\label{cf}
{\em  The forgetful functor $\  u:\mathrm{irmmds} \longrightarrow \mathrm{immds} \ $ has a left adjoint functor \\
$IR: \mathrm{immds} \longrightarrow \mathrm{irmmds}\ $ given by
$ \  IR(X,A,f,\alpha)  =  (X\times \mathbb{Z}_2, A, f\pi_X,\widehat{\alpha}, r) \  $  where:
\begin{itemize}
  \item $\ \pi_X:X\times \mathbb{Z}_2 \longrightarrow X \ $ is the projection to $\ X$;
  \item  $\ r:X\times \mathbb{Z}_2 \longrightarrow X\times \mathbb{Z}_2\ $ is given by $\ r(i,s) =  (i,-s);$
  \item  $\ \widehat{\alpha}:X\times \mathbb{Z}_2 \longrightarrow X\times \mathbb{Z}_2\ $ is given by  $\ \widehat{\alpha}(i,s)  =  (\alpha^s(i),s). $
\end{itemize}
We have that:
\begin{enumerate}
\item $|(f\pi_X)^{-1}(a)|=2|f^{-1}(a)| \ $  for $\ a \in A, \ \ $ and  $\ \ S(X\times \mathbb{Z}_2, A,f\pi_X) =   S(X,A,f)  +  \mathrm{ln}(2) .$
\item $  |D_{\widehat{\alpha}}| = |I_{\widehat{\alpha}}|   =  |D_{\alpha}| + |I_{\alpha}|\  $ and
$  \ |C_{\widehat{\alpha}}| =  2|C_{\alpha}|.$
\item $2T_{ab}(\widehat{\alpha})  =  T_{ab}(\alpha)  +  T_{ab}(\alpha^{-1}).$
  \item $\widehat{\alpha} \in \mathrm{L}_1(\varepsilon_1) \  \ $ iff $\ \  |D_{\alpha}| + |I_{\alpha}|  \leq
     2\varepsilon_1 |X| \ \ $
iff $\  \  |C_{\alpha}|   \geq   (1-  2\varepsilon_1) |X|.$
  \item $\widehat{\alpha} \in \mathrm{GAT}(\varepsilon_2) \ \ $ iff $\ \  |D_{\alpha}| + |I_{\alpha}|
    \geq 2(1-\varepsilon_2) |X^{\mathrm{neq}}| \ \ $  iff $\ \ |C_{\alpha}|  \leq
|X^{\mathrm{eq}}|  -(1- 2\varepsilon_2) |X^{\mathrm{neq}}|.$
  \item $ \widehat{\alpha} \in \mathrm{L}_2(\varepsilon_1, \varepsilon_2)
\  \ \ \ \mbox{iff} \ \ \ \ 2(1-\varepsilon_2) |X^{\mathrm{neq}}|  \leq  |D_{\alpha}| +  |I_{\alpha}|\leq
  2\varepsilon_1 |X|\ \ \ \ $  iff \\
$ (1-2\varepsilon_1) |X| \leq    |C_{\alpha}|  \leq
|X^{\mathrm{eq}}|  -(1- 2\varepsilon_2) |X^{\mathrm{neq}}|.$
\end{enumerate}
}
\end{thm}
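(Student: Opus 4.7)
My plan is to handle the adjunction first and then dispatch the six numbered claims as routine computations anchored on Lemma \ref{at}.

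For the adjunction, I would exhibit the natural bijection
$$\mathrm{irmmds}\bigl(IR(X,A,f,\alpha),(Y,B,g,\beta,s)\bigr)\ \simeq\ \mathrm{immds}\bigl((X,A,f,\alpha),u(Y,B,g,\beta,s)\bigr)$$
explicitly. Given a morphism $k:X\to Y$ in $\mathrm{immds}$ with associated $\overline{k}:A\to B$, I would define $\widehat{k}:X\times\mathbb{Z}_2\to Y$ by $\widehat{k}(i,1)=k(i)$ and $\widehat{k}(i,-1)=s(k(i))$, with macro-map $\overline{k}$ unchanged. The compatibility with $r$ is immediate from $s^2=1$; compatibility with the dynamics on the $s=1$ copy is immediate from $\beta k=k\alpha$, and on the $s=-1$ copy follows from $s\beta s=\beta^{-1}$ rewritten as $s\beta^{-1}=\beta s$. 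Conversely, any morphism $h$ in $\mathrm{irmmds}$ is determined by its restriction $h(\cdot,1)$ because $s$-equivariance forces $h(i,-1)=s(h(i,1))$, and then invariance of $g$ under $s$ guarantees that $h(\cdot,1)$ is a legitimate $\mathrm{immds}$-morphism. Naturality in both variables is clear because all constructions commute with composition by maps of the underlying data.

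For item 1, I would simply note $(f\pi_X)^{-1}(a)=f^{-1}(a)\times\mathbb{Z}_2$, giving $|(f\pi_X)^{-1}(a)|=2|f^{-1}(a)|$; then $S(i,s)=\ln 2+S(i)$ propagates into the averaged entropy to yield $S(X\times\mathbb{Z}_2,A,f\pi_X)=S(X,A,f)+\ln 2$. For item 2, I would split $D_{\widehat{\alpha}}$ by the $\mathbb{Z}_2$-coordinate: the $s=1$ slice equals $D_\alpha\times\{1\}$ while the $s=-1$ slice equals $D_{\alpha^{-1}}\times\{-1\}$, and Lemma \ref{at} gives $|D_{\alpha^{-1}}|=|I_\alpha|$, so $|D_{\widehat{\alpha}}|=|D_\alpha|+|I_\alpha|$; the computations for $I_{\widehat{\alpha}}$ and $C_{\widehat{\alpha}}$ are analogous, using $|C_{\alpha^{-1}}|=|C_\alpha|$. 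Item 3 follows directly from the same slicing applied to $T_{ab}(\widehat{\alpha})=|\{(i,s)\in b\times\mathbb{Z}_2:\widehat{\alpha}(i,s)\in a\times\mathbb{Z}_2\}|/(2|b|)$.

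Items 4--6 are purely arithmetic rewritings. For item 4, I would substitute $|D_{\widehat{\alpha}}|=|D_\alpha|+|I_\alpha|$ and $|X\times\mathbb{Z}_2|=2|X|$ into the definition of $\mathrm{L}_1(\varepsilon_1)$ and then use $|C_\alpha|=|X|-|D_\alpha|-|I_\alpha|$ for the second reformulation. For item 5, I would observe that $(X\times\mathbb{Z}_2)^{\mathrm{neq}}=X^{\mathrm{neq}}\times\mathbb{Z}_2$ (since the fibers of $f\pi_X$ are uniformly doubled, the set of equilibria is the same), that every $i\in I_\alpha\cup D_\alpha$ lies in $X^{\mathrm{neq}}$, and that the $s=-1$ slice of $I_{\widehat{\alpha}}$ is counted by $|I_{\alpha^{-1}}|=|D_\alpha|$; substituting and simplifying $|X|-|C_\alpha|=|D_\alpha|+|I_\alpha|$ together with $|X|=|X^{\mathrm{eq}}|+|X^{\mathrm{neq}}|$ yields the second reformulation. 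Item 6 is simply the conjunction of items 4 and 5. The one step that requires a moment of care is the verification that $s\beta s=\beta^{-1}$ does give the required intertwining on the $s=-1$ copy in the adjunction; everything else reduces to bookkeeping enabled by Lemma \ref{at}.
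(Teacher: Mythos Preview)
The paper states Theorem \ref{cf} without proof, so there is no reference argument to compare against; your outline is essentially a complete and correct proof. The adjunction argument is right: the unit $X\to X\times\mathbb{Z}_2$, $i\mapsto(i,1)$, together with the $s$-equivariance constraint, forces exactly the bijection you describe, and your verification that $s\beta s=\beta^{-1}$ delivers the intertwining on the $s=-1$ slice is the only nontrivial check. Items 1--4 and 6 are handled correctly by the $\mathbb{Z}_2$-slicing and Lemma \ref{at}.

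One small correction in item 5: the claim ``every $i\in I_\alpha\cup D_\alpha$ lies in $X^{\mathrm{neq}}$'' is false as stated, since a microstate in $D_\alpha$ can perfectly well sit in $X^{\mathrm{eq}}$ (equilibrium microstates may map to lower-entropy states). What you actually need, and what is true, is that $I_{\widehat{\alpha}}\subseteq (X\times\mathbb{Z}_2)^{\mathrm{neq}}$; this follows because on the $s=1$ slice $I_{\widehat{\alpha}}$ restricts to $I_\alpha\subseteq X^{\mathrm{neq}}$ and on the $s=-1$ slice to $I_{\alpha^{-1}}\subseteq X^{\mathrm{neq}}$, both inclusions holding since an increasing microstate cannot be at maximum entropy. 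With that adjustment your computation $|I_{\widehat{\alpha}}(X\times\mathbb{Z}_2)^{\mathrm{neq}}|=|I_\alpha|+|I_{\alpha^{-1}}|=|I_\alpha|+|D_\alpha|$ goes through and the rest of item 5 follows.
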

\noindent Figure \ref{rds} shows an invertible micro-macro dynamical system and its associated invariant
 reversible micro-macro dynamical system.

\begin{figure}[t]
    \centering
    \includegraphics[width=14cm, height=5cm]{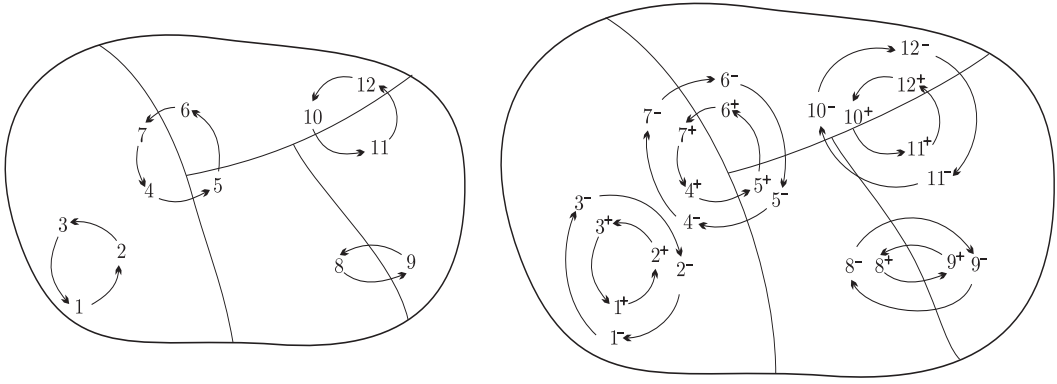}
    \caption{Invertible system and its associated invariant reversible system.}
    \label{rds}
\end{figure}

\begin{thm}\label{df}
{\em  The forgetful functor $\  u:\mathrm{ermmds} \longrightarrow \mathrm{immds} \ $ has a left adjoint functor \\
$ER: \mathrm{immds} \longrightarrow \mathrm{ermmds}\ $ given by
$ \  ER(X,A,f,\alpha)  =  (X\times \mathbb{Z}_2, A\times \mathbb{Z}_2, f\times 1,\widehat{\alpha}, r) \  $  where:
\begin{itemize}
\item  $\ f\times 1:X\times \mathbb{Z}_2 \longrightarrow X\times \mathbb{Z}_2\ $ is given by $\ f\times1(i,s) =  (fi,s);$
  \item  $\ r:X\times \mathbb{Z}_2 \longrightarrow X\times \mathbb{Z}_2\ $ is given by $\ r(i,s) =  (i,-s);$
  \item  $\ \widehat{\alpha}:X\times \mathbb{Z}_2 \longrightarrow X\times \mathbb{Z}_2\ $ is given by  $\ \widehat{\alpha}(i,s)  =  (\alpha^s(i),s). $
\end{itemize}
We have that:
\begin{enumerate}
\item $|(f\times 1)^{-1}(a,s)|=|f^{-1}(a)| \ $  for $\ a \in A, \ \ $ and  $\ \  S(X\times \mathbb{Z}_2, A,f\times 1) =   S(X,A,f).$
  \item Properties 2,4,5,6  \ from Theorem \ref{cf} hold.
  \item $T_{(a,1)(b,1)}(\widehat{\alpha})  =  T_{ab}(\alpha),\ $
  $\ T_{(a,-1)(b,-1)}(\widehat{\alpha})  =  T_{ab}(\alpha^{-1}),\ \ $ and $\ \ T_{(a,s)(b,-s)}(\widehat{\alpha}) =0.\ $
  \item $IR(X,A,f,\alpha) \in \mathrm{L}_2(\varepsilon_1, \varepsilon_2)\ $ if and only if
  $\ ER(X,A,f,\alpha) \in \mathrm{L}_2(\varepsilon_1, \varepsilon_2).$
\item $IR(X,A,f,\alpha) \in \mathrm{L}_3(\varepsilon_1, \varepsilon_2)\ $ if and only if
  $\ ER(X,A,f,\alpha) \in \mathrm{L}_3(\varepsilon_1, \varepsilon_2).$
\end{enumerate}
}
\end{thm}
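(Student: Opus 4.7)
The plan is to follow the same overall strategy used for Theorem \ref{cf} but with the doubled macrostate set $A\times\mathbb{Z}_2$ in place of $A$. First I will check that $ER$ lands in $\mathrm{ermmds}$: reversibility $r^2=1$ and $r\widehat{\alpha}r=\widehat{\alpha}^{-1}$ are immediate from the formulas $r(i,s)=(i,-s)$ and $\widehat{\alpha}(i,s)=(\alpha^s(i),s)$; equivariance comes from defining the induced involution on macrostates by $r(a,s)=(a,-s)$, whereupon $(f\times 1)\circ r = r\circ (f\times 1)$ holds tautologically. For the adjunction, the unit at $(X,A,f,\alpha)$ is the morphism $i\mapsto(i,1)$, $a\mapsto(a,1)$; given any morphism $k:(X,A,f,\alpha)\to u(Y,B,g,\beta,r')$ in $\mathrm{immds}$, extend it by $k(i,1)=k(i)$, $k(i,-1)=r'k(i)$, and similarly on macrostates using the equivariant $r'$. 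Commutativity with $\widehat{\alpha}$ uses $r'\beta r' = \beta^{-1}$, which is exactly where equivariance of the target forces the extension to be well-defined; I expect this verification to be the cleanest part since it mirrors the corresponding argument for $IR$.

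Item 1 is immediate: $(f\times 1)^{-1}(a,s)=f^{-1}(a)\times\{s\}$, so the cardinalities match and the entropy of a microstate $(i,s)$ coincides with $S(i)$, forcing $S(X\times\mathbb{Z}_2,A\times\mathbb{Z}_2,f\times 1)=S(X,A,f)$. For item 2 the key observation is that on microstates the dynamics $\widehat{\alpha}$ of $ER(X,A,f,\alpha)$ agrees with that of $IR(X,A,f,\alpha)$, and since the entropy of $(i,s)$ also equals $S(i)$ in both cases, the sets $D_{\widehat{\alpha}}$, $I_{\widehat{\alpha}}$, $C_{\widehat{\alpha}}$ and the subsets $X^{\mathrm{eq}},X^{\mathrm{neq}}$ of $X\times\mathbb{Z}_2$ depend only on $S\circ\widehat{\alpha}$ and $S$ on microstates; hence properties $2,4,5,6$ from Theorem \ref{cf} carry over verbatim, using the splittings $(i,1)\in D_{\widehat{\alpha}}\Leftrightarrow i\in D_\alpha$ and $(i,-1)\in D_{\widehat{\alpha}}\Leftrightarrow \alpha^{-1}(i)\in I_\alpha$, which give $|D_{\widehat{\alpha}}|=|D_\alpha|+|I_\alpha|$, and analogously for $I_{\widehat{\alpha}}$ and $C_{\widehat{\alpha}}$.

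For item 3, I compute $T_{(a,s)(b,t)}(\widehat{\alpha})$ directly from Definition \ref{bs}: the block $(b,t)=\{(i,t):i\in b\}$ has size $|b|$, and $\widehat{\alpha}(i,t)=(\alpha^t(i),t)$ preserves the $\mathbb{Z}_2$-coordinate, so $T_{(a,s)(b,t)}=0$ whenever $s\neq t$, $T_{(a,1)(b,1)}=|\{i\in b:\alpha(i)\in a\}|/|b|=T_{ab}(\alpha)$, and $T_{(a,-1)(b,-1)}=|\{i\in b:\alpha^{-1}(i)\in a\}|/|b|=T_{ab}(\alpha^{-1})$. This also makes transparent why the $T$-formula differs from that of Theorem \ref{cf}, where the macrostates were not split.

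Finally items 4 and 5 are corollaries: since the characterizations of $\mathrm{L}_1,\mathrm{GAT},\mathrm{L}_2$ (and, presumably, $\mathrm{L}_3$) for $\widehat{\alpha}$ in both $IR$ and $ER$ reduce via items 4--6 to identical inequalities in $|D_\alpha|,|I_\alpha|,|C_\alpha|,|X^{\mathrm{eq}}|,|X^{\mathrm{neq}}|$, membership in these classes is equivalent for $IR(X,A,f,\alpha)$ and $ER(X,A,f,\alpha)$. The main obstacle I anticipate is bookkeeping: making sure that in $ER$ the equilibrium set $(X\times\mathbb{Z}_2)^{\mathrm{eq}}$ equals $X^{\mathrm{eq}}\times\mathbb{Z}_2$ (which requires noting that both copies of a macrostate have equal entropy, so the maximum is attained on a full $\mathbb{Z}_2$-orbit), and then verifying that this causes no discrepancy in the $\mathrm{GAT}$ count relative to the $IR$ version despite the different macrostate structure.
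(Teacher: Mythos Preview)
Your proposal is correct and in fact more detailed than what the paper provides: the paper states both Theorem~\ref{cf} and Theorem~\ref{df} without proof, so there is no argument there to compare against. One small imprecision worth fixing: in $IR(X,A,f,\alpha)$ the entropy of $(i,s)$ is $S(i)+\ln 2$, not $S(i)$, since $|(f\pi_X)^{-1}(a)|=2|a|$; however this differs from the $ER$ entropy only by an additive constant, so the comparison sets $D_{\widehat{\alpha}},I_{\widehat{\alpha}},C_{\widehat{\alpha}}$, the equilibrium set $X^{\mathrm{eq}}\times\mathbb{Z}_2$, and the zone decomposition of $X\times\mathbb{Z}_2$ coincide for $IR$ and $ER$, and your deductions of items~2, 4, and~5 go through unchanged.
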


Next result follows from Proposition \ref{cc} and Theorems \ref{jj}, \ref{kk}, \ref{cf}, \ref{df}.

\begin{prop}\label{ter}
{\em Let $\ (X, \pi) \ $ be a micro-macro phase space with
$\ |X^{\mathrm{eq}}|  \geq   (1-\varepsilon_1)  |X^{\mathrm{neq}}| \ $, and consider the (invariant, equivariant)
associated reversible system $\ R(X,\pi,\alpha).\ $ If $ \ R(X,\pi,\alpha) \in
\mathrm{L}_2(\varepsilon_1, \varepsilon_2), \ $ then
$\ | D_{\alpha}X^{\mathrm{eq}}| + | D_{\alpha^{-1}}X^{\mathrm{eq}}| \geq  2 (1 - 2\varepsilon_2)|X^{\mathrm{neq}}|.\ $
The system    $ \ R(X,\pi,\alpha) \in \mathrm{L}_2(\varepsilon_1, \varepsilon_2) \ $ for any permutation
$\ \alpha \in \mathrm{S}_X\ $ such that
$\  | D_{\alpha}X^{\mathrm{eq}}| + | D_{\alpha^{-1}}X^{\mathrm{eq}}| \geq  2 (1 - \varepsilon_2)|X^{\mathrm{neq}}|.$
}
\end{prop}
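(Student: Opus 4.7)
The plan is to reduce both assertions to the associated reversible system $R(X,\pi,\alpha)$ on $X \times \mathbb{Z}_2$ and then synthesize Proposition \ref{cc} with Theorems \ref{jj} and \ref{kk}. Both the invariant and equivariant constructions share the dynamics $\widehat{\alpha}(i,s)=(\alpha^s(i),s)$ and reversion $r(i,s)=(i,-s)$, which fixes block cardinalities and hence preserves entropy; thus Proposition \ref{cc} gives $|D_{\widehat\alpha}|=|I_{\widehat\alpha}|$, which is precisely the hypothesis needed to invoke Theorem \ref{jj}.

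First I would set up the bookkeeping between $X$ and $X\times \mathbb{Z}_2$. From Theorems \ref{cf} and \ref{df} all block cardinalities in $R(X,\pi,\alpha)$ scale uniformly, so equilibrium is preserved and
\[
(X\times \mathbb{Z}_2)^{\mathrm{eq}} \ = \ X^{\mathrm{eq}}\times \mathbb{Z}_2, \qquad (X\times \mathbb{Z}_2)^{\mathrm{neq}} \ = \ X^{\mathrm{neq}}\times \mathbb{Z}_2.
\]
Next, a direct computation on the definitions yields the key identity
\[
|D_{\widehat\alpha}(X\times \mathbb{Z}_2)^{\mathrm{eq}}| \ = \ |D_\alpha X^{\mathrm{eq}}| \ + \ |D_{\alpha^{-1}} X^{\mathrm{eq}}|,
\]
because $(i,1)\in D_{\widehat\alpha}$ iff $S(\alpha(i))<S(i)$ iff $i\in D_\alpha$, and symmetrically $(i,-1)\in D_{\widehat\alpha}$ iff $i\in D_{\alpha^{-1}}$.

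For the first assertion, assume $R(X,\pi,\alpha)\in \mathrm{L}_2(\varepsilon_1,\varepsilon_2)$. Combining $|D_{\widehat\alpha}|=|I_{\widehat\alpha}|$ with Theorem \ref{jj}(3) gives
\[
|D_{\widehat\alpha}(X\times \mathbb{Z}_2)^{\mathrm{eq}}| \ \geq \ (1-2\varepsilon_2)\,|(X\times \mathbb{Z}_2)^{\mathrm{neq}}| \ = \ 2(1-2\varepsilon_2)\,|X^{\mathrm{neq}}|,
\]
and the key identity delivers the claimed lower bound on $|D_\alpha X^{\mathrm{eq}}|+|D_{\alpha^{-1}} X^{\mathrm{eq}}|$. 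For the second assertion, the hypothesis on $\alpha$ translates via the same identity to $|D_{\widehat\alpha}(X\times\mathbb{Z}_2)^{\mathrm{eq}}| \geq (1-\varepsilon_2)|(X\times \mathbb{Z}_2)^{\mathrm{neq}}|$, which is hypothesis (ii) of Theorem \ref{kk}(3); hypothesis (i) for $R(X,\pi,\alpha)$ follows from the assumption on $|X^{\mathrm{eq}}|$ after passing to the doubled space. Theorem \ref{kk}(3) then concludes $R(X,\pi,\alpha)\in \mathrm{L}_2(\varepsilon_1,\varepsilon_2)$.

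The only delicate point is the translation between quantities on $X$ and on $X\times \mathbb{Z}_2$; once the equilibrium/non-equilibrium identification and the key identity are in place, the proposition is a direct synthesis of the results quoted. I do not expect any genuine technical obstacle beyond this bookkeeping.
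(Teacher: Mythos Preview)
Your proposal is correct and follows exactly the route the paper indicates: the paper's proof consists of the single sentence that the result ``follows from Proposition \ref{cc} and Theorems \ref{jj}, \ref{kk}, \ref{cf}, \ref{df},'' and your argument spells out precisely this synthesis, including the key bookkeeping identity $|D_{\widehat\alpha}(X\times\mathbb{Z}_2)^{\mathrm{eq}}| = |D_\alpha X^{\mathrm{eq}}| + |D_{\alpha^{-1}} X^{\mathrm{eq}}|$ that makes the translation work.
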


Below we introduce three methods for constructing invariant  reversible micro-macro dynamical
systems out of an $A$-colored disjoint union of linearly order sets:\\

\noindent 1) Let $ \ (L,A,f) \ $ be such that $ \ L \ $ is a non-empty poset obtained
 as a finite disjoint union $\ \coprod L_c \ $  of  linearly ordered sets $\ L_c \ $
  with $ \ c \in C, \ $ and $ \ f: L \longrightarrow A\ $ is a  map.
Let $\ n(L,A,f,g) \ $  be the invariant reversible micro-macro dynamical system given by
$ \ (L\times \{1,-1\},  A, \widetilde{f}, \alpha, r)   \  \mbox{where:}$

\begin{itemize}
  \item $\widetilde{f} = f\pi_L \ $ on $\ L\times \{1,-1\}, \ $ $\ r(l,1)=(l,-1),\ $  and $\ r(l,-1)=(l,1).$
  \item $ \alpha$-orbits with cyclic order
  $\ L_c\times \{1\} \  \sqcup \ L_c^{\mathrm{op}}\times \{-1\}  .$
\end{itemize}

\noindent 2) Consider  $ \ (L,A,f,g) \ $ with $ \ (L,A,f) \ $  as in item 1 with $\ L \ $ allowed to be empty,
 and $\ g: C \longrightarrow A\ $ another  map.
Let $\ o(L,A,f,g) \ $ be the invariant reversible micro-macro dynamical system given by
 $\  (  L\times \{1,-1\}\sqcup \{o_c \}_{c \in C},  A, \widetilde{f}, \alpha, r)\ \ \  \mbox{where:}$

\begin{itemize}
  \item $\widetilde{f} = f\pi_L \ $ on $\ L\times \{1,-1\},  \ \ \widetilde{f}(o_c) = g(c), $ $\
  r(o_c)=o_c,\ $  $ r(l,1)=(l,-1), \ $  and $\ r(l,-1)=(l,1).$
  \item $ \alpha$-orbits with cyclic order
  $\ o_c \  \sqcup   \ L_c\times \{1\}  \ \sqcup \ L_c^{\mathrm{op}}\times \{-1\}  .$
\end{itemize}

\noindent 3) Consider $ \ (L,A,f,g,h) \ $ with $\ (L,A,f,g) \  $ as in item 2 and $\ h: C \longrightarrow A\ $ another map.
Let $\ t(L,A,f,g,h)=(  L\times \{1,-1\}\sqcup \{o_c, t_c \}_{ c \in C },  A, \widetilde{f}, \alpha, r) \ $ be
the invariant reversible micro-macro dynamical system  given by
\begin{itemize}
  \item $\widetilde{f} = f\pi_L \ $ on $\ L\times \{1,-1\}, \ $ $\ \widetilde{f}(o_c) = g(c), \ $ and
   $\ \widetilde{f}(t_c) = h(c). \ $
  \item $r(o_c)=o_c,\ \ r(t_c)=t_c, \ $  $\ r(l,1)=(l,-1), \ $  and $\ r(l,-1)=(l,1).$
  \item $ \alpha$-orbits with cyclic order
  $\ o_c \ \ \sqcup \  \ L_c\times \{1\}\ \ \sqcup \ \  t_c \ \ \sqcup \ \ L_c^{\mathrm{op}}\times \{-1\}  .$
\end{itemize}

The  constructions above can be modified to yield equivariant reversible micro-macro dynamical
systems out of an $A$-colored disjoint union of linearly order sets, where now $\ A \ $ is a set provided with
an involution map $\ r:A \longrightarrow A .\ $ One proceeds as in the previous case demanding
that macrostates in the image of the maps $\ g \ $  and $\ h \ $ be fixed by $ \ r.  \ $ We denote
by $\ \tilde{n}, \  \tilde{o}, \ \tilde{t}\ $ the resulting equivariant systems.

\begin{thm}{\em \
\begin{enumerate}
  \item An invariant reversible  system $\ (X,A,f,\alpha,r) \ $ is isomorphic to a system of the form
$$  IR(X_1,A,f,\alpha)\  \sqcup \ n(X_2,A,f)  \ \sqcup \ o(X_3,A,f,f)\  \sqcup \ t(X_4,A,f,f,f).   $$
  \item An equivariant reversible  system $\ (X,A,f,\alpha,r) \ $ is isomorphic to a system of the form
$$  ER(X_1,A,f,\alpha)\  \sqcup \ \tilde{n}(X_2,A,f)  \ \sqcup \ \tilde{o}(X_3,A,f,f)\  \sqcup \ \tilde{t}(X_4,A,f,f,f).   $$
\end{enumerate}
  }
\end{thm}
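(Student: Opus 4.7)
The plan is to decompose $X$ according to the orbits of the subgroup of $\mathrm{S}_X$ generated by $\alpha$ and $r$. The relations $r^{2}=1$ and $r\alpha r=\alpha^{-1}$ make this subgroup dihedral, so each of its orbits is either a union of two distinct $\alpha$-orbits swapped by $r$, or a single $r$-invariant $\alpha$-orbit.

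Next I would classify the $r$-invariant $\alpha$-cycles. On such a cycle of length $n$, writing $\alpha$ as $x_{i}\mapsto x_{i+1}$ and $r$ as $x_{i}\mapsto x_{c-i}$ (indices mod $n$) for a suitable $c$, the fixed points of $r$ are the solutions of $2i\equiv c\pmod{n}$. This gives exactly one fixed point when $n$ is odd (the $o$ case), exactly two when $n$ is even and $c$ is even (the $t$ case), and none when $n$ is even and $c$ is odd (the $n$ case). Combined with the $IR$ case of $r$-swapped pairs, these four types exhaust all possibilities.

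I would then exhibit an explicit isomorphism in each case. In the $IR$ case, pick one representative from each $r$-swapped pair of $\alpha$-orbits and take their union to form $X_{1}$; the map $(i,1)\mapsto i$, $(i,-1)\mapsto r(i)$ is an isomorphism from $IR(X_{1},A,f,\alpha)$ onto this piece. In the $n$ case, choose the labelling on each fixed-point-free $r$-invariant orbit of length $2m$ so that $r(x_{i})=x_{-1-i}$, let $L_{c}=\{x_{0}<\cdots<x_{m-1}\}$, and take $X_{2}$ to be the disjoint union of these $L_{c}$ over the indexing set $C$ of such orbits; the cyclic order prescribed by $n(X_{2},A,f)$ then reproduces $\alpha$ on this piece. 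The $o$ and $t$ cases are analogous, with the one (resp.\ two) fixed points of $r$ playing the role of $o_{c}$ (resp.\ $o_{c}$ and $t_{c}$), and $X_{3},X_{4}$ the corresponding disjoint unions of linearly ordered halves. Invariance $fr=f$ is exactly what lets us take the extra map arguments in $o$ and $t$ to be $f$ itself, since $o_{c}$ and $t_{c}$ are genuine points of $X$ with macrostate labels prescribed by $f$.

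The equivariant case follows the same outline, with the additional observation that every $r$-fixed microstate lies over an $r$-fixed macrostate: if $r(x)=x$, then $f(x)=fr(x)=r_{A}f(x)$, so $f(o_{c})$ and $f(t_{c})$ automatically belong to the $r_{A}$-fixed part of $A$, as required by the constructions $\tilde o$ and $\tilde t$. The main delicate step in both cases is the canonicity of the cyclic-coordinate choices used to split $r$-invariant orbits: different valid choices on a given orbit produce inputs to $n,o,t,IR$ that are isomorphic in $\mathrm{rmmds}$, so the decomposition is well defined up to isomorphism, which is all that is claimed.
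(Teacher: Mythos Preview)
Your proposal is correct and follows essentially the same orbitwise strategy as the paper: classify each $\alpha$-cycle according to whether $r$ swaps it with a distinct cycle (the $IR$/$ER$ piece) or maps it to itself, and in the latter case count the $r$-fixed points (none, one, or two, giving the $n$, $o$, $t$ pieces respectively). Your explicit use of dihedral coordinates $r(x_i)=x_{c-i}$ to carry out the fixed-point count and your remark on canonicity are a bit cleaner than the paper's more ad hoc case analysis, but the underlying argument is the same.
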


\begin{proof}The result follows by an orbitwise analysis. Once the required properties are checked for microstates, the corresponding
properties for  macrostates follow as well, in both cases.  Since $\ r \ $ is an involution its cycles have length one or
 two, inducing a partition
of $\ X \ $ in two blocks
$\ \widehat{r}_1 \ $ and $\ \widehat{r}_2, \ $ defined as the union of blocks of the respective cardinality.
For a cycle $\ c \ $ of $\ \alpha \ $ the following possibilities arise:
\begin{enumerate}
\item $c \subseteq   \widehat{r}_2 \ $ and $\ c \cap r(c) = \emptyset. \ $ The map $\ r \ $ induces an involution without fixed points on the set
 such cycles. Choose a cycle  for each match pair and let $\ X_1 \ $ be the reunion
of such microstates.
\item $c \subseteq   \widehat{r}_2 \ $ and $\ c \cap r(c) \neq \emptyset. \ $ In this case necessarily $\ c = r(c) \ $ and
$\ r \ $ induces a matching on $\ c. \ $
Choose for each such cycle a maximal segment of $\alpha$-orbit with unmatched points, and let $\ X_2 \ $ be the reunion
of such microstates.
\item $|c \cap \widehat{r}_1|=1.$ Away from the fixed point
 (to be identified with $o_c$) $\ r \ $ defines a matching on $c$. Choose for each such cycle a maximal segment of $\alpha$-orbit with unmatched not fixed points,  and let $\ X_3 \ $ be the union  of microstates.
\item $|c \cap \widehat{r}_1|=2.$ Away from the pair of fixed points
 (to be identified with $o_c$ and $t_c$), $\ r \ $ defines a matching on $c$. Choose for each such cycle a maximal segment of $\alpha$-orbit with unmatched not fixed points,  and let $\ X_4 \ $ be the union  of microstates.
\end{enumerate}
Suppose that an $\ \alpha$-cycle $\ c \ $  has a $r$-fixed point $\ i, \ $  then
$\ r\alpha^s(i)= \alpha^{-s}(i) \ $ for $\ s>0. \ $
It follows that there should be a  minimum  $\ s >0 \ $ for which either $\ \alpha^s(i)= r\alpha^{s-1}(i) \ $ or
$\ r\alpha^s(i)= \alpha^{s}(i) \ $ (exclusively). If the former condition holds we are in case 3,
and  if the latter condition holds we are in case .
\end{proof}

Statements 1 and 2 of our next result provide the substrate of the Loschmidt's paradox within  the combinatorial
framework, note the subtle asymmetry in statements 3 and 4.

\begin{thm}\label{loc}
{\em Let  $\ (X,A,f,\alpha,r) \ $ be a entropy preserving reversible micro-macro dynamical system  and fix $ N, M \geq 0 . $
\begin{enumerate}
  \item Entropy defines an $\ (\varepsilon,N,M)$-arrow
of time around $\ i \in X \ $ if and only if  negative entropy defines  an $(\varepsilon,M,N)$-arrow
of time around $\ r\alpha(i) \in X. \ $
  \item $ \big| \{i \in X \ | \  (\varepsilon,N,M)\mbox{-arrow
of time around}\ \ i \  \} \big|  \ = \\
\big| \{i \in X \ | \ \mbox{reversed} \ (\varepsilon,M,N)\mbox{-arrow
of time around} \ \ i \big|.$
\item  $\big| \{i \in X^{\mathrm{neq}} \ | \   (\varepsilon,N,M)\mbox{-arrow
of time around} \ \ i \ \mbox{ and } \ \alpha^{M+1}(i) \in  X^{\mathrm{neq}}\} \big| \ = $ \\
$\big| \{i \in X^{\mathrm{neq}} \ | \ \mbox{reversed} \ (\varepsilon,M,N)\mbox{-arrow
of time around}\ \  i \}\big|.$
  \item $ |DX^{\mathrm{neq}}| =  |\{i \in IX^{\mathrm{neq}} \ | \ \alpha(i) \in  X^{\mathrm{neq}}\}| .$

\end{enumerate}
}
\end{thm}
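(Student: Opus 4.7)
The plan is to base everything on the involution $\phi: X \to X$ given by $\phi(i) = r\alpha(i)$. From $r\alpha r = \alpha^{-1}$ one derives $r\alpha = \alpha^{-1} r$ and inductively $r\alpha^k = \alpha^{-k} r$ for every $k \in \mathbb{Z}$; this yields $\phi^2 = \mathrm{id}$ and, for all $m \in \mathbb{Z}$, the orbit identity $\alpha^m \phi(i) = r\alpha^{1-m}(i)$. The entropy-preservation hypothesis $S \circ r = S$ then gives the crucial equality $S(\alpha^m \phi(i)) = S(\alpha^{1-m}(i))$, which is the single technical fact driving the whole proof.

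For item 1, I will use the substitution $m = -n$, which is a bijection between the index sets $[-N,M]$ and $[-M,N]$. By the displayed identity, the bad-index condition $S(\alpha^{n+1}(i)) \leq S(\alpha^n(i))$ for the entropy arrow around $i$ is equivalent to the bad-index condition $S(\alpha^{m+1}\phi(i)) \geq S(\alpha^m\phi(i))$ for the negative-entropy arrow around $\phi(i)$. Since both index sets have cardinality $N+M+1$, the two $\varepsilon$-fractions coincide, proving item 1. Item 2 follows at once: $\phi$ is a bijection of $X$ and, by item 1, it restricts to a bijection between the arrow set and the reversed-arrow set.

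For item 3, I will restrict $\phi$ and track how the non-equilibrium conditions transform. With $i = \phi(j) = r\alpha(j)$, a direct calculation gives $\alpha^{M+1}(i) = r\alpha^{-M}(j)$; combined with the fact that $r$ preserves zones (a consequence of entropy preservation), one obtains the translations $i \in X^{\mathrm{neq}} \Leftrightarrow \alpha(j) \in X^{\mathrm{neq}}$ and $\alpha^{M+1}(i) \in X^{\mathrm{neq}} \Leftrightarrow \alpha^{-M}(j) \in X^{\mathrm{neq}}$. The reversed-arrow property around $j$ chains the orbit entropies monotonically, so these translated extremal conditions pin down $j \in X^{\mathrm{neq}}$; this bookkeeping, which accounts for the subtle asymmetry the theorem points out, is where I expect the argument to be most delicate. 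Item 4 is the $N=M=0$ specialization: $DX^{\mathrm{neq}}$ is the set of $i$ with $S(\alpha(i)) < S(i) < S_{\max}$, which forces $\alpha(i) \in X^{\mathrm{neq}}$ automatically, and the entropy swap $S(j) = S(\alpha(i))$, $S(\alpha(j)) = S(i)$ accompanying $j = \phi(i) = r\alpha(i)$ matches $DX^{\mathrm{neq}}$ bijectively with $\{i \in IX^{\mathrm{neq}} \mid \alpha(i) \in X^{\mathrm{neq}}\}$.
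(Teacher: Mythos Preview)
Your approach via the involution $\phi = r\alpha$ and the orbit identity $S(\alpha^m\phi(i)) = S(\alpha^{1-m}(i))$ is exactly the mechanism behind the paper's proof, which simply displays the two corresponding $\alpha$-sequences $\alpha^{-N}(i)\to\cdots\to\alpha^{M+1}(i)$ and $r\alpha^{M+1}(i)\to\cdots\to r\alpha^{-N}(i)$. Your treatment of items~1, 2, and~4 is correct and considerably more explicit than the paper's sketch (which addresses only item~1, and only in the strict-arrow regime).

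There is, however, a gap in your plan for item~3. Your claim that ``the reversed-arrow property around $j$ chains the orbit entropies monotonically'' holds only in the strict regime, not for a general $\varepsilon$-arrow. But even granting strictness, the direction you outline ($j\in B \Rightarrow i=\phi(j)\in A$) requires $\alpha^{M+1}(i)\in X^{\mathrm{neq}}$, which by your own translation is equivalent to $\alpha^{-M}(j)\in X^{\mathrm{neq}}$. Strict reversed monotonicity gives $S(\alpha^{-M}(j)) > \cdots > S(j)$, so $\alpha^{-M}(j)$ has the \emph{largest} entropy along the segment and may well lie in the equilibrium; nothing in your argument rules this out. The forward direction $\phi(A)\subseteq B$ does work in the strict case (since $S(\alpha(i))<S(\alpha^{M+1}(i))<S_{\max}$), but that alone only yields $|A|\le |B|$. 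The paper's own proof is silent on item~3 beyond the sequence display, so this is a place where the statement deserves more care than either argument currently provides.
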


\begin{proof}
Since entropy is preserved by reversion, it is strictly increasing along the $\alpha$-sequence
$$\alpha^{-N}(i) \rightarrow \cdots  \rightarrow \alpha^{-1}(i) \rightarrow i \rightarrow
 \alpha^{1}(i) \rightarrow \cdots \rightarrow \alpha^{M}(i) \rightarrow  \alpha^{M+1}(i),$$
 if and only if it is strictly decreasing along the $\alpha$-sequence
$$r\alpha^{M+1}(i) \rightarrow \cdots  \rightarrow r\alpha^{1}(i) \rightarrow ri \rightarrow r\alpha^{-1}(i)
\rightarrow \cdots \rightarrow r\alpha^{-N+1}(i) \rightarrow r\alpha^{-N}(i) .$$
\end{proof}

Our combinatorial settings  provide a  straightforward approach to the next results
known collectively as fluctuation theorems \cite{sb, c, de1, de3, de2, es, s}.

\begin{thm}\label{ft}
{\em \

\begin{enumerate}
  \item  Let $\ (X,A,f, \alpha) \ $ be an invertible micro-macro dynamical system, then
$$ T_{a,b}\  = \ e^{S(a)-S(b)}T_{b,a}(\alpha^{-1}) .$$
 \item Let  $\ (X,A,f,\alpha,r) \ $ be a reversible micro-macro dynamical system, then
$$T_{a,b} \  =  \ e^{S(a)-S(b)}T_{r(b),r(a)} .$$

\end{enumerate}

}
\end{thm}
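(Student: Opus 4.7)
The plan is to verify both identities by direct counting, exploiting the bijectivity of $\alpha$ in item 1, and, in item 2, the conjugation identity $r\alpha r = \alpha^{-1}$ together with the fact that $r$ descends to an involution on macrostates (the setting in which the symbol $r(a)$ has a literal meaning).

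For item 1, I would begin from the definitions
$$T_{ab} \ = \ \frac{|\{i \in b \ |\ \alpha(i) \in a\}|}{|b|} \ , \qquad T_{ba}(\alpha^{-1}) \ = \ \frac{|\{j \in a \ |\ \alpha^{-1}(j) \in b\}|}{|a|} \ .$$
Since $\alpha$ is bijective, it restricts to a bijection between $\{i \in b \ |\ \alpha(i) \in a\}$ and $\{j \in a \ |\ \alpha^{-1}(j) \in b\}$, so the numerators of $|b| T_{ab}$ and $|a| T_{ba}(\alpha^{-1})$ coincide. Rearranging gives $|b|T_{ab} = |a| T_{ba}(\alpha^{-1})$, and using $S(a) - S(b) = \mathrm{ln}(|a|/|b|)$ yields the claim.

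For item 2, I would reduce to item 1 by proving that $T_{r(b),r(a)} = T_{ba}(\alpha^{-1})$. Equivariance of the reversion gives $|r(a)| = |a|$. From $r\alpha r = \alpha^{-1}$ one obtains $\alpha r = r\alpha^{-1}$, so for $j \in a$ the element $i := r(j)$ lies in $r(a)$ and satisfies $\alpha(i) = r\alpha^{-1}(j)$, which belongs to $r(b)$ if and only if $\alpha^{-1}(j) \in b$. Hence $r$ induces a bijection
$$\{j \in a \ |\ \alpha^{-1}(j) \in b \} \ \longrightarrow \ \{i \in r(a) \ |\ \alpha(i) \in r(b) \} \ ,$$
equating the two numerators and, after dividing by $|a| = |r(a)|$, the desired identity. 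Substituting into item 1 yields item 2.

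No genuinely hard step appears; the calculation is essentially a symmetric accounting of preimages under $\alpha$ and under $r$. The only subtle point is the interpretation of $r(a)$: when the system is merely invariant, $r(a) = a$ and item 2 degenerates to $T_{ab} = e^{S(a)-S(b)} T_{ba}$, which is consistent with item 1 via the blockwise identity $T_{ba}(\alpha^{-1}) = T_{ba}$ obtained by the same conjugation argument applied inside each block.
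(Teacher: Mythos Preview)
Your proof is correct and follows essentially the same counting-via-bijection approach as the paper: item~1 by equating the numerators through the bijection $\alpha$, and item~2 by reducing to item~1 via the identity $T_{r(b),r(a)} = T_{b,a}(\alpha^{-1})$, established through the conjugation relation $\alpha r = r\alpha^{-1}$.

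One small point of comparison: you frame item~2 under an equivariance hypothesis so that $r(a)$ is literally a macrostate, and then treat the invariant case separately at the end. The paper instead works in full generality, remarking that ``$r(a)$ need not be a macrostate'' and interpreting $T_{r(b),r(a)}$ with $r(a), r(b)$ taken as subsets of $X$. Your bijection argument already works verbatim in that generality, and the cardinality identity $|r(a)| = |a|$ you attribute to equivariance is in fact automatic from $r$ being an involution on $X$. So the equivariance assumption can simply be dropped and your final paragraph becomes unnecessary.
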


\begin{proof} For item 1 set $\ l =  |\{i \in b \ | \ \alpha(i) \in a \}|  =   |\{i \in a \ | \ \alpha^{-1}(i) \in b \}|.\ $ We have that
$$T_{a,b}(\alpha) \ = \ \frac{l}{|b|} \ = \ \frac{|a|}{|b|}\frac{l}{|a|}  \ = \ e^{S(a)-S(b)}T_{b,a}(\alpha^{-1}).$$
Item 2 follows from item 1 and  the identity  $  \ T_{b,a}(\alpha^{-1})  =  T_{r(b),r(a)}, \ $ indeed we have that
$$ \ \big|\{i \in a \ | \ \alpha^{-1}(i) \in b \}\big|\  =  \ \big|\{i \in a \ | \ \alpha r(i) \in r(b) \}\big|\  =
\ \big|\{i \in r(a) \ | \ \alpha(i) \in rb \}\big|.\ $$ Note that $\ r(a) \ $ need not be a macrostate.
\end{proof}

\begin{defn}{\em Let $(X,A,f, \alpha)  $ be a micro-macro dynamical system.  The $n$-steps entropy production rate is the map $\ \sigma_n:X \longrightarrow \mathbb{R}\ $ given by
$$\sigma_n(i) \ = \ \frac{S(\alpha^ni)-S(i)}{n} \ =  \ \frac{1}{n}\mathrm{ln}(\frac{|\alpha^ni|}{|i|}). $$
}
\end{defn}

Let  $\ (X,A,f, \alpha, r)\  $ be an entropy preserving reversible system. Below we consider three probability distributions
on $\ X: \ $ uniform probability $\ u,  \ $ uniform probability on non-equilibrium microstates, also denoted by $ \  u, \ $  and the  probability $\ q \ $ from Definition \ref{bs}
given by $\ q_i=\frac{1}{|A||i|}.\ $ Note that $\ q_i > \frac{1}{|X|}\ $ if and only if $\ S(i) < \mathrm{ln}(\frac{|X|}{|A|}), \ $
i.e. $\ q\ $ assigns to low entropy microstates a probability higher than the uniform probability, moreover,
the lower the entropy the higher the $\ q$-probability of a microstate.
For $\ i \in X  \ $ we have that
$$\sigma_n(r\alpha^ni)= - \sigma_n(i) \ \ \ \ \ \mbox{and} \ \ \ \ \ |r\alpha^ni|= e^{n\sigma(i)}|i|. $$
The density functions $\ \mathrm{W}_n^u, \ \mathrm{W}_{n, \mathrm{neq}}^u, \ \mathrm{W}_n^q: \mathbb{R}\longrightarrow [0,1]\ $ associated to
 the uniform probability, the uniform probability on non-equilibrium microstates, and $\ q\ $ via
the map $\ \sigma_n\ $ are given  by
$$ \mathrm{W}_n^u(x)= \frac{|\{i \ | \ \sigma_n(i)=x \}|}{|X|}, \ \
 \mathrm{W}_{n, \mathrm{neq}}^u =  \frac{|\{i \in X^{\mathrm{neq}}\ | \ \sigma_n(i)=x \}|}{|X^{\mathrm{neq}}|},
 \ \  \mbox{and}  \ \
\mathrm{W}_n^q(x)=  \sum_{\sigma_n(i)=x}\frac{1}{|A||i|}.  $$
The statement and proofs of Theorem \ref{cr1}-3 and Theorem \ref{ie}-3  below are combinatorial
renderings of the arguments given by Deward and Maritan \cite{de2}.
\begin{thm}\label{cr1}
{\em  Let $\ (X,A,f, \alpha,r) \ $ be an entropy preserving reversible system.  For $\ x\in \mathbb{R}_{\geq 0} \ $
 we have that: $\ \displaystyle 1) \ \mathrm{W}_n^u(x) = \mathrm{W}_n^u(-x). \  $
  $\ \displaystyle 2) \mathrm{W}_{n, \mathrm{neq}}^u(x) \geq \mathrm{W}_{n, \mathrm{neq}}^u(-x). \ $
  $\ \displaystyle  3) \mathrm{W}_n^q(x) = e^{nx}\mathrm{W}_n^q(-x) .$
}
\end{thm}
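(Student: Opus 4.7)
The plan is to base everything on a single involution $\phi:X\longrightarrow X$ defined by $\phi(i)=r\alpha^n(i).$ First I would check that $\phi$ is a bijection (indeed an involution): from $r\alpha r=\alpha^{-1}$ one gets by induction $r\alpha^n r=\alpha^{-n},$ so $\phi^2(i)=r\alpha^n r\alpha^n(i)=\alpha^{-n}\alpha^n(i)=i.$ The text already records the two identities $\sigma_n(\phi(i))=-\sigma_n(i)$ and $|\phi(i)|=e^{n\sigma_n(i)}|i|,$ the first following from entropy preservation (so $|ri|=|i|$) together with $\sigma_n(r\alpha^n i)=\tfrac{1}{n}\ln(|\alpha^n r\alpha^n i|/|r\alpha^n i|)=\tfrac{1}{n}\ln(|i|/|\alpha^n i|),$ and the second from $|r\alpha^n i|=|\alpha^n i|.$ Every statement of the theorem will be a consequence of pushing the relevant sum forward along this involution.

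For item 1, since $\phi$ is a bijection of $X$ and $\sigma_n\circ\phi=-\sigma_n,$ it restricts to a bijection between the level sets $\{i\in X:\sigma_n(i)=x\}$ and $\{i\in X:\sigma_n(i)=-x\}.$ Dividing by $|X|$ gives $\mathrm{W}_n^u(x)=\mathrm{W}_n^u(-x).$

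For item 2, fix $x\geq 0$ (the case $x=0$ is trivial, so take $x>0$) and set $A=\{i\in X^{\mathrm{neq}}:\sigma_n(i)=-x\}$ and $B=\{i\in X^{\mathrm{neq}}:\sigma_n(i)=x\}.$ I claim $\phi(A)\subseteq B.$ Indeed, for $i\in A$ we have $\sigma_n(\phi(i))=x>0,$ so $|\alpha^n\phi(i)|>|\phi(i)|,$ forcing $|\phi(i)|$ to be strictly less than the maximum block size, i.e.\ $\phi(i)\in X^{\mathrm{neq}}.$ Since $\phi$ is injective on all of $X,$ it is injective on $A,$ hence $|A|\leq|B|$; dividing by $|X^{\mathrm{neq}}|$ yields the desired inequality. (Note that the reverse inclusion need not hold, since for $i\in B$ the microstate $\phi(i)$ has $|\phi(i)|=e^{nx}|i|,$ which could reach the equilibrium block size.)

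For item 3, use the bijection $\phi:\{i:\sigma_n(i)=x\}\to\{i:\sigma_n(i)=-x\}$ to re-index:
\begin{equation*}
\mathrm{W}_n^q(-x)\ =\ \sum_{\sigma_n(j)=-x}\frac{1}{|A|\,|j|}\ =\ \sum_{\sigma_n(i)=x}\frac{1}{|A|\,|\phi(i)|}\ =\ \sum_{\sigma_n(i)=x}\frac{1}{|A|\,e^{nx}|i|}\ =\ e^{-nx}\,\mathrm{W}_n^q(x),
\end{equation*}
which is the claimed identity. No step looks hard; the only thing to get right is that the involution $\phi$ is simultaneously an entropy flipper ($\sigma_n\mapsto-\sigma_n$) and an explicit block-size rescaler ($|\cdot|\mapsto e^{n\sigma_n}|\cdot|$), which is exactly why the three statements take the shapes they do.
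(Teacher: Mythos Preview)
Your proof is correct and follows essentially the same approach as the paper: both arguments rest on the involution $\phi=r\alpha^n$ together with the identities $\sigma_n\circ\phi=-\sigma_n$ and $|\phi(i)|=e^{n\sigma_n(i)}|i|$, and each item amounts to pushing a sum (or a level-set count) forward along $\phi$. Your treatment of item 2 is slightly more direct---you show $\phi$ injects $\{i\in X^{\mathrm{neq}}:\sigma_n(i)=-x\}$ into $\{i\in X^{\mathrm{neq}}:\sigma_n(i)=x\}$---whereas the paper decomposes the target set according to whether $r\alpha^n i$ lands in $X^{\mathrm{neq}}$ or $X^{\mathrm{eq}}$ and identifies each piece; but this is the same bijection read two ways, not a different route.
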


\begin{proof} For item 1 we have that
$$\mathrm{W}_n^u(x)\ = \ \frac{|\{i \ | \ \sigma_n(i)=x \}|}{|X|}\  = \ \frac{|\{i \ | \ \sigma_n(r\alpha^ni)=x \}|}{|X|}
\ = \ \frac{|\{i \ | \ \sigma_n(i)=-x \}|}{|X|}\ = \ \mathrm{W}_n^u(-x).  $$
The item 2 statement is trivial for $\ x=0,\ $ so we assume $ \ x>0. \ $ We have that
$$\mathrm{W}_{n, \mathrm{neq}}^u(x)\ = \ \frac{|\{i \in X^{\mathrm{neq}}\ | \ \sigma_n(i)=x \}|}{|X|}\  = \ $$
$$ \frac{|\{i \in X^{\mathrm{neq}} \ | \ \sigma_n(i)=x, \ \ r\alpha^ni \in X^{\mathrm{neq}} \}|}{|X^{\mathrm{neq}}|}
\ + \ \frac{|\{i \in X^{\mathrm{neq}}\ | \ \sigma_n(i)=x, \ \ r\alpha^ni \in X^{\mathrm{eq}} \}|}{|X^{\mathrm{neq}}|}\ =  $$
$$ \frac{|\{i \in X^{\mathrm{neq}} \ | \ \sigma_n(i)=-x \}|}{|X^{\mathrm{neq}}|}
\ + \ \frac{|\{i \in X^{\mathrm{eq}}\ | \ \sigma_n(i)=-x \}|}{|X^{\mathrm{neq}}|}\ =  $$
$$ \mathrm{W}_{n, \mathrm{neq}}^u(-x)\ + \ \frac{|\{i \in X^{\mathrm{eq}}\ | \ \sigma_n(i)=-x \}|}{|X^{\mathrm{neq}}|}\ \geq \
 \mathrm{W}_{n, \mathrm{neq}}^u(-x) .  $$
For item 3 we have that
$$\mathrm{W}_n^q(x)\ = \ \sum_{\sigma_n(i)=x}\frac{1}{|A||i|} \  = \
\sum_{\sigma_n(r\alpha^ni)=x}\frac{1}{|A||r\alpha^ni|}\ = $$
$$\sum_{\sigma_n(i)=-x}\frac{e^{-n\sigma_n(i)}}{|A||i|} \ = \
e^{nx}\sum_{\sigma_n(i)=-x}\frac{1}{|A||i|}= e^{nx}\mathrm{W}_n(-x) .$$
\end{proof}

Below we consider the $n$-steps entropy production rate mean value
$\ \overline{\sigma}_{n}^u, \ \overline{\sigma}_{n, \mathrm{neq}}^{u}, \ \overline{\sigma}_{n}^q  \ $
with respect to the uniform probability,
the uniform probability over non-equilibrium microstates, and the $\ q \ $ probability on $\ X.$

\begin{thm}\label{ie}
{\em Let $ \ (X,A,f, \alpha) \ $ be a micro-macro dynamical system, then  $$ \displaystyle n\overline{\sigma}_{n}^u   =
 S_{T(\alpha^n)p}(A)  -  S_p(A) \ $$ where $\ T(\alpha^n) \ $ is the stochastic map on $\ A \ $ induced by
$\ \alpha^n.$
\begin{enumerate}

  \item  If $\ \alpha\ $ is invertible, then $\  \ \overline{\sigma}_{n}^u   =  0. $
\item If $\ \alpha\ $ is invertible, then $\ \ \displaystyle \overline{\sigma}_{n, \mathrm{neq}}^{u}
 \ = \ \frac{1}{n|X^\mathrm{neq}|}\sum_{i\in D_{\alpha^n}(X^{\mathrm{eq}})}[S(i) - S(\alpha^ni)] \ \geq \ 0,\ \ $
 and $\ \ \overline{\sigma}_{n, \mathrm{neq}}^{u} =0 \ \ $ if and only if $\  X^\mathrm{eq} \ $ is $\ \alpha^n$-invariant.
\item For an entropy preserving reversible  system $\ (X,A,f, \alpha, r) \ $, we have that
$\ \ \overline{\sigma}_{n}^q \geq 0 \ $ and $\  \overline{\sigma}_{n}^q = 0 \ $ if and only if  $\ \alpha^n\ $ preserves entropy.
\end{enumerate}
}
\end{thm}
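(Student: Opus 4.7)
My plan is to treat the four assertions in the order they appear, since each builds on the previous via a simple counting identity. First I would establish the preliminary identity $n\overline{\sigma}_n^u = S_{T(\alpha^n)p}(A) - S_p(A)$ by unpacking definitions: write $n\overline{\sigma}_n^u = \frac{1}{|X|}\sum_{i\in X}[S(\alpha^n i) - S(i)]$, then reindex the first sum by grouping according to the starting macrostate $b=f(i)$ and the ending macrostate $a=f(\alpha^n i)$. This produces $\sum_{a,b}|\{i\in b : \alpha^n i \in a\}|\, S(a) = |X|\sum_{a,b} T_{ab}(\alpha^n)p_b S(a) = |X|\, S_{T(\alpha^n)p}(A)$, while the second sum is $|X|\, S_p(A)$ by definition of $S(A)$.

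For item 1, invertibility of $\alpha^n$ makes $\sum_{i\in X}S(\alpha^n i)$ a mere reindexing of $\sum_{j\in X}S(j)$, so the two sides of the identity cancel; equivalently $T(\alpha^n)p=p$. Item 2 then follows from the same cancellation applied in the form $\sum_{X^{\mathrm{neq}}}[S(\alpha^n i)-S(i)] = -\sum_{X^{\mathrm{eq}}}[S(\alpha^n i)-S(i)]$. Since $S$ attains its maximum on $X^{\mathrm{eq}}$, each summand $S(i)-S(\alpha^n i)$ for $i\in X^{\mathrm{eq}}$ is non-negative and vanishes exactly when $\alpha^n i \in X^{\mathrm{eq}}$; trimming the zero terms restricts the sum to $D_{\alpha^n}X^{\mathrm{eq}}$. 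Non-negativity is then immediate and the vanishing condition $D_{\alpha^n}X^{\mathrm{eq}} = \emptyset$ translates, via bijectivity of $\alpha^n$, to $\alpha^n$-invariance of $X^{\mathrm{eq}}$.

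For item 3 I would invoke the fluctuation relation $\mathrm{W}_n^q(x) = e^{nx}\mathrm{W}_n^q(-x)$ already proven in Theorem \ref{cr1}(3); this is where the reversible and entropy-preserving hypotheses enter. Writing $\overline{\sigma}_n^q = \sum_x x\,\mathrm{W}_n^q(x)$, the $x=0$ contribution vanishes, and pairing each $x>0$ with $-x$ gives
$$\overline{\sigma}_n^q \ = \ \sum_{x>0} x\bigl[\mathrm{W}_n^q(x) - \mathrm{W}_n^q(-x)\bigr] \ = \ \sum_{x>0} x\,(e^{nx}-1)\,\mathrm{W}_n^q(-x) \ \geq \ 0,$$
with equality forcing $\mathrm{W}_n^q(-x) = 0$ for every $x>0$. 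Since $q_i > 0$ for all $i\in X$, this is equivalent to $\sigma_n(i)=0$ for every $i$, i.e., $|\alpha^n i| = |i|$ throughout, which is the assertion that $\alpha^n$ preserves entropy.

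The computations are mostly combinatorial bookkeeping, and the only genuine analytic content is the elementary factorization that turns the fluctuation relation into a non-negativity statement; the main thing to watch is keeping straight the three distinct probability distributions ($u$ on $X$, $u$ on $X^{\mathrm{neq}}$, $q$ on $X$) so that the right symmetry or cancellation is applied in each case.
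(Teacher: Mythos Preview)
Your argument for the preliminary identity and for items 1 and 2 is essentially the same as the paper's: unpack, reindex via the bijection $\alpha^n$, and trim zero summands on $X^{\mathrm{eq}}$.

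For item 3 you take a genuinely different route. The paper recognizes $n\overline{\sigma}_n^q$ directly as a Kullback--Leibler divergence: since $q_i=\frac{1}{|A||i|}$ one has $\ln(|\alpha^n i|/|i|)=\ln(q_i/q_{\alpha^n i})$, and because $\alpha^n$ is a bijection the pushforward $(q_{\alpha^n i})_{i\in X}$ is again a probability on $X$; Gibbs' inequality then gives $n\overline{\sigma}_n^q=\sum_i q_i\ln(q_i/q_{\alpha^n i})\ge 0$ with equality iff $q_i=q_{\alpha^n i}$ for all $i$. This argument is shorter and in fact only needs invertibility of $\alpha$, not the full entropy-preserving reversible structure. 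Your approach instead leverages the fluctuation relation $\mathrm{W}_n^q(x)=e^{nx}\mathrm{W}_n^q(-x)$ from Theorem~\ref{cr1}(3), which does use that structure, and extracts non-negativity by the pairing $\sum_{x>0}x(e^{nx}-1)\mathrm{W}_n^q(-x)\ge 0$. Both are correct; the paper's argument is more economical with hypotheses, while yours makes explicit the general mechanism by which a fluctuation symmetry forces a non-negative mean, which is conceptually valuable in its own right.
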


\begin{proof} Recall that $\ p \ $ and $\ T(\alpha^n)p\ $ are the probability measures on $\ A \ $
given by $\ \displaystyle p_a  =   \frac{|a|}{|X|} \ $ and $ \ \displaystyle   (T(\alpha^n)p)_a  =
 \sum_{b \in A}T_{ab}(\alpha^n)p_b. \ $
We have that
$$n\overline{\sigma}_{n}^u  \  =  \ \sum_{i\in X}\frac{S(\alpha^n(i))  -  S(i)}{|X|} \  =
 \  \sum_{i\in b, \ \alpha^n(i) \in a}\frac{S(a)-S(b)}{|X|}\  = $$
$$ \sum_{a,b}[S(a)-S(b)]\frac{|\{i\in b \ | \ \alpha^n(i) \in a \}|}{|b|}\frac{|b|}{|X|} \  =
\ \sum_{a,b}[S(a)-S(b)]T_{ab}(\alpha^n)p_b \ = $$
$$\sum_{a,b}S(a)T_{ab}(\alpha^n)p_b   -   \sum_{a,b}S(b)T_{ab}(\alpha^n)p_b \  = \ S_{T(\alpha^n)p}(A) - S_{p}(A).$$
If $\ \alpha \ $ is invertible, then
$$\overline{\sigma}_{n}^u\  = \ \frac{1}{n|X|}\sum_{i\in X}[S(\alpha^ni)-S(i)]\ = \
 \frac{1}{n|X|}\bigg[\sum_{i\in X}S(\alpha^ni)  -
\sum_{i\in X} S(i)\bigg] \   = \ 0 .$$
Furthermore we  have that
$$\overline{\sigma}_{n,\mathrm{neq}}^u\  = \
\frac{1}{n|X^\mathrm{neq}|}\sum_{i\in X^\mathrm{neq}}[S(\alpha^ni)-S(i)]\ \ = $$
$$\frac{|X|}{n|X^\mathrm{neq}|}\bigg[ \frac{1}{|X|}\sum_{i\in X}[S(\alpha^ni)-S(i)] \ -\
\frac{1}{|X|}\sum_{i\in X^{\mathrm{eq}}}[S(\alpha^ni)-S(i)] \bigg]\ \ = $$
$$\frac{|X|}{n|X^\mathrm{neq}|}\overline{\sigma}_{n}^u \ - \
\frac{1}{n|X^\mathrm{neq}|}\sum_{i\in X^{\mathrm{eq}}}[S(\alpha^ni)-S(i)] \ = \
\frac{1}{n|X^\mathrm{neq}|}\sum_{i\in D_{\alpha^n}(X^{\mathrm{eq}})}[S(i) - S(\alpha^ni)]\ \geq 0.$$
By the Gibb's inequality \cite{ct} we have that
$$n\overline{\sigma}_{n}^q  \ = \ \sum_{i\in X}q_i\mathrm{ln}(\frac{|\alpha^ni|}{|i|})
\ = \ \sum_{i\in X}q_i\mathrm{ln}(\frac{q_i}{q_{\alpha^ni}})\ \geq \ 0. $$

\end{proof}

Items 2 and 3  of Theorem \ref{ie} guarantee that with uniform probability on non-equilibrium microstates, and
with probability $\ q \ $ the average $n$-steps entropy production
rate is non-negative. Suppose now that we are given before hand the mean value $\ \alpha_n>0 \ $ of the $n$-steps
entropy production rate, then following
the Jaynes' maximum entropy method is natural to consider the probability $\ t \ $ on $\ X \ $ given on
$\ i \in a \in \pi \ $ by
$$t(i)=\frac{p_{a}}{|a|} \ \ \   \mbox{where} \ \ \ p_a= \frac{e^{\lambda \overline{\sigma}_{n}(a)}}{Z(\lambda)},
\ \ \  \overline{\sigma}_{n}(a) = \frac{1}{|a|} \sum_{i\in a}S(\alpha^ni)-S(i), \ \ \  \mbox{and}
\ \ \ Z(\lambda)= \sum_{a\in A} e^{\lambda \overline{\sigma}_{n}(a)}.$$
Assuming that $\ \alpha_n \in \big[\underset{a\in A}{\mathrm{min}}\ \overline{\sigma}_{n}(a)\ , \
\underset{a\in A}{\mathrm{max}}\ \overline{\sigma}_{n}(a)\big], \ $ the parameter $\ \lambda \ $ is chosen so that
$$\sum_{a\in A}\overline{\sigma}_{n}(a) \frac{e^{\lambda \overline{\sigma}_{n}(a)}}{Z(\lambda)} = \alpha_n.$$

\section{Structural Properties of Micro-Macro  Systems}\label{spmms}

In this section we review  some of the structural properties of the category of micro-macro dynamical systems:
we introduce the product, disjoint union, restriction, coarse-graining, meet, and joint of micro-macro dynamical systems.
We also provide five general construction yielding interesting examples of micro-macro phase spaces.\\

If $\ (X,A,f,\alpha)\ $  satisfies the axioms for a micro-macro dynamical system except that $\ f \ $ may not be
surjective, then we  have  the micro-macro dynamical system $\ (X,f(X), f, \alpha). \ $ We
use this construction without change of notation, and even without mention.
The inversion functor $\ \mathrm{inv}: \mathrm{immds} \longrightarrow \mathrm{immds}\ $ is
given on objects by  $ \ \mathrm{inv}(X,A,f,\alpha) =  (X,A,f,\alpha^{-1}).\ $ The functor $\ \mathrm{inv}\ $
is defined for reversible systems as  $\ \mathrm{inv}(X,A,f,\alpha,r )=  (X,A,f,\alpha^{-1}, r).$

\begin{prop}{\em  Let $\ (X,A,f,\alpha,r) \ $ be an entropy preserving reversible micro-macro dynamical system. Then
 $\ (X,A,f,\alpha,r) \in \mathrm{L}_2(\varepsilon_1, \varepsilon_2) \ \ $ if and only if
$\ \ (X,A,f,\alpha^{-1},r) \in\mathrm{L}_2(\varepsilon_1, \varepsilon_2). \ $
}
\end{prop}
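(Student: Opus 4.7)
The plan is to exhibit the reversion map $r$ as a bijection $X \to X$ that swaps the data attached to $\alpha$ and to $\alpha^{-1}$ in the two relevant statistics.

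First I would record the following preliminary observation: since the system is entropy preserving, $r$ restricts to a bijection $r: X^{\mathrm{neq}} \to X^{\mathrm{neq}}$. Indeed $S(ri)=S(i)$, so $i$ belongs to an equilibrium macrostate if and only if $ri$ does, hence $r$ preserves both $X^{\mathrm{eq}}$ and $X^{\mathrm{neq}}$. This is important because $X^{\mathrm{neq}}$ is defined from the partition alone, so it is the same set whether we consider $\alpha$ or $\alpha^{-1}$.

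Next I would use the defining identity $\alpha^{-1} = r\alpha r$ together with $r^2 = 1$ to see how $r$ transports the entropy comparisons. For $i \in X$ set $j = ri$. Then
\[
\alpha^{-1}(j) \;=\; r\alpha r (ri) \;=\; r\alpha(i),
\]
and by entropy preservation $S(\alpha^{-1}(j)) = S(r\alpha(i)) = S(\alpha(i))$ while $S(j) = S(ri) = S(i)$. Therefore
\[
S(\alpha^{-1}(j))\;\lessgtr\;S(j)\quad\Longleftrightarrow\quad S(\alpha(i))\;\lessgtr\;S(i),
\]
with equality on one side equivalent to equality on the other. This yields $r$-induced bijections
\[
r: D_\alpha \longrightarrow D_{\alpha^{-1}},\qquad r: I_\alpha \longrightarrow I_{\alpha^{-1}},\qquad r: C_\alpha \longrightarrow C_{\alpha^{-1}},
\]
and, combined with the preservation of $X^{\mathrm{neq}}$ from the first step, also bijections
\[
r: D_\alpha X^{\mathrm{neq}} \longrightarrow D_{\alpha^{-1}} X^{\mathrm{neq}},\qquad r: C_\alpha X^{\mathrm{neq}} \longrightarrow C_{\alpha^{-1}} X^{\mathrm{neq}}.
\]

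With these bijections the conclusion is immediate: $|D_\alpha|/|X| = |D_{\alpha^{-1}}|/|X|$, so $\mathrm{L}_1(\varepsilon_1)$ transfers between $\alpha$ and $\alpha^{-1}$; and
\[
\frac{|D_\alpha X^{\mathrm{neq}} \sqcup C_\alpha X^{\mathrm{neq}}|}{|X^{\mathrm{neq}}|} \;=\; \frac{|D_{\alpha^{-1}} X^{\mathrm{neq}} \sqcup C_{\alpha^{-1}} X^{\mathrm{neq}}|}{|X^{\mathrm{neq}}|},
\]
so $\mathrm{GAT}(\varepsilon_2)$ transfers as well. Since $\mathrm{L}_2(\varepsilon_1,\varepsilon_2)$ is the conjunction of these two properties, the equivalence follows. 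The main (minor) obstacle is simply the bookkeeping that $r$ maps the sets correctly; once the conjugation identity $\alpha^{-1} = r\alpha r$ and entropy preservation are used in tandem, the bijections fall out.
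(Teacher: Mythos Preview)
Your proof is correct and follows essentially the same idea as the paper's: use the entropy-preserving reversion to match the relevant cardinalities for $\alpha$ and $\alpha^{-1}$. The paper's version is terser, invoking the earlier results $|D_{\alpha^{-1}}|=|I_\alpha|=|D_\alpha|$ (Lemma~\ref{at} and Proposition~\ref{cc}) and the observation $I\subseteq X^{\mathrm{neq}}$ to collapse $IX^{\mathrm{neq}}=I$, whereas you build the bijections $r:D_\alpha\to D_{\alpha^{-1}}$ etc.\ directly and use $r(X^{\mathrm{neq}})=X^{\mathrm{neq}}$; both routes are equally valid and equally short.
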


\begin{proof}Follows from $ \ |D_{\alpha^{-1}}|  =  |I|  =  |D| \ $ and
$ \ |I_{\alpha^{-1}}X^{\mathrm{neq}}|  =   |I_{\alpha^{-1}}X| = |D| =|I| =     |IX^{\mathrm{neq}}|  .$
\end{proof}

\begin{defn}{\em
Let $\ (X_1,A_1,f_1,\alpha_1) \ $ and $\ (X_2,A_2,f_2,\alpha_2)\ $ be micro-macro dynamical systems.
The product micro-macro dynamical system
is given by
$ (X_1\times X_2,A_1\times A_2,f_1\times f_2,\alpha_1\times \alpha_2). $
The map $\ \times: \mathrm{mmds} \times \mathrm{mmds} \ \longrightarrow \  \mathrm{mmds}\ $ is functorial. }
\end{defn}

The product functor  induces a product functor  on $\ \mathrm{immds}, \ $  and can be compatibly defined
 on $\ \mathrm{rmmds}\ $ so that $\ i: \mathrm{immds} \longrightarrow \mathrm{mmds},  $
$\ i_{\ast}: \mathrm{mmds} \longrightarrow \mathrm{immds}, $  $\ u: \mathrm{rmmds} \longrightarrow \mathrm{immds}, \ $
and $\ \mathrm{inv} :\mathrm{immds} \longrightarrow \mathrm{immds} \ $ are product preserving.
The following result justifies  the presence of the logarithmic function
in the Boltzmann entropy from the structural viewpoint.

\begin{prop}{\em Consider the system $\ (X_1\times X_2,A_1\times A_2,f_1\times f_2,\alpha_1\times \alpha_2).\ $ We have that
$\ S(A_1\times A_2)  =   S(A_1)  +  S(A_2), \ $ $ \ H(p_{A_1\times A_2})  =  H(p_{A_1})  +  H(p_{A_2}), \ $
$\ H(T_{A_1\times A_2})  =   H(T_{A_1})  +  H(T_{A_2}).\ $
}
\end{prop}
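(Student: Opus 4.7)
The plan is to exploit the fact that on product systems all the relevant combinatorial quantities factorize. First I would check that $(f_1\times f_2)^{-1}(a_1,a_2) = f_1^{-1}(a_1)\times f_2^{-1}(a_2)$, so $|(a_1,a_2)| = |a_1|\cdot|a_2|$. Taking logarithms yields $S(a_1,a_2)=S(a_1)+S(a_2)$ at the level of individual macrostates, and dividing by $|X_1||X_2|$ gives the factorization of the induced probability
\[ p_{(a_1,a_2)} \ = \ \frac{|a_1||a_2|}{|X_1||X_2|} \ = \ p_{a_1}p_{a_2}. \]
Similarly, since $\alpha_1\times\alpha_2$ acts coordinatewise, $\{(i_1,i_2)\in b_1\times b_2\ |\ (\alpha_1\times\alpha_2)(i_1,i_2)\in a_1\times a_2\}$ factorizes as a product, whence
\[ T_{(a_1,a_2)(b_1,b_2)} \ = \ T_{a_1b_1}(\alpha_1)\,T_{a_2b_2}(\alpha_2). \]

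Next I would substitute these factorizations into the three definitions from Definition \ref{bs}. For the Boltzmann entropy, one gets a double sum $\sum_{a_1,a_2}[S(a_1)+S(a_2)]p_{a_1}p_{a_2}$; the cross terms decouple using $\sum_{a_i} p_{a_i} = 1$, giving $S(A_1)+S(A_2)$. For the Shannon entropy of $p_{A_1\times A_2}$, the same device works after applying $\ln(p_{a_1}p_{a_2}) = \ln p_{a_1}+\ln p_{a_2}$. For $H(T_{A_1\times A_2})$ one expands
\[ -\sum_{a_1,a_2,b_1,b_2}\ln\bigl(T_{a_1b_1}T_{a_2b_2}\bigr)T_{a_1b_1}T_{a_2b_2}p_{b_1}p_{b_2}, \]
splits the logarithm, and collapses the spectator indices using both $\sum_{a_i}T_{a_ib_i}=1$ and $\sum_{b_i}p_{b_i}=1$.

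There is no genuine obstacle here; the proposition is a structural consistency check that justifies the logarithm in the Boltzmann definition. The only care needed is bookkeeping in the $H(T)$ computation, where four summation indices appear simultaneously; writing out the factorization and applying the two normalizations in sequence makes the two spectator sums collapse cleanly to yield the desired additivity.
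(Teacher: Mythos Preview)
Your proposal is correct and follows essentially the same route as the paper: factorize $|(a_1,a_2)|$, $p_{(a_1,a_2)}$, and $T_{(a_1,a_2)(b_1,b_2)}$, then split the logarithms and collapse the spectator sums via the normalizations. The only cosmetic difference is that for $H(p_{A_1\times A_2})$ the paper invokes the identity $H(p)=\ln|X|-S(A)$ (so additivity of $H(p)$ follows immediately from that of $S(A)$ and $\ln|X_1\times X_2|=\ln|X_1|+\ln|X_2|$), whereas you compute it directly from the definition; both are equally valid.
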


\begin{proof}
For $\ (a_1,a_2) \in A_1\times A_2\ $ we have that  $ \  p(a_1,a_2) =   \frac{|(a_1,a_2)|}{|A_1\times A_2|}
  =   \frac{|a_1||a_2|}{|A_1||A_2|}  =  p(a_1)p(a_2) \ $ and
$\ S(a_1,a_2)  = \mathrm{ln}|(a_1,a_2)|   =   \mathrm{ln}|a_1|  +  \mathrm{ln}|a_2|
 =  S(a_1)  +  S(a_2).\ $ Therefore
{\footnotesize $$S(A_1\times A_2) =   \sum_{(a_1,a_2)\in A_1\times A_2}S(a_1,a_2)p(a_1,a_2)
  = \sum_{a_1 \in A_1}S(a_1)p(a_1)  +  \sum_{a_2\in A_2}S(a_2)p(a_2)  =   S(A_1) +  S(A_2).\ $$}
Thus $\ H(p_{A_1\times A_2})  =   \ln\big(|A_1||A_2|\big)   -   S(A_1)  -  S(A_2)  =   H(p_{A_1})  +  H(p_{A_2}).\ $
Considering transition maps we have that $\ T_{(b_1,b_2),(a_1,a_2)} = T_{b_1 a_1}T_{b_2 a_2} \ $ since
{\footnotesize $$  \frac{\big|\{(i,j) \in a_1\times a_2\ | \ (\alpha_1(i),\alpha_2(j)) \in b_1 \times b_2 \}\big|}
{|a_1||a_2|} \ \ = \ \
\frac{\big|\{i \in a_1 \ | \ \alpha_1(i) \in b_1 \}\big|}{|a_1|}
\frac{\big|\{j \in a_2\ | \ \alpha_2(j) \in b_2 \}\big|}{|a_2|} . $$}
Therefore
$$H(T_{A_1 \times A_2}) \  =  \ -\sum_{(a_1,a_2),(b_1,b_2) \in A_1\times A_2}\mathrm{ln}|T_{(b_1,b_2),(a_1,a_2)}|\ T_{(b_1,b_2),(a_1,a_2)}p(a_1,a_2) \  = $$
$$\sum_{a_1,b_1 \in A_1, \ a_2,b_2 \in A_2}-\Big[\mathrm{ln}|T_{(b_1,a_1)}| + \mathrm{ln}|T_{(b_2,a_2)}|\Big]\ T_{(b_1,a_1)}T_{(b_2,a_2)}p(a_1)p(a_2)  \ = \ H(T_{A_1})  + H(T_{A_2}) .$$
\end{proof}

Next we phrase the asymptotic equipartition theorem \cite{ct} in terms of  Boltzmann entropy.

\begin{thm}
{\em Let $ (X,A,f) $ be a micro-macro phase space and $  \varepsilon > 0.   $ For $ n \in \mathbb{N}_{\geq 1} $ consider the $n$-power
micro-macro phase space $ \ (X^n,A^n,f^{\times n}).  \ $ Let the set of typical microstates $\ X^{n}_{\epsilon} \subseteq X^n \ $ be given by
$\ \displaystyle X^{n}_{\varepsilon}   =   \big\{ (i_1,...,i_n) \in X^n \  \big|  \ e^{n(S(A)-\varepsilon)}
 \leq  |f(i_1)|\cdots |f(i_n)|  \leq   e^{n(S(A)+\varepsilon)} \big\}, \ $ and let the set of typical macrostates
  be given by $\ A^n_{\varepsilon}  =  f^{\times n}X^n_{\varepsilon}.\ $ For $\ n \ $ large enough we have that:
\begin{enumerate}
 \item $ |X^{n}_{\varepsilon}|  \geq  (1-\varepsilon)|X^n|. $

  \item $  e^{n(S(A)-\varepsilon)} \leq  |(a_1,... , a_n)|  \leq   e^{n(S(A)+\varepsilon)} \ \ \ $ for $\ \ \ (a_1,...,a_n) \in A^n_{\varepsilon}.$

  \item $(1-\varepsilon)|A|^ne^{-n(S(A) + \varepsilon)}   \leq   |A^{n}_{\varepsilon}|  \leq   |A|^ne^{-n(S(A)-\varepsilon)}.$
\end{enumerate}
}
\end{thm}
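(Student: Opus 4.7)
The plan is to reduce the theorem to the weak law of large numbers applied to the iid random variables $S_k(i_1,\ldots,i_n) = \ln|f(i_k)|$ on $X^n$ equipped with the uniform probability. First I would observe that each $S_k$ has common distribution equal to that of $i\mapsto \ln|f(i)|$ on $X$ under the uniform measure, so its expectation is $\frac{1}{|X|}\sum_{i\in X}\ln|f(i)| = S(A)$ by the very definition of Boltzmann entropy. The typical-microstate condition $e^{n(S(A)-\varepsilon)} \leq |f(i_1)|\cdots|f(i_n)| \leq e^{n(S(A)+\varepsilon)}$ is exactly $\bigl|\frac{1}{n}\sum_{k=1}^n S_k - S(A)\bigr| \leq \varepsilon$, so $X^n_\varepsilon$ is literally a WLLN event.

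For item 1, I would apply the weak law of large numbers (valid since $|X|<\infty$ ensures finite variance): for all $n$ large enough,
\[
\frac{|X^n_\varepsilon|}{|X|^n} \;=\; \mathrm{Prob}\Bigl(\Bigl|\tfrac{1}{n}\sum_{k=1}^n S_k - S(A)\Bigr|\leq \varepsilon\Bigr) \;\geq\; 1-\varepsilon,
\]
which gives $|X^n_\varepsilon| \geq (1-\varepsilon)|X^n|$. Item 2 is immediate definition-chasing: if $(a_1,\ldots,a_n) \in A^n_\varepsilon = f^{\times n}(X^n_\varepsilon)$ then there exists a preimage $(i_1,\ldots,i_n)\in X^n_\varepsilon$ with $f(i_k)=a_k$ and hence $|(a_1,\ldots,a_n)| = |a_1|\cdots|a_n| = |f(i_1)|\cdots|f(i_n)|$ lies in $[e^{n(S(A)-\varepsilon)}, e^{n(S(A)+\varepsilon)}]$.

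For item 3, I would use the block decomposition
\[
|X^n_\varepsilon| \;=\; \sum_{(a_1,\ldots,a_n)\in A^n_\varepsilon} |(a_1,\ldots,a_n)|,
\]
combining the per-macrostate bounds from item 2 with the size bound from item 1. The upper bound on $|A^n_\varepsilon|$ comes from replacing each summand by its lower bound $e^{n(S(A)-\varepsilon)}$ and using $|X^n_\varepsilon| \leq |X|^n$; the lower bound comes from replacing each summand by its upper bound $e^{n(S(A)+\varepsilon)}$ and using $|X^n_\varepsilon| \geq (1-\varepsilon)|X|^n$. (The cardinality $|X|^n$ majorizes $|A|^n$ since the surjection $f$ forces $|X|\geq|A|$, yielding the stated bounds.)

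The only mildly delicate point is invoking the weak law of large numbers cleanly; once that is in hand, items 2 and 3 are purely combinatorial bookkeeping on the fibers of $f^{\times n}$, so I do not anticipate any real obstacle. The essence is that Boltzmann entropy $S(A)$ is, by construction, the expected value of the log-fiber-size random variable, which is exactly what makes an AEP argument directly applicable in the combinatorial setting.
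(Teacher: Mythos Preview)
The paper does not supply its own proof of this theorem; it merely states it as a reformulation of the standard asymptotic equipartition theorem, citing Cover--Thomas. Your argument \emph{is} the standard AEP proof, and items 1 and 2 are handled correctly: the key observation that $S(A)$ is by definition the mean of the log-fiber-size random variable is exactly what makes the WLLN apply directly.

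There is one slip in your treatment of item 3. Your block-decomposition argument correctly yields
\[
(1-\varepsilon)\,|X|^n e^{-n(S(A)+\varepsilon)} \;\leq\; |A^n_\varepsilon| \;\leq\; |X|^n e^{-n(S(A)-\varepsilon)},
\]
with $|X|^n$, not the $|A|^n$ printed in the statement. Your parenthetical that $|X|^n \geq |A|^n$ does rescue the \emph{lower} bound (shrinking the constant is harmless), but it runs the wrong way for the \emph{upper} bound: replacing $|X|^n$ by the smaller $|A|^n$ would \emph{strengthen} the inequality, not follow from it. In fact the printed upper bound with $|A|^n$ is false as stated: take $|A|=1$ and $|X|\geq 2$, so $S(A)=\ln|X|$, $A^n_\varepsilon=A^n$ has one element, yet $|A|^n e^{-n(S(A)-\varepsilon)}=e^{-n(\ln|X|-\varepsilon)}\to 0$. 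The bound your argument actually proves---the one with $|X|^n$, equivalently $|A^n_\varepsilon|\leq e^{n(H(p)+\varepsilon)}$ via $S(A)=\ln|X|-H(p)$---is the correct AEP estimate. So your proof is sound; the discrepancy is a typo in the stated theorem, not a gap in your reasoning.
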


\

\begin{defn}{\em
Let $\ (X_1,A_1,f_1,\alpha_1) \ $ and $\ (X_2,A_2,f_2,\alpha_2)\ $ be micro-macro dynamical systems. The disjoint union micro-macro dynamical system is given by
$\ (X_1\sqcup X_2,A_1\sqcup A_2,f_1\sqcup f_2,\alpha_1\sqcup  \alpha_2). $ The map $\ \sqcup: \mathrm{mmds} \times \mathrm{mmds}  \longrightarrow  \mathrm{mmds} \ $ is functorial. }
\end{defn}

Disjoint union functor  induces a disjoint union functor on $\ \mathrm{immds}, \ $  which can be naturally extended
to $\ \mathrm{rmmds}\ $ so that the functors $\ i: \mathrm{immds} \longrightarrow \mathrm{mmds}, \ $
$ i_{\ast}: \mathrm{mmds} \longrightarrow \mathrm{immds},  $  $\ u: \mathrm{rmmds} \longrightarrow \mathrm{immds}, \ $
$\ IR: \mathrm{immds} \longrightarrow \mathrm{irmmds}, \ $ $\ ER: \mathrm{immds} \longrightarrow \mathrm{ermmds}, \ $
and $\ \mathrm{inv}: \mathrm{immds} \longrightarrow \mathrm{immds} \ $ preserve disjoint unions.

\begin{prop}
{\em The following identities hold for  $ (X_1\sqcup X_2,A_1\sqcup A_2,f_1\sqcup f_2,\alpha_1\sqcup  \alpha_2) :$
\begin{enumerate}
  \item $S_{A_1\sqcup A_2}(a)  =  S_{A_1}(a) \ $   if $ \ a \in A_1; \ \ $
$S_{A_1\sqcup A_2}(a)  =  S_{A_2}(a) \ $   if $ \ a \in A_2.\ $
  \item $ p_{A_1\sqcup A_2}(a)  = \frac{|X_1|}{|X_1|+|X_2|}p_{A_1}(a) \ $ \ if $ \ a \in A_1; \ \ $
$ p_{A_1\sqcup A_2}(a)  = \frac{|X_2|}{|X_1|+|X_2|}p_{A_2}(a) \ $ \ if $ \ a \in A_2. \ \ $
  \item $S(A_1 \sqcup A_2)   =   \frac{|X_1|}{|X_1|+|X_2|}S(A_1)  +  \frac{|X_2|}{|X_1|+|X_2|}S(A_2) .$
  \item $T^{A_1\sqcup A_2}_{ab} = T^{A_1}_{ab} \ $ if $ \ a,b \in A_1; \ \ $ $T^{A_1\sqcup A_2}_{ab} = T^{A_2}_{ab} \ $ if $ \ a,b \in A_2; \ \ $ $T^{A_1\sqcup A_2}_{ab} = 0 \ $ otherwise.
\item $H(T_{A_1 \sqcup A_2})  =   \frac{|X_1|}{|X_1|+|X_2|}H(T_{A_1})  +   \frac{|X_2|}{|X_1|+|X_2|}H(T_{A_2}) .$
\end{enumerate}
}
\end{prop}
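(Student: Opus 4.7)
The plan is to verify each of the five identities by unpacking the definitions, since the disjoint union construction on micro-macro dynamical systems interacts cleanly with each of the quantities involved. The key observation, used implicitly throughout, is that for the preimage map of $f_1 \sqcup f_2 : X_1 \sqcup X_2 \to A_1 \sqcup A_2$ we have $(f_1 \sqcup f_2)^{-1}(a) = f_1^{-1}(a)$ for $a \in A_1$ and $(f_1 \sqcup f_2)^{-1}(a) = f_2^{-1}(a)$ for $a \in A_2$, so the block sizes are preserved. This immediately gives item 1, since $S_{A_1 \sqcup A_2}(a) = \ln|a|$ agrees with $S_{A_i}(a)$ on each side.

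For item 2, I would just compute $p_{A_1\sqcup A_2}(a) = |a|/(|X_1|+|X_2|)$ for $a \in A_1$, then factor the denominator as $\frac{|X_1|}{|X_1|+|X_2|}\cdot \frac{|a|}{|X_1|}$, and similarly for $a \in A_2$. Item 3 then follows by splitting the defining sum $S(A_1\sqcup A_2) = \sum_{a \in A_1 \sqcup A_2} S(a) p(a)$ along $A_1$ and $A_2$ and substituting items 1 and 2; the weights $\frac{|X_i|}{|X_1|+|X_2|}$ pull out as common factors.

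For item 4, the crucial fact is that $\alpha_1 \sqcup \alpha_2$ preserves the partition $X_1, X_2$; therefore for $b \in A_1$ no microstate of $b$ maps into any macrostate $a \in A_2$, so $T^{A_1 \sqcup A_2}_{ab} = 0$ when $a$ and $b$ lie in different components, while if both are in $A_i$ the counting $|\{i \in b \mid \alpha(i) \in a\}|/|b|$ reduces literally to the $A_i$ value.

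Finally, item 5 is a direct substitution: plug the block form of $T^{A_1\sqcup A_2}_{ab}$ from item 4 into the definition $H(T) = -\sum_{a,b}\ln(T_{ab})T_{ab}p_b$, noting that the cross terms vanish because $T_{ab}=0$ there, then replace $p_b$ by $\frac{|X_i|}{|X_1|+|X_2|}p_{A_i}(b)$ via item 2 on each remaining block-diagonal sum. The weight factors $\frac{|X_i|}{|X_1|+|X_2|}$ again come out as common factors, yielding the claimed convex combination. No step here is an obstacle; if any needs care it is item 5, where one must track that the vanishing of cross terms uses the convention $0\ln 0 = 0$ so that no ill-defined terms appear in the Shannon sum.
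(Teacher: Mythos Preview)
Your proposal is correct; each item is a routine verification from the definitions, and your handling of item 5 (including the $0\ln 0 = 0$ convention for the vanishing cross terms) is the right thing to note. The paper states this proposition without proof, so there is no argument to compare against; your write-up would serve as a complete proof.
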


\begin{defn}\label{sub}
{\em
Let $ \ (X,A,f,\alpha)\ $ be an invertible micro-macro dynamical system and $\ Z\ $ be a subset of $\ X. \ $
The restriction of $ \ (X,A,f,\alpha)\ $ to
$\ Z \ $ is the invertible micro-macro dynamical system
$\ (Z,f(Z),f_Z,\alpha_Z) \ $ such that
 $\ f_Z: Z \longrightarrow f(Z)\ $ is the restriction to $\ Z \ $ of $\ f , \ $ and
  the bijective map $\ \alpha_Z: Z \longrightarrow Z \ $ is constructed as follows: for $\ i \in Z \ $ find the smallest
  $\ l\in \mathbb{N}_{>0} \ $ such that $\ \alpha^{l}(i) \in Z \ $ and set $\ \alpha_Z(i) =  \alpha^{l}(i).$
}
\end{defn}

The restriction construction can be applied to reversible systems  $ \ (X,A,f,\alpha, r)\ $ as follows. Let $ Z $ be a subset of $X$ closed under $ r, $ then $\ (Z,f(Z),f_Z,\alpha_Z, r_Z) \ $ is a reversible micro-macro dynamical system. Suppose that $\ \alpha_Z(i) =   \alpha^l(i) =  j, \ $ with $\ l > 0 \  $ as small as possible. Note that the identities $\alpha^l(i)=j$ and $\alpha^l (rj)=ri$ are equivalent, the former identity implies the latter since $\alpha^l(rj)=r(r\alpha r)^l(j)=r\alpha^{-l}(j)=ri,$  the other implication is similar.
Therefore we conclude that  $r\alpha_Z(rj) = r\alpha^l(rj) =  i =  \alpha_Z^{-1}(j). \ $

\begin{thm}\label{j2l}
{\em Let $\ (Z, \pi_Z, \alpha_Z) \ $ be the restriction to $\ Z \subseteq X \ $ of the invertible micro-macro dynamical system
$ \ (X,\pi,\alpha).\ $ The maximum entropy of \ $\ (Z, \pi_Z, \alpha_Z) \ $  is less than or equal to
the maximum entropy of $ \ (X,\pi,\alpha). \ $
 }
\end{thm}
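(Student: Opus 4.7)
The plan is to observe that the maximum Boltzmann entropy of a micro-macro dynamical system depends only on its underlying partition structure, not on the dynamics, so the map $\alpha_Z$ plays no role whatsoever in the argument. First, I would recall that by definition the Boltzmann entropy of a macrostate is $S(a) = \ln|a|$, so the maximum entropy of $(X,\pi,\alpha)$ is $\max_{a\in\pi}\ln|a|$, achieved on the equilibrium macrostate(s) in $\pi^{\mathrm{eq}}$.

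Next I would unpack what the partition $\pi_Z$ looks like. Since $f_Z: Z \longrightarrow f(Z)$ is the restriction of $f$ to $Z$, the blocks of $\pi_Z$ are exactly the non-empty sets of the form $f^{-1}(a) \cap Z$ for $a \in f(Z) \subseteq \pi$. The crucial inequality is then immediate:
\[
|f^{-1}(a) \cap Z| \ \leq \ |f^{-1}(a)| \ = \ |a| \qquad \text{for each } a \in f(Z).
\]

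Taking logarithms, every block $b \in \pi_Z$ satisfies $S_Z(b) = \ln|f^{-1}(a) \cap Z| \leq \ln|a| \leq \max_{a'\in\pi}\ln|a'|$, and maximizing the left-hand side over $b \in \pi_Z$ yields the desired inequality between maximum entropies. There is no real obstacle here; the only step worth flagging is noting that $\pi_Z$ consists of the intersections of blocks of $\pi$ with $Z$ (this is what guarantees the cardinality comparison), and that equality holds precisely when some equilibrium block of $\pi$ happens to be contained in $Z$.
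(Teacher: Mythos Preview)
Your proof is correct and follows essentially the same approach as the paper: both identify the blocks of $\pi_Z$ as the non-empty intersections $a\cap Z$ for $a\in\pi$, use the trivial bound $|a\cap Z|\leq|a|$, and take logarithms and maxima. The paper compresses this into a single chain of (in)equalities, while you spell out the steps and add the observation about when equality holds.
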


\begin{proof}
$\ \ \underset{i \in Z}{\mathrm{max}}\ S(i) \ = \  \underset{a \in \pi_Z}{\mathrm{max}}\ \mathrm{ln}|a| \ = \
\underset{a \in \pi, a \cap Z \neq \emptyset}{\mathrm{max}}\ \mathrm{ln}|a \cap Z| \ \leq \
 \underset{a \in \pi}{\mathrm{max}}\ \mathrm{ln}|a| \ = \  \underset{i \in X}{\mathrm{max}}\ S(i).$
\end{proof}

\

\begin{defn}{\em Let $\ (X,A,f,\alpha)\ $ be a micro-macro dynamical systems and  $\ g:A \longrightarrow B \ $ be  a surjective map.
The associated coarse-grained micro-macro dynamical system is given by  $\ (X,B,gf,\alpha). \ $
Coarse-graining is also naturally  defined for reversible systems, preserving
invariant reversible systems since $\ fr=f\ $ implies  $\ (gf)r=gf, \ $ and preserving equivariant
reversible systems if $\ B \ $ comes with an involution $\ r:B \longrightarrow B \ $ such that $ \ gr=rg,\ $ since
in this case  $\ (gf)r=g(rf)=r(gf). \ $

}
\end{defn}

\begin{prop}\label{cogra}
{\em Let $\ (X,B,gf,\alpha) \ $ be the coarse-grained micro-macro dynamical system obtain
from $(X,A,f,\alpha) $ and  $ g:A \longrightarrow B.  $ We have that  $\ S(B)  \geq  S(A) , \   $
$\ H(p_B)  \leq  H(p_A),  $ $ \  H(T_B)- S(B) \leq H(T_A) - S(A), \ $ and
$ \    H(p_B) + H(T_B)  \leq       H(p_A) + H(T_A).$
}
\end{prop}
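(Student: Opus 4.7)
The plan is to reduce all four inequalities to a single monotonicity fact about Shannon entropy under coarse-graining of probability distributions, applied to an appropriately chosen joint distribution on $A \times A$ (resp. $B \times B$).

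First I would dispatch items 1 and 2. For item 1, write $S(A) = \frac{1}{|X|}\sum_{i \in X} S_A(i)$ and $S(B) = \frac{1}{|X|}\sum_{i \in X} S_B(i)$. For each microstate $i \in X$, if $f(i) = a$ and $g(a) = b$, then $|b| = \sum_{a' \in g^{-1}(b)} |a'| \geq |a|$, so $S_B(i) = \ln|b| \geq \ln|a| = S_A(i)$. Summing over $i$ gives $S(B) \geq S(A)$. Item 2 then follows immediately from the identity $H(p_A) + S(A) = \ln|X| = H(p_B) + S(B)$ recalled in the remark after the entropy definitions.

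For items 3 and 4, define the joint distributions on $A \times A$ and $B \times B$ by
\[
P^A(a',a) \ = \ T^A_{a'a} p_a \ = \ \frac{|\{i \in a \mid \alpha(i) \in a'\}|}{|X|}, \qquad P^B(b',b) \ = \ T^B_{b'b} p_b.
\]
A brief calculation gives $H(p_A) + H(T_A) = -\sum_{a,a'} \ln(T^A_{a'a} p_a) P^A(a',a) = H(P^A)$, and similarly on the $B$-side. Using $H(p) = \ln|X| - S(A)$, both of the remaining inequalities are equivalent to $H(P^B) \leq H(P^A)$, so it suffices to establish this one inequality.

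The key observation, and the step that does the real work, is that $P^B$ is the pushforward of $P^A$ along the map $g \times g : A \times A \to B \times B$, since
\[
P^B(b',b) \ = \ \sum_{\substack{a \in g^{-1}(b) \\ a' \in g^{-1}(b')}} P^A(a',a).
\]
The desired $H(P^B) \leq H(P^A)$ is then the standard fact that Shannon entropy is non-increasing under coarse-graining of the sample space; this follows from the elementary inequality $-(x+y)\ln(x+y) \leq -x\ln x - y\ln y$ for $x,y \geq 0$, applied inductively to merge the fibers of $g \times g$. Neither of the remaining steps presents an obstacle; the only delicate point is making sure the joint-distribution identity $H(p) + H(T) = H(P)$ is assembled correctly from the definitions so that both items 3 and 4 reduce to the single monotonicity statement above.
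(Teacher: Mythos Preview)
Your proof is correct. For items 1 and 2 you argue essentially as the paper does. For items 3 and 4 your route is genuinely different and, in my view, cleaner: you recognise that $H(p_A)+H(T_A)$ is exactly the Shannon entropy $H(P^A)$ of the joint distribution $P^A(a',a)=T^A_{a'a}p_a=\frac{1}{|X|}\,|\{i\in a\mid \alpha(i)\in a'\}|$, observe that $P^B$ is the pushforward of $P^A$ along $g\times g$, and invoke the standard monotonicity of Shannon entropy under merging outcomes. This reduces items 3 and 4 to a single line once the joint-entropy identity is checked.

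The paper instead attacks item 3 by a direct computation: it expands $T_{b_2b_1}$ as a sum over the fibres $g^{-1}(b_1)\times g^{-1}(b_2)$, applies the same elementary inequality $-(\sum c_k)\ln(\sum c_k)\le -\sum c_k\ln c_k$ to bound $H(T_B)$ by a double sum, and then splits $\ln\bigl(\tfrac{|\{i\in a_1:\alpha(i)\in a_2\}|}{|b_1|}\bigr)$ into $\ln\bigl(\tfrac{|\{i\in a_1:\alpha(i)\in a_2\}|}{|a_1|}\bigr)+\ln\tfrac{|a_1|}{|b_1|}$ to extract $H(T_A)+S(B)-S(A)$ by hand; item 4 is then deduced from item 3 via $H(p)=\ln|X|-S$. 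The two arguments rest on the same underlying convexity fact, but your packaging makes the structure transparent and avoids the bookkeeping that the paper carries out explicitly.
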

\begin{proof}
$$S(B)\  =  \ \sum_{b\in B}\mathrm{ln}|b|\ p_b \  =  \ \sum_{b\in B}\sum_{g(a)=b}\mathrm{ln}(\sum_{g(a)=b}|a|)\frac{|a|}{|X|}
\  \geq $$
$$\sum_{b\in B}\sum_{g(a)=b}\mathrm{ln}|a|\ \frac{|a|}{|X|} \ =  \ \sum_{a \in A}\mathrm{ln}|a| \ \frac{|a|}{|X|} \  =
\  \sum_{a\in A}\mathrm{ln}|a|\ p_a \  = \ S(A),$$
thus $\ \ H(p_B) =  \mathrm{ln}|X| - S(B)  \leq  \mathrm{ln}|X| - S(A)  =  H(p_A) . \ $
We have that
$$T_{b_2b_1}  \ =  \ \frac{|\{i \in b_1 \ | \ \alpha(i) \in b_2 \}| }{|b_1|} \  =  \ \sum_{f(a_1)=b_1,\ g(a_2)=b_2}\frac{|\{i \in a_1 \ | \ \alpha(i) \in a_2 \}|}{|b_1|}.$$ Therefore
$$H(T_B) \ \leq  \  -\sum_{b_1,b_2 \in B}\sum_{f(a_1)=b_1,\ g(a_2)=b_2}\mathrm{ln}(\frac{|\{i \in a_1 \ | \ \alpha(i) \in a_2 \}|}{|b_1|})\frac{|\{i \in a_1 \ | \ \alpha(i) \in a_2 \}|}{|b_1|}\frac{|b_1|}{|X|}  \ =$$
$$-\sum_{a_1,a_2 \in A}\mathrm{ln}(\frac{|\{i \in a_1 \ | \ \alpha(i) \in a_2 \}|}{|a_1|} \frac{|a_1|}{|b_1|})\frac{|\{i \in a_1 \ | \ \alpha(i) \in a_2 \}|}{|b_1|}\frac{|b_1|}{|X|} \ = $$
$$-\sum_{a_1,a_2 \in A}\mathrm{ln}(\frac{|\{i \in a_1 \ | \ \alpha(i) \in a_2 \}|}{|a_1|}) \frac{|\{i \in a_1 \ | \ \alpha(i) \in a_2 \}|}{|a_1|}\frac{|a_1|}{|X|} \ \ + $$
$$-\sum_{a_1,a_2 \in A}\mathrm{ln}(\frac{|a_1|}{|b_1|})\frac{|\{i \in a_1 \ | \ \alpha(i) \in a_2 \}|}{|b_1|}\frac{|b_1|}{|X|} \  = $$
$$H(T_A) -   \sum_{a_1\in A}\mathrm{ln}|a_1|\frac{|a_1|}{|X|} \ + \
\sum_{a_1\in A}\mathrm{ln}|b_1|\frac{|a_1|}{|X|} \ =  \ H(T_A)+ S(B)- S(A).$$
Thus we get that
$$H(T_B) +  H(p_B) =    H(T_B)  +  \mathrm{ln}|X|  - S(B)    \leq  H(T_A)  +  \mathrm{ln}|X|   -  S(A)   =   H(T_A)  +  H(p_A). $$
\end{proof}

 Given a micro-macro dynamical system $\ (X,A,f,\alpha)\ $
we let $\ (X,A_{e},f_{e},\alpha) \ $ be the micro-macro dynamical system   with a unique equilibrium macrostate where
 $ \  A_{e}  =  A\setminus A^{\mathrm{eq}} \sqcup \{e\},  \ $ and $\ f_e \ $ is given by
$\ f_{e}(i)=f(i) \ $   if $\ i\notin X^{\mathrm{eq}}, \ $ and   $ \  f_{e}(i)=e \ $ if $ \  i \in X^{\mathrm{eq}}. \  $
Let $\ (X,\{n,e\}, p,\alpha) \ $ be the micro-macro dynamical system  with a unique equilibrium  and a unique non-equilibrium macrostates,
 with  $\ p \ $ given by $ \  p(i)=e \ $ if $ \ i\in X^{\mathrm{eq}}, \ $  and $ \  p(i)=n \ $ if $\ i \notin X^{\mathrm{eq}}. \ $
 The systems $\ (X,A,f,\alpha)\ $ and  $ \ (X,A_{e},f_{e},\alpha) \ $ have, respectively, the same number
 of (non) equilibrium microstates, microstates with strict increase, strict decrease, and constant entropy.
 Thus $\ (X,A,f,\alpha) \in \mathrm{L}_1(\varepsilon_1)\  $ if and only if
$\ (X,A_{e},f_{e},\alpha) \in \mathrm{L}_1(\varepsilon_1). \ $
We have that $\ S(\{n,e\}) \geq  S(A_e) \geq   S(A), \ $ and
 $\ |DX^{\mathrm{eq}}| \geq  (1 - \varepsilon_2)|X^{\mathrm{neq}}| \ $  if and only if
$\ |De| \geq  (1 - \varepsilon_2)|n|  .$

\begin{defn}{\em
Let $\ (X,A,f) \ $ and $\ (X,B,g)\ $ be micro-macro phase spaces. The meet micro-macro phase space is given by
$\ (X,(f,g)X,(f,g))  \ $
where $\ (f,g)X \ $ is the image of the map $\ (f,g):X \longrightarrow A\times B. \ $
The joint micro-macro phase space is given by
$ \ (X,A \sqcup_X B,i_A f), \ $ where
the amalgamated sum $\ A \sqcup_X B \ $ is the quotient of $\ A \sqcup B \ $ by the relation generated by
$\ f(x) \sim g(x) \ $ for $ \ x \in X.  $
}
\end{defn}

\begin{prop}{\em
$\ S(A)  \geq  S((f, g)X), \ $ $ \  S(B)  \geq  S((f, g)X), \  $
$\ S(A \sqcup_X B) \geq  S(A), \ $ and $ \  S(A \sqcup_X B)  \geq  S(B).$}
\end{prop}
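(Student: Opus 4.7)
The plan is to reduce all four inequalities to a single application of Proposition \ref{cogra}, since that proposition already tells us that coarse-graining cannot decrease Boltzmann entropy. The main point to verify is that the meet sits as a \emph{refinement} of both $(X,A,f)$ and $(X,B,g)$, while the joint sits as a \emph{coarsening} of both; after that, the inequalities are immediate.

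First I would handle the meet. Let $\pi_A : (f,g)X \longrightarrow A$ and $\pi_B : (f,g)X \longrightarrow B$ be the restrictions of the two projections $A\times B \longrightarrow A$ and $A\times B \longrightarrow B$. These are surjective: given $a\in A$, by surjectivity of $f$ pick $x\in X$ with $f(x)=a$, then $(a,g(x))\in (f,g)X$ maps to $a$, and similarly for $\pi_B$. By construction $\pi_A\circ(f,g)=f$ and $\pi_B\circ(f,g)=g$, so $(X,A,f)$ is the coarse-graining of the meet $(X,(f,g)X,(f,g))$ along $\pi_A$, and $(X,B,g)$ is its coarse-graining along $\pi_B$. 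Proposition \ref{cogra} then gives $S(A)\geq S((f,g)X)$ and $S(B)\geq S((f,g)X)$.

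Next I would handle the joint. Let $q:A\sqcup B \longrightarrow A\sqcup_X B$ be the quotient map defining the amalgamated sum, and set $q_A = q\circ i_A$ and $q_B = q\circ i_B$, where $i_A,i_B$ are the canonical inclusions into $A\sqcup B$. The map $q_A$ is surjective: any class $[c]\in A\sqcup_X B$ either already contains an element of $A$, or has $c\in B$, in which case surjectivity of $g$ provides $x\in X$ with $g(x)=c$, and then $f(x)\sim g(x)$ gives $[c]=[f(x)]=q_A(f(x))$; the argument for $q_B$ is symmetric. Since the structural map $i_A f$ of the joint equals $q_A\circ f$ (and equals $q_B\circ g$ by the defining relation), the joint $(X,A\sqcup_X B,i_A f)$ is a coarse-graining of $(X,A,f)$ along $q_A$ and of $(X,B,g)$ along $q_B$. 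Proposition \ref{cogra} then delivers $S(A\sqcup_X B)\geq S(A)$ and $S(A\sqcup_X B)\geq S(B)$.

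There is no real obstacle; the only step that needs a line of justification is the surjectivity of $q_A$ and $q_B$, which is where the surjectivity hypothesis on $f$ and $g$ built into the definition of a micro-macro phase space actually gets used. Everything else is a direct invocation of Proposition \ref{cogra}.
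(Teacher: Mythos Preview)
Your proof is correct and follows essentially the same route as the paper: both reduce all four inequalities to Proposition \ref{cogra} via the factorizations $f=\pi_A\circ(f,g)$, $g=\pi_B\circ(f,g)$, and $i_Af=i_Bg$, together with surjectivity of the relevant maps. You simply spell out the surjectivity checks in more detail than the paper does.
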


\begin{proof}The result follows since entropy grows under coarse-graining (Proposition \ref{cogra}), the identities
$\ f  =  \pi_A(f,g), \ g = \pi_B(f,g), \ i_Af  =  i_Bf \ $ and  the fact that the maps
$\ i_A:A \longrightarrow A \sqcup_X B \ $ and  $\ i_B:B \longrightarrow A \sqcup_X B\ $ are surjective.

\end{proof}

Next we introduce five general constructions of micro-macro phase spaces. \\

\noindent $\mathrm{I}.  \ $ Let $  (X, A, f, \alpha) $ be a micro-macro dynamical system. For $  n \in \mathbb{N}_{\geq 1}  $ we construct micro-macro phase spaces  $\ (X, A^n, \gamma_n) \ $ useful for  understanding macrostates transitions in $ \ (X, A, f, \alpha). \ $
The map $\ \gamma_n: X \longrightarrow A^n \ $ is given by $ \ \gamma_n(i)  =   ( fi,  f\alpha(i), f\alpha^2(i),  ...  ,  f\alpha^{n-1}(i) ) .\ $
The entropy $  \ S(a_1,...,a_n) \ $ of  macro-state $\ (a_1,...,a_n) \in A^n \ $ is the logarithm of the number of micro-realizations
of the transitions $\  a_1  \rightarrow  \cdots  \rightarrow a_n \ $
through the $\alpha$-dynamics.\\

\noindent $\mathrm{II}. \  $ Given a finite set $ B $ (boxes) and $ k \in \mathbb{N} $ we let $\ \mathbb{N}^{B}_k \ $ be the set of $\mathbb{N}$-valued measures on $ B $ of total measure $k, $ that is
$\ \displaystyle \mathbb{N}^{B}_k    =   \{  w:B \longrightarrow \mathbb{N}  \  | \ \sum_{b \in B}w(b) =  k  \}. \  $
Given another finite set $ P $ (particles) we let $\ [P,B] \ $ be the set of maps from $\ P \ $ to $\ B \  $ (particles to boxes).
We obtain the micro-macro phase space $ \  ([P,B], \ \mathbb{N}^{B}_{|P|}, \ c  )   \ $
where the surjective map $\ c:[P,B] \longrightarrow  \mathbb{N}^{B}_{|P|}\ $ sends  $ f  $ to the measure $\ c_f  \ $ given by $ \ c_f(b) =  |f^{-1}(b)|.\ $ We study the two-boxes case  in Example \ref{ss}.\\

\noindent $\mathrm{III}. \  $  $ S_P \ $ acts on $\ [P,B] \ $ by $\ (\alpha f)(p)  =  f(\alpha^{-1}(p)). \ $  $\ S_B\ $ acts respectively on $\ [P,B] \ $ and on   $\ \mathbb{N}^{B}_k \ $  by $ \  (\beta f)(b) =  \beta f(b)\ \ \mbox{and} \ \ (\beta c) (b)  =  c(\beta^{-1}(b)). \  $
We have that $$ c_{\beta f \alpha^{-1}}( b)  \  =  \  |(\beta f \alpha^{-1})^{-1}( b)|  \  =  \
 |\alpha^{-1}(f^{-1}(\beta^{-1} b))|  \  =  \ |f^{-1}(\beta^{-1} b)|  \  =  \ (\beta c_f)(b) .$$
Let $\ G_P \subseteq  S_P\ $ and  $\ G_B  \subseteq  S_B\ $ be subgroups, and $\ M(P,B)  \subseteq  [P,B]\ $ be
 invariant under the action of $\ G_P \times G_B\ $ on $\ [P,B]. \ $ From the identities above we get the micro-macro phase space
$ \  (M(P,B)/G_P\times G_B,\ \mathbb{N}^{B}_k/G_B,\  c ). $

\

Several instances of this construction, attached to illustrious names, have been study in the literature.
Niven  \cite{ni, ni2} considers the following cases:
\begin{itemize}
  \item Maxwell-Boltzmann statistics: $\ M(P,B)=[P,B], \ \ G_P=1,\ \ G_B=1.$
  \item  Lynden-Bell statistics: $\ M(P,B)=\mathrm{Inj}(P,B), \ \ G_P=1,\ \ G_B=1.$
  \item  Bose-Einstein statistics: $\ M(P,B)=[P,B], \ \ G_P=S_P,\ \ G_B=1.$
 \item $m$-gentile statistics:  $\ M(P,B)=\{f\in [P,B] \ | \ |f^{-1}b|\leq m \ \}, \ \ G_P=S_P,\ \ G_B=1.$
  \item Fermi-Dirac statistics: $\ M(P,B)=\mathrm{Inj}(P,B), \ \ G_P=S_P,\ \ G_B=1.$
  \item DI statistics: $\ M(P,B)=[P,B], \ \ G_P=1,\ \ G_B=S_B.$
  \item II statistics: $\ M(P,B)=[P,B], \ \ G_P=S_P,\ \ G_B=S_B.$
\end{itemize}
An unifying aim of these studies has been  finding the macrostate of greatest entropy. Other instances of this fairly general construction are yet to be explored. \\

\noindent $\mathrm{IV}.\  $ With the notation of example III \ let $\ \mathrm{prob}_B \ $ be space of
probability distributions on
 $\ B, \ $ $\ \mathbb{N}_k^{B} \longrightarrow \mathrm{prob}_B\ $ be the normalization map,  and
$\ \mathbb{N}^{B}_k/G_B \longrightarrow \mathrm{prob}_B/G_B\ $ be the induced map.
Let  $\ \mathrm{prob}_B \longrightarrow A \ $ be a $\ G_B$-invariant map with $\ A \ $
a finite set,  and  $\ \mathrm{prob}_B/G_B \longrightarrow A \ $ be the induced map.
Consider the composition map $\ f \ $ obtained from the chain of maps
$$\ M(P,B)/G_P\times G_B \longrightarrow \mathbb{N}^{B}_k/G_B \longrightarrow \mathrm{prob}_B/G_B  \longrightarrow A.\ $$
 We have constructed a micro-macro phase space $\ (M(P,B)/G_P\times G_B, A, f). \ $\\

\noindent $\mathrm{V}.\   $ Our last construction relies on a  generalized version of
the theory of combinatorial species \cite{berg,  di2, dc, di} where $\ \mathrm{surj}, \ $ the category
 of finite sets and surjective maps, plays the role usually reserved  for the category
of finite sets and bijections. Given functor $\ F:\mathrm{surj} \longrightarrow \mathrm{surj}, \ $
 we obtain the map
$ \  \widehat{F}:\mathrm{mmds} \longrightarrow \mathrm{mmds}  \ $ given by
$ \ \widehat{ F}(X,A,f, \alpha)  =  (FX, FA, Ff, F\alpha), \ $
acting functorially on surjective morphisms in $\ \mathrm{mmds}.\ $
For example,  we have functors $\ \mathrm{P},  G, L, \mathrm{Par }: \mathrm{surj}
 \longrightarrow \mathrm{surj} \ $ sending a set $\ X \ $ to the set $\ \mathrm{P}X \ $ of subsets
 of $\ X, \ $ the set $\ GX\ $ of simple graphs
on $ \ X,\ $ the set $\ LX\ $ of linear orderings on $\ X,\ $ and the set $\ \mathrm{Par}X\ $ of partitions on $\ X,\ $
respectively. Moreover, given functors $\ F,G: \mathrm{surj} \longrightarrow \mathrm{surj}\ $ we build new such
functors using the following natural operations:
$$\ (F + G)(x) = F(x) \sqcup G(x), \ \ \ \ \ \ (F \times G)(x) = F(x) \times G(x), \ $$
$$FG(x)= \bigsqcup_{a \cup b=x}F(a) \times G(b) , \ \ \ \ \ \  F \circ G(x)= F(G(x)),$$
$$\ F(G)(x)=\bigsqcup_{\pi \in \mathrm{Par}(x)}F(\pi)\times \prod_{a \in \pi}G(a) ,\ $$
in the latter case we set $\ F(G)(\emptyset)= F(\emptyset)=G(\emptyset)=\emptyset \ $ and assume
that $\ G \ $ is monoidal, i.e. it comes with  functorial (under bijections) maps $\ G(a)\times G(b) \longrightarrow G(a \sqcup b)\ $
satisfying natural associativity constraints. Note that the functor $\ \widehat{F} \ $ can be extended to reversible systems
yielding the map $ \  \widehat{F}:\mathrm{rmmds} \longrightarrow \mathrm{rmmds}  \ $ given by
$ \ \widehat{ F}(X,A,f, \alpha,r)  =  (FX, FA, Ff, F\alpha, Fr)  \ $ acting functorially on surjective morphisms
in $\ \mathrm{rmmds} .$

\section{Always Increasing Entropy on Invertible Systems}\label{fsl}

In this section we consider invertible micro-macro dynamical systems for which entropy is always increasing,
i.e. those systems for which property $\ \mathrm{L}_1(0)\ $ holds. Although we are going to show that
this case occurs with low probability, Theorem \ref{parfi}, we develop it in details to illustrate  the duality principle described in the introduction, see Theorem \ref{un}.
We first show a combinatorial analogue of Zermelo's observation of the tension between
recurrence and the second law.

\begin{prop}\label{csod}
{\em Let $\ (X,A,f,\alpha)\ $ be an invertible micro-macro dynamical system. Entropy is always increasing if and only if entropy
 is constant on $\ \alpha$-orbits.
}
\end{prop}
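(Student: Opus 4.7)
The plan is to unfold the definition of "entropy always increasing" and then exploit the finiteness of $\alpha$-orbits in the invertible case. Property $\mathrm{L}_1(0)$ means $|D|/|X|\leq 0$, hence $D=\emptyset$, which in turn says $S(\alpha(i))\geq S(i)$ for every $i\in X$ (recall that throughout this paper "increasing" is understood in the weak sense).

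The reverse implication is trivial: if $S$ is constant on each $\alpha$-orbit, then in particular $S(\alpha(i))=S(i)$ for all $i\in X$, so $D=\emptyset$ and property $\mathrm{L}_1(0)$ holds.

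For the forward implication, fix $i\in X$ and use that $\alpha$ is a bijection on the finite set $X$, so the $\alpha$-orbit of $i$ is a finite cycle; pick $n\geq 1$ with $\alpha^n(i)=i$. Iterating the hypothesis $S(\alpha(j))\geq S(j)$ along the cycle yields
$$ S(i) \ \leq \ S(\alpha(i)) \ \leq \ S(\alpha^2(i)) \ \leq \ \cdots \ \leq \ S(\alpha^n(i)) \ = \ S(i), $$
forcing equality throughout. Hence $S$ takes a single value on the orbit of $i$, and since $i$ was arbitrary, $S$ is constant on every $\alpha$-orbit.

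There is really no obstacle here; the only content is the elementary observation that a weakly increasing sequence that returns to its starting value must be constant, combined with the fact that invertibility plus finiteness of $X$ forces every orbit to close up. The result thus plays the role of a combinatorial Zermelo-type obstruction: strict increase of entropy is incompatible with the Poincar\'e recurrence built into invertible finite dynamics.
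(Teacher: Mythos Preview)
Your proof is correct and follows essentially the same argument as the paper: both directions are handled identically, with the forward implication using the chain of inequalities $S(i)\leq S(\alpha(i))\leq\cdots\leq S(\alpha^l(i))=S(i)$ along a finite cycle to force all terms equal. Your added remarks unpacking $\mathrm{L}_1(0)$ and the Zermelo interpretation are fine but not needed for the proof itself.
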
\label{ce}
\begin{proof}
If $\ S \ $ is constant on the orbits of $\ \alpha, \ $ then $\ S(i) =  S(\alpha(i))\ $ for all $\ i \in X\ $ and
thus  entropy is always increasing. Conversely, if entropy is always increasing then  $$S(i) \leq  S(\alpha(i))  \leq \cdots \leq  S(\alpha^l(i))  =  S(i),$$
where $l$ is the cardinality of the $\alpha$-orbit of $\ i. \ $ Thus the inequalities above are  identities.
\end{proof}

\begin{cor}{\em Let $(X,A,f,\alpha) $ be an invertible micro-macro dynamical system.
 The induced stochastic map $\ T: A \longrightarrow A \ $ is experimentally reproducible, in Jaynes' sense, if and only
 if there is a permutation $ t:A \longrightarrow A  $ such that $ \ T_{ab} =  \delta_{at(b)} \ $
 and $ \ S(t(a)) =  S(a) \ $ for $\ a \in A.$
}
\end{cor}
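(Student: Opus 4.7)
The plan is to split the equivalence and observe that only one direction has any content.

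The reverse direction is immediate: if $t:A\longrightarrow A$ is a permutation with $T_{ab}=\delta_{a,t(b)}$, then in particular $T_{t(b),b}=1$ for every $b$, so the stochastic map is (fully) deterministic and hence experimentally reproducible in Jaynes' strict sense.

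For the forward direction, assume there is a map $t:A\longrightarrow A$ with $T_{t(b),b}=1$ for all $b\in A$. Since $\sum_a T_{ab}=1$ this forces $T_{ab}=\delta_{a,t(b)}$. Unpacking the definition of $T$, the equality $T_{t(b),b}=1$ says that $|\{i\in b\ |\ \alpha(i)\in t(b)\}|=|b|$, i.e.\ $\alpha(b)\subseteq t(b)$ for every block $b$. I would then compute the preimage $\alpha^{-1}(a)$ for each $a\in A$: if $t(b)=a$ then $b\subseteq \alpha^{-1}(a)$; if $t(b)\neq a$ then $a$ and $t(b)$ are disjoint blocks of the partition and $\alpha(b)\subseteq t(b)$ gives $b\cap \alpha^{-1}(a)=\emptyset$. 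Hence
$$\alpha^{-1}(a)\ =\ \bigsqcup_{b\in t^{-1}(a)} b.$$

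The main (and essentially only) step is to use the invertibility of $\alpha$ to extract injectivity of $t$ from this decomposition. Taking cardinalities, and using that $\alpha^{-1}$ is a bijection,
$$|a|\ =\ \sum_{b\in t^{-1}(a)}|b|.$$
Every block is nonempty, so $|a|\geq 1$ forces $t^{-1}(a)\neq\emptyset$; thus $t$ is surjective, and since $A$ is finite $t$ is a permutation. With $t^{-1}(a)=\{b\}$ a singleton, the displayed identity reduces to $|a|=|b|$, i.e.\ $|t(b)|=|b|$ for every $b$, which is $S(t(b))=S(b)$. This is exactly the required conclusion, and (by Proposition \ref{csod}) it is also consistent with the fact that such systems lie in $\mathrm{L}_1(0)$ with entropy constant on $\alpha$-orbits.
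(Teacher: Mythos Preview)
Your proof is correct. The paper does not spell out a proof but places the statement as an immediate corollary of Proposition~\ref{csod}, so the intended argument runs through the recurrence/orbit route: from $\alpha(b)\subseteq t(b)$ and $|\alpha(b)|=|b|$ one gets $S(\alpha(i))\geq S(i)$ for every $i$, hence entropy is always increasing; Proposition~\ref{csod} then forces $S$ to be constant on $\alpha$-orbits, so $|b|=|t(b)|$, whence $\alpha(b)=t(b)$ and $t$ is a bijection because the images $\alpha(b)$ partition $X$. Your argument bypasses the orbit step entirely: by decomposing $\alpha^{-1}(a)$ block-by-block you get the identity $|a|=\sum_{t(b)=a}|b|$ directly, and surjectivity plus finiteness of $A$ does the rest. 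The paper's route explains why the corollary sits where it does and ties it to the Zermelo-type recurrence picture; your route is a shade more elementary and would work verbatim in settings where one knows only that $\alpha$ is a bijection, without appealing to finite orbit lengths.
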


\begin{thm}\label{parfi}
{\em Let $\ (X,\pi) \ $ be a micro-macro phase space with $\ O_{\pi}=\{k_1 < \cdots < k_o \} \ $.
 A random  permutation $ \alpha \in \mathrm{S}_X $ determines a micro-macro dynamical system
 $\ (X, \pi, \alpha)\ $ with always increasing entropy with probability
$$ \binom{|X|}{|\widehat{\pi}_1|,\hspace*{.1cm}\dots{}\hspace*{.1cm},|\widehat{\pi}_o|}^{-1}.$$

 }
\end{thm}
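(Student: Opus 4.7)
The plan is to reduce the problem to counting permutations that preserve the zone partition and then normalize by $|X|!$.

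First I would invoke Proposition \ref{csod}: $(X,\pi,\alpha)$ has always increasing entropy if and only if $S$ is constant along every $\alpha$-orbit. Since the entropy of a microstate $i$ depends only on the cardinality of its block, two microstates have the same entropy precisely when they lie in the same zone $\widehat{\pi}_j$. Consequently, $S$ is constant on $\alpha$-orbits if and only if every $\alpha$-orbit is entirely contained in a single zone. Because the zones $\widehat{\pi}_1,\dots,\widehat{\pi}_o$ already partition $X$, this is equivalent to demanding that each zone be $\alpha$-invariant.

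Next I would count permutations with this property. An $\alpha \in S_X$ leaving each $\widehat{\pi}_j$ invariant is exactly a tuple of permutations $(\alpha|_{\widehat{\pi}_1},\dots,\alpha|_{\widehat{\pi}_o}) \in S_{\widehat{\pi}_1}\times \cdots \times S_{\widehat{\pi}_o}$, so the number of such permutations is
$$\prod_{j=1}^{o}|\widehat{\pi}_j|!.$$
Dividing by $|S_X|=|X|!$ and using $|X|=\sum_j|\widehat{\pi}_j|$, the sought probability equals
$$\frac{\prod_{j=1}^{o}|\widehat{\pi}_j|!}{|X|!} \;=\; \binom{|X|}{|\widehat{\pi}_1|,\dots,|\widehat{\pi}_o|}^{-1},$$
which is the claimed formula.

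There is no real obstacle here: once Proposition \ref{csod} is available, the argument is purely a translation between ``constant entropy on orbits'' and ``zone-preserving permutation'', followed by a one-line multinomial identity. The only point worth stating carefully is that entropy constancy on an orbit is equivalent to the orbit living in one zone (not merely in one block), because $S(i)=\ln|f^{-1}(f(i))|$ depends only on the block size.
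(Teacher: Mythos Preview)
Your proof is correct and follows essentially the same approach as the paper: both invoke Proposition~\ref{csod} to reduce ``always increasing entropy'' to ``entropy constant on orbits'', then identify such permutations with tuples in $S_{\widehat{\pi}_1}\times\cdots\times S_{\widehat{\pi}_o}$ and divide by $|X|!$. Your version is slightly more explicit in spelling out why constancy of $S$ on an orbit is equivalent to the orbit lying in a single zone, but the argument is the same.
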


\begin{proof}
It follows from Proposition \ref{csod} that such a permutation $\ \alpha \ $ induces and is determined by
  permutations on the sets
$\ \widehat{\pi}_j. \ $ The induced permutations are arbitrary, so  the result follows because
 a set with $\ n \ $ elements has $\ n! \ $ permutations.  The probabilistic statement is then clear assuming
  uniform probability on $\ \mathrm{S}_X.\ $ Figure \ref{tp} shows a couple of permutations for
   which entropy is always increasing given the partitioned set.
\end{proof}

\begin{figure}[t]
    \centering
    \includegraphics[scale=.7]{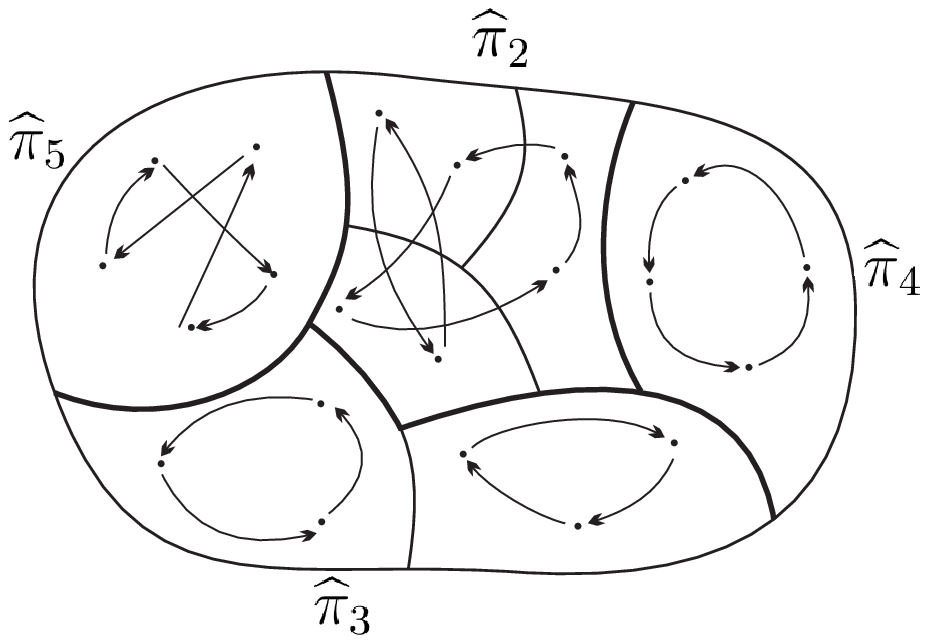}\hspace{.8cm}
    \includegraphics[scale=.7]{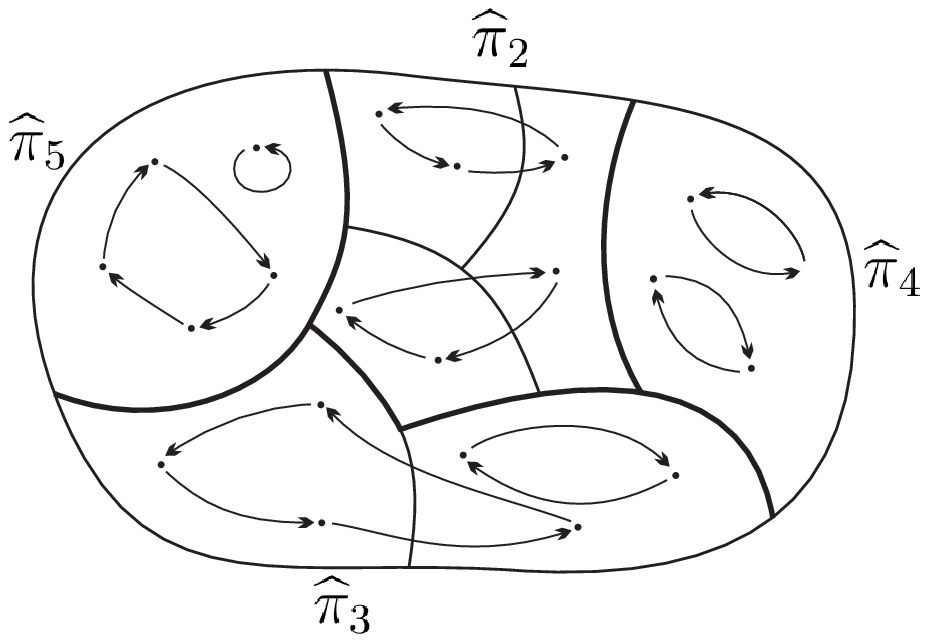}
    \caption{Permutations on a partitioned set with always increasing entropy.}
    \label{tp}
\end{figure}

Suppose now that we are given a set $\ X \ $ together with a permutation $ \ \alpha \ $ on it. We want to know how many partitions $\ \pi  \ $ are there such that
entropy is always increasing on the system $\ (X, \pi, \alpha). \ $  Recall that the number
 of partitions on $\ \{1,2,\dots{},nk \}\ $ into $\ n \ $ blocks each of cardinality $\ k \ $ is
 given by $\ \displaystyle   \frac{(nk)!}{n!k!^n}. \ $ Given $\ S \subseteq \mathrm{P}X, \ $ a family of subsets $X, $  we let $\ \underline{S} \ $  be the set of maps
$\ l:S \longrightarrow \mathbb{N}_{\geq 1}\ $ such that for all $ \ k \in  \mathbb{N}_{\geq 1}\ $ we have that
$$k  \ \  \mbox{divides}  \ \ \overline{l}(k)=  \sum_{l(A) = k}|A|.$$
Given a permutation $\ \alpha \ $ on the finite set $\ X, \ $ we let $\ \mathrm{Cyc}(\alpha) \ $
 be the partition of $\ X \ $ into $\alpha$-cycles.
We obtain the micro-macro phase space $\ (X, \mathrm{Cyc}(\alpha), c), \ $
 where $\ c \ $ is the map sending $\ i \in X\ $ to the $ \alpha$-cycle $\ c(i) \ $
generated by $\ i. \ $

\begin{thm}\label{perfi}
{\em
Let $\ X \ $ be a finite set and $\ \alpha \in \mathrm{S}_X. \ $
The number of partitions $\ \pi \in \mathrm{Par}(X)\ $ such that entropy is always increasing in  $\ (X, \pi, \alpha) \ $  is given by
$$\sum_{l \in \underline{\mathrm{Cyc}(\alpha)}}\ \prod_{k \in \mathrm{Im}(l)}\ \frac{\overline{l}(k)!}{k!^{\frac{\overline{l}(k)}{k}} \frac{\overline{l}(k)}{k}!}.$$
}
\end{thm}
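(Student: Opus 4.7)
The plan is to translate the problem into a pure counting problem over $\alpha$-cycles by means of Proposition \ref{csod}, and then apply the standard formula for the number of partitions of a set into blocks of a given size.

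First I would invoke Proposition \ref{csod}: the system $(X,\pi,\alpha)$ has always increasing entropy if and only if $S$ is constant on each $\alpha$-orbit. Since $S(i) = \ln|f(i)|$, this is equivalent to saying that every $\alpha$-cycle $c$ is contained in a single zone $\widehat{\pi}_k$, i.e. all elements of $c$ lie in blocks of a common size $l(c)\in\mathbb{N}_{\geq 1}$. Thus each admissible partition $\pi$ determines a function $l:\mathrm{Cyc}(\alpha) \longrightarrow \mathbb{N}_{\geq 1}$.

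Next I would check that $l$ necessarily lies in $\underline{\mathrm{Cyc}(\alpha)}$. Fix $k\in \mathrm{Im}(l)$ and let $Y_k = \bigsqcup_{l(c)=k} c \subseteq X$. Because each cycle $c$ with $l(c)=k$ sits inside blocks of $\pi$ of size $k$, and no cycle $c'$ with $l(c')\neq k$ meets any such block, $Y_k$ is a disjoint union of blocks of $\pi$ each of size $k$. Hence $k$ divides $|Y_k| = \overline{l}(k) = \sum_{l(c)=k}|c|$, which is exactly the condition defining $\underline{\mathrm{Cyc}(\alpha)}$.

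Then I would carry out the count. For each valid $l \in \underline{\mathrm{Cyc}(\alpha)}$ and each $k \in \mathrm{Im}(l)$, the restriction of $\pi$ to $Y_k$ is a partition of $Y_k$ into blocks of size exactly $k$. The number of such partitions of a set of cardinality $nk$ into $n$ blocks of size $k$ is the standard multinomial count $\tfrac{(nk)!}{n!\,k!^n}$, which with $n=\overline{l}(k)/k$ gives
\[
\frac{\overline{l}(k)!}{k!^{\overline{l}(k)/k}\bigl(\overline{l}(k)/k\bigr)!}.
\]
Since the restrictions on different $Y_k$ are chosen independently, and together they reconstruct $\pi$ (because the $Y_k$ with $k\in \mathrm{Im}(l)$ form a partition of $X$), the number of partitions $\pi$ producing a given $l$ is the product of these numbers over $k \in \mathrm{Im}(l)$. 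Summing over $l \in \underline{\mathrm{Cyc}(\alpha)}$ yields the stated formula.

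The only delicate point, which I would emphasize, is the bijective correspondence $\pi \longleftrightarrow (l,\{\pi|_{Y_k}\}_{k\in \mathrm{Im}(l)})$: one has to verify that no partition is counted twice and that every choice of $l$ together with block partitions of each $Y_k$ reassembles a genuine partition of $X$ satisfying the always-increasing-entropy condition. Both directions are straightforward once the zone decomposition $X = \bigsqcup_{k\in \mathrm{Im}(l)} Y_k$ is in place, so no substantial obstacle remains beyond this bookkeeping.
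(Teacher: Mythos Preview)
Your proposal is correct and follows essentially the same approach as the paper: both invoke Proposition~\ref{csod} to reduce to the condition that each $\alpha$-cycle lies in a single zone, assign to $\pi$ the map $l\in\underline{\mathrm{Cyc}(\alpha)}$, and then count, for each fixed $l$, the uniform partitions of $Y_k=\bigcup_{l(c)=k}c$ into blocks of size $k$ via the standard formula. Your write-up is slightly more explicit about the divisibility check and the bijection, but the argument is the same.
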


\begin{proof}
Let $\ \pi \ $ be a partition on $\ X  \ $ such that entropy is always increasing in $\ (X, \pi,  \alpha) \ $ and let $ \ c \in \mathrm{Cyc}(\alpha)$.
According to Proposition \ref{csod} if $\ c \cap \widehat{\pi}_k  \neq \emptyset, \ $ then $\ c \subseteq \ \widehat{\pi}_k. \ $
Thus we can associate to $\ \pi \ $ the map $\ l_{\pi} \in \underline{\mathrm{Cyc}(\alpha)} \ $ given by
$\ l_{\pi}(c)=k \ \ \ \  \mbox{if \ and \ only \ if}   \ \ \ \ c \subseteq  \widehat{\pi}_k. \ $ Conversely, given $\ l \in \underline{\mathrm{Cyc}(\alpha)} \ $ the partitions $\ \pi \ $ with $\ l_{\pi}=l \ $ can be constructed by choosing for each $\ k \in \mathrm{Im}(l) \ $ a uniform partition with blocks of cardinality $k$ on the set
$ \ \displaystyle \bigcup_{l(c)=k}c \ \subseteq \  X.\ $ Entropy is always increasing for such partitions, and there are
$\ \displaystyle \frac{\overline{l}(k)!}{k!^{\frac{\overline{l}(k)}{k}}\frac{\overline{l}(k)}{k}!} \ $ of them.
Figure \ref{fig:Ex4} displays a couple of examples of this construction.
\end{proof}

We have shown the following  instance of the micro/macro duality principle.
\begin{thm}\label{un}
{\em
Let $ X  $ be a finite set. The number of invertible micro-macro dynamical systems
$\ (X,\pi, \alpha{}) \ $ such that entropy is always increasing is given by
$$ \sum_{\pi \in \mathrm{Par}(X) } |\widehat{\pi}_1|!\dots{}|\widehat{\pi}_{o}|!\ =  \
\sum_{\alpha\in \mathrm{S}_X}\ \sum_{l \in \underline{\mathrm{Cyc}(\alpha)}}\ \prod_{k \in \mathrm{Im}(l)}\frac{\overline{l}(k)!}{k!^{\frac{\overline{l}(k)}{k}}\frac{\overline{l}(k)}{k}!}.$$
}
\end{thm}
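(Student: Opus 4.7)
The plan is to prove Theorem \ref{un} by counting the set
\[
\mathcal{E}(X) \ = \ \{(\pi, \alpha) \in \mathrm{Par}(X) \times \mathrm{S}_X \ | \ \text{entropy is always increasing in } (X,\pi,\alpha)\}
\]
in two ways, corresponding to the partition-first and permutation-first viewpoints emphasized in the paper. This is the direct manifestation of the Micro/Macro Duality principle in the always-increasing-entropy setting, and each of the two counts has already been computed (or is implicit) in Theorems \ref{parfi} and \ref{perfi}.

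First, I would fix $\pi \in \mathrm{Par}(X)$ and count the fiber $\{\alpha \in \mathrm{S}_X : (\pi,\alpha) \in \mathcal{E}(X)\}$. By Proposition \ref{csod}, entropy is always increasing on $(X,\pi,\alpha)$ if and only if $\alpha$ preserves the zone decomposition $X = \widehat{\pi}_{k_1} \sqcup \cdots \sqcup \widehat{\pi}_{k_o}$, i.e.\ $\alpha$ restricts to a permutation of each zone. Since the zones are determined by $\pi$ and the restricted permutations can be chosen independently, the fiber has cardinality $|\widehat{\pi}_1|! \cdots |\widehat{\pi}_o|!$. (Equivalently, the probability in Theorem \ref{parfi} times $|X|!$ yields exactly this product via the identity $|X|!/\binom{|X|}{|\widehat{\pi}_1|,\ldots,|\widehat{\pi}_o|} = |\widehat{\pi}_1|!\cdots |\widehat{\pi}_o|!$.) Summing over all partitions gives the left-hand side.

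Second, I would fix $\alpha \in \mathrm{S}_X$ and count the fiber $\{\pi \in \mathrm{Par}(X) : (\pi,\alpha) \in \mathcal{E}(X)\}$. This count is exactly Theorem \ref{perfi}: by Proposition \ref{csod} any valid partition $\pi$ must contain every $\alpha$-cycle inside a single zone, which is encoded by a map $l_\pi \in \underline{\mathrm{Cyc}(\alpha)}$ assigning to each cycle the common block-size $k$ of its containing zone; conversely, for any such $l$ the zone $\widehat{\pi}_k$ is the union $\bigcup_{l(c)=k} c$, a set of cardinality $\overline{l}(k)$, which must then be partitioned into blocks of size $k$, admitting $\overline{l}(k)!\,/(k!^{\overline{l}(k)/k}\,(\overline{l}(k)/k)!)$ choices made independently across $k \in \mathrm{Im}(l)$. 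Multiplying over $k$ and summing over $l$ yields the inner expression, and summing over $\alpha \in \mathrm{S}_X$ yields the right-hand side.

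Equating the two expressions for $|\mathcal{E}(X)|$ gives the claimed identity. The only step that requires slight care is verifying that the divisibility condition in the definition of $\underline{\mathrm{Cyc}(\alpha)}$ is indeed automatic from Proposition \ref{csod} (each zone is a union of $\alpha$-cycles, and the total size $\overline{l}(k)$ of such a union must be a multiple of $k$ so that it can be partitioned into blocks of size $k$), and that no choice is over- or undercounted; everything else is bookkeeping that has already been done in Theorems \ref{parfi} and \ref{perfi}. There is no substantial obstacle: the content of the theorem is that the two prior counts match, which is guaranteed by Fubini applied to $\mathcal{E}(X)$.
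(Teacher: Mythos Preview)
Your proposal is correct and follows exactly the same double-counting argument as the paper: count the set of pairs $(\pi,\alpha)$ with always increasing entropy first by fixing $\pi$ (Theorem \ref{parfi}) and then by fixing $\alpha$ (Theorem \ref{perfi}), then equate. You have simply spelled out in more detail what the paper states in two sentences.
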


\begin{proof}Consider the set of pairs $\ (\pi, \alpha) \in \mathrm{Par}(X)\times S_X\ $ such that
 entropy is always increasing on the invertible micro-macro dynamical system  $\ (X, \pi, \alpha).\ $
  Cardinality of this set can be found by  fixing $\ \pi\ $ and then counting permutations
  $\ \alpha, \ $ leading, by Theorem \ref{parfi},  to the left-hand side of the proposed formula.
  Alternatively, it can be counted by fixing $\ \alpha \ $ and then counting  partitions
   $\ \pi, \ $ leading, by Theorem \ref{perfi}, to the right-hand side of the proposed formula.
\end{proof}

\begin{figure}[t]
    \centering
    \includegraphics[scale=.8]{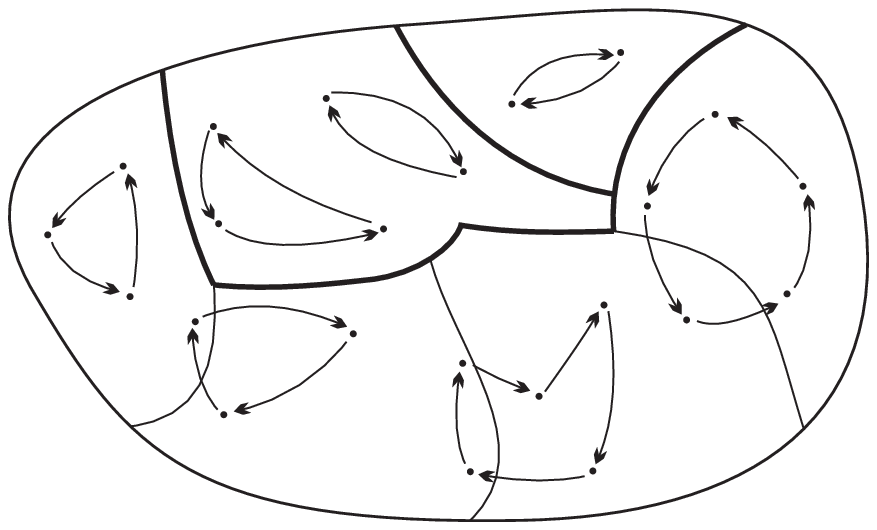}\hspace{.7cm}
    \includegraphics[scale=.8]{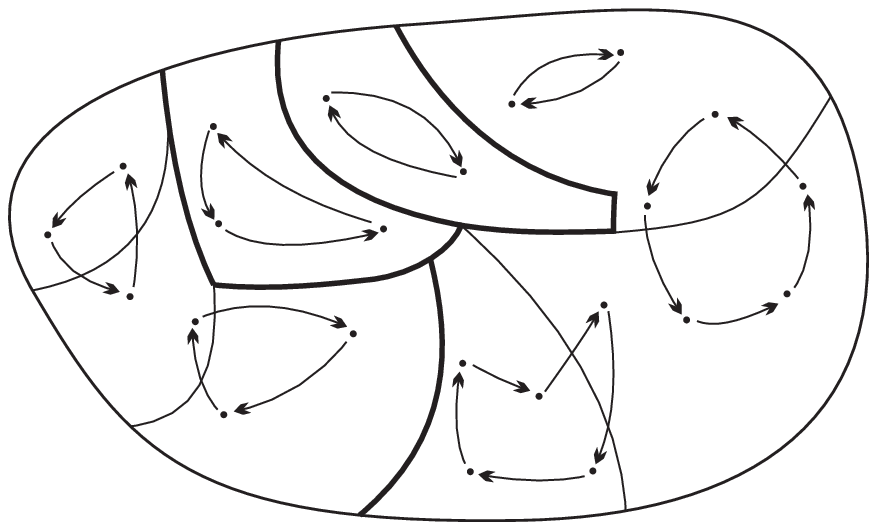}
    \caption{Partitions on a set with permutation for which entropy is always increasing.}
    \label{fig:Ex4}
\end{figure}

\section{Bounding Entropy Strict Decreases on Invertible Systems}\label{mfsl}

In this section we fix a micro-macro phase space $\  (X,\pi ) \ $ and find  an upper bound
for the  number $\ |D(X,\pi, \alpha )| \ $ of strict decreases in entropy for an arbitrary permutation
$\ \alpha \in \mathrm{S}_X. $

\begin{thm}\label{n}
{\em
Let $\ (X,\pi)\ $ be an invertible micro-macro dynamical system.  We have that
$$ \underset{\alpha \in \mathrm{S}_X}{\mathrm{max}}\ |D(X,\pi, \alpha )| \ =  \  |X| -  |\widehat{\pi}_{r}|,$$
where $\ r \ $ is such that $\  |\widehat{\pi}_{r}|  \geq  |\widehat{\pi}_k| \ $ for $ \ k \in \mathbb{N}. $}
\end{thm}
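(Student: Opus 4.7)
The plan is to establish both inequalities of the asserted equality: the upper bound via a flow-counting argument over a tripartition of $X$, and the lower bound via an explicit cyclic-shift construction.

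For the upper bound, I fix an arbitrary $\alpha \in \mathrm{S}_X$ and decompose $X = L \sqcup A \sqcup H$, where $A = \widehat{\pi}_r$ has cardinality $n_r := |\widehat{\pi}_r|$, while $L$ and $H$ collect microstates whose block size is strictly less than, respectively strictly greater than, $r$. For ordered pairs of parts let $PQ := |\{i \in P \mid \alpha(i) \in Q\}|$. Any arrow with endpoint in $A$ is automatically classified by entropy: the $AA$-arrows lie in $C$; the $AH$ and $LA$ arrows are strict increases; the $AL$ and $HA$ arrows are strict decreases. Bijectivity of $\alpha$ yields two flow identities, namely $AL + AH = LA + HA$ (arrows out of $A$ equal arrows into $A$) and $LA + LH = AL + HL$ (arrows out of $L$ equal arrows into $L$). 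Combining these with $AA + AL + AH = n_r$, the microstates in $A$ contributing to $C$ together with the strictly increasing arrows $AH, LA, LH$ satisfy
\[
AA + AH + LA + LH \; = \; (n_r - AL) + (AL + HL) \; = \; n_r + HL \; \geq \; n_r.
\]
Since all of these microstates belong to $C \cup I$, this gives $|C| + |I| \geq n_r$ and hence $|D| \leq |X| - n_r$.

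For the matching lower bound, I construct an explicit permutation achieving $|D| = |X| - n_r$. List the elements of $X$ as $x_1, \ldots, x_{|X|}$ in order of non-decreasing block size, so that each zone $\widehat{\pi}_k$ occupies a consecutive range of indices, and define the cyclic shift $\alpha(x_i) = x_{i - n_r \bmod |X|}$. For $i > n_r$ the image $x_{i-n_r}$ lies at distance exactly $n_r$ from $x_i$ in the sorted list; since no zone contains more than $n_r$ elements, a zone cannot span $n_r + 1$ consecutive positions, so $x_{i-n_r}$ and $x_i$ lie in different zones and monotonicity forces $|x_{i-n_r}| < |x_i|$, giving $x_i \in D$. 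For $i \leq n_r$ the image $x_{|X| - n_r + i}$ lies at an index at least $i$, hence $|\alpha(x_i)| \geq |x_i|$ and $x_i \notin D$. Thus exactly $|X| - n_r$ microstates are in $D$, attaining the upper bound.

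The main obstacle is the upper bound: naive column- and row-wise bounds on the transportation matrix of zone-to-zone transitions are too loose, and one must use both flow identities simultaneously to telescope the sum down to the clean expression $n_r + HL$. The achievability is straightforward once one observes that maximality of $n_r$ prevents two positions at cyclic distance $n_r$ from belonging to the same zone.
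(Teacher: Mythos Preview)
Your proof is correct, and for the upper bound you take a genuinely different route from the paper. The paper argues orbitwise: on any $\alpha$-cycle meeting $\widehat{\pi}_r$ in $l$ points, one splits the cycle into $l$ arcs each of which must contain at least one non-decreasing step, so each cycle contributes at most its length minus $l$ to $|D|$; summing over cycles gives the bound. Your argument is global: by balancing in-flow and out-flow at the tripartition $L\sqcup A\sqcup H$ you obtain the identity $AA+AH+LA+LH = n_r + HL$, which lower-bounds $|C\cup I|$ directly without ever decomposing into orbits. This is arguably cleaner and sidesteps the mild edge cases (orbits disjoint from $\widehat{\pi}_r$, consecutive visits to $\widehat{\pi}_r$ giving a constant rather than strictly increasing step) that the orbitwise argument must absorb. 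For the lower bound the two constructions are close cousins: the paper stacks zones as left-justified rows and cycles down each column, while you linearize that same array and apply a single cyclic shift by $n_r$; your key observation that no zone occupies $n_r+1$ consecutive positions in the sorted list is exactly the left-justification in disguise, and your construction has the minor bonus of being describable in one line.
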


\begin{proof} First we show that $\  \underset{\alpha \in \mathrm{S}_X}{\mathrm{max}}\ |D(X,\pi, \alpha )|  \leq     |X| -  |\widehat{\pi}_{r}|,$ i.e. we show that
$\ |D(X,\pi, \alpha)|  \leq   |X| -  |\widehat{\pi}_{r}| \ $ for  $\ \alpha \in \mathrm{S}_X . \ $ Consider an $ \alpha$-orbit
containing  $ \ l \geq 1\ $ microstates in $\ |\widehat{\pi}_{r}|; \ $ such an orbit can be written as
$\  a_1 \ast a_2 \ast \cdots a_l \ast a_1 \ $ where the symbol $\ \ast \ $ stands for the orbit elements (if any)  not in $ \ \widehat{\pi}_{r}, \ $ and $\  a_1, a_2, \cdots, a_l \ $ are the orbit elements in $ \ \widehat{\pi}_{r}. \ $ Note that in each subsegment of orbit
$\  a_i \ast  \ $ there must be at least one  microstate with strictly increasing entropy, and thus a total of $\ l \ $ microstates with strictly increasing entropy.
Taking all orbits that intersect $ \ \widehat{\pi}_{r} \ $ into account  we obtain the desired inequality.
It remains to show that $\  \underset{\alpha \in S_X}{\mathrm{max}}\ |D(X,\pi, \alpha )|
 \leq    |X| -  |\widehat{\pi}_{r}|,\ $ i.e. one has to check that there exists a permutation $\ \alpha \ $
such that $\  |D(X,\pi,\alpha )| =    |X| -  |\widehat{\pi}_{r}|. \ $ Write the set $\ X \ $ as in Figure \ref{fig:Ex12} with the blocks
$ \ \widehat{\pi}_{k} \ $ contained in left justified line and $ \ \widehat{\pi}_{k} \ $ above $ \ \widehat{\pi}_{l} \ $ if $ \  k > l. \  $
Define the permutation $\ \alpha \  $ by flowing downwards on each vertical column, and sending the bottom element of a column to the highest
element in the column.  The permutation $\ \alpha \  $ obtained has exactly $\  |\widehat{\pi}_{r}|, \ $  microstates with increasing entropy.
\end{proof}

Figure \ref{fig:Ex12} displays an example of a micro-macro dynamical systems for which the bound from Theorem \ref{n} on the number of
strict decreases in entropy is achieved. For a finite set $\ X\ $  set
$$  \displaystyle d_X  =   \underset{\pi \in \mathrm{Par}X, \ \alpha \in S_X}{\mathrm{max}}\  |D(X,\pi, \alpha )|. \  $$

\begin{figure}[t]
    \centering
    \includegraphics[width=12cm, height=5cm]{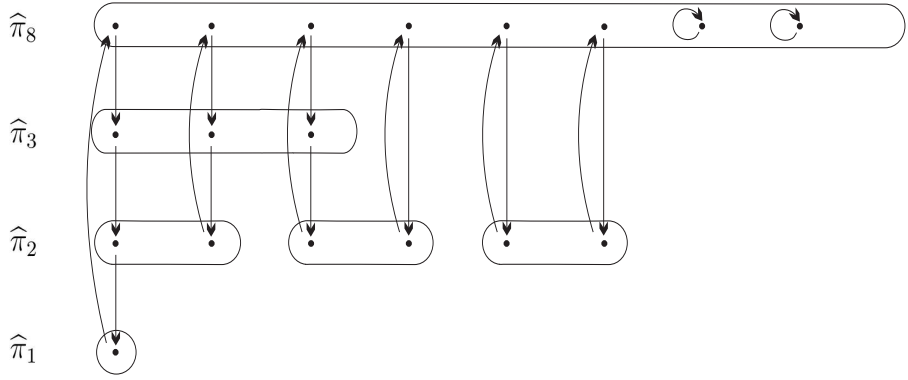}
    \caption{Micro-macro phase space with maximal decreasing set.}
    \label{fig:Ex12}
\end{figure}

\begin{thm}{\em  For any $\ X,\ $ we have that $\ \displaystyle d_X   =
  |X|  -  \underset{l \vdash |X|}{\mathrm{min}}\
\underset{1 \leq k \leq |X|}{\mathrm{max}}\ kl_k, \ \ $
where $\ l \ $ runs over the partitions of $\ |X|, \ $ i.e. $\ \displaystyle l=(l_1,...,l_{|X|})
\ \ \mbox{and}  \ \ \sum_{i=1}^{|X|}kl_k  =  |X|. $
}
\end{thm}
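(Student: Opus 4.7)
The plan is to bootstrap directly from Theorem \ref{n}, which already solves the problem for a \emph{fixed} partition: it says
\[
\underset{\alpha \in \mathrm{S}_X}{\max}\ |D(X,\pi, \alpha)| \ = \ |X| - \max_k |\widehat{\pi}_k|.
\]
Taking a further maximum over $\pi \in \mathrm{Par}(X)$ on both sides gives
\[
d_X \ = \ \max_{\pi \in \mathrm{Par}(X)}\ \max_{\alpha \in \mathrm{S}_X} |D(X,\pi,\alpha)| \ = \ |X| - \min_{\pi \in \mathrm{Par}(X)}\ \max_k |\widehat{\pi}_k|.
\]
So the entire content of the theorem is the identification
\[
\min_{\pi \in \mathrm{Par}(X)}\ \max_k |\widehat{\pi}_k| \ = \ \min_{l \vdash |X|}\ \max_{1\leq k \leq |X|} kl_k.
\]

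The key observation is that the quantity $\max_k |\widehat{\pi}_k|$ depends on $\pi$ only through its \emph{shape}, i.e.\ the numerical partition of $|X|$ that records how many blocks of each size $\pi$ has. Concretely, if for each $k$ we set $l_k$ equal to the number of blocks of $\pi$ of cardinality $k$, then by the very definition $\widehat{\pi}_k = \bigsqcup_{a \in \pi_k} a$ is a disjoint union of $l_k$ blocks each of size $k$, so $|\widehat{\pi}_k| = kl_k$. Moreover the tuple $(l_1,\dots,l_{|X|})$ satisfies $\sum_k kl_k = |X|$, i.e.\ it is a numerical partition of $|X|$.

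Conversely, every numerical partition $l \vdash |X|$ is realized as the shape of some set partition $\pi$ of $X$ (just split $X$ into $l_k$ blocks of size $k$ for each $k$, which is possible precisely because $\sum_k kl_k = |X|$). Thus the map $\pi \mapsto l(\pi)$ from set partitions to numerical partitions is surjective, and $\max_k |\widehat{\pi}_k| = \max_k k l_k(\pi)$. Taking minima on both sides yields the required equality, completing the proof.

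I do not foresee any real obstacle: Theorem \ref{n} already does the work, and the remaining step is just the bookkeeping identification between a set partition and its shape. The only small subtlety is making sure to note that both directions hold (every $\pi$ produces an $l \vdash |X|$, and every $l \vdash |X|$ is realized by some $\pi$), so that the two minima genuinely coincide rather than one being merely an inequality.
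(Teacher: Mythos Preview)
Your proposal is correct and follows essentially the same route as the paper: both use Theorem \ref{n} to reduce to $d_X = |X| - \min_{\pi} \max_k |\widehat{\pi}_k|$, then identify $|\widehat{\pi}_k| = kl_k$ and pass from set partitions to numerical partitions. Your write-up is actually a bit more explicit than the paper's, spelling out the surjectivity of the shape map that justifies the final equality of minima.
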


\begin{proof} Follows from the identities
$\ \ \ \displaystyle d_X \ = \   \underset{\pi \in \mathrm{Par}X, \ \alpha \in S_X }{\mathrm{max}}\
 |D(X,\pi, \alpha )| \  =
 \   \underset{\pi \in \mathrm{Par}X}{\mathrm{max}}\ |X| -  |\widehat{\pi}_{r}| \  =  $
$\ \ \ \ |X|  -  \underset{\pi \in \mathrm{Par}X}{\mathrm{min}}\  |\widehat{\pi}_{r}|\  =
    \ \ \ |X|  -  \underset{l \vdash |X|}{\mathrm{min}}\ \underset{1 \leq k \leq |X|}{\mathrm{max}}\ kl_k.$
\end{proof}

Next we describe various scenarios guaranteeing or not the validity of property
$\ \mathrm{L}_1 \ $ for arbitrary permutations on combinatorial micro-macro phase spaces
under suitable hypothesis on the growth of  $\ |\widehat{\pi}_{k}|.\ $ Note that both  $\ X \ $ and $\ A \ $
 grow to infinity, in subsequence sections
we will let $\ |X| \ $ go to infinity but keep the cardinality of $\ A \ $ fixed.

\begin{thm}\label{ycle}
{\em Let $\ \{c_1,...,c_n,... \} \ $ and $\ \{k_1  <  ... < k_n  < ... \} \ $  be a couple of sequences of natural numbers such that $\ c_nk_n   \leq   c_{n+1}k_{n+1}.\ $
For $\ n \in \mathbb{N}_{\geq 1}\ $ consider the micro-macro phase space $\ (X_n,\pi_n) \ $
such that $$X_{n} \  =  \ \bigsqcup_{s=1}^n [k_s]^{\sqcup c_s},$$
and $\ \pi_n \ $ is the displayed partition of $\ X_{n} \ $ with $\ c_s \ $ blocks of cardinality $\ k_s \ $ for $\ s \in [n]$.
\begin{enumerate}
  \item If $\ c_nk_n \simeq  an^r\ $  with $\ r > 1, \ $ then there are permutations $\ \alpha_n \in \mathrm{S}_{X_{n}} \ $ such
  that $$ \underset{n\rightarrow \infty}{\lim}\frac{|D(X_n,\pi_n, \alpha_n)|}{|X_n|}\ = \ 1.  $$

  \item If $\ c_nk_n \simeq  ar^n \ $ with $\ r > 1, \ $   then for arbitrary permutations $\ \alpha_n \in \mathrm{S}_{X_{n}} \ $ we have:
  $$ \underset{n\rightarrow \infty}{\lim}\ \frac{|D(X_n,\pi_n,  \alpha_n)|}{|X_n|}\ \leq \ \frac{1}{r}.$$

  \item If $\ c_nk_n \simeq  an^n, \ $ then for arbitrary permutations $\ \alpha_n \in \mathrm{S}_{X_{n}} \ $ we have that:
  $$ \underset{n\rightarrow \infty}{\lim}\ \frac{|D(X_n,\pi_n, \alpha_n)|}{|X_n|}\ = \  0. $$

\end{enumerate}
}
\end{thm}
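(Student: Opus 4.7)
The strategy is to reduce the whole statement to the asymptotics of two sequences using Theorem \ref{n}. Since the sequence $c_s k_s$ is assumed non-decreasing, the largest block-zone in $(X_n, \pi_n)$ is $\widehat{\pi}_{k_n}$, which has cardinality $c_n k_n$. Therefore Theorem \ref{n} gives
$$\underset{\alpha \in \mathrm{S}_{X_n}}{\mathrm{max}}\ |D(X_n,\pi_n,\alpha)| \ = \ |X_n| - c_n k_n \ = \ \sum_{s=1}^{n-1} c_s k_s ,$$
and hence, for any permutation $\alpha_n \in \mathrm{S}_{X_n}$,
$$\frac{|D(X_n,\pi_n,\alpha_n)|}{|X_n|} \ \leq \ \frac{\sum_{s=1}^{n-1} c_s k_s}{\sum_{s=1}^{n} c_s k_s}.$$
The plan is then to estimate the right-hand side in each of the three growth regimes, and for the first case to invoke Theorem \ref{n} again to exhibit a permutation achieving the bound.

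For case 1, assume $c_s k_s \simeq a s^r$ with $r > 1$. Using $\sum_{s=1}^n s^r \simeq n^{r+1}/(r+1)$ (standard Riemann-sum estimate), the maximal ratio satisfies
$$\frac{c_n k_n}{|X_n|} \ \simeq \ \frac{a n^r(r+1)}{a\, n^{r+1}} \ = \ \frac{r+1}{n} \ \longrightarrow \ 0,$$
and Theorem \ref{n} provides permutations $\alpha_n$ attaining the maximum, so $|D(X_n,\pi_n,\alpha_n)|/|X_n| \to 1$. For case 2, with $c_s k_s \simeq a r^s$ and $r > 1$, the geometric sum $\sum_{s=1}^n r^s = r(r^n - 1)/(r-1)$ yields
$$\frac{c_n k_n}{|X_n|} \ \simeq \ \frac{(r-1) r^{n-1}}{r^n - 1} \ \longrightarrow \ 1 - \frac{1}{r},$$
so the bound gives $\limsup |D(X_n,\pi_n,\alpha_n)|/|X_n| \leq 1/r$ for every $\alpha_n$.

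Case 3 is the only one requiring a slightly careful estimate. With $c_s k_s \simeq a s^s$ I would use the elementary bound $s^s \leq (n-1)^s$ valid for $1 \leq s \leq n-1$, so that
$$\sum_{s=1}^{n-1} s^s \ \leq \ \sum_{s=1}^{n-1}(n-1)^s \ \leq \ \frac{(n-1)^n}{n-2}$$
for $n \geq 3$. Dividing by $n^n$,
$$\frac{\sum_{s=1}^{n-1} s^s}{n^n} \ \leq \ \frac{1}{n-2}\Bigl(1-\frac{1}{n}\Bigr)^{n} \ \longrightarrow \ 0,$$
which forces $c_n k_n / |X_n| \to 1$ and consequently $|D(X_n,\pi_n,\alpha_n)|/|X_n| \to 0$ uniformly in $\alpha_n$. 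The only mild obstacle is the sharpness of this last estimate: the naive bound $s^s \leq (n-1)^{n-1}$ only gives a ratio tending to $(1+e^{-1})^{-1}$, so the key observation is that replacing the exponent $s$ by $s$ (and not by $n-1$) gains the crucial factor $1/(n-2)$. With this estimate in hand all three items follow uniformly from Theorem \ref{n}.
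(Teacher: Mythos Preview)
Your argument is correct and follows the same opening move as the paper: you invoke Theorem \ref{n} to reduce everything to the asymptotics of the ratio
\[
\frac{\sum_{s=1}^{n-1} c_s k_s}{\sum_{s=1}^{n} c_s k_s}.
\]
Where you diverge from the paper is in how you evaluate this ratio. You treat the three regimes separately with direct estimates: a Riemann-sum comparison for $s^r$, the closed form of the geometric series for $r^s$, and the elementary inequality $s^s \leq (n-1)^s$ summed as a geometric series for $s^s$. The paper instead handles all three cases at once via the Stolz--Ces\`aro theorem: since $\lim_{n\to\infty} c_n k_n / c_{n+1} k_{n+1}$ exists in each regime (equal to $1$, $1/r$, and $0$ respectively), Stolz--Ces\`aro immediately gives the same value for the ratio of partial sums. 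The paper's route is shorter and unified, requiring only the computation of a single term-ratio limit per case; your route is more elementary and self-contained, avoiding an auxiliary limit theorem at the cost of a case-by-case computation. Both are perfectly valid.

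One minor remark: in your aside about the naive bound $s^s \leq (n-1)^{n-1}$, the resulting limiting upper bound on the ratio is $1/(1+e)$ rather than $(1+e^{-1})^{-1}$; this does not affect the argument, since your point is simply that the naive bound fails to reach $0$ while the sharper bound $s^s \leq (n-1)^s$ does.
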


\begin{proof}
We  study the asymptotic behavior of
$ \  \displaystyle \frac{1}{|X_n|}  \underset{\alpha_n \in \mathrm{S}_{X_n}}{\mathrm{max}}\ |D(X_n,\pi_n, \alpha_n )| \  $  as $ \ n \rightarrow  \infty. \ $ Note that $\ |\widehat{\pi}_{n,k_s}|  =  c_sk_s \ $ assume its largest value for $ s=n. \ $ By Theorem \ref{n} we have:
$$\lim_{n\rightarrow \infty}\frac{1}{|X_n|}  \underset{\alpha_n \in \mathrm{S}_{X_n}}{\mathrm{max}}\ |D(X_n,\pi_n, \alpha_n )|\  =  \ \lim_{n\rightarrow \infty}\frac{|X_n|-|\widehat{\pi}_{n,k_n}|}{|X_n|}\  = \ \lim_{n\rightarrow \infty}\ \frac{\sum\limits_{s=1}^{n-1}|\widehat{\pi}_{n,k_s}|}{\sum\limits_{s=1}^{n}|\widehat{\pi}_{n,k_s}|}.$$
Under the three alternative hypothesis for  $\ c_nk_n\ $  stated in the Theorem  the limit $\  \displaystyle \underset{n\rightarrow \infty}{\lim}\ \frac{c_nk_n}{c_{n+1}k_{n+1}}\ $ exits. Therefore by the Stolz-Cesaro theorem we have that
$$\lim_{n\rightarrow \infty}\frac{1}{|X_n|}  \underset{\alpha_n \in \mathrm{S}_{X_n}}{\mathrm{max}}\ |D(X_n,\pi_n, \alpha_n )|\ =  \ \lim_{n\rightarrow \infty}\ \frac{\sum\limits_{s=1}^{n-1}|\widehat{\pi}_{n,k_s}|}{\sum\limits_{s=1}^{n}|\widehat{\pi}_{n,k_s}|}  \ = \ \lim_{n\rightarrow \infty}\ \frac{\sum\limits_{s=1}^{n-1}c_sk_s}{\sum\limits_{s=1}^{n}c_sk_s} \  = \    \underset{n\rightarrow \infty}{\lim}\ \frac{c_nk_n}{c_{n+1}k_{n+1}}.$$

The desired result follows since:
\begin{enumerate}
  \item If $\  c_nk_n \simeq  an^r\ $  with $\ r > 1, \ $ then $\  \displaystyle  \underset{n\rightarrow \infty}{\lim}\ \frac{c_nk_n}{c_{n+1}k_{n+1}}= 1. \ $
  \item If $\ c_nk_n \simeq  ar^n \ $ with $\ r > 1, \  $ then $\  \displaystyle  \underset{n\rightarrow \infty}{\lim}\ \frac{c_nk_n}{c_{n+1}k_{n+1}} =  \frac{1}{r}. \ $
  \item If $\ c_nk_n \simeq  an^n, \  $ then $\  \displaystyle  \underset{n\rightarrow \infty}{\lim}\ \frac{c_nk_n}{c_{n+1}k_{n+1}}  = 0. \ $
\end{enumerate}

\end{proof}

So polynomial growth for $\ c_nk_n \ $ yields no control on the number of strict decreases in entropy
for arbitrary invertible micro-macro  dynamical systems on $\ (X_n,\pi_n); \ $
exponential growth  gives or not a good control on the number of strict decreases in entropy depending on
the value of $r; \ $  if $\ r \ $ is close to $1  $  exponential growth gives only a minor improvement
over polynomial growth in terms of imposing property $\mathrm{L}_1$; \ if $\ r \ $ is quite large,
then any invertible micro-macro dynamical systems on $\ (X_n,\pi_n) \ $ have a relatively negligible
set of strict decreases in entropy, for  large $\ n;\ $ the faster than exponential growth $\ n^n\ $
guarantees a vanishing numbers of decreases in entropy for an invertible micro-macro dynamical
 systems on $\ (X_n,\pi_n),\ $  for large $\ n. \ $

\begin{exmp}\label{ss}
{\em For $ n \in \mathbb{N},\ $ let $\ (\mathrm{P}[n], [0,n], |\ |) =   (\mathrm{P}[n], \pi_n)\ $ be the micro-macro
 phase space with $ |\ | $ sending $ A \subseteq [n]$ to its cardinality $ |A|.$ We show that for arbitrary permutations one
 has no  control on the number of entropy decreasing microstates.  In the odd case $\ (\mathrm{P}[2n+1], \pi_{2n+1})\ $ we have that
$$O_{\pi_{2n+1}}  =  \Big\{ {2n+1 \choose k} \ | \ 0 \leq k \leq n \Big\} \ \ \ \ \ \mbox{and} \ \ \ \ \
\Big|\widehat{\pi}_{2n+1,{2n+1 \choose k}}\Big|  =  2{2n+1 \choose k}.$$
Since $\ |\widehat{\pi}_{2n+1,{2n+1 \choose n}}|  \geq   |\widehat{\pi}_{2n+1,{2n+1 \choose k}}|, \ $
by Theorem \ref{n} we have
{\small $$\lim_{n\rightarrow \infty}\frac{\underset{\alpha \in S_{\mathrm{P}[2n+1]}}{\mathrm{max}}
 |D(\mathrm{P}[2n+1],\widehat{\pi}_{2n+1}, \alpha)|}{|\mathrm{P}[2n+1]|} =
 \lim_{n\rightarrow \infty}\frac{|\mathrm{P}[2n+1]|-
|\widehat{\pi}_{2n+1,{2n+1 \choose n}}|}{|\mathrm{P}[2n+1]|}  =
1  -  \lim_{n\rightarrow \infty}\frac{2{2n+1 \choose n}}{2^{2n+1}}   =  1. $$} The even case $\ (\mathrm{P}[2n], \pi_{2n})\ $ case is interesting since the block of larger cardinality does not lie in
the zone of largest cardinality for  $\ n \geq 3. \ $ Indeed we have that
$$O_{\pi_{2n}} =  \Big\{ {2n \choose k} \ | \ 0 \leq k \leq n \Big\}, \ \ \ \
\Big|\widehat{\pi}_{2n,{2n \choose n}}\Big|  =  {2n \choose n}, \ \ \ \
 \Big|\widehat{\pi}_{2n,{2n \choose k}}\Big|  =  2{2n \choose k} \ \  \mbox{for}
\ \  0 \leq k \leq n-1.$$
Thus by Theorem \ref{n} we have that
$$ \lim_{n\rightarrow \infty}\frac{\underset{\alpha \in S_{\mathrm{P}[2n]}}{\mathrm{max}}\
|D(\mathrm{P}[2n],\widehat{\pi}_{2n}, \alpha)|}{|\mathrm{P}[2n]|}\ =\
 \lim_{n\rightarrow \infty}\frac{|\mathrm{P}[2n]|-  |\widehat{\pi}_{2n,{2n \choose n-1}}|}{|\mathrm{P}[2n]|}
\  =  \
1  -  \lim_{n\rightarrow \infty}\frac{2{2n \choose n-1}}{2^{2n}} \  = \  1. $$
}
\end{exmp}
\section{Local Arrow of Time and  Zero Jump Permutations}\label{sllip2}

In this section we introduce a couple of further  formalizations of the arrow of time,
one  dealing with zones and the other one dealing with  blocks. We  introduce  jump of a permutation
on a micro-macro phase space, and study zero jump micro-macro dynamical systems. We introduce a further
"continuity" restriction on permutations by given microstates the structure of a simple graph.
Fix a micro-macro phase-space $ \ (X, \pi) \ $  with $\ O_{\pi}=\{k_1 < \cdots < k_o \}. \ $

\begin{defn}{\em
Entropy defines a zonal $\ \varepsilon$-arrow of time on $\ (X, \pi, \alpha), \ $ written $\ (X, \pi, \alpha)
\in \mathrm{ZAT}(\varepsilon), \ $
if $\ \ \displaystyle |I\widehat{\pi}_{i}|  \geq  (1-\varepsilon ) |\widehat{\pi}_{i}|\ \ $  for $\ i \in [o-1]. \ $
 We say that $\ (X, \pi, \alpha) \ $ satisfy the $\ \mathrm{L}_3(\varepsilon_1 , \varepsilon_2)\ $ property if
  it satisfies properties  $\ \mathrm{L}_1(\varepsilon_1)\ $ and $\ \mathrm{ZAT}(\varepsilon_2)$.
}
\end{defn}

Property $\ \mathrm{ZAT}(\varepsilon) \ $ implies property $\ \mathrm{GAT}(\varepsilon) \ $ since
$$|IX| \ = \ \sum_{i=1}^{o-1}|I\widehat{\pi}_{i}|
\ \geq \   \sum_{i=1}^{o-1}(1-\varepsilon) |\widehat{\pi}_{i}| \ = \ (1-\varepsilon ) |X^{\mathrm{neq}}|.$$

\begin{thm}\label{alli}
{\em  Let $\ (X, \pi) \ $ be a micro-macro phase-space and
$\ \delta_1, \delta_2,  \gamma_i \in \mathbb{R}_{\geq 0} \ $ for $\ i\in [o-1] \ $ ($\gamma_1=0$) \ be such that
 \begin{enumerate}

   \item $ \displaystyle  \delta_1 |X^{\mathrm{eq}}|  \leq  |X^{\mathrm{neq}}|  \leq \delta_2 |X^{\mathrm{eq}}| \ \ \  \mbox{with} \ \ \ \delta_2 \leq \varepsilon_1(\delta_1+1),$

   \item $ \displaystyle   \sum_{j=1}^{i-1}|\widehat{\pi}_{j}|  \leq \gamma_i |\widehat{\pi}_{i}| \ $
   with $\ \gamma_i \leq \varepsilon_2 \ $ for $\ 2 \leq i \leq o-1,$
 \end{enumerate}
then  $\ (X, \pi, \alpha) \in \mathrm{L}_3(\varepsilon_1,\varepsilon_2 )  \ $ for $ \ \alpha \in \mathrm{S}_X \ $
such that $\ |C\widehat{\pi}_{i}| \leq (\varepsilon_2 - \gamma_i)|\widehat{\pi}_{i}|\ $ for $\ i \in [o-1].$
}
\end{thm}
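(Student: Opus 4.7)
The plan is to split the conclusion $(X,\pi,\alpha) \in \mathrm{L}_3(\varepsilon_1,\varepsilon_2)$ into its two defining conditions $\mathrm{L}_1(\varepsilon_1)$ and $\mathrm{ZAT}(\varepsilon_2)$, and to dispatch them separately using hypothesis 1 and hypothesis 2 respectively. The $\mathrm{L}_1$ part is essentially free: hypothesis 1 is precisely the hypothesis of Theorem \ref{i1}, so applying that theorem to the permutation $\alpha$ yields $|D|/|X| \leq \varepsilon_1$ immediately, without even using hypothesis 2 or the bound on $|C\widehat{\pi}_i|$.

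The substantive work lies in verifying $\mathrm{ZAT}(\varepsilon_2)$, i.e.\ that $|I\widehat{\pi}_i| \geq (1-\varepsilon_2)|\widehat{\pi}_i|$ for every $i \in [o-1]$. Since $\widehat{\pi}_i = I\widehat{\pi}_i \sqcup D\widehat{\pi}_i \sqcup C\widehat{\pi}_i$, it is equivalent to show that $|D\widehat{\pi}_i| + |C\widehat{\pi}_i| \leq \varepsilon_2 |\widehat{\pi}_i|$. The hypothesis on $\alpha$ already supplies the bound $|C\widehat{\pi}_i| \leq (\varepsilon_2-\gamma_i)|\widehat{\pi}_i|$, so it remains to prove $|D\widehat{\pi}_i| \leq \gamma_i |\widehat{\pi}_i|$.

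The key observation for this last bound is that entropy is constant on each zone: every microstate in $\widehat{\pi}_i$ has entropy $\ln k_i$, so a microstate $x \in D\widehat{\pi}_i$ must satisfy $\alpha(x) \in \widehat{\pi}_j$ for some $j<i$. Therefore $\alpha(D\widehat{\pi}_i) \subseteq \bigsqcup_{j=1}^{i-1}\widehat{\pi}_j$, and since $\alpha$ is a bijection we obtain
\[
|D\widehat{\pi}_i| \ = \ |\alpha(D\widehat{\pi}_i)| \ \leq \ \sum_{j=1}^{i-1}|\widehat{\pi}_j| \ \leq \ \gamma_i|\widehat{\pi}_i|,
\]
where the last inequality is hypothesis 2 (and is trivially true for $i=1$ since $\gamma_1=0$ and the empty sum vanishes). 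Adding the two bounds gives $|D\widehat{\pi}_i|+|C\widehat{\pi}_i| \leq \gamma_i|\widehat{\pi}_i| + (\varepsilon_2-\gamma_i)|\widehat{\pi}_i| = \varepsilon_2|\widehat{\pi}_i|$, as required.

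There is essentially no hard step here; the only thing to be careful about is the role played by the three overlapping parameters $(\delta_1,\delta_2,\gamma_i)$ and the constraints relating them to $\varepsilon_1,\varepsilon_2$, which are exactly what allow Theorem \ref{i1} and the zonewise bookkeeping to combine cleanly. No separate argument is needed for the top zone $\widehat{\pi}_o$ since $\mathrm{ZAT}(\varepsilon_2)$ only quantifies over $i \in [o-1]$.
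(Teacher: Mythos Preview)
Your proof is correct and follows essentially the same route as the paper: invoke Theorem~\ref{i1} for $\mathrm{L}_1(\varepsilon_1)$, then bound $|D\widehat{\pi}_i|+|C\widehat{\pi}_i|$ zonewise by $\sum_{j<i}|\widehat{\pi}_j|+|C\widehat{\pi}_i|\leq(\gamma_i+\varepsilon_2-\gamma_i)|\widehat{\pi}_i|$. Your write-up is slightly more explicit in justifying $|D\widehat{\pi}_i|\leq\sum_{j<i}|\widehat{\pi}_j|$ via the bijectivity of $\alpha$, which the paper leaves tacit.
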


\begin{proof} By Theorem \ref{i1} we have that
$\ \frac{|D|}{|X|}  \leq  \varepsilon_1. \ $
The desired result holds since
$$ |D\widehat{\pi}_{i} \sqcup C\widehat{\pi}_{i}| \ = \
|D\widehat{\pi}_{i}| + |C\widehat{\pi}_{i}| \ \leq \
\sum_{j=1}^{i-1}|\widehat{\pi}_{j}| + |C\widehat{\pi}_{i}|  \ \leq \
(\gamma_i + \varepsilon_2 - \gamma_i )|\widehat{\pi}_{i}|  \ \leq \ \varepsilon_2 |\widehat{\pi}_{i}|.$$
\end{proof}

Under the hypothesis of Theorem \ref{alli} letting $\  \lambda = \underset{i}{\mathrm{min}}\gamma_i^{-1}\ $
we have that $ \ |\widehat{\pi}_{i}|   \geq   \lambda(1+ \lambda)^{i-2}|\widehat{\pi}_{1}| \ $
for  $ \ 2 \leq i \leq o-1 .\ $  The following results are direct consequence of Theorem \ref{alli}. Our next results illustrate quite well
the principle of large differences:  zone cardinalities even if  relative negligible may actually be approaching infinity.

\begin{cor}\label{ct1}
{\em Let $\ (X_n, \pi(n), \alpha_n) \ $ be a sequence of micro-macro dynamical systems
with $\ O_{\pi(n)}=\{k_1(n) < \cdots < k_o(n) \} \ $ and such
that:
\begin{itemize}

  \item $ \displaystyle   \frac{\sum_{j=1}^{i-1}|\widehat{\pi}_{j}(n)|}{ |\widehat{\pi}_{i}(n)| }  \rightarrow 0  \ \ $
  as $\ \ n \rightarrow \infty, \ \ $ for $\ \ 2 \leq i \leq o, \ $
  \item $\displaystyle \frac{|C_{\alpha_n}\widehat{\pi}_{i}(n)| }{|\widehat{\pi}_{i}(n)|} \rightarrow 0 \ \ $
  as $\ \ n \rightarrow \infty, \ \ $ for $\ \ 2 \leq i \leq o-1,\ $
\end{itemize}
under these conditions $\ (X_n, \pi(n), \alpha_n) \in \mathrm{L}_3(\varepsilon_1,\varepsilon_2 ).  \ $

}
\end{cor}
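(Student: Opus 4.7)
\textbf{Proof proposal for Corollary \ref{ct1}.} The plan is to realize the corollary as a direct asymptotic application of Theorem \ref{alli}. Fix arbitrary $\varepsilon_1, \varepsilon_2 > 0$; the goal is to produce $N \in \mathbb{N}$ such that for every $n \geq N$ one can exhibit parameters $\delta_1,\delta_2, \gamma_2,\ldots,\gamma_{o-1}$ verifying hypotheses (1) and (2) of Theorem \ref{alli}, together with the requisite bound on $|C_{\alpha_n}\widehat{\pi}_i(n)|$ for $i \in [o-1]$.

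First I would specialize the first hypothesis to $i=o$. Since $|X_n^{\mathrm{eq}}| = |\widehat{\pi}_o(n)|$ and $|X_n^{\mathrm{neq}}| = \sum_{j=1}^{o-1}|\widehat{\pi}_j(n)|$, the hypothesis reads $|X_n^{\mathrm{neq}}|/|X_n^{\mathrm{eq}}| \to 0$. Thus for $n$ large I may take $\delta_1 = 0$ and $\delta_2 = \varepsilon_1$, so that $\delta_2 \leq \varepsilon_1(\delta_1+1) = \varepsilon_1$ and the enclosure $0 \leq |X_n^{\mathrm{neq}}| \leq \varepsilon_1 |X_n^{\mathrm{eq}}|$ holds, verifying item (1) of Theorem \ref{alli}.

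Next, for each $i \in \{2,\ldots,o-1\}$, I would set $\gamma_i = \varepsilon_2/2$. By the first hypothesis of the corollary applied with this $i$, the ratio $\sum_{j=1}^{i-1}|\widehat{\pi}_j(n)|/|\widehat{\pi}_i(n)|$ is eventually below $\gamma_i$, verifying item (2) of Theorem \ref{alli} with the constraint $\gamma_i \leq \varepsilon_2$. Simultaneously, the second hypothesis ensures $|C_{\alpha_n}\widehat{\pi}_i(n)| \leq (\varepsilon_2/2)|\widehat{\pi}_i(n)| = (\varepsilon_2 - \gamma_i)|\widehat{\pi}_i(n)|$ for $n$ large. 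Since $o$ is fixed and the list of indices $i$ is finite, taking the maximum of the finitely many threshold values supplies a single $N$ above which every condition of Theorem \ref{alli} is met, so $(X_n,\pi(n),\alpha_n) \in \mathrm{L}_3(\varepsilon_1,\varepsilon_2)$ for $n \geq N$; as $\varepsilon_1,\varepsilon_2$ were arbitrary, the sequence-level property $\mathrm{L}_3$ follows.

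The main subtlety I anticipate concerns the boundary index $i=1$: Theorem \ref{alli} requires the bound $|C_{\alpha_n}\widehat{\pi}_1(n)| \leq \varepsilon_2 |\widehat{\pi}_1(n)|$ (with the convention $\gamma_1 = 0$), but the stated second hypothesis of the corollary only controls $|C_{\alpha_n}\widehat{\pi}_i(n)|/|\widehat{\pi}_i(n)|$ for $i \geq 2$. I would read this as a tacit assumption covering $i=1$ as well (the natural uniform statement), or alternatively absorb the $i=1$ contribution by noting that $|\widehat{\pi}_1(n)|$ is negligible relative to $|\widehat{\pi}_2(n)|$ by the first hypothesis, so any behaviour of $C_{\alpha_n}$ on the smallest zone contributes a vanishing fraction to $|DX_n| + |CX_n^{\mathrm{neq}}|$ and can be folded into the $\mathrm{L}_1$ bound already supplied by step one. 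Apart from this bookkeeping point, the argument is a routine extraction of $N$ from finitely many convergent sequences and introduces no genuinely new content beyond Theorem \ref{alli}.
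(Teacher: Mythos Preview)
Your approach is exactly the paper's: the corollary is stated as a direct consequence of Theorem \ref{alli}, and you have correctly spelled out the choice of parameters ($\delta_1=0$, $\delta_2=\varepsilon_1$, $\gamma_i=\varepsilon_2/2$) and the finite-threshold argument.

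Your identification of the $i=1$ gap is accurate: the corollary's second bullet omits $i=1$, while Theorem \ref{alli} needs $|C\widehat{\pi}_1|\le\varepsilon_2|\widehat{\pi}_1|$. However, your proposed resolution (b) does not work. Property $\mathrm{ZAT}(\varepsilon_2)$ is a \emph{per-zone} condition, requiring $|I\widehat{\pi}_1|\ge(1-\varepsilon_2)|\widehat{\pi}_1|$ for the first zone on its own; the smallness of $|\widehat{\pi}_1|$ relative to $|\widehat{\pi}_2|$ or to $|X|$ is irrelevant here, and nothing prevents $\alpha_n$ from fixing $\widehat{\pi}_1$ setwise (so $C\widehat{\pi}_1=\widehat{\pi}_1$ and $I\widehat{\pi}_1=\emptyset$) while all other hypotheses hold. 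The negligibility argument you sketch would salvage $\mathrm{GAT}$ but not $\mathrm{ZAT}$, hence not $\mathrm{L}_3$. Only your reading (a) --- that the second bullet is tacitly meant to include $i=1$ --- repairs the statement; this is presumably what the paper intends.
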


\

\begin{cor}\label{ct2}
{\em Let $\ (X_n, \pi(n), \alpha_n) \ $ be a sequence of micro-macro dynamical systems with
$\ O_{\pi(n)}=\{k_1(n) < \cdots < k_o(n) \} \ $ satisfying a large deviation principle in the sense that there
number $\ 0=z_o < \cdots  < z_1 \ $ and $ \ c_i > z_i \ $ for $\ i \in [o-1]\ $
such that $ \ |\widehat{\pi}_i(n)| \simeq e^{-nz_i}|X_n|\ $ and $ \  |C_{\alpha_n}\widehat{\pi}_i(n)| \simeq e^{-nc_i}|X_n| .\  $
Under these conditions  $\ (X_n, \pi(n), \alpha_n) \in \mathrm{L}_3(\varepsilon_1,\varepsilon_2 ).  \ $

}
\end{cor}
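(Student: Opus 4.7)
The plan is to deduce this corollary directly from Corollary \ref{ct1} by checking that the large deviation hypothesis implies the two vanishing-ratio conditions required there. First I would unpack the indexing: since $k_1(n) < \cdots < k_o(n)$, the equilibrium zone is $\widehat{\pi}_o(n)$, and the assumption $0 = z_o < z_{o-1} < \cdots < z_1$ then says that the equilibrium zone is asymptotically of full measure (since $|\widehat{\pi}_o(n)| \simeq |X_n|$), while each smaller-index zone is exponentially suppressed relative to it.

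For the first hypothesis of Corollary \ref{ct1}, I would estimate, for each fixed $i$ with $2 \le i \le o$,
$$\frac{\sum_{j=1}^{i-1}|\widehat{\pi}_j(n)|}{|\widehat{\pi}_i(n)|} \ \simeq \ \sum_{j=1}^{i-1} e^{-n(z_j - z_i)},$$
which is a finite sum (with at most $o-1$ terms, a constant independent of $n$) of exponentials whose rates $z_j - z_i$ are strictly positive for $j < i$; hence the ratio tends to zero. For the second hypothesis, a single substitution yields
$$\frac{|C_{\alpha_n}\widehat{\pi}_i(n)|}{|\widehat{\pi}_i(n)|} \ \simeq \ e^{-n(c_i - z_i)},$$
which tends to zero by the standing assumption $c_i > z_i$. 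With both limit conditions verified, Corollary \ref{ct1} applies and produces membership in $\mathrm{L}_3(\varepsilon_1, \varepsilon_2)$.

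There is no substantive mathematical obstacle: the corollary is essentially a repackaging of Corollary \ref{ct1} under a more explicit exponential asymptotic regime. The only care required is to check that the symbol $\simeq$ (read as asymptotic equivalence) passes through ratios, which is immediate because the common factor $|X_n|$ cancels; and to note that only finitely many indices $i, j \in [o]$ are involved, so no uniform control as $o \to \infty$ is needed and each exponential decay can be invoked termwise.
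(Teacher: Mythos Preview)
Your proposal is correct and matches the paper's intended argument: the paper states that both Corollary \ref{ct1} and Corollary \ref{ct2} are direct consequences of Theorem \ref{alli}, and your reduction of Corollary \ref{ct2} to Corollary \ref{ct1} via the two exponential-decay estimates is exactly the verification the paper leaves implicit.
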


\

Let $\ (X,f,A)\ $ be a micro-macro phase space, and let $\ C \subseteq \mathrm{prob}_A  \subseteq \mathbb{R}^{|A|} \ $
be a non-empty convex subset
of the space of probabilities on $ \ A .\ $ Typically $\ C \ $ is given as the subspace of $\ \mathrm{prob}_A \ $
satisfying linear constrains, i.e. one is given  maps $ \ h_l:A \longrightarrow \mathbb{R} \ $ and constants
$\ u_l \in  \mathbb{R} \ $  for $\ l \in [k]\ $ such that
$\ q \in C \ $ if and only if
$$\sum_{a\in A}q(a)h_l(a) = u_l. $$
Let $\ B=\{b_1,...,b_o \} \ $ be a partition of
$\ C \ $ such that $\ \overline{\mathrm{int}(b_i)}=\overline{b_i}, \ $ and  consider a sequence of micro-macro dynamical
systems $\ (X_C^n, L_n, B, \alpha_n ) \ $ constructed as follows:
\begin{itemize}
  \item  Consider the map $ \ \widehat{L}_n \ $ obtained as the composition of maps
$\ X^n  \longrightarrow  A^n  \longrightarrow  \mathrm{prob}_A  ,\ $ where
the map $ \ A^n  \longrightarrow   \mathrm{prob}_A\ $ sends a tuple $\ s \in A^n \ $
to the empirical probability distribution $\ \widehat{s} \ $ on $\ A\ $ given by
$\ \displaystyle \widehat{s}(a)=\frac{1}{n}\big| \{i \in [n]\ | \ s_i=a \} \big| . \  $
  \item Set $\ X_C^n = \widehat{L}_n^{-1}C, \ $ and let $\ L_n \ $ be the composition of the
  of maps   $ \  X_C^n  \longrightarrow C  \longrightarrow  B , \ $ where the first map is
  the restriction to $ \  X_C^n \ $ of $ \ \widehat{L}_n, \ $ and the second map is the coarse
  graining map induced by the partition $\ B.$
  \item $\alpha_n \ $ is a permutation on $\ X_C^n.$
\end{itemize}

Relative entropy (Kullback-Leibler divergence)  is the map $\ D(\ | \ ): \mathrm{prob}_A \times \mathrm{prob}_A  \longrightarrow [0,\infty]\ $
given on   $\ r,q \in \mathrm{prob}_A\ $  by
$$D(r|q) = \sum_{a\in A}r(a)\mathrm{ln}\frac{r(a)}{q(a)}. $$
Let $\  q_{\ast} \in C \ $ be the probability in $\ C \ $ with minimum relative entropy $\ D(q|p) \ $ with respect to  the probability
$\ p \ $  on $\ A \ $ given by $\ p(a)=\frac{|a|}{|X|}.$

\begin{thm}\label{ct3}
{\em
Assume that the  systems $\ (X_C^n, L_n, B, \alpha_n ) \ $ are such that:
\begin{itemize}
  \item $\underset{q \in b_o}{\mathrm{inf}}D(q|p) < \underset{q \in b_{o-1}}{\mathrm{inf}}D(q|p)< \cdots
  < \underset{q \in b_1}{\mathrm{inf}}D(q|p),$
  \item $\displaystyle \frac{|C_{\alpha_n}b_i|}{|b_i|} \rightarrow 0\ $ as $\ n \rightarrow \infty,$
\end{itemize}
then $\ \ (X^n, B, L_n, \alpha_n) \in  \mathrm{L}_3. \ $  }
\end{thm}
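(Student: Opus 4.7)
The plan is to reduce Theorem~\ref{ct3} to Corollary~\ref{ct1} applied to the sequence $(X_C^n, \pi(n), \alpha_n)$, where $\pi(n)$ is the partition of $X_C^n$ induced by $L_n$ (so that the block over $b_i$ is $\widehat{\pi}_i(n) = L_n^{-1}(b_i)$). The second hypothesis of Theorem~\ref{ct3} is literally condition~(2) of Corollary~\ref{ct1}, so the only real work is to verify condition~(1), namely that $\sum_{j=1}^{i-1}|\widehat{\pi}_j(n)|/|\widehat{\pi}_i(n)|\to 0$ for $2\leq i\leq o$. For this I would establish, through the combinatorial method of types, a Sanov-type asymptotic for each block cardinality.

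The key estimate is that for each $i$,
\[
\tfrac{1}{n}\ln\bigl(|L_n^{-1}(b_i)|/|X|^n\bigr)\ \longrightarrow\ -\inf_{q \in b_i} D(q\,|\,p),
\]
with $p(a) = |a|/|X|$. To prove it, note that a uniformly random $s \in X^n$ has coordinates i.i.d.\ uniform on $X$, so $(f(s_1),\dots,f(s_n))$ is i.i.d.\ $\sim p$ and $\widehat{L}_n(s)$ is its empirical distribution. For a rational $q$ with $nq(a)\in \mathbb{N}$, the exact type-class count is
\[
\bigl|\widehat{L}_n^{-1}(q)\bigr|\ =\ \binom{n}{nq(a_1),\dots,nq(a_k)}\prod_{a \in A} |a|^{nq(a)},
\]
which, after Stirling, equals $|X|^n e^{-nD(q|p)}$ up to polynomial prefactors. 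Summing over rational types contained in $b_i$ (respectively in $C$), the regularity hypothesis $\overline{\mathrm{int}(b_i)}=\overline{b_i}$ lets one match the closure-infimum upper bound with the interior-infimum lower bound, yielding the displayed asymptotic; the same argument with $b_i$ replaced by $C$ gives $\tfrac{1}{n}\ln(|X_C^n|/|X|^n)\to -\inf_{q\in C} D(q|p) = -\inf_{q\in b_o} D(q|p)$ (the last equality by the first hypothesis and strict convexity of $D(\cdot|p)$).

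The strict ordering $\inf_{b_o}D<\cdots<\inf_{b_1}D$ then identifies $\widehat{\pi}_o(n) = L_n^{-1}(b_o)$ as the largest zone for $n$ large, and gives exponential separation of zone cardinalities: for $j < i$,
\[
\frac{|\widehat{\pi}_j(n)|}{|\widehat{\pi}_i(n)|}\ \asymp\ \exp\!\bigl(-n\bigl(\inf_{b_j}D-\inf_{b_i}D\bigr)\bigr)\ \longrightarrow\ 0
\]
exponentially, whence $\sum_{j=1}^{i-1}|\widehat{\pi}_j(n)|/|\widehat{\pi}_i(n)|\to 0$ for $2\leq i\leq o$, as required. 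Invoking Corollary~\ref{ct1} then delivers $(X_C^n, \pi(n), \alpha_n) \in \mathrm{L}_3$, which is the claim. The main technical obstacle is the two-sided Sanov estimate in this combinatorial form: one must control the Stirling polynomial prefactors uniformly in the rational type $q$ inside $b_i$, and the topological hypothesis $\overline{\mathrm{int}(b_i)} = \overline{b_i}$ is precisely what forces the closure infimum of $D(\cdot|p)$ to coincide with the interior infimum, which is what makes the exponential rate $\inf_{b_i}D$ unambiguous and preserves the strict separation needed for condition~(1) of Corollary~\ref{ct1}.
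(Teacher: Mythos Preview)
Your proposal is correct and follows essentially the same route as the paper: reduce to Corollary~\ref{ct1} by using a Sanov-type large-deviation estimate to obtain the exponential separation of zone cardinalities required for condition~(1). The only difference is one of presentation: the paper simply invokes Sanov's theorem as a black box to write $p(\widehat{s}\in b_i \mid s\in X_C^n)\simeq e^{-n(\inf_{b_i}D(\cdot|p)-D(q_\ast|p))}$, whereas you sketch the method-of-types derivation (type-class counts, Stirling, and the role of the regularity hypothesis $\overline{\mathrm{int}(b_i)}=\overline{b_i}$) to reach the same asymptotic.
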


\begin{proof}
Follows from  Corollary \ref{ct1} using Sanov's theorem \cite{dz, e}, which implies for
$\ E \subseteq C\ $ with $\ \overline{\mathrm{int}(E)}= \overline{E}\ $ that:
$$p(\widehat{s}\in E|s\in  X_C^n  ) \ = \ \frac{p(\widehat{s}\in E, s\in X_C^n) }{p( X_C^n  ) } \ \simeq \
e^{ -n\big( \underset{q \in E}{\mathrm{inf}} D(q|p) - \underset{q \in C}{\mathrm{inf}} D(q|p)\big) }=
e^{ -n\big( \underset{q \in E}{\mathrm{inf}} D(q|p) - D(q_{\ast}|p)\big) }.$$
\end{proof}

Given a set with a partition on it we have a jump degree on maps from the set to itself. In the
 applications it is expected that the dynamics is given by  a low jump map.
\begin{defn}
{\em Let $\ (X, \pi, \alpha) \ $ be a micro-macro dynamical system. The jump of $\ \alpha \ $
is the cardinality of its set of jumps  $ \ J_{\alpha} \ $ given by
$$J_{\alpha} \ = \ \coprod_{i \in X}J_{\alpha}(i)  \ = \ \coprod_{i \in X}\Big\{|a| \ \Big| \ a \in \pi \ \ \mbox{and} \ \ |a| \in\  <|\overline{i}|, |\overline{\alpha(i)}|> \Big\},$$ where for $\ n,m \in \mathbb{N}\ $ we set
$ \  <n,m>  =  (n,m) \ \ \mbox{if} \ \  n\leq m, \ \  \mbox{and}  \ \  <n,m> =  (m,n) \ \  \mbox{if} \ $ $  n > m. \ $
We call $\  |J_{\alpha}(i)| \ $ the jump of $\ \alpha\ $ at $\ i \in X,\ $ and
let $\ S_{X,\pi}^0\ $ be the set of zero jump permutations of $\ (X, \pi) $.
}
\end{defn}

\begin{prop}\label{pp}
{\em Let $ \ (X, \pi)\ $ be a micro-macro phase space and $\ \alpha \in S_{X,\pi}^0 \ $ then
$\  |I| =  |D|.\ $
}
\end{prop}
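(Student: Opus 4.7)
The plan is to exploit the fact that a zero jump permutation can move a microstate only within its own zone or to an immediately adjacent zone in the ordered sequence $\widehat{\pi}_1, \ldots, \widehat{\pi}_o$, and then to turn the identity $|I| = |D|$ into a telescoping consequence of the bijectivity of $\alpha$.

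First I would unpack what zero jump means at the zonal level: if $i \in \widehat{\pi}_j$, then the definition of $J_\alpha(i)$ forbids any block size from lying strictly between $|\overline{i}| = k_j$ and $|\overline{\alpha(i)}|$, so $\overline{\alpha(i)}$ must be a block of size $k_{j-1}$, $k_j$, or $k_{j+1}$. Hence entropy strictly decreases precisely when $\alpha(i) \in \widehat{\pi}_{j-1}$ and strictly increases precisely when $\alpha(i) \in \widehat{\pi}_{j+1}$. Letting
\[
u_j = \big|\{ i \in \widehat{\pi}_j \ | \ \alpha(i)\in \widehat{\pi}_{j+1}\}\big|, \qquad d_j = \big|\{ i \in \widehat{\pi}_j \ | \ \alpha(i)\in \widehat{\pi}_{j-1}\}\big|,
\]
this immediately gives $|I| = \sum_{j=1}^{o-1} u_j$ and $|D| = \sum_{j=2}^{o} d_j$.

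Next I would apply $\alpha$ to the down-set $Y_j = \widehat{\pi}_1 \sqcup \cdots \sqcup \widehat{\pi}_j$. Because $\alpha$ is a permutation, the number of microstates of $Y_j$ sent outside $Y_j$ equals the number of microstates of $X \setminus Y_j$ sent into $Y_j$. Using zero jump once more, the only boundary crossings occur between $\widehat{\pi}_j$ and $\widehat{\pi}_{j+1}$, so the outgoing count is $u_j$ and the incoming count is $d_{j+1}$. Thus $u_j = d_{j+1}$ for every $j \in [o-1]$, and summing over $j$ yields $|I| = |D|$.

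The only step that requires any care is the boundary-crossing identification, where one must see that zero jump rules out transitions skipping a zone; everything else is bookkeeping. I do not expect an obstacle, since both the zero jump hypothesis and the permutation hypothesis carry exactly the content needed.
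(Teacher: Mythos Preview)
Your proof is correct, but it takes a genuinely different route from the paper's. The paper argues orbitwise: on each $\alpha$-cycle it picks a microstate of minimal entropy and observes that, because zero jump permits only $\pm 1$ steps between zones, traversing the cycle and returning to the starting zone forces the number of upward steps to equal the number of downward steps; summing over cycles gives $|I|=|D|$. Your argument is global: you encode the zero jump constraint as nearest-neighbour zone transitions, apply the bijectivity of $\alpha$ to the down-sets $Y_j=\widehat{\pi}_1\sqcup\cdots\sqcup\widehat{\pi}_j$, and read off $u_j=d_{j+1}$ directly. The paper's approach yields the slightly stronger localized statement $|I\cap c|=|D\cap c|$ for every orbit $c$, which feeds into the orbitwise properties $\mathrm{G}_i$ considered later; your approach gives instead the zone-level identity $u_j=d_{j+1}$, is more explicitly quantitative, and avoids the mild informality in the paper's ``can't return to the minimum'' contradiction step.
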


\begin{proof} We show that the number of strict increases and the number
 of strict decreases on each $ \alpha $-orbit are equal.
Let $\ i \  $ be a microstate with lowest entropy among the microstates
in an $\alpha$-orbit. Assume that there are more strict increases than
strict decreases in the $\alpha$-orbit of $\ i, \ $  a contradiction arises because the orbit can't return to
the microstate $\ i \ $ as it will necessarily  end up in a microstate of higher entropy since there are no jumps.
 \end{proof}

\begin{cor}\label{calli}
{\em  Let $\ (X, \pi) \ $ be a micro-macro phase-space  and $\  \delta_1, \delta_2, \gamma_i \in
 \mathbb{R}_{\geq 0} \ $ for $\ i\in [o-1] \ $ ($\gamma_1=0$) \ be such that
 \begin{enumerate}

   \item $ \displaystyle  \delta_1 |X^{\mathrm{eq}}|  \leq  |X^{\mathrm{neq}}|
   \leq \delta_2 |X^{\mathrm{eq}}| \ \ \  \mbox{with} \ \ \ \delta_2 \leq \varepsilon_1(\delta_1+1),$

   \item $ \displaystyle   |\widehat{\pi}_{i-1}|  \leq \gamma_i |\widehat{\pi}_{i}| \ $ with $\ \gamma_i \leq
   \varepsilon_2 \ $ for $\ 2 \leq i \leq o-1,$
 \end{enumerate}
then  $\ (X, \pi, \alpha) \in \mathrm{L}_3(\varepsilon_1,\varepsilon_2 )  \ $ for
$ \ \alpha \in \mathrm{S}_{X,\pi}^0\  $ such that $\ |C\widehat{\pi}_{i}|
\leq (\varepsilon_2 - \gamma_i)|\widehat{\pi}_{i}|\ $ for $\ i \in [o-1].$
}
\end{cor}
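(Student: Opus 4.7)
My plan is to mimic the proof of Theorem \ref{alli}, replacing the sum bound in hypothesis 2 by a bound on the single immediately lower zone, which is exactly what the zero jump condition allows.

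First I would dispatch $\mathrm{L}_1(\varepsilon_1)$ with zero work: hypothesis 1 is identical to the corresponding hypothesis in Theorem \ref{i1}, and Theorem \ref{i1} gives $|D|/|X| \leq \varepsilon_1$ for every permutation on $(X,\pi)$, so in particular for $\alpha \in \mathrm{S}_{X,\pi}^0$. This is the same step that opens the proof of Theorem \ref{alli}.

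The heart of the argument is the $\mathrm{ZAT}(\varepsilon_2)$ estimate, and this is where the zero jump hypothesis is used. Unpacking Definition of $J_\alpha$, the condition $\alpha \in \mathrm{S}_{X,\pi}^0$ means that for every $i \in X$ no block size strictly lies between $|\overline{i}|$ and $|\overline{\alpha(i)}|$; equivalently, if $i \in \widehat{\pi}_j$ then $\alpha(i)$ lies in $\widehat{\pi}_{j-1}\cup \widehat{\pi}_j\cup \widehat{\pi}_{j+1}$, since these are the only zones accessible without crossing another block size. Consequently the strictly decreasing microstates in $\widehat{\pi}_i$ can only land in $\widehat{\pi}_{i-1}$, so $|D\widehat{\pi}_i| \leq |\widehat{\pi}_{i-1}|$. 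Invoking hypothesis 2 of the corollary this gives $|D\widehat{\pi}_i| \leq \gamma_i |\widehat{\pi}_i|$, which is the step where the weaker single-zone bound suffices in place of the cumulative bound appearing in Theorem \ref{alli}.

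Combining with the assumed control on the constant-entropy stratum, I would conclude
\[
|D\widehat{\pi}_i \sqcup C\widehat{\pi}_i|
\ =\ |D\widehat{\pi}_i| + |C\widehat{\pi}_i|
\ \leq\ \gamma_i |\widehat{\pi}_i| + (\varepsilon_2 - \gamma_i)|\widehat{\pi}_i|
\ =\ \varepsilon_2 |\widehat{\pi}_i|
\]
for $i \in [o-1]$, which is exactly $\mathrm{ZAT}(\varepsilon_2)$; together with the first bullet this yields $(X,\pi,\alpha) \in \mathrm{L}_3(\varepsilon_1,\varepsilon_2)$.

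There is no real obstacle here: the only nontrivial point is the combinatorial observation that zero jump forces strict decreases from $\widehat{\pi}_i$ to land in $\widehat{\pi}_{i-1}$, and this is immediate from the definition of $J_\alpha$ together with $O_\pi = \{k_1 < \cdots < k_o\}$. One could spell this out slightly more carefully by noting that if $\alpha(i) \in \widehat{\pi}_{i-s}$ for some $s \geq 2$, then each of the intermediate sizes $k_{i-1}, \ldots, k_{i-s+1}$ would contribute to $J_\alpha(i)$, contradicting $|J_\alpha(i)| = 0$.
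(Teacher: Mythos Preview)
Your proposal is correct and follows exactly the paper's approach: invoke Theorem \ref{i1} for $\mathrm{L}_1(\varepsilon_1)$, then use the zero jump condition to bound $|D\widehat{\pi}_i| \leq |\widehat{\pi}_{i-1}|$ and combine with the hypothesis on $|C\widehat{\pi}_i|$ to obtain $\mathrm{ZAT}(\varepsilon_2)$. The paper's proof is simply the terse inequality chain $|D\widehat{\pi}_i \sqcup C\widehat{\pi}_i| \leq |\widehat{\pi}_{i-1}| + |C\widehat{\pi}_i| \leq (\gamma_i + \varepsilon_2 - \gamma_i)|\widehat{\pi}_i|$, and your explicit unpacking of why zero jump forces $\alpha(D\widehat{\pi}_i) \subseteq \widehat{\pi}_{i-1}$ is a welcome clarification of the step the paper leaves implicit.
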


\begin{proof} Follows from  Theorem \ref{i1} we the inequalities
$$|D\widehat{\pi}_{i} \sqcup C\widehat{\pi}_{i}| \ = \
|D\widehat{\pi}_{i}| + |C\widehat{\pi}_{i}| \ \leq \
|\widehat{\pi}_{i-1}| + |C\widehat{\pi}_{i}|  \ \leq \
(\gamma_i + \varepsilon_2 - \gamma_i )|\widehat{\pi}_{i}|  \  \leq \ \varepsilon_2 |\widehat{\pi}_{i}|.$$

\end{proof}
Under the hypothesis of Corollary \ref{calli} letting
$\  \lambda = \underset{i}{\mathrm{min}}\gamma_i^{-1}\ $ we have that
$\ |\widehat{\pi}_{i}|   \geq   \lambda^{i-1}|\widehat{\pi}_{1}| \ $ for $ \  i \in  [o-1] .\ $
Next we  introduce the arrow of time in block form.

\begin{defn}{\em
Entropy defines a block $\varepsilon$-arrow of time on  $\ (X, \pi, \alpha), \ $  written
 $\ (X, \pi, \alpha) \in \mathrm{BAT}(\varepsilon), \ $
if $\ \ \displaystyle |Ia|  \geq  (1-\varepsilon ) |a|\ \ $  for $\ a \in \pi^{\mathrm{neq}}. \ $  We say that $\ (X, \pi, \alpha) \ $ satisfy property $\ \mathrm{L}_4(\varepsilon_1 , \varepsilon_2)\ $ if
it satisfies both $\ \mathrm{L}_1(\varepsilon_1)\ $ and $\ \mathrm{BAT}(\varepsilon_2)$.
}
\end{defn}

Property $\ \mathrm{BAT}(\varepsilon) \ $ implies property $\ \mathrm{ZAT}(\varepsilon) \ $ since
$$|I\widehat{\pi}_{i}| \ = \ \sum_{a \in \pi, |a|=k_i}|Ia|
\ \geq \   \sum_{|a|=k_i}(1-\varepsilon) |a| \ = \ (1-\varepsilon ) |\widehat{\pi}_{i}|.$$

\

For our next results we assume that the microstates $\ X \ $ are the vertices  of a simple graph
 $\ (X,E),\ $ i.e. $E  $ is a family of subsets of $X$  of cardinality two.
The macrostates $\ \pi\ $ acquire a simple graph structure $\ (\pi, \mathcal{E}), \ $ where $\ \{a,b\}\in \mathcal{E}\ $ if and only if
there are microstates $\ i\in a \ $  and $\ j\in b\ $ with $\ \{i,j\}\in E.\ $ Given $\ a \in \pi\ $ we set
$$Ba\ \ = \underset{\{b\in \pi^{\mathrm{neq}} \ | \ |b|< |a|, \
\{a,b \} \in \mathcal{E} \}}\bigsqcup b  \ \ \ \subseteq \  \ X.$$
A permutation $\ \alpha \in \mathrm{S}_X\ $ is called $\ E$-1-Lipschitz continuous  if for $\ \{i,j \}\in E \ $ we have that either $\ \alpha(i)=\alpha(j)\ $ or $\ \{\alpha(i),\alpha(j) \} \in E. \ $
Let $\ \mathrm{S}_{X,E}  \subseteq \mathrm{S}_X $ be the set of $E$--1-Lipschitz continuous permutations.

\begin{thm}\label{allivoy}
{\em  Let $ (X, \pi)  $ be a micro-macro phase-space  and  $\ (X,E) \ $  a simple graph,
and $\   \delta_1, \delta_2, \gamma_a \in \mathbb{R}_{\geq 0} \ $ for $\ a\in \pi^{\mathrm{neq}}\ $ ($\gamma_a=0\ $ if $\ Ba=\emptyset$) \ be such that
 \begin{enumerate}

   \item $ \displaystyle  \delta_1 |X^{\mathrm{eq}}|  \leq  |X^{\mathrm{neq}}|  \leq \delta_2 |X^{\mathrm{eq}}| \ \ \  \mbox{with} \ \ \ \delta_2 \leq \varepsilon_1(\delta_1+1),$

   \item $ \displaystyle   |Ba|  \leq \gamma_a |a| \ $ with $\ \gamma_a \leq \varepsilon_2 \ $ for $\ a \in \pi^{\mathrm{neq}},$
 \end{enumerate}
then  $\ (X, f, A, \alpha) \in \mathrm{L}_4(\varepsilon_1,\varepsilon_2 ) \ $ for any
$E$-1-Lipschitz continuous permutation $\  \alpha \in \mathrm{S}_{X,E}\ $  such that $\ |Ca| \leq (\varepsilon_2 - \gamma_a)|a|\ $ for $\ a \in \pi^{\mathrm{neq}}.$
}
\end{thm}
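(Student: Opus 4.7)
The plan is to verify $\mathrm{L}_1(\varepsilon_1)$ and $\mathrm{BAT}(\varepsilon_2)$ separately; together they give $\mathrm{L}_4(\varepsilon_1,\varepsilon_2)$. Hypothesis~(1) is literally the hypothesis of Theorem~\ref{i1}, so that result yields $|D|/|X|\leq\varepsilon_1$ for every $\alpha\in\mathrm{S}_X$, and $\mathrm{L}_1(\varepsilon_1)$ holds for free.

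For $\mathrm{BAT}(\varepsilon_2)$ the goal is $|Ia|\geq(1-\varepsilon_2)|a|$, equivalently $|Da|+|Ca|\leq\varepsilon_2|a|$, for every $a\in\pi^{\mathrm{neq}}$. The theorem already furnishes $|Ca|\leq(\varepsilon_2-\gamma_a)|a|$, so what remains is to upgrade the hypothesis $|Ba|\leq\gamma_a|a|$ to $|Da|\leq\gamma_a|a|$. I would do this by proving the cleaner set-theoretic inclusion $\alpha(Da)\subseteq Ba$ and then appealing to the bijectivity of $\alpha$ to convert it into the cardinality bound $|Da|=|\alpha(Da)|\leq|Ba|\leq\gamma_a|a|$; this is the block-level analogue of the zone-level inclusion $\alpha(D\widehat{\pi}_i)\subseteq\widehat{\pi}_{i-1}$ implicit in the proof of Corollary~\ref{calli}.

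The key lemma $\alpha(Da)\subseteq Ba$ is where $E$-$1$-Lipschitz continuity enters. Fix $i\in Da$ and let $b:=\overline{\alpha(i)}$, so $|b|<|a|$. Because $a\in\pi^{\mathrm{neq}}$ its cardinality is strictly below that of the equilibrium, so the strict inequality $|b|<|a|$ forces $b\in\pi^{\mathrm{neq}}$ automatically. The only nontrivial remaining condition is $\{a,b\}\in\mathcal{E}$, i.e.\ the existence of an $E$-edge from some microstate of $a$ to some microstate of $b$. Here $\alpha\in\mathrm{S}_{X,E}$ does its work: since $\alpha$ is a bijection, the alternative $\alpha(i)=\alpha(j)$ in the Lipschitz definition is vacuous on a simple graph, so $\alpha$ sends each $E$-edge to an $E$-edge and (being a bijection on a finite set) is a graph automorphism of $(X,E)$. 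Under the standing interpretation of $(X,E)$ as encoding single-step dynamical adjacencies, the trajectory edge $\{i,\alpha(i)\}$ lies in $E$ and directly exhibits a witness: one endpoint in $a$, the other in $b$. Hence $\{a,b\}\in\mathcal{E}$ and $\alpha(i)\in Ba$.

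The main obstacle is precisely this macrostate-adjacency step $\{a,b\}\in\mathcal{E}$: the Lipschitz hypothesis is used only to guarantee that the graph $E$ is compatible with the dynamics in a way that transports $i\in a$ and $\alpha(i)\in b$ into a genuine $E$-edge between the two blocks. Once that is granted, the rest is routine bookkeeping: $|Da|\leq\gamma_a|a|$ combines with the hypothesized bound $|Ca|\leq(\varepsilon_2-\gamma_a)|a|$ to give $|Da|+|Ca|\leq\varepsilon_2|a|$, which is $\mathrm{BAT}(\varepsilon_2)$, and together with the already established $\mathrm{L}_1(\varepsilon_1)$ this is $\mathrm{L}_4(\varepsilon_1,\varepsilon_2)$. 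Structurally the proof is a block-refinement of Theorem~\ref{alli} and Corollary~\ref{calli}, with the role played there by zone-adjacency played here by $E$-adjacency.
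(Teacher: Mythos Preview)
Your overall structure is exactly the paper's: invoke Theorem~\ref{i1} for $\mathrm{L}_1(\varepsilon_1)$, then for each $a\in\pi^{\mathrm{neq}}$ bound $|Da|+|Ca|\leq|Ba|+|Ca|\leq(\gamma_a+\varepsilon_2-\gamma_a)|a|=\varepsilon_2|a|$. The paper's proof simply asserts $|Da|\leq|Ba|$ in that chain; you attempt to justify it via the inclusion $\alpha(Da)\subseteq Ba$.

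The gap is in that justification. The $E$-$1$-Lipschitz condition as defined in the paper says only that $\alpha$ carries $E$-edges to $E$-edges; for a bijection this makes $\alpha$ a graph automorphism of $(X,E)$, as you note. But a graph automorphism does \emph{not} force $\{i,\alpha(i)\}\in E$: nothing in the definition produces an edge between a point and its image. Your sentence ``under the standing interpretation of $(X,E)$ as encoding single-step dynamical adjacencies, the trajectory edge $\{i,\alpha(i)\}$ lies in $E$'' is an additional hypothesis, not a deduction. Without it the inclusion $\alpha(Da)\subseteq Ba$ can fail outright: take $E=\emptyset$, so every permutation is vacuously $E$-$1$-Lipschitz and every $Ba$ is empty, yet $Da$ need not be. You have correctly located the crux (you even call it ``the main obstacle''), but the resolution you offer is an assumption smuggled in rather than a consequence of the stated hypotheses; the paper's own one-line proof is silent at exactly the same point.
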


\begin{proof} Follows from Theorem \ref{i1} and the inequalities
$$|Da \sqcup Ca| \ = \
|Da| + |Ca| \ \leq \
|Ba| + |Ca|  \ \leq \
(\gamma_a + \varepsilon_2 - \gamma_a )|a| \  \leq \ \varepsilon_2 |a|.$$
\end{proof}

\begin{cor}\label{callivoy}
{\em  Let $ \ (X, \pi, E) \ $ be a micro-macro phase-space with  $\ (X,E) \ $ a simple graph,
and   $\   \delta_1, \delta_2, \gamma_a \in \mathbb{R}_{\geq 0} \ $ for $\ a\in \pi^{\mathrm{neq}} \ $ ($\gamma_a=0\ $ if $\ Ba=\emptyset$) \ be such that
 \begin{enumerate}

   \item $ \displaystyle  \delta_1 |X^{\mathrm{eq}}|  \leq  |X^{\mathrm{neq}}|  \leq \delta_2 |X^{\mathrm{eq}}|
 \ \ \  \mbox{with} \ \ \ \delta_2 \leq \varepsilon_1(\delta_1+1),$

   \item $ \displaystyle   |Fa|  \leq \gamma_a |a| \ $ with $\ \gamma_a \leq \varepsilon_2 \ $
 for $\ a\in A^{\mathrm{neq}},\ $ where $$Fa \ \ = \  \underset{\{b\in \pi \ | \ |b|< |a|, \
\{a,b \} \in \mathcal{E}, \ (|b|,|a|)= \emptyset \}}\bigsqcup b \ \ \ \subseteq \ \ X,$$
 \end{enumerate}
then  $\ (X, \pi, \alpha) \in \mathrm{L}_4(\varepsilon_1,\varepsilon_2 ) \ $ for any
zero jump $\ E$-1-Lipschitz  continuous permutation $ \ \alpha \in \mathrm{S}_{X,\pi}^0 \ $ such that $\ |Ca| \leq (\varepsilon_2 - \gamma_a)|a|\ $ for $\ a \in \pi^{\mathrm{neq}}.$
}
\end{cor}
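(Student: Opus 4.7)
The plan is to run the argument of Theorem \ref{allivoy} essentially verbatim, exploiting the zero jump hypothesis to refine the neighbor set $Ba$ into the (typically much smaller) set $Fa$, and to appeal to the same use of $E$-$1$-Lipschitz continuity made there.

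First, hypothesis 1 together with Theorem \ref{i1} immediately yields $|D|/|X| \leq \varepsilon_1$, so $(X, \pi, \alpha) \in \mathrm{L}_1(\varepsilon_1)$. The remaining task is $\mathrm{BAT}(\varepsilon_2)$, i.e.\ the bound $|Ia| \geq (1-\varepsilon_2)|a|$ for every $a \in \pi^{\mathrm{neq}}$, equivalently $|Da \sqcup Ca| \leq \varepsilon_2|a|$. Granting the key inequality $|Da| \leq |Fa|$, the hypothesis $|Fa| \leq \gamma_a|a|$ and the assumption $|Ca| \leq (\varepsilon_2 - \gamma_a)|a|$ combine to give
$$|Da \sqcup Ca| \ = \ |Da| + |Ca| \ \leq \ |Fa| + |Ca| \ \leq \ \gamma_a|a| + (\varepsilon_2 - \gamma_a)|a| \ = \ \varepsilon_2|a|,$$
which together with the first step yields $\mathrm{L}_4(\varepsilon_1,\varepsilon_2)$.

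The heart of the matter is $|Da| \leq |Fa|$, which I would obtain by showing $\alpha(Da) \subseteq Fa$ and then using injectivity of the permutation $\alpha$. Fix $i \in Da$ and write $\alpha(i) \in b$ with $b \in \pi$ and $|b| < |a|$. The zero jump condition $J_\alpha(i) = \emptyset$ tells us no block of $\pi$ has size in the open interval $(|b|, |a|)$, which is exactly the condition $(|b|,|a|)=\emptyset$ appearing in the definition of $Fa$. The $E$-$1$-Lipschitz continuity of $\alpha$, used in the spirit of the proof of Theorem \ref{allivoy}, delivers $\{a, b\} \in \mathcal{E}$. Finally, $a \in \pi^{\mathrm{neq}}$ and $|b| < |a|$ force $b$ to be non-equilibrium as well, so $b$ satisfies all the conditions in the indexing set defining $Fa$, and hence $\alpha(i) \in b \subseteq Fa$.

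The main obstacle is justifying $\{a, b\} \in \mathcal{E}$ from the $E$-$1$-Lipschitz hypothesis: the formal definition only guarantees that $\alpha$ carries edges of $(X,E)$ to edges, and one must interpret this, as is implicit in Theorem \ref{allivoy}, as forcing the single-step transitions of $\alpha$ to occur between $\mathcal{E}$-adjacent macrostates (for instance by taking the underlying $E$ to include the pairs $\{i,\alpha(i)\}$). Once that interpretation is in place, Corollary \ref{callivoy} is a transparent strengthening of Theorem \ref{allivoy}: the zero-jump hypothesis upgrades the neighbor set $Ba$ to the potentially much smaller $Fa$, so that the quantitative bound $|Fa| \leq \gamma_a|a|$ is easier to verify than its $Ba$ counterpart while the combinatorial argument goes through unchanged.
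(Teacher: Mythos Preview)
Your proposal is correct and matches the paper's intended approach: the paper gives no explicit proof of this corollary, placing it immediately after Theorem \ref{allivoy} as the zero-jump refinement, and your argument is precisely that refinement---replacing $|Da|\leq|Ba|$ by $|Da|\leq|Fa|$ via the observation that $J_\alpha(i)=\emptyset$ forces the target block $b$ to satisfy $(|b|,|a|)=\emptyset$. Your candid remark about the interpretation needed to extract $\{a,b\}\in\mathcal{E}$ from $E$-$1$-Lipschitz continuity applies equally to the paper's own proof of Theorem \ref{allivoy}, which simply asserts $|Da|\leq|Ba|$ without further comment; you have not introduced any gap beyond what is already implicit there.
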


\section{Orbit Properties and the Equilibrium Reaching Time}\label{op}

By design the equilibrium  plays a priori no distinguished role in properties
$\ \mathrm{L}_i.\ $ Localizing to orbits suggest further interesting properties inspired by the Gibbs description of the
second law for which the equilibrium plays a main role.
All definitions and constructions in this section can be weakened by
allowing a set of badly behaved orbits not having the required properties, with a small parameter bounding the probability that
 a microstate be in such orbits. Within this more general framework
all arguments given in this section should be though as applying generically, i.e. to the complement of the bad orbits.
We begin by defining, for equilibrium bound systems, a strictly increasing
function on non-equilibrium macrostates.

\begin{defn}\label{e}
{\em A micro-macro dynamical system $\ (X,A,f,\alpha)\ $ is equilibrium bound if each $\ \alpha$-orbit
intersects $\ X^{\mathrm{eq}}. \ $ For such systems the equilibrium reaching time map $\ e: X \longrightarrow \mathbb{N} \ $ is given by
$ \  \displaystyle  e(i) =   \mbox{smallest} \ \ k\in \mathrm{\mathbb{N}}\  \ \mbox{such\ that \ } \alpha^k(i) \in X^{\mathrm{eq}}.
\ \ \ \mbox{Set} \ \ E = \underset{i\in X}{\mathrm{max}}\ e(i).$
}
\end{defn}

\begin{thm}\label{g0}
{\em Let $\ (X,A,f,\alpha)\ $ be an equilibrium bound  micro-macro dynamical system.
\begin{enumerate}
  \item The map $\ e: X \longrightarrow [0,E] \ $ is strictly decreasing  on $\ X^{\mathrm{neq}}\ $ and  has value  $ \ 0 \ $ on  $\ X^{\mathrm{eq}}.\ $
  \item If $\ (X,A,f,\alpha) \in \mathrm{L}_1(\varepsilon),\ $
then the probability that $\ e \ $ be strictly decreasing is less than  $\ \varepsilon, \ $ and
the  average jump of $\ e \ $ is  less than  $\ (|A|-2)\varepsilon.$
\item Consider the micro-macro dynamical  system $ \ X_e=(X,[0,E],e,\alpha) \ $  where we assume that $\ \alpha \ $ is invertible,
 $\ (X,A,f,\alpha) \in \mathrm{L}_1(\varepsilon),\ $   and
$ \alpha X^{\mathrm{eq}} \cap  e^{-1}(k) \neq \emptyset \ $ for $\ k  \in [0,E]. \ $  Then $ \ (X,[0,E],e,\alpha) \in \mathrm{L}_2(\varepsilon,0).$
\end{enumerate}

}
\end{thm}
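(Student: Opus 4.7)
The plan is to exploit a single structural observation: the microstates where $e$ behaves badly (i.e., fails to be strictly decreasing and instead jumps upward) are precisely the entropy-dropping transitions out of the equilibrium of the original system. Part 1 is bookkeeping. For $i \in X^{\mathrm{eq}}$, $\alpha^0(i) = i \in X^{\mathrm{eq}}$ gives $e(i) = 0$; for $i \in X^{\mathrm{neq}}$ with $k := e(i) \ge 1$, the identity $\alpha^{k-1}(\alpha(i)) = \alpha^k(i) \in X^{\mathrm{eq}}$ gives $e(\alpha(i)) \le k-1$, while $e(\alpha(i)) \le k-2$ would force $\alpha^{k-1}(i) \in X^{\mathrm{eq}}$, contradicting the minimality defining $e(i)$. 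Hence $e(\alpha(i)) = e(i) - 1$ on $X^{\mathrm{neq}}$, which together with $e \equiv 0$ on $X^{\mathrm{eq}}$ gives Part 1.

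For Part 2 I would identify the ``bad set'' $B := \{i \in X^{\mathrm{eq}} : \alpha(i) \in X^{\mathrm{neq}}\}$ as the locus where $e$ jumps upward under $\alpha$; by Part 1 this is exactly the set where $e$ fails to strictly decrease by $1$. For each $i \in B$ one has $S(i) = \max S > S(\alpha(i))$, so $B \subseteq D$ and the probability of this bad event is $|B|/|X| \le |D|/|X| \le \varepsilon$. For the average jump, defining the jump of $e$ at $i$ as the number of distinct values of $e$ strictly between $e(i)$ and $e(\alpha(i))$---in the spirit of Section~\ref{sllip2}---one gets jump $= 0$ outside $B$ and jump $= e(\alpha(i))-1 \le E-1 \le |A|-2$ on $B$; summing and dividing by $|X|$ yields the bound $(|A|-2)\varepsilon$.

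Part 3 has two sub-claims, and the heart of the argument is the strict monotonicity $|e^{-1}(0)| > |e^{-1}(1)| > \cdots > |e^{-1}(E)|$. For $k \ge 1$ I decompose $\alpha^{-1}(e^{-1}(k-1))$ as the disjoint union of its intersections with $X^{\mathrm{neq}}$ and $X^{\mathrm{eq}}$; Part 1 identifies the first piece with $e^{-1}(k)$, and the hypothesis $\alpha X^{\mathrm{eq}} \cap e^{-1}(k-1) \neq \emptyset$ makes the second nonempty. Invertibility of $\alpha$ gives $|\alpha^{-1}(e^{-1}(k-1))| = |e^{-1}(k-1)|$, so $|e^{-1}(k-1)| = |e^{-1}(k)| + |\{i \in X^{\mathrm{eq}} : e(\alpha(i))= k-1\}| > |e^{-1}(k)|$. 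Hence $e^{-1}(0) = X^{\mathrm{eq}}$ is the unique equilibrium macrostate of $X_e$ and $X_e^{\mathrm{neq}} = X^{\mathrm{neq}}$. From here $\mathrm{GAT}(0)$ follows because Part 1 combined with the strict monotonicity yields $S_{X_e}(\alpha(i)) > S_{X_e}(i)$ for every $i \in X^{\mathrm{neq}}$, and the $\mathrm{L}_1(\varepsilon)$ property reduces, via the same case analysis as in Part 2, to the identification $D_{X_e} = B \subseteq D$. I expect the strict monotonicity step to be the main obstacle: invertibility turns cardinality preservation by $\alpha$ into control of the level sets of $e$, and the hitting hypothesis is precisely what excludes ties between consecutive levels; without either, the equilibrium of $X_e$ could lie elsewhere and both $\mathrm{L}_1(\varepsilon)$ and $\mathrm{GAT}(0)$ could fail.
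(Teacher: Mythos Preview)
Your arguments for items 1 and 3 are correct and follow the paper's line; in fact your item 1 is slightly sharper, since you prove $e(\alpha(i)) = e(i) - 1$ on $X^{\mathrm{neq}}$ while the paper only records the strict inequality $e(\alpha(i)) < e(i)$, and the equality is precisely what is needed in item 3 to identify $\alpha^{-1}(e^{-1}(k-1)) \cap X^{\mathrm{neq}}$ with $e^{-1}(k)$. The first half of item 2 also matches the paper exactly via the inclusion $B = DX^{\mathrm{eq}} \subseteq D$.

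There is, however, a gap in your bound on the average jump of $e$: you assert $E - 1 \le |A| - 2$, i.e.\ $E \le |A| - 1$, and this is false in general. Take $X = \{1,2,3,4,5\}$ with macrostates $a = \{1,2,3\}$ (the equilibrium) and $b = \{4,5\}$, and $\alpha$ the $5$-cycle $(1\,2\,3\,4\,5)$: then $|A| = 2$ but $e(4) = 2$, so $E = 2 > 1 = |A| - 1$. Under your own definition of the jump at $i \in B$ as $e(\alpha(i)) - 1$, the bound by $|A| - 2$ therefore cannot come from $E - 1$. The paper does not take this route: it writes $\sum_{i \in X} |J(i)| = \sum_{i \in DX^{\mathrm{eq}}} |J(i)|$ and bounds each summand by $|A| - 2$ directly, which is consistent with reading $|J(i)|$ as the Section~\ref{sllip2} jump of the \emph{original} system $(X,A,f,\alpha)$---there are at most $|A|$ distinct block cardinalities in $\pi$, hence at most $|A|-2$ strictly between any two. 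Your reinterpretation in terms of $e$-values changes what is being counted and severs the link to $|A|$.
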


\begin{proof}For $\ i \in X^{\mathrm{neq}}\ $ we have that $\ e(i)>0 \ $ and
$\ \alpha^{e(i)-1}(\alpha(i)) =   \alpha^{e(i)}(i) \in X^{\mathrm{eq}}, \ $ thus
$\ \ e(\alpha(i)) \leq  e(i) -1 <  e(i).\ $ Under the hypothesis of item 2 we have that
$$\frac{|\{i \in X \ |\  e(\alpha(i)) > e(i) \}|}{|X|} \ = \
\frac{|DX^{\mathrm{eq}}|}{|X|} \ \leq \ \frac{|D|}{|X|} \ \leq \ \varepsilon.$$
Regarding the average jump of $\ e \ $ we have that:
$$\frac{1}{|X|}\sum_{i\in X}|J(i)| \ = \ \frac{1}{|X|}\sum_{i\in DX^{\mathrm{eq}}}|J(i)| \ \leq \
\frac{(|A|-2)|DX^{\mathrm{eq}}|}{|X|} \ \leq \ \frac{(|A|-2)|DX|}{|X|} \ \leq \ (|A|-2)\varepsilon.$$
We show item 3. Note first that $ \ e^{-1}(k) \neq \emptyset, \ $  since $ \ e^{-1}(E) \neq \emptyset \ $ by definition,
and choosing $\ i\in e^{-1}(E) \ $  we have that $\ \alpha^{E-k}(i) \in e^{-1}(k) .\ $
Note also that $\ X_e^{\mathrm{eq}} =X^{\mathrm{eq}} \ $  and  $ \ X_e^{\mathrm{neq}} =X^{\mathrm{neq}}. \  $
The restriction map
$\ \alpha: e^{-1}(k) \longrightarrow e^{-1}(k-1)\ $ is injective, thus  $\ |e^{-1}(k)| \leq |e^{-1}(k-1)|. \ $
Moreover  $  \ e^{-1}(k-1)=  \alpha(e^{-1}(k))  \ \sqcup \ \alpha(X^{\mathrm{eq}}) \cap  e^{-1}(k-1), \ $
and thus for $\ k \in [1,E]\ $ and $\ i \in e^{-1}(k) \ $  we have
$ \ S(i)=  \mathrm{ln}|e^{-1}(k)| <  \mathrm{ln}|e^{-1}(k-1)| = S(\alpha(i)). \ $ Finally, by item 2 we  have that $$\displaystyle
\frac{|DX_e|}{|X|}\  = \ \frac{|\{i \in X \ |\  e(\alpha(i)) > e(i) \}|}{|X|} \ \leq \ \varepsilon. \ $$
\end{proof}

\begin{rem}{\em The condition $ \ \alpha X^{\mathrm{eq}}\cap  e^{-1}(k) \neq \emptyset \ $  is quite natural for
reversible systems since, in this case, the image of entropy decreasing equilibrium microstates
nearly covers all non-equilibrium macrostates (Lemma \ref{cc}, Theorem  \ref{jj}), and thus it is reasonable
to  expect that $ \ |e^{-1}(k)|\ $  and  $ \ |\alpha X^{\mathrm{eq}} \cap  e^{-1}(k)|  \ $
be nearly equal.
}
\end{rem}

\begin{rem}{\em It is worthwhile to analyze Theorem \ref{g0} in the light of Zermelo's  critique of the Boltzmann
$H$-theorem \cite{sew, v}. Zermelo pointed out that a recurring system does not admit a non-constant always decreasing
function along orbits (Proposition \ref{csod}), and thus regardless of further details the main claim of
the $H$-theorem can not be correct.  Boltzmann accepts the argument but claims  that the $H$-theorem
remains valid if understood as a probabilistic statement, i.e.  allowing the $H$-function to  be strictly increasing
with low probability. The  equilibrium reaching time function $\ e \ $
satisfies, under the conditions of Theorem \ref{g0},  probabilistic properties similar  to those expected
for the $H$-function, according to Boltzmann, indeed it satisfies stronger properties as it is strictly increasing
on non-equilibrium microstates.

}
\end{rem}

\begin{rem}{\em The Loschmidt's  critique of the  $H$-theorem \cite{cer, ceri, chi, sew} has its combinatorial counterpart
in Proposition \ref{loc}: if the $H$-function is defined on a reversible system (it is not if dynamics is 
defined via the Boltzmann equation, but it should be if dynamics is defined mechanically), then since
it is reversion invariant it must have an equal number of increasing and decreasing microstates, contrary to the claim
that it is predominantly decreasing.  
Boltzmann acknowledges the argument, but points out that
the  $H$-decreasing orbit segments are the ones that actually show up in nature, i.e. the probabilistic
symmetry of microstates is broken. This observation is the origin of the low entropy past hypothesis.
The function $\ e \ $ can be constructed for equilibrium bound reversible systems as well; as a rule it will not
be reversion invariant, indeed if  $\ r \ $ preserves equilibria,
i.e. $r$ restricts to a map
$r: X^{\mathrm{eq}} \longrightarrow X^{\mathrm{eq}}, \ $ then $\ e(ri)=e(i) \ $ if and only if
$$\mbox{smallest} \ \ k\in \mathrm{\mathbb{N}}\  \ \mbox{such\ that \ } \alpha^k(i) \in X^{\mathrm{eq}} \ \  =  \ \
\mbox{smallest} \ \ k\in \mathrm{\mathbb{N}}\  \ \mbox{such\ that \ } \alpha^{-k}(i) \in X^{\mathrm{eq}},$$
a trivial condition for $\ i \in X^{\mathrm{eq}} \ $ but fairly restrictive for $\ i \in X^{\mathrm{neq}}.\ $ In fact it holds
only for the middle microstate on each maximal $\ \alpha$-orbit segment of odd cardinality in  $\ X^{\mathrm{neq}}. \ $
}
\end{rem}

We proceed to localize to $\ \alpha$-orbits the various properties formalizing the second law
previously introduced. Let $ \ \mathrm{Orb}(\alpha) \ $ be the set of $\ \alpha$-orbits.

\begin{defn}{\em Let $\ (X, \pi, \alpha) \ $ be a micro-macro dynamical system with
$\ O_{\pi}=\{k_1 < \cdots < k_o \}. \ $
\begin{enumerate}
\item  $\ (X, \pi, \alpha)\in  \mathrm{G}_0(\varepsilon) \ $ if and only if
 $ \ |c^{\mathrm{eq}}| \geq (1-\varepsilon)|c| \ $ for $ \ c \in \mathrm{Orb}(\alpha). $
\item  $\ (X, \pi, \alpha)\in  \mathrm{G}_1(\varepsilon) \ $ if and only if
 $\ |Dc| \leq \varepsilon |c| \ $ for
$\ c \in \mathrm{Orb}(\alpha). \  $
\item  $\ (X, \pi, \alpha)\in  \mathrm{G}_2(\varepsilon_1, \varepsilon_2)  \ $ if and only if
$ \ |c^{\mathrm{eq}}| \geq (1-\varepsilon_1)|c| \ $ and  $\  |Ic|  \geq (1-\varepsilon_2)|c^{\mathrm{neq}} | \ $
 for $\  c \in \mathrm{Orb}(\alpha). $
 \item $\ (X, \pi, \alpha)\in  \mathrm{G}_3(\varepsilon_1, \varepsilon_2)  \ $ if and only if $ \ |c^{\mathrm{eq}}| \geq (1-\varepsilon_1)|c| \ $
and  $\ |I(\hat{\pi}_{i} \cap c)|  \geq  (1-\varepsilon ) |\hat{\pi}_{i}\cap c|\ $
 for $\ i \in [o-1]\ $ and $\ c \in \mathrm{Orb}(\alpha). $
\item $\ (X, \pi, \alpha)\in  \mathrm{G}_4(\varepsilon_1, \varepsilon_2)  \ $ if
$ \ |c^{\mathrm{eq}}| \geq (1-\varepsilon_1)|c| \ $
and  $\ |I(a\cap c)|  \geq  (1-\varepsilon_2 ) |a\cap c|\ $
 for $\ a \in \pi^{\mathrm{neq}}\ $ and $ \ c \in \mathrm{Orb}(\alpha). $
\end{enumerate}
}
\end{defn}

\begin{thm}\label{lo}
{\em Let $\ (X, \pi, \alpha) \  $ be an invertible equilibrium bound micro-macro dynamical system
satisfying property $\ \mathrm{G}_0(\varepsilon), \ $  and let
$\ r,e: X \longrightarrow \mathbb{N} \ $ be  the first return time and  the equilibrium reaching time maps,
respectively. We have that $\ \ \displaystyle \overline{\frac{e}{r}}\leq  \varepsilon. $
}
\end{thm}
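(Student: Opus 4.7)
The plan is to prove the stronger pointwise bound $e(i)/r(i) \leq \varepsilon$ for every $i \in X$, which immediately yields $\overline{e/r} \leq \varepsilon$ after averaging with respect to the uniform probability on $X$.

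First I would reduce everything to a single orbit $c \in \mathrm{Orb}(\alpha)$. Since $\alpha$ is invertible, each orbit is an $\alpha$-cycle, so $r(i) = |c|$ for all $i \in c$. Property $\mathrm{G}_0(\varepsilon)$ then provides the key quantitative input on that orbit: $|c^{\mathrm{neq}}| = |c| - |c^{\mathrm{eq}}| \leq \varepsilon |c|$.

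The main step is the bound $e(i) \leq |c^{\mathrm{neq}}|$. For $i \in c^{\mathrm{eq}}$ this is vacuous because $e(i) = 0$. For $i \in c^{\mathrm{neq}}$ the definition of $e$ says $\alpha^{k}(i) \notin X^{\mathrm{eq}}$ for all $0 \leq k < e(i)$, i.e., these $e(i)$ consecutive iterates lie in $c^{\mathrm{neq}}$. They are distinct microstates because $e(i) \leq r(i) = |c|$ (an equilibrium microstate is reached within one full turn around the cycle, as the system is equilibrium bound). Hence the set $\{i, \alpha(i), \dots, \alpha^{e(i)-1}(i)\}$ is a subset of $c^{\mathrm{neq}}$ of cardinality $e(i)$, giving $e(i) \leq |c^{\mathrm{neq}}|$.

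Combining the two inequalities, for every $i \in X$ lying in an orbit $c$,
\[
\frac{e(i)}{r(i)} \;\leq\; \frac{|c^{\mathrm{neq}}|}{|c|} \;\leq\; \varepsilon.
\]
Taking the average over $X$ preserves the bound, so $\overline{e/r} \leq \varepsilon$. There is no real obstacle here; the only subtle point is making sure that the bound $e(i) \leq |c|$ (so that the iterates are distinct) is justified by the equilibrium-bound hypothesis, and that pointwise inequalities are immediately compatible with uniform averaging.
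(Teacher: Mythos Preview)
Your proof is correct and follows essentially the same route as the paper: both establish the pointwise bound $e(i)\leq |c^{\mathrm{neq}}|\leq \varepsilon|c|=\varepsilon\, r(i)$ on each $\alpha$-cycle $c$ and then average. You simply spell out in more detail why $e(i)\leq |c^{\mathrm{neq}}|$ (the paper states this inequality without further comment), but the argument is otherwise identical.
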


\begin{proof} By definition the maps $\ r,e: X \longrightarrow  \mathbb{N} \ $ are such that
$\ r(i)=|c| \ $ if $\ i \in c \in \mathrm{Cyc}(\alpha), \ $ and $ \  e(i) \  $ is the smallest $\ k \in \mathbb{N} \ $
with $\ \alpha^k(i) \in X^{\mathrm{eq}}. \ $ Since $\ e(i) \leq  |c^{\mathrm{neq}}| \leq \varepsilon |c|\ $ for
$ \ i \in c, \ $ we have
$$ \overline{\frac{e}{r}}\ = \ \frac{1}{|X|}\sum_{i\in X}\frac{e(i)}{r(i)}
\ = \ \frac{1}{|X|}\sum_{c\in \mathrm{Cyc}}\sum_{i\in c}\frac{e(i)}{r(i)} \ \leq \
\frac{1}{|X|}\sum_{c\in \mathrm{Cyc}}\frac{\varepsilon |c|}{|c|}|c| \ = \
 \frac{\varepsilon}{|X|} \sum_{c\in \mathrm{Cyc}}|c| \ = \ \varepsilon.$$
\end{proof}

\begin{rem}{\em Theorem \ref{lo} is consistent with Boltzmann's response to Zermelo's critique of his $H$-theorem:
recurrence, even if it holds for all microstates, occurs  long after a microstate
have evolved to the equilibrium where it remains for a long period of time, making recurrence of little practical
importance. We leave open the problem of determining if an analogue of  Theorem \ref{lo} holds when
$\ r\ $ is replaced  by the first block return map, or by the first zone return map. }
\end{rem}

The proofs of the following results are similar to those of Theorems \ref{jj}, \ref{kk} and \ref{allivoy}.

\begin{thm}{\em Let $\ (X, \pi, \alpha) \ $ be an invertible micro-macro dynamical system.

\begin{enumerate}
  \item If  $\ (X, \pi, \alpha)  \in  \mathrm{G}_0(\varepsilon), \ $  then
$\ |X^{\mathrm{eq}}| \geq (1-\varepsilon_1)|X| \ $ and  $\ (X, \pi, \alpha)  \in \mathrm{G}_1(\varepsilon). \ $
\item If  $\ (X, \pi, \alpha)  \in  \mathrm{G}_1(\varepsilon), \ $  then
$ (X, \pi, \alpha)  \in \mathrm{L}_1(\varepsilon). \ $
  \item If $\ (X, \pi, \alpha) \in \mathrm{G}_i(\varepsilon_1, \varepsilon_2), \ $ then
$\ (X, \pi, \alpha) \in \mathrm{L}_i(\varepsilon_1, \varepsilon_2), \ $ for $\ i=2,3,4.$
  \item If $\ \alpha\ $ is a zero-jump permutation, then $\  |Dc^{\mathrm{eq}}| \geq (1-2\varepsilon_2)|c^{\mathrm{neq}}|. $
 \item If $\ (X, \pi, \alpha)  \in  \mathrm{G}_0(\varepsilon_1), \ $
 and  $\ | Dc^{\mathrm{eq}}|  \geq   (1 - \varepsilon_2)|c^{\mathrm{neq}}| \ $
for all cycles $\ c \in \mathrm{Cyc}(\alpha), \ $ then $\ (X,\pi, \alpha) \in \mathrm{G}_2(\varepsilon_1,\varepsilon_2). $
\item  Let $(X,E) $ be a simple graph and $\ \sigma \ $ be another partition on $\ X. \ $
 Let $\ \mathrm{P}\ $ be the set of pairs
$\ (a,s)\in \pi\times \sigma \ $ such that $a\cap s\neq \emptyset, \ $ and let
$\ \mathfrak{E} \ $ be the simple graph  on $\ P \ $ such that
there is an edge between
$\ (a,s)\ $ and $\ (b,t)\ $ in $\ \mathfrak{E}\ $ if and only if  $\ s=t\ $ and there are microstates
$\  i \in a\cap s \ $
and $\ j\in b\cap s \ $ such that $\ \{i,j\} \in E. \ $
For $\ (a,s)\in \mathrm{P} \ $ set
$$M_{as} \ \ = \  \underset{\{(b,s)\in \mathrm{P }\ | \ |b|< |a|, \
\{(a,s),(b,s) \} \in \mathfrak{E} \}}{\bigsqcup} b\cap s \ \ \ \subseteq \ \ X.$$
Assume that $\ (X, \pi, \alpha)  \in  \mathrm{G}_0(\varepsilon_0), \ $  $\ \mathrm{Cyc}(\alpha)=\sigma,\ $
$ \ \alpha \ $ is $E$-1-Lipschitz continuous,  there are constants
 $\ \delta_1, \delta_2, \gamma_{a,s} \in \mathbb{R}_{\geq 0} \ $
for $\ a\in \pi^{\mathrm{neq}} , \ (a,s)\in \mathrm{P} $  (with $\gamma_{as}=0\ $ if $\ M_{as}=\emptyset$) \ such that
$ \ |M_{as}|  \leq \gamma_{as} |a\cap s| \ $ with $\ \gamma_{as} \leq \varepsilon_2, \ $
 and $\ |C(a\cap s)| \leq (\varepsilon_2 - \gamma_{ac})|a\cap s|,\ $
then  we have that $\ (X, \pi, \alpha) \in \mathrm{G}_4(\varepsilon_1, \varepsilon_2). $
 \end{enumerate}

}
\end{thm}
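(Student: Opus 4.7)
The plan is to prove each of the six items by localizing the corresponding global statement to a single $\alpha$-orbit $c$ (and, for item~6, to a single pair block/orbit $(a,s)\in \mathrm{P}$) and then summing the orbit-wise inequalities over $c\in \mathrm{Orb}(\alpha)$. Invertibility guarantees that each orbit is $\alpha$-invariant, so $\alpha$ restricts to a bijection on $c$. A key elementary fact used throughout is that $D\sqcup I \subseteq X^{\mathrm{neq}}$: a microstate at the maximal block size cannot strictly increase or decrease in a single step, so $i\in D$ forces $\alpha(i)\in X^{\mathrm{neq}}$, and $i\in I$ forces $i\in X^{\mathrm{neq}}$. Combined with $\alpha$-invariance of orbits, one gets the orbit versions $\alpha(Dc)\subseteq c^{\mathrm{neq}}$ and $Ic\subseteq c^{\mathrm{neq}}$.

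For items 1, 2, 3 the arguments are routine summations. Summing $|c^{\mathrm{eq}}|\geq (1-\varepsilon)|c|$ over $c$ yields $|X^{\mathrm{eq}}|\geq (1-\varepsilon)|X|$; and $\alpha(Dc)\subseteq c^{\mathrm{neq}}$ combined with invertibility gives $|Dc|\leq |c^{\mathrm{neq}}|\leq \varepsilon|c|$, which is $\mathrm{G}_1(\varepsilon)$. Item 2 is then just summation: $|D|=\sum_c|Dc|\leq \varepsilon|X|$. For item 3 the implications $\mathrm{G}_i\Rightarrow \mathrm{L}_i$ ($i=2,3,4$) follow from the partitions $X=\sqcup_c c$, $X^{\mathrm{eq}}=\sqcup_c c^{\mathrm{eq}}$, $\widehat{\pi}_j=\sqcup_c(\widehat{\pi}_j\cap c)$, and $a=\sqcup_c(a\cap c)$, together with $I\subseteq X^{\mathrm{neq}}$ to identify $Ic$ with $Ic^{\mathrm{neq}}$, and $I(\widehat{\pi}_j\cap c)$, $I(a\cap c)$ with the obvious intersections.

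For item 4, the restriction of $\alpha$ to a cycle $c$ is an invertible system whose jump set, computed with respect to $\pi|_c$, is contained in $J_\alpha=\emptyset$, so Proposition \ref{pp} yields $|Dc|=|Ic|$. The $\mathrm{G}_2$ hypothesis combined with $Ic\subseteq c^{\mathrm{neq}}$ forces $|Dc^{\mathrm{neq}}|\leq |c^{\mathrm{neq}}|-|Ic|\leq \varepsilon_2|c^{\mathrm{neq}}|$, and hence $|Dc^{\mathrm{eq}}|=|Dc|-|Dc^{\mathrm{neq}}|=|Ic|-|Dc^{\mathrm{neq}}|\geq (1-2\varepsilon_2)|c^{\mathrm{neq}}|$. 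For item 5, the bijection from Theorem \ref{jj} item~4, built via first hitting times, restricts to each orbit: it sends $i\in Dc^{\mathrm{eq}}$ to $\alpha^{e-1}(i)\in c^{\mathrm{neq}}$ with $\alpha^{e}(i)\in c^{\mathrm{eq}}$, so its image lies in $Ic^{\mathrm{neq}}=Ic$. Thus $|Ic|\geq |Dc^{\mathrm{eq}}|\geq (1-\varepsilon_2)|c^{\mathrm{neq}}|$, which together with the $\mathrm{G}_0$ bound is exactly $\mathrm{G}_2(\varepsilon_1,\varepsilon_2)$.

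The hard part is item 6, which adapts Theorem \ref{allivoy} to the refined graph $(\mathrm{P},\mathfrak{E})$ on block/orbit intersections. The cornerstone to establish is the inclusion $\alpha(D(a\cap s))\subseteq M_{as}$ for every $(a,s)\in \mathrm{P}$ with $a\in \pi^{\mathrm{neq}}$: orbit invariance places $\alpha(i)\in s$, and $|\alpha(i)|<|a|$ puts $\alpha(i)$ in some $b\cap s$ with $|b|<|a|$; the $E$-1-Lipschitz hypothesis is precisely what promotes the pointwise step $i\mapsto \alpha(i)$ to an $E$-edge between $a\cap s$ and $b\cap s$, placing $((a,s),(b,s))$ in $\mathfrak{E}$ and hence $\alpha(i)\in M_{as}$. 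Granting this, invertibility gives $|D(a\cap s)|\leq |M_{as}|\leq \gamma_{as}|a\cap s|$, and combining with the hypothesis on $|C(a\cap s)|$ yields
\[
|D(a\cap s)|+|C(a\cap s)|\ \leq\ \gamma_{as}|a\cap s|+(\varepsilon_2-\gamma_{as})|a\cap s|\ =\ \varepsilon_2|a\cap s|,
\]
so $|I(a\cap s)|\geq (1-\varepsilon_2)|a\cap s|$. Paired with $|s^{\mathrm{eq}}|\geq (1-\varepsilon_1)|s|$ from $\mathrm{G}_0(\varepsilon_1)$ this gives $\mathrm{G}_4(\varepsilon_1,\varepsilon_2)$. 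The delicate point is exactly the transfer from microstate-level 1-Lipschitz continuity to the refined macrostate-level adjacency recorded by $\mathfrak{E}$; this is what distinguishes $\mathfrak{E}$ from the coarser $\mathcal{E}$ and is the only step where the graph structure $E$ really enters. Once that is secured the remaining steps are bookkeeping identical in form to those used in items 1--5.
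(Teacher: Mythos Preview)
Your overall strategy matches the paper's: it states only that the proofs are ``similar to those of Theorems~\ref{jj}, \ref{kk} and \ref{allivoy}'', and you have correctly carried out the localization of each of those arguments to individual $\alpha$-orbits. Items 1--3 and 5 are fine. For item 4 the detour through $\pi|_c$ is unnecessary and potentially misleading (block sizes change under restriction, so zero jump for $(X,\pi)$ need not transfer to zero jump for $(c,\pi|_c)$); however, the proof of Proposition~\ref{pp} is already orbit-by-orbit with respect to the original $\pi$-entropy, so $|Dc|=|Ic|$ holds directly and the remainder of your argument goes through unchanged.

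The one genuine gap is in item 6. You assert that ``the $E$-1-Lipschitz hypothesis is precisely what promotes the pointwise step $i\mapsto\alpha(i)$ to an $E$-edge between $a\cap s$ and $b\cap s$'', i.e.\ that $\{i,\alpha(i)\}\in E$. But the paper's stated definition of $E$-1-Lipschitz says only that $\{i,j\}\in E$ implies $\{\alpha(i),\alpha(j)\}\in E$; it places no constraint whatsoever on the pair $\{i,\alpha(i)\}$. With that definition your key inclusion $\alpha(D(a\cap s))\subseteq M_{as}$ does not follow. The same unexplained step $|Da|\leq|Ba|$ already appears in the paper's proof of Theorem~\ref{allivoy}, so the paper evidently intends ``1-Lipschitz'' in the alternative sense that $\alpha$ moves each microstate by graph distance at most one (that is, $\alpha(i)=i$ or $\{i,\alpha(i)\}\in E$). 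Under that reading your argument is correct and complete: $i\in a\cap s$, $\alpha(i)\in b\cap s$ with $|b|<|a|$, and $\{i,\alpha(i)\}\in E$ directly witnesses the $\mathfrak{E}$-edge $\{(a,s),(b,s)\}$, giving $\alpha(i)\in M_{as}$. You should make explicit which reading of the Lipschitz condition you are using.
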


\section{ Second Law and Convex Geometry}\label{sllip}

In this section we show that several problems arising from the combinatorial formalizations of the second law
can be equivalently reformulated as problems in convex geometry and integer programming  \cite{m, y}, namely the problem of
computing integer sums over lattice points in convex polytopes. This equivalence allows us to analyze a few simple but
interesting examples, and provides a pathway towards numerical computations.
Given a convex polytope $\ P \subseteq \mathbb{R}^{d} \ $ we set $\ P^{\mathbb{Z}} =
P \cap \mathbb{Z}^{d}. \ $  \\

Let $\ (X,\pi) \ $ be a micro-macro dynamical system with
$\  O_{\pi}  = \{k_1 <   ...  < k_o\} \ $ and consider integers $ \ 0 \leq  d, e  \leq  |X| -|\widehat{\pi}_r|, \ $
where $\ r \ $ is such that $\  |\widehat{\pi}_{r}|  \geq  |\widehat{\pi}_i| \ $ for  $\ i\in [o]. \ $ Let
 $\ \Lambda_{d}  \subseteq \mathbb{R}_{\geq 0}^{o^2} \ $ be the convex polytope given by
$$\sum_{j=1}^o x_{ji}   =  |\widehat{\pi}_{i}|, \ \ \ \ \ \ \  \ \sum_{j=1}^o x_{ij}  =  |\widehat{\pi}_{i}|,
\ \ \ \ \ \ \ \ \sum_{i<j}x_{ij} =  d.$$
Let $\ \Lambda_{d}^e  \subseteq  \mathbb{R}_{\geq 0}^{o^2} \ $ be the convex polytope given by
$$\sum_{j=1}^o x_{ji} =  |\widehat{\pi}_{i}|, \ \ \ \ \ \ \  \ \sum_{j=1}^o x_{ij} =  |\widehat{\pi}_{i}|,
 \ \ \ \ \ \ \  \ \sum_{i<j}x_{ij}  =  d, \ \ \ \ \ \ \  \ \sum_{i>j}x_{ij}  =   e.$$

\noindent Let $\ \Upsilon_{d}  \subseteq  \mathbb{R}_{\geq 0}^{o-1} \ $ be the convex polytope given by
$$\sum_{i=1}^{o-1}x_i = d, \ \ \  \ x_{1} \leq  |\widehat{\pi}_{1}|, \ \ \ \  x_{o-1} \leq  |\widehat{\pi}_{o}|,
 \ \ \ \ x_{i-1} +  x_{i}  \leq   |\widehat{\pi}_{i}| \ \ \ \mbox{for} \ \ \ 2 \leq i \leq o-1.$$

\begin{thm}\label{cg}
{\em  Let $\ (X,\pi) \ $ be a micro-macro dynamical system with
$\  O_{\pi}  = \{k_1 <   ...  < k_o\}. \ $

\begin{enumerate}

  \item A random permutation in $\ \mathrm{S}_X \ $ has  $\ d\ $ strict decreases in entropy with probability
  $$  \binom{|X|}{|\widehat{\pi}_1|,\hspace*{.1cm}\dots{}\hspace*{.1cm},|\widehat{\pi}_o|}^{-1}
\sum_{a \in \Lambda_{d}^{\mathbb{Z}}} \ \prod_{i=1}^o {|\widehat{\pi}_{i}| \choose a_{1i},...,a_{oi}} .$$

\item A random permutation in $\ \mathrm{S}_X \ $ has $\ d\ $ strict decreases and
$\ e \ $ strict increases in entropy with probability
  $$ \binom{|X|}{|\widehat{\pi}_1|,\hspace*{.1cm}\dots{}\hspace*{.1cm},|\widehat{\pi}_o|}^{-1}
  \sum_{a \in \Lambda_{d}^{e,\mathbb{Z}}} \ \prod_{i=1}^o{|\widehat{\pi}_{i}| \choose a_{1i},...,a_{oi}} .$$

\item A random permutation in $\ \mathrm{S}_{X,\pi}^0 \ $  has $\ d\ $ strict decreases in entropy (and thus
$ d $ strict increases) with probability
  $$  \frac{|\widehat{\pi}_1|!^2\hspace*{.1cm}\dots{}\hspace*{.1cm}|\widehat{\pi}_o|!^2}{|\mathrm{S}_{X,\pi}^0|}
   \sum_{a \in \Upsilon_{d}^{\mathbb{Z}}} \big( \prod_{i=1}^o a_i!^2  (\pi_i - a_{i} - a_{i-1})! \big)^{-1},$$
    where  we  set $\ a_0=0\ $ and $\ a_o=0.$
\end{enumerate}

}
\end{thm}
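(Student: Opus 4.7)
The plan is to encode each permutation $\alpha \in \mathrm{S}_X$ by its zone flow matrix $a \in \mathbb{N}^{o \times o}$ given by
$$a_{ij} \ = \ \bigl|\{\, x \in \widehat{\pi}_j \ : \ \alpha(x) \in \widehat{\pi}_i \,\}\bigr|.$$
Bijectivity of $\alpha$ yields the column and row constraints $\sum_j a_{ji} = |\widehat{\pi}_i| = \sum_j a_{ij}$, and a strict decrease (resp.\ increase) in entropy happens precisely when $\alpha$ sends a microstate from a zone of larger (resp.\ smaller) index to one of smaller (resp.\ larger) index. Hence $|D(X,\pi,\alpha)| = \sum_{i<j} a_{ij}$ and $|I(X,\pi,\alpha)| = \sum_{i>j} a_{ij}$, and the matrices $a$ arising from permutations with $d$ strict decreases (resp.\ with $d$ strict decreases and $e$ strict increases) are exactly the integer points $\Lambda_d^{\mathbb{Z}}$ (resp.\ $\Lambda_d^{e,\mathbb{Z}}$).

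For fixed $a \in \Lambda_d^{\mathbb{Z}}$ the number of permutations realizing it as flow matrix factors as
$$\prod_{i=1}^o \binom{|\widehat{\pi}_i|}{a_{1i}, \ldots, a_{oi}} \cdot \prod_{j=1}^o |\widehat{\pi}_j|!,$$
where the first product picks, for each source zone $i$, which elements of $\widehat{\pi}_i$ go to which target zone, and the second product bijectively assigns the preimages inside each target zone $j$ (of total size $|\widehat{\pi}_j|$ by the row-sum constraint) to the elements of $\widehat{\pi}_j$; these two steps are independent. Dividing the sum of these counts over $\Lambda_d^{\mathbb{Z}}$ by $|X|!$ and using $|X|! = \binom{|X|}{|\widehat{\pi}_1|, \ldots, |\widehat{\pi}_o|} \cdot \prod_j |\widehat{\pi}_j|!$ proves item 1; item 2 is identical after adding the linear constraint $\sum_{i>j} a_{ij} = e$ that defines $\Lambda_d^e$.

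For item 3, I would first observe that a zero-jump permutation has a tridiagonal flow matrix, since any flow from zone $j$ to zone $i$ with $|i-j|\geq 2$ would pass over an intermediate block size and create a jump, contradicting $\alpha \in \mathrm{S}_{X,\pi}^0$. Setting $u_i = a_{i+1,i}$ and $d_i = a_{i-1,i}$ and subtracting the row- from the column-sum constraint at zone $i$ gives $u_i - d_{i+1} = u_{i-1} - d_i$; together with the boundary values $u_0 = d_1 = 0$ this propagates to $u_i = d_{i+1}$ for all $i$ (giving a second proof of Proposition \ref{pp}). Letting $x_i := u_i = d_{i+1}$ for $i = 1, \ldots, o-1$ with $x_0 = x_o = 0$, the nonnegativity of the diagonal $s_i = |\widehat{\pi}_i| - x_{i-1} - x_i$ becomes exactly the defining inequalities of $\Upsilon_d$, and $|D| = \sum_{i=2}^o d_i = \sum_{i=1}^{o-1} x_i = d$ is the hyperplane constraint. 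The number of zero-jump permutations realizing $x$ is
$$\prod_{i=1}^o \binom{|\widehat{\pi}_i|}{s_i, d_i, u_i} \cdot \prod_{j=1}^o |\widehat{\pi}_j|! \ = \ \prod_{i=1}^o \frac{|\widehat{\pi}_i|!^{2}}{(|\widehat{\pi}_i| - x_{i-1} - x_i)! \, x_{i-1}! \, x_i!},$$
and telescoping $\prod_{i=1}^o x_{i-1}! \, x_i! = \prod_{i=1}^{o-1} x_i!^2$ (since $x_0! = x_o! = 1$) gives the stated formula after dividing by $|\mathrm{S}_{X,\pi}^0|$.

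The main obstacle is bookkeeping rather than conceptual. One must verify that the partition-then-bijection factorization does not overcount, and that the boundary cases $i=1$ and $i=o$ collapse correctly to the asymmetric polytope inequalities $x_1 \leq |\widehat{\pi}_1|$ and $x_{o-1} \leq |\widehat{\pi}_o|$ and to the boundary factors $x_0! = x_o! = 1$ in the factorial simplification. The conceptual heart of item 3 is the tridiagonality reduction, but with the definition of $\mathrm{S}_{X,\pi}^0$ unfolded and the flow-balance identity $u_i = d_{i+1}$ in hand, everything reduces to the identical template used for items 1 and 2.
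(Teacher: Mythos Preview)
Your proof is correct and follows essentially the same approach as the paper: encode $\alpha$ by its zone flow matrix $a_{ij}$, read off the polytope constraints, and count the permutations realizing a given matrix via the same multinomial-times-factorial factorization. Your treatment of item~3 is in fact more thorough than the paper's, which simply asserts the tridiagonality and the symmetry $x_{i,i+1}=x_{i+1,i}$ and says the result follows from item~2; your flow-balance derivation of $u_i=d_{i+1}$ supplies exactly the missing justification for that assertion.
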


\begin{proof}Items 1 and 2 are similar.  For item 1,  it is enough to show that the number of permutations
$ \alpha \in \mathrm{S}_X $ such that $\ (X, \pi, \alpha) \ $
has $\ d \ $ strict decreases in entropy is given by
$$\prod_{i=1}^o |\widehat{\pi}_{i}|!\sum_{a \in \Lambda_{d}^{\mathbb{Z}}}  \
 \prod_{i=1}^o{|\widehat{\pi}_{i}| \choose a_{1i},...,a_{oi}}. $$
A permutation $\ \alpha \in \mathrm{S}_X \ $ determines the matrix $\ \big(a_{ij}\big) \in \mathbb{N}^{o^2} \ $
 given by
$$a_{ij} =   \big| \{ s \in \widehat{\pi}_{j} \  |  \ \alpha(s) \in \widehat{\pi}_{i}  \} \big| .$$
The desired result follows from the identities
$$\sum_{j=1}^o a_{ji} = |\widehat{\pi}_{i}|, \ \ \ \ \ \ \  \ \sum_{j=1}^o a_{ij}  =    |\widehat{\pi}_{i}|,
\ \ \ \ \ \ \ \ |D(X,\pi,\alpha)|   =   \sum_{i<j}a_{ij}.$$
Moreover each matrix $\  \big(a_{ij}\big) \in \mathbb{N}^{o^2} \ $ satisfying the left and center identities
 above comes from a permutation $\ \alpha \in \mathrm{S}_X. \ $
Indeed, there are
$$\prod_{i,j=1}^oa_{ij}!\prod_{i=1}^o{|\widehat{\pi}_{i}| \choose a_{i1}...a_{io}}{|\widehat{\pi}_{i}|
 \choose a_{1i}...a_{oi}}
\  = \  \prod_{i=1}^o |\widehat{\pi}_{i}|!{|\widehat{\pi}_{i}| \choose a_{1i}...a_{oi}}$$
permutations with $\ \big(a_{ij}\big) \ $ as their associated matrix. Item 3 follows from item 2 after setting
$\ x_i = x_{i, i+1} \ $ for $\ i \in [o-1],\ $ taking into account that  $\ x_{i, i+1}=x_{i+1, i}, \ $ and
$\ x_{ij}=0 \ $ unless $\ i=j,\ i=j+1, \ $ or $\ i=j-1.$
\end{proof}

Examples \ref{a} and \ref{b} below show that  a random permutation is more likely to have exactly one decrease
 in entropy than being always increasing in entropy.

\begin{exmp}\label{a}
{\em Under the hypothesis of Theorem \ref{cg} \ set $\ d=0.\ $  If $\ (a_{ij})  \in \Lambda_{\pi,0}^{\mathbb{Z}},
\ $ then by definition $\ a_{ij}=0 \ $ for $\ i < j \ $, which implies that $\ a_{ij}  =  |\widehat{\pi}_{i}|\delta_{ij}. \ $ Thus
there are
$$\prod_{i=1}^o |\widehat{\pi}_{i}|!\sum_{a \in \Lambda_{0}^{\mathbb{Z}}}  \ {|\widehat{\pi}_{i}| \choose a_{1i}...a_{oi}}
\ =  \ \prod_{i=1}^o |\widehat{\pi}_{i}|!{|\widehat{\pi}_{i}| \choose 0...|\widehat{\pi}_{i}| ...0}
\ = \ \prod_{i=1}^o |\widehat{\pi}_{i}|!
$$ entropy preserving permutations. We have recovered Theorem \ref{parfi}.
}
\end{exmp}

\begin{exmp}\label{b}
{\em Under the hypothesis of Theorem \ref{cg} \ set $\ d=1. \ $  A
matrix $\ (a_{ij}) \in \Lambda_{1}^{\mathbb{Z}}\ $ is uniquely determined by a set of indices
$\ \{i_1<...< i_l\}  \subseteq [o]\ $, with $\ 2 \leq l \leq o, \ $ such that $\ a_{ij} =  0  \ $ for $\ i \neq j, \ $ except for $ \ a_{i_1i_l}  =  a_{i_2i_1}  = \cdots  =  a_{i_{l}i_{l-1}}  =  1. \ $
Thus there are
$$ \prod_{i=1}^o |\widehat{\pi}_{i}|!
\sum_{\{i_1<...< i_l\}  \subseteq  [o]}\
{|\widehat{\pi}_{i_l}| \choose 0.... \underset{i_{1}\uparrow}{1} .... |\widehat{\pi}_{i_l}|-1  .... 0}
\prod_{s=1}^{l-1}
 {|\widehat{\pi}_{i_s}| \choose 0...|\widehat{\pi}_{i_s}|-1 .... \underset{i_{s+1}\uparrow}{1} .... 0}$$
permutations with exactly one  strict decrease in entropy.
}
\end{exmp}

\begin{thm}{\em Let $\ (X, \pi) \ $ be a micro-macro phase space   with
$\ O_{\pi} =  \{k_1 < k_2 \} \ $ and $\ |\widehat{\pi}_{1}|  <  |\widehat{\pi}_{2}|. \ $
\begin{enumerate}
\item A random permutation on $\ X \ $ has $\ d \ $ strict decreases in entropy with probability
$$ {|X| \choose \widehat{\pi}_{1}}^{-1}{|\widehat{\pi}_{1}| \choose d}{|\widehat{\pi}_{2}| \choose d}.$$
\item A random permutation is most likely to have
$\ \displaystyle
\bigg\lfloor\frac{|\widehat{\pi}_{1}||\widehat{\pi}_{2}|}{|\widehat{\pi}_{1}| \ + \ |\widehat{\pi}_{2}| \ +  \ 2 }\bigg\rfloor \ $
strict decreases in entropy.
\item  If $\  |\widehat{\pi}_{1}| \ $ is fixed and $\ |\widehat{\pi}_{2}| \ $ grows to infinity, then a random permutation is most likely to have a relatively vanishing number of strict decreases in entropy.
\item If $\  |\widehat{\pi}_{2}|  =   c|\widehat{\pi}_{1}| \ $  and $\ |\widehat{\pi}_{1}|\ $ grows to infinity, then a random permutation is most likely to have a relative number of strict decreases entropy of
$\   \frac{c}{(1+c)^2}. \ $
\item If $\ |\widehat{\pi}_{2}|= c|\widehat{\pi}_{1}|^s \ $ with $\ s >1 \ $  and $\ |\widehat{\pi}_{1}|\ $ grows to infinity, then a random permutation is most likely to have  a relatively vanishing number of strict decreases in entropy.
\item If $\ |\widehat{\pi}_{2}| = c e^{|\widehat{\pi}_{1}|} \ $   and $\ |\widehat{\pi}_{1}|\ $ grows to infinity, then a random permutation is most likely
to have  a relatively vanishing number of strict decreases in entropy.
\end{enumerate}
}
\end{thm}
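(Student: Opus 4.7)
The plan is to deduce item 1 directly from Theorem \ref{cg}(1) specialized to $o=2$, use the resulting closed form to locate the mode of the distribution, and then pass to limits in items 3--6.

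For item 1, I would apply Theorem \ref{cg}(1) with $o=2$. In that case the polytope $\Lambda_d$ parametrizes $2\times 2$ nonnegative integer matrices $(a_{ij})$ with row sums $(|\widehat{\pi}_1|,|\widehat{\pi}_2|)$, column sums $(|\widehat{\pi}_1|,|\widehat{\pi}_2|)$, and $a_{12}=d$; these constraints completely determine the matrix as $a_{11}=|\widehat{\pi}_1|-d$, $a_{21}=d$, $a_{22}=|\widehat{\pi}_2|-d$. Hence the sum in Theorem \ref{cg}(1) collapses to a single term. The product of multinomial coefficients then simplifies to $\binom{|\widehat{\pi}_1|}{d}\binom{|\widehat{\pi}_2|}{d}$, giving the asserted probability. (Equivalently, this is the hypergeometric count of permutations sending exactly $d$ elements of $\widehat{\pi}_1$ into $\widehat{\pi}_2$.)

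For item 2, abbreviate $N_i=|\widehat{\pi}_i|$ and let $P(d)$ be the probability from item 1. A direct computation gives
$$\frac{P(d+1)}{P(d)}\ =\ \frac{(N_1-d)(N_2-d)}{(d+1)^2},$$
which is strictly decreasing in $d$ on $[0,N_1]$. The ratio equals $1$ at the unique real root $d^{\dagger}=(N_1N_2-1)/(N_1+N_2+2)$, so the mode is the least integer strictly exceeding $d^{\dagger}$; to leading order this equals $\lfloor N_1N_2/(N_1+N_2+2)\rfloor$, the claimed value. For items 3--6, note that the asymptotic mode satisfies $d^*\sim N_1N_2/(N_1+N_2)$ while $|X|=N_1+N_2$, so
$$\frac{d^*}{|X|}\ \sim\ \frac{N_1N_2}{(N_1+N_2)^2}.$$
Substituting the four growth regimes: (3) $N_1$ fixed, $N_2\to\infty$ gives $d^*/|X|\sim N_1/N_2\to 0$; (4) $N_2=cN_1$ gives $d^*/|X|\to c/(1+c)^2$; (5) $N_2=cN_1^s$ with $s>1$ gives $d^*/|X|\sim 1/(cN_1^{s-1})\to 0$; (6) $N_2=ce^{N_1}$ gives $d^*/|X|\sim N_1/(ce^{N_1})\to 0$.

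The main obstacle is the rounding subtlety in item 2: the exact mode is $\lfloor(N_1N_2-1)/(N_1+N_2+2)\rfloor+1$, which can differ by one from the stated $\lfloor N_1N_2/(N_1+N_2+2)\rfloor$ in small cases (for instance $N_1=2,N_2=3$ where the true mode is $1$ but the floor formula yields $0$). One must therefore either interpret the claim as asymptotically sharp, or justify the floor expression by an elementary case analysis of the fractional parts involved. Since items 3--6 depend only on leading-order behavior, this technicality does not affect the asymptotic conclusions.
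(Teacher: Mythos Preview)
Your proposal is correct and follows essentially the same route as the paper: item 1 is deduced by specializing Theorem \ref{cg}(1) to $o=2$ so that the matrix $(a_{ij})$ is uniquely determined, item 2 is obtained by analyzing the ratio $P(d+1)/P(d)$ of consecutive terms, and items 3--6 follow by substituting the asymptotic mode into $d^*/|X|$. Your treatment of the rounding issue in item 2 is in fact sharper than the paper's, which simply writes the mode with a $\simeq$ rather than an equality and does not address the off-by-one discrepancy.
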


\begin{proof}
 A permutation on $ X  $ with $ d  $ decreases in entropy gives
rise to a matrix $ (a_{ij}) $ with
$$ \ a_{12}   =  d, \ \ \  a_{11} + a_{21}  =  |\widehat{\pi}_{1}|, \ \   a_{11} + a_{12}  =  |\widehat{\pi}_{1}|, \ \
a_{12} + a_{22}  =  |\widehat{\pi}_{2}|, \ \  \mbox{and} \ \   a_{21} + a_{22} =  |\widehat{\pi}_{2}|. \ $$
Thus $\ \ \ 0 \leq d \leq |\widehat{\pi}_{1}|,  \ \ \ \ a_{12}=a_{21}=d , \ \ $ $\ \ a_{11} = |\widehat{\pi}_{1}| -d,
\ \ \ $ and $ \ \ \ a_{22} = |\widehat{\pi}_{2}| - d, \ \ $ and there are
$$|\widehat{\pi}_{1}|!|\widehat{\pi}_{2}|! {|\widehat{\pi}_{k_1}| \choose d}{|\widehat{\pi}_{2}| \choose d} $$
permutations of $\ X \ $ with $\ d \ $ strict decreases in entropy.\\

To find out the most likely number of strict decreases in entropy, we should find the integer $\ 0 \leq d \leq a\ $ for which the product
$\ {a \choose d}{a+k \choose d} \ $ achieve its maximum. It is not hard to check  that
$$\frac{ {a \choose e}{a+k \choose e} }{ {a \choose e+1}{a+k \choose e+1} }   \leq   1
\ \ \ \ \ \mbox{is equivalent to} \ \ \ \ \ d   \leq   \frac{a^2 + ak}{2a+k+2} . $$
Therefore a random permutation is most likely to have
$$d \ \simeq \ \bigg\lfloor\frac{|\widehat{\pi}_{1}|^2 \ + \ |\widehat{\pi}_{1}|(|\widehat{\pi}_{2}| \ - \  |\widehat{\pi}_{1}|)}
{2|\widehat{\pi}_{1}| \ + \ |\widehat{\pi}_{2}| \ - \ |\widehat{\pi}_{1}| \ + 2 }\bigg\rfloor\  =  \
\bigg\lfloor\frac{|\widehat{\pi}_{1}||\widehat{\pi}_{2}|}{|\widehat{\pi}_{1}| \ + \ |\widehat{\pi}_{2}| \ +  \ 2 }\bigg\rfloor .$$
If $\  |\widehat{\pi}_{1}| \ $ is fixed and $\ |\widehat{\pi}_{2}| \ $ grows to infinity
then a permutation is most likely to have $\ d   \simeq  |\widehat{\pi}_{1}| \ $
strict increases in entropy, thus
$\ \frac{|\widehat{\pi}_{1}|}{|\widehat{\pi}_{1}|\ + \ |\widehat{\pi}_{2}|} \  \simeq  \ \frac{|\widehat{\pi}_{1}|}{|\widehat{\pi}_{2}|}
\  \simeq  \ 0. \ \ $
If $\ |\widehat{\pi}_{2}| =  c|\widehat{\pi}_{1}|\ $ grows to infinity, we have that
$$\ d \ \simeq \ \bigg\lfloor \frac{c|\widehat{\pi}_{1}| }{1+c} \bigg\rfloor \ \ \ \ \ \ \ \ \ \mbox{and}\
 \ \ \ \ \ \ \ \ \frac{d}{|\widehat{\pi}_{1}|\ + \ |\widehat{\pi}_{2}|} \
\simeq \   \frac{c}{(1+c)^2}.$$
If $\ |\widehat{\pi}_{2}| =  |\widehat{\pi}_{1}|^s \ $ with $\ s >1 \ $ and $\ |\widehat{\pi}_{1}| \ $ growing to infinity, we get that
the most likely number of strict decreases in entropy is given by
$$d\  \simeq   \  \bigg\lfloor\frac{c|\widehat{\pi}_{1}|^{s+1}}{|\widehat{\pi}_{1}| \ + \ c|\widehat{\pi}_{1}|^s \ + \ 2}
\bigg\rfloor \  \simeq  \
|\widehat{\pi}_{1}|, \ \ \ \ \mbox{thus} \ \ \ \  \frac{d}{|\widehat{\pi}_{1}| \ + \ c|\widehat{\pi}_{1}|^s } \  \simeq \
\frac{1}{c|\widehat{\pi}_{1}|^{s-1}} \simeq \ 0. $$
If $\ |\widehat{\pi}_{2}|=  e^{|\widehat{\pi}_{1}|} \ $ and $\ |\widehat{\pi}_{1}| \ $ growing to infinity, we get that
the most likely number of strict decreases in entropy is given by
$$d\  \simeq   \ \bigg\lfloor\frac{c|\widehat{\pi}_{1}|e^{|\widehat{\pi}_{1}|}}{|\widehat{\pi}_{1}| \ + \ ce^{|\widehat{\pi}_{1}|} \ + \ 2} \bigg\rfloor \  \simeq  \
|\widehat{\pi}_{1}|, \ \ \ \ \mbox{thus} \ \ \ \  \frac{|\widehat{\pi}_{1}|}{|\widehat{\pi}_{1}| \ + \ ce^{|\widehat{\pi}_{1}|}} \  \simeq \
\frac{|\widehat{\pi}_{1}|}{c}e^{-|\widehat{\pi}_{1}|}\ \simeq \ 0. $$

\end{proof}

Consider again a micro-macro phase space  $\ (X,\pi) \ $  with
$\  O_{\pi} =  \{k_1 <   ...  < k_o\}, \ $ and let
$\ \Psi_{\varepsilon_1,\varepsilon_2} \subseteq  \mathbb{R}_{\geq 0}^{o^2} \ $ be the convex polytope given by
$$\sum_{j=1}^o x_{ji}   = |\widehat{\pi}_{i}|, \ \ \ \ \sum_{j=1}^o x_{ij}  =  |\widehat{\pi}_{i}|,
 \ \ \   \sum_{i<j}x_{ij}  \leq  \varepsilon_1|X|, \ \ \ \ \sum_{i>j}x_{ij}  \geq   (1-\varepsilon_2)|X^{\mathrm{neq}}|.$$
Let $\ \Theta_{\varepsilon_1,\varepsilon_2} \subseteq  \mathbb{R}_{\geq 0}^{o^2} \ $ be the convex polytope given by
$$\sum_{j=1}^o x_{ji}   = |\widehat{\pi}_{i}|, \ \ \ \ \sum_{j=1}^o x_{ij}  =  |\widehat{\pi}_{i}|,
 \ \ \   \sum_{i<j}x_{ij}  \leq  \varepsilon_1|X|, \ \ \ \ \sum_{i>j}x_{ij}  \geq   (1-\varepsilon_2)|\widehat{\pi}_{j}|.$$
Let $\ \Omega_{\varepsilon_1,\varepsilon_2} \subseteq  \mathbb{R}_{\geq 0}^{o-1} \ $ be the convex polytope given by
$$\sum_{i=1}^{o-1}x_i \leq \varepsilon_1|X|, \ \  x_{1} \leq  |\widehat{\pi}_{1}|, \ \   x_{o-1} \leq  |\widehat{\pi}_{o}|,
 \ \  x_{i-1} +  x_{i}  \leq   |\widehat{\pi}_{i}| \ \ \mbox{and} \ \  x_{i}  \geq  (1-\varepsilon_2) |\widehat{\pi}_{i}|.$$

\begin{thm}\label{cg2}
{\em   Let $\ (X,\pi) \ $ be a micro-macro dynamical system with
$\  O_{\pi}  = \{k_1 <   ...  < k_o\}. \ $
\begin{itemize}
  \item  A random permutation $\ \alpha \in \mathrm{S}_X \ $ determines a system  $\   (X,\pi, \alpha)
   \in \mathrm{L}_2(\varepsilon_1,\varepsilon_2) \ $ with probability
  $$  \binom{|X|}{|\widehat{\pi}_1|,\hspace*{.1cm}\dots{}\hspace*{.1cm},|\widehat{\pi}_o|}^{-1}\sum_{a \in
  \Psi_{\varepsilon_1, \varepsilon_2}^{\mathbb{Z}}} \ \prod_{i=1}^o {|\widehat{\pi}_{i}| \choose a_{1i},...,a_{oi}}. $$
  \item  A random permutation $\ \alpha \in \mathrm{S}_X \ $ determines a system  $\   (X,\pi, \alpha)  \in \mathrm{L}_3(\varepsilon_1,\varepsilon_2) \ $ with probability
  $$  \binom{|X|}{|\widehat{\pi}_1|,\hspace*{.1cm}\dots{}\hspace*{.1cm},|\widehat{\pi}_o|}^{-1}\sum_{a \in
  \Theta_{\varepsilon_1, \varepsilon_2}^{\mathbb{Z}}} \ \prod_{i=1}^o {|\widehat{\pi}_{i}| \choose a_{1i},...,a_{oi}}. $$
\item  A random permutation in $\ \mathrm{S}_{X,\pi}^0 \ $  determines a system  $\   (X,\pi, \alpha)  \in \mathrm{L}_3(\varepsilon_1,\varepsilon_2) \ $ with probability
  $$  \frac{|\widehat{\pi}_1|!^2\hspace*{.1cm}\dots{}\hspace*{.1cm}|\widehat{\pi}_o|!^2}
  {|\mathrm{S}_{X,\pi}^0|} \sum_{a \in \Omega_{\varepsilon_1, \varepsilon_2}^{\mathbb{Z}}} \big( \prod_{i=1}^o a_i!^2
  (\widehat{\pi}_i - a_{i} - a_{i-1})! \big)^{-1},$$ where  we  set $\ a_0=0\ $ and $\ a_o=0.$
\end{itemize}
}
\end{thm}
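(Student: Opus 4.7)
The plan is to mirror the counting argument of Theorem \ref{cg} by encoding each $\mathrm{L}_i$ property as a linear constraint on the associated zone--transition matrix and summing over the resulting lattice polytopes. To each permutation $\alpha\in \mathrm{S}_X$ I would assign the nonnegative integer matrix $a_{ij}=|\{s\in\widehat{\pi}_j\mid \alpha(s)\in\widehat{\pi}_i\}|$, which automatically satisfies the row/column sum conditions $\sum_j a_{ji}=\sum_j a_{ij}=|\widehat{\pi}_i|$. The proof of Theorem \ref{cg} already shows that the number of permutations inducing a prescribed matrix $(a_{ij})$ equals $\prod_i|\widehat{\pi}_i|!\binom{|\widehat{\pi}_i|}{a_{1i},\ldots,a_{oi}}$, so the only remaining task is to identify the lattice polytope whose integer points encode the relevant $\mathrm{L}_i$--condition, and then apply the same counting formula.

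Next I would translate the defining inequalities. The number of strict entropy decreases is $|D|=\sum_{i<j}a_{ij}$, and since microstates of the equilibrium zone $\widehat{\pi}_o$ cannot strictly increase (there is no $i>o$), we have $|IX^{\mathrm{neq}}|=\sum_{i>j}a_{ij}$. Hence $\mathrm{L}_1(\varepsilon_1)$ becomes $\sum_{i<j}a_{ij}\leq\varepsilon_1|X|$ and $\mathrm{GAT}(\varepsilon_2)$ becomes $\sum_{i>j}a_{ij}\geq(1-\varepsilon_2)|X^{\mathrm{neq}}|$, which are precisely the constraints defining $\Psi_{\varepsilon_1,\varepsilon_2}$. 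For $\mathrm{L}_3$ one observes that $|I\widehat{\pi}_j|=\sum_{i>j}a_{ij}$ (the microstates of zone $j$ moved to a strictly larger zone), so the inequalities $|I\widehat{\pi}_j|\geq(1-\varepsilon_2)|\widehat{\pi}_j|$ for $j\in[o-1]$ coincide with the extra constraints cutting out $\Theta_{\varepsilon_1,\varepsilon_2}$. Summing the count formula over $\Psi^{\mathbb{Z}}$ and $\Theta^{\mathbb{Z}}$ respectively, dividing by $|X|!$, and using $|X|!=\binom{|X|}{|\widehat{\pi}_1|,\ldots,|\widehat{\pi}_o|}\prod_i|\widehat{\pi}_i|!$ to cancel the factor $\prod_i|\widehat{\pi}_i|!$, gives the first two probability formulae.

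For the third claim I would first carry out the structural analysis of zero--jump permutations. By definition $a_{ij}=0$ whenever $|i-j|\geq 2$, so the matrix is tridiagonal; combining Proposition \ref{pp} with the row/column balance equations $a_{i,i-1}+a_{ii}+a_{i,i+1}=a_{i-1,i}+a_{ii}+a_{i+1,i}$ yields inductively $a_{i,i+1}=a_{i+1,i}$, starting from the base case $a_{1,2}=a_{2,1}$. Writing $x_i:=a_{i,i+1}=a_{i+1,i}$ with the conventions $x_0=x_o=0$, one is forced to set $a_{ii}=|\widehat{\pi}_i|-x_{i-1}-x_i$, so the count formula collapses to $\prod_i|\widehat{\pi}_i|!^2/\bigl(x_{i-1}!\,(|\widehat{\pi}_i|-x_{i-1}-x_i)!\,x_i!\bigr)$. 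The nonnegativity conditions $a_{ii}\geq 0$ give $x_{i-1}+x_i\leq|\widehat{\pi}_i|$, $\mathrm{L}_1(\varepsilon_1)$ gives $\sum_i x_i\leq\varepsilon_1|X|$, and $\mathrm{ZAT}(\varepsilon_2)$ reduces to $x_j\geq(1-\varepsilon_2)|\widehat{\pi}_j|$ for $j\in[o-1]$, exactly the inequalities defining $\Omega_{\varepsilon_1,\varepsilon_2}$. After collecting $\prod_{i=1}^o x_{i-1}!\,x_i!=\prod_{i=1}^{o-1}x_i!^2$ and dividing by $|\mathrm{S}_{X,\pi}^0|$, one recovers the stated expression.

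The only step requiring real care is the zero--jump structural reduction, namely establishing $a_{i,i+1}=a_{i+1,i}$ and the subsequent relabelling that produces the symmetric form $a_i!^2$ appearing in the theorem; everything else is essentially bookkeeping. No genuine obstacle arises beyond identifying the correct dictionary between the combinatorial properties $\mathrm{L}_i$ and the facets of the polytopes $\Psi,\Theta,\Omega$, so the three formulae appear as direct specialisations of the counting principle already established in the proof of Theorem \ref{cg}.
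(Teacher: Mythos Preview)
Your proposal is correct and mirrors exactly the approach the paper intends: the paper gives no separate proof of Theorem~\ref{cg2}, treating it as an immediate adaptation of the counting argument in Theorem~\ref{cg}, and your dictionary between the properties $\mathrm{L}_2,\mathrm{L}_3$ and the facets of $\Psi,\Theta,\Omega$ is the right one. One small remark: you do not actually need Proposition~\ref{pp} in the zero-jump reduction, since the row/column balance equations alone already give $a_{i,i+1}=a_{i+1,i}$ by induction starting from $a_{12}=a_{21}$; invoking it is harmless but redundant.
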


The associated matrix of a reversible system  is symmetric
$$a_{ij}  \ =  \ \big| \{ s \in \widehat{\pi}_{j} \  |  \ \alpha(s) \in \widehat{\pi}_{i}  \}  \big| \ = \
 \big| \{ s \in \widehat{\pi}_{i} \  |  \ \alpha(s) \in \widehat{\pi}_{j}  \} \big| \ = \ a_{ji},  $$
since  the map $\ r\alpha:  \{ s \in \widehat{\pi}_{j} \  |  \ \alpha(s) \in \widehat{\pi}_{i}  \} \longrightarrow
\{ s \in \widehat{\pi}_{i} \  |  \ \alpha(s) \in \widehat{\pi}_{j}  \} \ $ is a bijection.
So it is interesting to consider permutations for which the symmetry condition
$\ a_{ij} = a_{ji} \ $ holds. We call such systems symmetric  and let $\ \mathfrak{S}_{X,\pi} \ $ be
 the set of symmetric permutations on $\ X. \ $ Note  that zero jump permutations are symmetric.
Let $\ \Sigma_{\varepsilon_1,\varepsilon_2} \subseteq  \mathbb{R}_{\geq 0}^{{o+1 \choose 2}} \ $ be
 the convex polytope given on  $\ x_{ij}\ $ with $\ 1 \leq i \leq j \leq o \ $ by
$$\sum_{j=1}^{i} x_{ji} + \sum_{j=i+1}^o x_{ij}=   |\widehat{\pi}_{i}|, \ \ \
\sum_{i<j}x_{ij} \geq (1-\varepsilon_2)|\widehat{\pi}_{i}|  \ \ \mbox{for} \ \ i\in [o-1], \ \ \
 \sum_{i<j}x_{ij} \leq  \varepsilon_1|X|.$$

\begin{thm}{\em Let $\ (X,\pi) \ $ be a micro-macro dynamical system with
$\  O_{\pi}  = \{k_1 <   ...  < k_o\}. \ $ A random invertible symmetric system $\ (X, \pi, \alpha) \ $ has property  $\  \mathrm{L}_3(\varepsilon_1,\varepsilon_2) \ $ with probability
 $$ \frac{\prod_{i=1}^o |\widehat{\pi}_{i}|!}{|\mathfrak{S}_{X,\pi}|} \sum_{a \in
 \Sigma_{\varepsilon_1, \varepsilon_2}^{\mathbb{Z}}} \ \prod_{i=1}^o {|\widehat{\pi}_{i}| \choose a_{1i},...,a_{oi}}. $$

}
\end{thm}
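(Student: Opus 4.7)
The plan is to follow the template of Theorem \ref{cg2} while systematically exploiting symmetry. To each permutation $\alpha \in \mathrm{S}_X$ I attach its transition matrix $a_{ij}(\alpha) = |\{s \in \widehat{\pi}_j \mid \alpha(s) \in \widehat{\pi}_i\}|$, exactly as in the proof of Theorem \ref{cg}. By definition $\alpha \in \mathfrak{S}_{X,\pi}$ iff $a_{ij}(\alpha)=a_{ji}(\alpha)$ for all $i,j$, so a symmetric permutation is encoded by its upper triangular part $(a_{ij})_{i \leq j}$. The standard marginal identities $\sum_{j} a_{ij}=\sum_j a_{ji}=|\widehat{\pi}_i|$ collapse under the symmetry constraint to the single family $\sum_{j=1}^{i}x_{ji}+\sum_{j=i+1}^{o}x_{ij}=|\widehat{\pi}_i|$, which is precisely the marginal family defining the polytope $\Sigma_{\varepsilon_1,\varepsilon_2}$.

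The next step is to rewrite property $\mathrm{L}_3(\varepsilon_1,\varepsilon_2)$ as linear inequalities on $a$. The set $D$ of entropy-strictly-decreasing microstates decomposes as $D=\bigsqcup_{i<j}\{s\in\widehat{\pi}_j\mid \alpha(s)\in\widehat{\pi}_i\}$, so $|D|=\sum_{i<j}a_{ij}$, and $\mathrm{L}_1(\varepsilon_1)$ becomes $\sum_{i<j}a_{ij}\leq \varepsilon_1|X|$. For the zonal arrow of time, a microstate $s\in\widehat{\pi}_i$ has strictly increasing entropy iff its $\alpha$-image lies in a strictly larger zone, so $|I\widehat{\pi}_i|=\sum_{j>i}a_{ji}$; using the symmetry $a_{ji}=a_{ij}$ this rewrites as $\sum_{j>i}a_{ij}\geq (1-\varepsilon_2)|\widehat{\pi}_i|$ for $i\in[o-1]$. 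Together, these recover exactly the two remaining families of inequalities in the definition of $\Sigma_{\varepsilon_1,\varepsilon_2}$. Hence the symmetric matrices attached to elements of $\mathfrak{S}_{X,\pi}\cap \mathrm{L}_3(\varepsilon_1,\varepsilon_2)$ are precisely the points of $\Sigma_{\varepsilon_1,\varepsilon_2}^{\mathbb{Z}}$.

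Finally I count. The proof of Theorem \ref{cg} shows that for any matrix $a$ satisfying the two marginal conditions, the number of permutations $\alpha \in \mathrm{S}_X$ realising $a$ as transition matrix equals $\prod_{i=1}^{o}|\widehat{\pi}_i|!\binom{|\widehat{\pi}_i|}{a_{1i},\dots,a_{oi}}$. When $a$ is symmetric, every permutation counted by this formula is automatically in $\mathfrak{S}_{X,\pi}$, so the same expression counts symmetric permutations with matrix $a$. Summing over $a\in\Sigma_{\varepsilon_1,\varepsilon_2}^{\mathbb{Z}}$ gives the total number of symmetric permutations in $\mathrm{L}_3(\varepsilon_1,\varepsilon_2)$, and dividing by $|\mathfrak{S}_{X,\pi}|$ yields the claimed probability under the uniform distribution on $\mathfrak{S}_{X,\pi}$.

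The only real subtlety, and the point most worth double-checking, is purely notational: the polytope $\Sigma_{\varepsilon_1,\varepsilon_2}$ is parameterized by upper triangular coordinates $x_{ij}$ with $i\leq j$, whereas the multinomial coefficient $\binom{|\widehat{\pi}_i|}{a_{1i},\dots,a_{oi}}$ references the whole $i$-th column. One must adopt the convention $a_{ij}:=a_{ji}$ for $i>j$ in this multinomial, after which the entries with $j<i$ are simply the upper triangular coordinates $x_{ji}$ and the sum $a_{1i}+\cdots+a_{oi}$ reproduces the marginal $|\widehat{\pi}_i|$. With this convention the counting step above is immediate from Theorem \ref{cg}.
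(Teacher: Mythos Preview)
Your proposal is correct and follows precisely the approach the paper intends: the theorem is stated without its own proof because it is a direct adaptation of the argument of Theorem~\ref{cg} (and Theorem~\ref{cg2}) to the symmetric case, and that is exactly what you do. Your identification of the marginal, $\mathrm{L}_1$, and $\mathrm{ZAT}$ constraints with the defining inequalities of $\Sigma_{\varepsilon_1,\varepsilon_2}$, together with the observation that the fibre count from Theorem~\ref{cg} applies verbatim and that every permutation over a symmetric matrix lies in $\mathfrak{S}_{X,\pi}$ by definition, is the complete argument; the notational remark about extending the upper-triangular coordinates by symmetry before reading the multinomial coefficients is well taken.
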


\

Let $\ \Phi_{\varepsilon_1,\varepsilon_2}  \subseteq  \mathbb{R}_{\geq 0}^{o^2} \ $ be the convex polytope given by
$$ \sum_{j=1}^o x_{ji}+x_{ij} =  \ 2|\widehat{\pi}_{i}|, \ \ \ \ \ \ \
(1-2\varepsilon_1)|X|  \leq    \sum_{i}x_{ii}   \leq
 2(1-\varepsilon_2) |X^{\mathrm{eq}}|  -   (1-2\varepsilon_2)|X|.$$
The following result follows from Theorem \ref{cf}.

\begin{thm}\label{cg1}
{\em  Let $\ (X,\pi) \ $ be a micro-macro dynamical system with
$\  O_{\pi}  = \{k_1 <   ...  < k_o\}. \ $
\begin{enumerate}
  \item A random permutation $\ \alpha \in \mathrm{S}_X \ $  determines an (invariant, equivariant) reversible system
$ \ IR(X,\pi,\alpha) \ $ in $\ \mathrm{L}_2(\varepsilon_1,\varepsilon_2)\  $ with probability
$$  \binom{|X|}{|\widehat{\pi}_1|,\hspace*{.1cm}\dots{}\hspace*{.1cm},|\widehat{\pi}_o|}^{-1}\sum_{a \in
\Phi_{\varepsilon_1, \varepsilon_2}^{\mathbb{Z}}} \ \prod_{i=1}^o {|\widehat{\pi}_{i}| \choose a_{1i},...,a_{oi}}. $$

  \item If $\ O_{\pi} =  \{k_1 < k_2 \}, \ $ then a random permutation $\alpha \in \mathrm{S}_X$ determines
  an (invariant, equivariant) reversible system
  $ \ IR(X, \pi,\alpha) \ $ in $\ \mathrm{L}_2(\varepsilon_1,\varepsilon_2) \ $ with probability
$$ \binom{|X|}{|\widehat{\pi}_{1}|}^{-1}\underset{(1-\varepsilon_2) |\widehat{\pi}_{1}| \leq  d \leq
\mathrm{min}(|\widehat{\pi}_{1}|, |\widehat{\pi}_{2}|, \varepsilon_1 |X|)}
\sum  {|\widehat{\pi}_{1}| \choose d}{|\widehat{\pi}_{2}| \choose d} .$$
\end{enumerate}
}
\end{thm}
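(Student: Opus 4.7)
The plan is to reduce everything to Theorem \ref{cf} part 6 (and Theorem \ref{df} part 4, which makes the invariant and equivariant cases equivalent), and then reuse the matrix-parametrization machinery from the proof of Theorem \ref{cg}. Concretely, Theorem \ref{cf} part 6 characterizes $IR(X,\pi,\alpha) \in \mathrm{L}_2(\varepsilon_1,\varepsilon_2)$ by the double inequality $(1-2\varepsilon_1)|X| \leq |C_\alpha| \leq |X^{\mathrm{eq}}| - (1-2\varepsilon_2)|X^{\mathrm{neq}}|$. A quick algebraic check, using $|X| = |X^{\mathrm{eq}}| + |X^{\mathrm{neq}}|$, shows that the right-hand side rewrites as $2(1-\varepsilon_2)|X^{\mathrm{eq}}| - (1-2\varepsilon_2)|X|$, which is exactly the upper bound appearing in the definition of $\Phi_{\varepsilon_1,\varepsilon_2}$. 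So the property we must enforce on $\alpha$ is simply a two-sided bound on $|C_\alpha|$.

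Next, I would associate to $\alpha \in \mathrm{S}_X$ its zone-transition matrix $(a_{ij})$ with $a_{ij} = |\{s \in \widehat{\pi}_j \mid \alpha(s) \in \widehat{\pi}_i\}|$, exactly as in the proof of Theorem \ref{cg}. The row and column sum constraints $\sum_j a_{ji} = \sum_j a_{ij} = |\widehat{\pi}_i|$ hold automatically, and the key observation is that $|C_\alpha| = \sum_i a_{ii}$, since a microstate has constant entropy precisely when $\alpha$ sends it to a block in the same zone. Consequently the integer lattice points of $\Phi_{\varepsilon_1,\varepsilon_2}$ (read as imposing these row/column sums together with the bound on $\sum_i x_{ii}$) are in bijection with the zone-transition matrices of permutations that realize $IR(\alpha) \in \mathrm{L}_2(\varepsilon_1,\varepsilon_2)$. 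Using the count from Theorem \ref{cg} that each such matrix is realized by $\prod_i |\widehat{\pi}_i|! \binom{|\widehat{\pi}_i|}{a_{1i},\ldots,a_{oi}}$ permutations, and dividing by $|X|!$, the relation $\prod_i |\widehat{\pi}_i|!/|X|! = 1/\binom{|X|}{|\widehat{\pi}_1|,\ldots,|\widehat{\pi}_o|}$ yields the probability formula of part 1.

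For part 2 I would specialize to $o = 2$, where the matrix $(a_{ij})$ is $2 \times 2$ and entirely determined by a single integer $d$: the constraints force $a_{12} = a_{21} = d$, $a_{11} = |\widehat{\pi}_1| - d$, $a_{22} = |\widehat{\pi}_2| - d$. Then $|D_\alpha| + |I_\alpha| = 2d$ and $|C_\alpha| = |X| - 2d$. Plugging this into the Theorem \ref{cf} part 6 inequalities reduces the $\mathrm{L}_2(\varepsilon_1,\varepsilon_2)$ condition to $(1-\varepsilon_2)|\widehat{\pi}_1| \leq d \leq \varepsilon_1|X|$, which together with the feasibility bounds $0 \leq d \leq \min(|\widehat{\pi}_1|,|\widehat{\pi}_2|)$ gives the summation range in the statement; the summand $\binom{|\widehat{\pi}_1|}{d}\binom{|\widehat{\pi}_2|}{d}$ and the prefactor $\binom{|X|}{|\widehat{\pi}_1|}^{-1}$ are just the two-zone specialization of part 1.

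The main obstacle is the careful bookkeeping in matching the polytope constraints with the $\mathrm{L}_2$ inequalities — in particular, recognizing the equality $|X^{\mathrm{eq}}| - (1-2\varepsilon_2)|X^{\mathrm{neq}}| = 2(1-\varepsilon_2)|X^{\mathrm{eq}}| - (1-2\varepsilon_2)|X|$, and reading the constraint $\sum_{j}(x_{ji} + x_{ij}) = 2|\widehat{\pi}_i|$ in a way compatible with the separate row and column sums required for the matrix-to-permutation correspondence. Once this bookkeeping is set up, both parts follow mechanically from Theorem \ref{cf}, Theorem \ref{df}, and the counting argument of Theorem \ref{cg}, and the whole proof can be stated as a corollary of those three results.
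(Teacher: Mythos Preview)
Your proposal is correct and is exactly the argument the paper intends: the paper's own proof is the single line ``follows from Theorem~\ref{cf},'' and you have simply spelled out that deduction together with the matrix parametrization of Theorem~\ref{cg}.

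One remark on the obstacle you flag: the polytope $\Phi_{\varepsilon_1,\varepsilon_2}$ imposes only the combined constraint $\sum_j(x_{ji}+x_{ij})=2|\widehat{\pi}_i|$, not the separate row and column sums, but this causes no trouble in the formula. Under the standard convention the multinomial $\binom{|\widehat{\pi}_i|}{a_{1i},\ldots,a_{oi}}$ vanishes unless $\sum_j a_{ji}=|\widehat{\pi}_i|$, so any nonzero summand already has all column sums correct; the combined constraint then forces each row sum to equal $|\widehat{\pi}_i|$ as well, and the permutation count from Theorem~\ref{cg} applies verbatim.
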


\

 Let $\ \Theta_{\varepsilon_1, \varepsilon_2} \subseteq
 \mathbb{R}_{\geq 0}^{o^2} \ $ be the convex polytope  given by
$$\sum_{j=1}^o x_{ij}+x_{ji}   = 2|\widehat{\pi}_{i}|, \ \ \ \ \ \  \ \
\sum_{j=1}^{o-1}x_{jo}+x_{oj}  \geq     2(1-\varepsilon_2)|X^{\mathrm{neq}}|.$$
Next  result is a consequence of Theorems \ref{jj} and \ref{kk}.

\begin{thm}\label{kkkk}
{\em   Let $\ (X, \pi) \ $ be  micro-macro phase space with  $\  |X^{\mathrm{eq}}|   \geq   (1-\varepsilon_1)|X|. \  $ A random
 permutation $ \ \alpha \in \mathrm{S}_X \ $  determines an (invariant, equivariant) reversible system
 $\ R(X,\pi,\alpha)\ $ in $\ \mathrm{L}_2(\varepsilon_1,\varepsilon_2) \ $ with probability greater than
$$  \binom{|X|}{|\widehat{\pi}_{1}|,\hspace*{.1cm}\dots{}\hspace*{.1cm},|\widehat{\pi}_{o}|}^{-1}\sum_{a \in
\Theta_{\varepsilon_1, \varepsilon_2}^{\mathbb{Z}}}  \prod_{i=1}^o {|\widehat{\pi}_{i}| \choose a_{1i},...,a_{oi}}, $$
and less than
$$  \binom{|X|}{|\widehat{\pi}_{1}|,\hspace*{.1cm}\dots{}\hspace*{.1cm},|\widehat{\pi}_{o}|}^{-1}\sum_{a \in
 \Theta_{\varepsilon_1, 2\varepsilon_2}^{\mathbb{Z}}}  \prod_{i=1}^o {|\widehat{\pi}_{i}| \choose a_{1i},...,a_{oi}}, $$
}
\end{thm}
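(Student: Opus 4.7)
The plan is to translate the condition $R(X,\pi,\alpha)\in \mathrm{L}_2(\varepsilon_1,\varepsilon_2)$ into sandwiched inequalities on the matrix of $\alpha$, and then to count permutations with a prescribed matrix using the formula already established in the proof of Theorem \ref{cg}.

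First I would observe that the hypothesis forces $\mathrm{L}_1(\varepsilon_1)$ to hold for $R(X,\pi,\alpha)$ independently of $\alpha$: in either the invariant or the equivariant construction, the microstates are $X\times \mathbb{Z}_2$ and the equilibrium consists of $X^{\mathrm{eq}}\times \mathbb{Z}_2$, so $|(X\times \mathbb{Z}_2)^{\mathrm{eq}}| = 2|X^{\mathrm{eq}}| \geq (1-\varepsilon_1)|X\times \mathbb{Z}_2|$, and Theorem \ref{kk} item 1 applies. Therefore the condition $R(X,\pi,\alpha)\in \mathrm{L}_2(\varepsilon_1,\varepsilon_2)$ is equivalent to $R(X,\pi,\alpha)\in \mathrm{GAT}(\varepsilon_2)$.

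Next I would bracket $\mathrm{GAT}(\varepsilon_2)$ by two inequalities on $|D_{\widehat{\alpha}}(X\times\mathbb{Z}_2)^{\mathrm{eq}}|$. Theorem \ref{kk} item 2 gives the sufficient condition $|D_{\widehat{\alpha}}(X\times\mathbb{Z}_2)^{\mathrm{eq}}|\geq 2(1-\varepsilon_2)|X^{\mathrm{neq}}|$. Since the reversible system $R(X,\pi,\alpha)$ is entropy preserving, Proposition \ref{cc} yields $|D_{\widehat{\alpha}}|=|I_{\widehat{\alpha}}|$, so Theorem \ref{jj} item 3 furnishes the necessary condition $|D_{\widehat{\alpha}}(X\times\mathbb{Z}_2)^{\mathrm{eq}}|\geq 2(1-2\varepsilon_2)|X^{\mathrm{neq}}|$. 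I would then rewrite $|D_{\widehat{\alpha}}(X\times\mathbb{Z}_2)^{\mathrm{eq}}|$ in terms of the matrix $a_{ij}=|\{s\in\widehat{\pi}_{j}\,|\,\alpha(s)\in \widehat{\pi}_{i}\}|$ associated to $\alpha$: writing $X^{\mathrm{eq}}=\widehat{\pi}_o$ and $\widehat{\alpha}(s,t)=(\alpha^t(s),t)$, a pair $(s,t)\in \widehat{\pi}_o\times \mathbb{Z}_2$ lies in $D_{\widehat{\alpha}}$ precisely when $\alpha^t(s)\in \widehat{\pi}_j$ for some $j<o$, and summing over $t\in\{\pm 1\}$ gives
\[
|D_{\widehat{\alpha}}(X\times\mathbb{Z}_2)^{\mathrm{eq}}| \ = \ \sum_{j<o}\big(a_{jo}+a_{oj}\big).
\]

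With this identification in place, the sufficient inequality places $a$ in $\Theta_{\varepsilon_1,\varepsilon_2}^{\mathbb{Z}}$ and the necessary one places it in $\Theta_{\varepsilon_1,2\varepsilon_2}^{\mathbb{Z}}$; the constraint $\sum_j(a_{ij}+a_{ji})=2|\widehat{\pi}_i|$ is automatic from the row- and column-sum constraints $\sum_j a_{ji}=\sum_j a_{ij}=|\widehat{\pi}_i|$ satisfied by any permutation matrix. Finally I would invoke the counting identity established inside the proof of Theorem \ref{cg}: the number of $\alpha\in \mathrm{S}_X$ with prescribed matrix $(a_{ij})$ is $\prod_{i=1}^o|\widehat{\pi}_i|!\binom{|\widehat{\pi}_i|}{a_{1i},\ldots,a_{oi}}$, the multinomial coefficient vanishing when the column sums do not match. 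Dividing by $|X|!$ and using $\prod_i|\widehat{\pi}_i|!/|X|!=\binom{|X|}{|\widehat{\pi}_1|,\ldots,|\widehat{\pi}_o|}^{-1}$ yields the claimed lower and upper bounds on the probability.

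The main subtlety — and the only real obstacle — is the polytope bookkeeping: $\Theta_{\varepsilon_1,\varepsilon_2}$ is written with only the \emph{symmetrized} constraint $\sum_j(x_{ij}+x_{ji})=2|\widehat{\pi}_i|$, so one must verify that restricting the sum to the integer points of $\Theta$ does not introduce spurious matrices outside the image of the matrix map $\alpha\mapsto(a_{ij})$. This is ensured by the multinomial coefficients, which force $\sum_j a_{ji}=|\widehat{\pi}_i|$ for every $i$; combined with the symmetrized constraint this also forces $\sum_j a_{ij}=|\widehat{\pi}_i|$, so the surviving terms are exactly the matrices of permutations satisfying the entropy-decrease inequality.
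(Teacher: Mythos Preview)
Your argument is correct and follows precisely the route the paper indicates: the paper merely states that the result ``is a consequence of Theorems \ref{jj} and \ref{kk}'' (packaged together in Proposition \ref{ter}), and you have unpacked exactly that implication --- using Theorem \ref{kk} for the sufficient bound, Proposition \ref{cc} plus Theorem \ref{jj} item 3 for the necessary bound, identifying $|D_{\widehat{\alpha}}(X\times\mathbb{Z}_2)^{\mathrm{eq}}|=\sum_{j<o}(a_{jo}+a_{oj})$, and then counting via the formula from the proof of Theorem \ref{cg}. Your handling of the symmetrized constraint in $\Theta$ via the vanishing of the multinomial coefficients is the right way to reconcile the polytope description with the image of the matrix map.
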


We close this section describing property $\ \mathrm{L}_4(\varepsilon_1,\varepsilon_2) \ $ in terms of convex polytopes.
Let $\ (X, \pi, E) \ $ be a micro-macro phase-space with $ \ (X,E)\ $  a simple graph. Recall that
$\ (\pi,\mathcal{E})\ $ denotes the  induced simple graph on macrostates. Let
$\ \Gamma_{\varepsilon_1,  \varepsilon_2} \subseteq \mathbb{R}_{\geq 0}^{A\times A} \ $ be the convex polytope given by
$$\sum_{a\in A}^o x_{ab}   = |b|, \ \ \ \ \sum_{b\in A}^o x_{ab}   = |a|,
 \ \ \   \sum_{|a|<|b|, \{a,b\}\in \mathcal{E}}x_{ab}  \leq  \varepsilon_1|X|, \ \ \ \  \sum_{|a|>|b|, \{a,b\}\in \mathcal{E}}x_{ab} \geq   (1-\varepsilon_2)|b|.$$

\begin{thm}\label{kkkk}
{\em   Let $\ (X, \pi, E) \ $ be a micro-macro phase-space with $ \ (X,E)\ $  a simple graph. A random  permutation in
$ \  \mathrm{S}_X^E \ $  determines a  system $\ (X,\pi,\alpha)\ $ in $\ \mathrm{L}_4(\varepsilon_1,\varepsilon_2) \ $ with probability
$$ \frac{\prod_{a\in A}|a|!}{|\mathrm{S}_X^E|} \sum_{c \in \Gamma^{\mathbb{Z}}_{\varepsilon_1, \varepsilon_2}}  \prod_{b\in A} {|b| \choose (c_{ab})_{a\in A}}. $$
}
\end{thm}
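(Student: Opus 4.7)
The plan is to mirror the template used in the proofs of Theorems \ref{cg} and \ref{cg2}. To each $\alpha \in \mathrm{S}_X^E$ I would associate the $A \times A$ transition matrix $c(\alpha) = (c_{ab})$ with
$$c_{ab} \ = \ \big|\{\, i \in b \ \big| \ \alpha(i)\in a \,\}\big|.$$
Bijectivity of $\alpha$ immediately yields the row and column sum identities $\sum_a c_{ab} = |b|$ and $\sum_b c_{ab} = |a|$, which are the first two families of equalities defining the polytope $\Gamma_{\varepsilon_1,\varepsilon_2}$.

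Next I would reformulate property $\mathrm{L}_4(\varepsilon_1, \varepsilon_2)$ in terms of $c(\alpha)$. By Proposition \ref{nt}, $|D|/|X| = \sum_{|a|<|b|} c_{ab}$, so $\mathrm{L}_1(\varepsilon_1)$ becomes $\sum_{|a|<|b|} c_{ab} \leq \varepsilon_1 |X|$; and $|Ib| = \sum_{|a|>|b|} c_{ab}$, so $\mathrm{BAT}(\varepsilon_2)$ becomes $\sum_{|a|>|b|} c_{ab} \geq (1-\varepsilon_2)|b|$ for each $b \in \pi^{\mathrm{neq}}$. The restriction of both sums to pairs with $\{a,b\} \in \mathcal{E}$ that appears in the definition of $\Gamma_{\varepsilon_1,\varepsilon_2}$ must be tracked back to the $E$-Lipschitz hypothesis: whenever a nonzero off-diagonal entry $c_{ab}$ is forced by some $i \in b$ with $\alpha(i) \in a$, the Lipschitz condition applied to the $E$-edges incident to $i$ will constrain $\alpha$ on neighbours of $i$ and, once combined with the definition of the induced graph $(\pi, \mathcal{E})$, give the required adjacency of $a$ and $b$.

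Then I would count the $E$-Lipschitz permutations realizing a prescribed matrix $c \in \Gamma^{\mathbb{Z}}_{\varepsilon_1,\varepsilon_2}$ following the cell-by-cell bookkeeping of Theorem \ref{cg}: partition each $b$ into pieces $B_{ab}$ of sizes $(c_{ab})_{a \in A}$ ($\prod_b \binom{|b|}{(c_{ab})_{a\in A}}$ choices), dually partition each $a$ into pieces $A_{ab}$ of the same sizes, and then specify bijections $B_{ab} \to A_{ab}$; after cancellation the count collapses to the summand $\prod_{a \in A} |a|! \cdot \prod_{b \in A} \binom{|b|}{(c_{ab})_{a \in A}}$. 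Summing over $c \in \Gamma^{\mathbb{Z}}_{\varepsilon_1,\varepsilon_2}$ and dividing by $|\mathrm{S}_X^E|$ yields the announced probability.

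The delicate step will be the second paragraph: rigorously transferring the $E$-Lipschitz hypothesis to the $\mathcal{E}$-restricted sums defining $\Gamma_{\varepsilon_1,\varepsilon_2}$, and conversely verifying that each bijection built from the cell decomposition in the third step is indeed $E$-Lipschitz when the matrix $c$ is supported on $\mathcal{E}$. Microstates isolated in $(X,E)$ carry no Lipschitz constraint, so this bookkeeping will likely require either an auxiliary remark restricting to the non-isolated part of $X$ or a direct inclusion--exclusion on the Lipschitz edges; the remainder of the argument is routine combinatorics once this translation is secured.
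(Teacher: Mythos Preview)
The paper states this theorem without proof, so there is nothing to compare against directly; your template approach, mirroring the proof of Theorem~\ref{cg}, is exactly the one the paper implicitly relies on for all of the polytope results in Section~\ref{sllip}.

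You are right, however, that the ``delicate step'' is genuinely delicate---more so than you let on. Two things can fail. First, the $E$-Lipschitz condition constrains images of \emph{edges}, not of individual vertices: from $i\in b$, $\alpha(i)\in a$, $a\neq b$, nothing in the definition of $\mathrm{S}_{X,E}$ forces $\{a,b\}\in\mathcal E$ (an isolated vertex, or one all of whose $E$-neighbours lie in its own block, can be sent anywhere). So the passage from $\mathrm{L}_1(\varepsilon_1)$ to the $\mathcal E$-restricted inequality $\sum_{|a|<|b|,\ \{a,b\}\in\mathcal E} c_{ab}\le\varepsilon_1|X|$ is not automatic. Second, and more seriously for the formula itself, the summand $\prod_{a}|a|!\prod_{b}\binom{|b|}{(c_{ab})}$ counts \emph{all} permutations of $X$ with block-transition matrix $c$, not the $E$-Lipschitz ones; there is no reason a generic bijection assembled from the cell decomposition should map $E$-edges to $E$-edges, even when $c$ is supported on $\mathcal E\cup\{(a,a)\}$.

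Both issues evaporate if one assumes $E$ is the pullback of $\mathcal E$ along $f$ (i.e.\ $\{i,j\}\in E$ iff $f(i)=f(j)$ or $\{f(i),f(j)\}\in\mathcal E$), since then $E$-Lipschitz is exactly the condition that $c$ be supported on $\mathcal E\cup\{(a,a)\}$, and the cell-by-cell count is correct. Absent such a hypothesis the formula as written does not follow from your outline, and the paper does not supply the missing argument. Your identification of this as the crux is spot on; the gap is real and lives in the statement as much as in the proof.
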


\section{Thermodynamic Limits}\label{tl}

Let $\ f:\mathbb{R}_{\geq 0}^o \longrightarrow  \mathbb{R} \ $ be a map. A thermodynamic (or projective) limit for $\ f \ $ is a limit
$$\underset{x\rightarrow \infty}{\mathrm{Lim}} f(xp_1,...,xp_o)$$
where $(p_1,...,p_o)\in \Delta^{o-1} ,$  i.e. $\ p_i \geq 0\ $ and $\ p_1+ \cdots + p_o=1.\ $
 Assuming that $\ f \ $ can be  written asymptotically as
$\ f(xp_1,...,xp_o) =  x^{\alpha}g(p_1,...,p_n) + o(x^{\alpha}), \ $
the thermodynamic limits of $\ f\ $ are controlled by the map
$\ g:\Delta^{o-1} \longrightarrow  \mathbb{R}. \ $ In our computations below $\ \mathrm{ln}(f) \ $ will have such
asymptotic  behaviour, with $ \ \alpha = 1, \ $ thus $\ \displaystyle f(xp_1,...,xp_o) \simeq e^{ xg(p_1,...,p_n)}. \ $\\

The zone proportions $\ p_i\ $ and transition proportions $\ \lambda_{ij}\ $ of a micro-macro dynamical system $\ (X,\pi, \alpha) \ $  with
$\  O_{\pi} =  \{k_1 <   ...  < k_o\} \ $ are given, respectively, by
$$p_i=\frac{|\widehat{\pi}_j|}{|X|} \ \ \ \ \ \ \mbox{and}  \ \  \ \ \ \  \lambda_{ij}=\frac{\big| \{ \alpha(l) \in \widehat{\pi}_i \ | \ l\in \widehat{\pi}_j \} \big|}{|X|}.$$

\begin{defn}{\em
A thermodynamic limit of micro-macro dynamical systems with zone proportions $\ p_i \ $ and zone transition proportions
$\ \lambda_{ij} \ $ is a sequence $\ (X_n,\pi(n), \alpha_n) \ $ such that:
\begin{itemize}
  \item $\displaystyle  O_{\pi(n)} =  \{k_1(n) <   ...  < k_o(n)\}, \ \ \ $ and $\ \ \ \displaystyle |X_n|\rightarrow \infty \  $ as $\ n\rightarrow \infty ,\ $
  \item $\displaystyle \underset{n\rightarrow \infty}{\mathrm{Lim}}\ p_j(n)= p_j,\ \ \ \ \mbox{and}
  \ \ \ \ \underset{n\rightarrow \infty}{\mathrm{Lim}}\
\lambda_{ij}(n)= \lambda_{ij}. \ \ $
\end{itemize}
}
\end{defn}

\

\begin{rem}{\em We have already study  thermodynamics limits with zone proportions
$\ p_o=1\ $ and $\ p_j=0  \ $ for $\ j \neq o \ $ in Corollaries \ref{ct1}  and \ref{ct2} and Theorem \ref{ct3}.
}
\end{rem}

\begin{prop}\label{g}
{\em In a thermodynamic limit with zone proportions $\ p_i> 0 \ $ for $\ i \in [o]\ $ a random
permutation has null probability of being always increasing in entropy.
}
\end{prop}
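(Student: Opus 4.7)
The plan is to reduce the statement to the asymptotics of a single multinomial coefficient and then apply Stirling's formula. By Theorem \ref{parfi}, for each micro-macro phase space $(X_n,\pi(n))$ in the thermodynamic sequence, the probability that a uniformly random permutation $\alpha_n\in \mathrm{S}_{X_n}$ yields an always-increasing entropy system is exactly
\[
P_n \ = \ \binom{|X_n|}{|\widehat{\pi}_1(n)|,\dots,|\widehat{\pi}_o(n)|}^{-1}.
\]
So it suffices to show that this multinomial coefficient diverges as $n \to \infty$ under the hypothesis $p_i>0$ for all $i\in[o]$ (the proposition being interesting only when $o \geq 2$, since for $o=1$ all microstates share the same entropy and the multinomial is $1$).

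First I would write $|\widehat{\pi}_i(n)|=p_i(n)|X_n|$ with $p_i(n)\to p_i>0$, and apply Stirling in the form $\ln(m!)=m\ln m-m+O(\ln m)$ to each factorial appearing in the multinomial. Since $\sum_i p_i(n)=1$, the linear terms $-m$ cancel, and one obtains
\[
\ln\binom{|X_n|}{|\widehat{\pi}_1(n)|,\dots,|\widehat{\pi}_o(n)|} \ = \ -|X_n|\sum_{i=1}^{o} p_i(n)\ln p_i(n)\ +\ O(\ln |X_n|).
\]
Continuity of the Shannon functional $H(q_1,\dots,q_o)=-\sum q_i\ln q_i$ at the interior point $(p_1,\dots,p_o)$ (where all coordinates are strictly positive) gives $\sum_i p_i(n)\ln p_i(n)\to \sum_i p_i\ln p_i$, hence
\[
\ln\binom{|X_n|}{|\widehat{\pi}_1(n)|,\dots,|\widehat{\pi}_o(n)|} \ = \ |X_n|\,H(p_1,\dots,p_o)\ + \ o(|X_n|).
\]

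Because $o\geq 2$ and each $p_i>0$, we have $H(p_1,\dots,p_o)>0$, so the right-hand side tends to $+\infty$. Therefore $P_n\to 0$, which is the claim. The only point requiring mild care is the convergence $p_i(n)\to p_i$ when combined with Stirling's error terms: but since there are only finitely many zones and the limiting $p_i$ are bounded away from $0$, the pointwise convergence is automatically uniform in $i$, and $p_i(n)\ln p_i(n)$ stays bounded for large $n$. No substantial obstacle is expected; the proof is a direct corollary of Theorem \ref{parfi} together with the well-known exponential asymptotics of multinomial coefficients in terms of the Shannon entropy of their normalized parts.
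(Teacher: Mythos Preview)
Your proof is correct and follows essentially the same approach as the paper: both invoke Theorem \ref{parfi} to express the probability as the reciprocal of a multinomial coefficient and then use Stirling's approximation to obtain the asymptotic $\binom{|X_n|}{|\widehat{\pi}_1(n)|,\dots,|\widehat{\pi}_o(n)|}\sim e^{|X_n|H(p_1,\dots,p_o)}$, concluding from $H>0$ when $o\geq 2$. Your version is simply more explicit about the $p_i(n)\to p_i$ convergence and the Stirling error terms, which the paper suppresses.
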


\begin{proof} By Theorem \ref{parfi} and  Stirling's approximation formula the desired probability
is given by the thermodynamic limit
$$\underset{n\rightarrow \infty }{\mathrm{lim}}
\binom{n}{np_1,\hspace*{.1cm}\dots{}\hspace*{.1cm},np_o}^{-1} \ = \
\underset{n\rightarrow \infty }{\mathrm{lim}} e^{-nH(p_1,...,p_o)} =\ 0,$$
where $ \ H(p_1,...,p_o) = -\sum_{i=1}^{o}p_i\mathrm{ln}(p_i)>0 \ $ is the Shannon entropy of
$\ (p_1,...,p_o) \in \Delta^{o-1}.$
\end{proof}

\begin{thm}\label{g2}
{\em In a thermodynamic limit with zone proportions $\ p_i> 0 \ $ for $\ i \in [o]\ $  an invertible micro-macro dynamical system $\ S \ $
has null probability of having transition proportions $\ \lambda_{ij}\geq 0\ $
unless $\  \lambda_{ij}=p_ip_j\  $ which has full probability. If
$\ 0 <p_1 < p_2 < ... < p_o\leq 1\ $ we have that:
\begin{enumerate}
  \item $S \in \mathrm{L}_1(\varepsilon) \ \ $ if and only if $\ \  \sum_{i<j}p_{i}p_j \leq \varepsilon ; \ \ \ $
  If $\ S \in \mathrm{L}_1(\varepsilon), \  $ then $\  p_o(1-p_o)\leq \varepsilon  .$
  \item $S \in \mathrm{GAT}(\varepsilon) \ \ $ if and only if
  $\ \ \displaystyle  \frac{\sum_{i\leq j <o}p_ip_j}{1-p_o} \leq \varepsilon; \ \ \ $
  $S \in \mathrm{ZAT}(\varepsilon) \ \ $ if and only if $\ \  p_o  \geq 1- \varepsilon.$
\item $ |S^{\mathrm{eq}}|   \geq   (1-\varepsilon)|S| \  $ if and only if
 $\  p_o  \geq 1- \varepsilon; \ \  $ $ |DS^{\mathrm{eq}}|   \geq   (1-\varepsilon)|S^{\mathrm{neq}}| \  $ if and only if
 $\ p_o  \geq 1- \varepsilon.$
\item If $\  p_o  \geq 1- \varepsilon, \ $ then  $\ \ S \in
 \mathrm{L}_3(\varepsilon,\varepsilon). \ $
  \item $S\ $ is symmetric and thus $\ |D|=|I|.\ $ The proportionality constants
   of the (invariant or equivariant) reversible system associated to $\ S \ $ agree with those of
  $\ S.$
\item  A micro-state in the zone $\ j \ $  moves
to the equilibrium with probability $\ p_o. \ $ The mean
  jump for such  micro-states is greater than $\ (o-j-1)p_o.$
  \end{enumerate}
  }
\end{thm}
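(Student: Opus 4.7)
The plan is to reduce the entire theorem to a single concentration statement: for a uniformly random invertible system with fixed zone proportions $p_i$, the empirical transition proportions $\lambda_{ij}$ concentrate on the product $p_ip_j$ in the thermodynamic limit. Once this is in hand, parts 1--6 become algebraic bookkeeping with the explicit matrix $\lambda_{ij}=p_ip_j$.

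For the concentration, I would invoke Theorem \ref{cg}(2): the number of permutations of $X_n$ whose associated transition matrix is $(a_{ij})$ equals $\prod_i |\widehat{\pi}_i(n)|!\binom{|\widehat{\pi}_i(n)|}{a_{1i},\dots,a_{oi}}$. With $|\widehat{\pi}_i(n)|=np_i$ and $a_{ij}=n\lambda_{ij}$, Stirling's formula gives, after cancellation with $n!$,
$$\log\mathbb{P}[(\lambda_{ij})]\;\sim\;n\bigl(H(\lambda)-2H(p)\bigr),$$
where $H(\lambda)=-\sum_{ij}\lambda_{ij}\log\lambda_{ij}$ and $H(p)=-\sum_i p_i\log p_i$. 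On the polytope of matrices with row and column marginals $p$, the function $H(\lambda)$ attains its unique maximum at $\lambda_{ij}=p_ip_j$ with value $2H(p)$ (the product distribution maximizes joint entropy given its marginals). Hence for any $\lambda\neq p\otimes p$ the probability is exponentially small, while the polynomially many matrices in a $\sqrt n$-neighborhood of $np_ip_j$ absorb all the mass. Alternatively, the same conclusion can be extracted from Sanov's theorem as was done in Theorem \ref{ct3}.

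Given $\lambda_{ij}=p_ip_j$, the six items are routine. For item 1, the zonal version of Proposition \ref{nt} gives $|D|/|X|=\sum_{i<j}p_ip_j$, and the monotonicity hypothesis makes $p_o(1-p_o)=\sum_{i<o}p_ip_o$ one of the summands of $\sum_{i<j}p_ip_j$. Item 2 uses $|DX^{\mathrm{neq}}\sqcup CX^{\mathrm{neq}}|/|X|=\sum_{i\le j<o}p_ip_j$ and $|I\widehat{\pi}_i|/|\widehat{\pi}_i|=1-\sum_{k\le i}p_k$, the latter forcing $\sum_{k<o}p_k\le\varepsilon$, equivalently $p_o\ge 1-\varepsilon$. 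Item 3 follows from $|X^{\mathrm{eq}}|/|X|=p_o$ and $|DX^{\mathrm{eq}}|/|X^{\mathrm{neq}}|=p_o(1-p_o)/(1-p_o)=p_o$. For item 4, combine item 3 with Theorem \ref{kk} and use $\sum_{i<j}p_ip_j\le(1-p_o)\sum_j p_j=1-p_o\le\varepsilon$ to obtain $L_1(\varepsilon)$. Item 5 is immediate: $\lambda_{ij}=p_ip_j$ is symmetric, so $|I|=|D|$, and Theorem \ref{cf}(3) yields $\lambda^{IR}_{ij}=\tfrac12(\lambda_{ij}+\lambda_{ji})=\lambda_{ij}$. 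For item 6, a zone-$j$ microstate lands in the equilibrium with probability $\lambda_{oj}/p_j=p_o$; such a transition has jump exactly $o-j-1$ since every zone cardinality $k_{j+1},\dots,k_{o-1}$ lies strictly between $k_j$ and $k_o$, so the mean jump restricted to zone $j$ is at least $(o-j-1)p_o$.

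The main obstacle is the careful execution of Step 1: controlling the Stirling remainder uniformly across the lattice, handling the $\sqrt n$ fluctuations around the mode $p_ip_j$, and translating the log-probability expansion into the categorical ``full probability''/``null probability'' dichotomy stated in the theorem. Everything else is algebra on the product matrix.
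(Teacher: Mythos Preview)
Your approach is correct and mirrors the paper's proof: both reduce to the Stirling/maximum-entropy computation showing that $H(\lambda)-2H(p)$ is uniquely maximized at $\lambda_{ij}=p_ip_j$, the paper via Lagrange multipliers and you by citing the product-marginal fact directly. Your verification of items 1--6 is more explicit than the paper's (which omits them entirely); one small slip is that Theorem~\ref{kk} yields only $\mathrm{L}_2$, not $\mathrm{L}_3$, but since you already established $\mathrm{ZAT}(\varepsilon)$ in item~2 and $\mathrm{L}_1(\varepsilon)$ from the bound $\sum_{i<j}p_ip_j\le 1-p_o$, the conclusion $\mathrm{L}_3(\varepsilon,\varepsilon)$ stands without invoking Theorem~\ref{kk}.
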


\begin{proof}
By Theorem \ref{cg} we should consider the thermodynamic limit
$$ \underset{n\rightarrow \infty }{\mathrm{lim}}
\binom{n}{np_1,\hspace*{.1cm}\dots{}\hspace*{.1cm},np_o}^{-1}
 \prod_{j=1}^o {np_j \choose np_j \frac{\lambda_{1j}}{p_j},...,np_j \frac{\lambda_{oj}}{p_j}} \ = \ \underset{n\rightarrow \infty }{\mathrm{lim}}
 e^{n\big[\sum_{j=1}^{o}p_jH(\frac{\lambda_{1j}}{p_j},...,\frac{\lambda_{oj}}{p_j})\ - \ H(p_1,...,p_o)\big]},$$
 $$ \mbox{where }  \ \ \ \ \ \sum_{i=1}^o p_i=1,
 \ \ \ \ \ \ \sum_{i=1}^o \lambda_{ij}   =  p_j,
 \ \ \ \ \ \  \ \sum_{i=1}^o \lambda_{ji}  =  p_j.$$
Our next goal is to maximize
$$\sum_{j=1}^{o}p_jH(\frac{\lambda_{1j}}{p_j},...,\frac{\lambda_{oj}}{p_j})- H(p_1,...p_o)\ = \
H(\lambda_{ij}) - 2H(p_1,...p_o)$$ with respect to $\ \lambda_{ij}. \ $ Omitting the $\lambda_{ij}$-independent summand  $2H(p_1,...p_o),$
we maximize $H(\lambda_{ij})$ subject to the above constrains. Applying the Jaynes' max entropy method
we get that max entropy is achieved by  $\ \lambda_{ij}=p_ip_j, \ $ with entropy  $\ H(p_ip_j)= 2H(p_1,...,p_o). \ $ Indeed the maximum entropy distribution is given by
$$\lambda_{ij}= \frac{e^{-f_i-c_j}}{\sum_{ij}e^{-f_i-c_j}} =  \frac{e^{-f_i}}{\sum_{i}e^{-f_i}} \frac{e^{-c_j}}{\sum_{j}e^{-c_j}} = p_ip_j,$$ where $f_i$ and $c_j$ are the Lagrangian multipliers associated with the constrains.
\end{proof}

\begin{cor}
{\em Let  $ \ s(1) > s(2) > ... > s(o) > 0   \ $ be real numbers.  If $\ \sum_{i=1}^{o} e^{- s(i)}=1 \ $
and $\ s(o) \leq -\mathrm{ln}(1-\varepsilon),\ $ then
an invertible micro-macro dynamical system with transition proportions  $\ e^{- s(i)-s(j)} \ $
belongs to $\ \mathrm{L}_3(\varepsilon,\varepsilon). \ $}
\end{cor}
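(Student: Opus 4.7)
The plan is to recognize this corollary as a direct translation of Theorem \ref{g2}(4) under the substitution $p_i = e^{-s(i)}$. So the work is just unpacking the hypotheses and verifying they match.

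First I would set $p_i = e^{-s(i)}$ for $i \in [o]$. The assumption $\sum_{i=1}^o e^{-s(i)} = 1$ says $(p_1,\dots,p_o) \in \Delta^{o-1}$, and since $s$ is strictly decreasing and positive, we have $0 < p_1 < p_2 < \cdots < p_o < 1$, placing us in the setting of Theorem \ref{g2}. The transition proportions $e^{-s(i)-s(j)}$ then read as $\lambda_{ij} = p_i p_j$, which are exactly the (full-probability) thermodynamic transition proportions identified in the proof of Theorem \ref{g2} via the Jaynes max-entropy argument.

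Next I would convert the bound $s(o) \leq -\ln(1-\varepsilon)$ into a statement about $p_o$: exponentiating gives $e^{-s(o)} \geq 1 - \varepsilon$, i.e. $p_o \geq 1 - \varepsilon$. This is precisely the hypothesis appearing in Theorem \ref{g2}(4), so invoking that item yields $S \in \mathrm{L}_3(\varepsilon,\varepsilon)$, which is the desired conclusion.

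There is no real obstacle here; the only thing to be slightly careful about is checking that the system built from the $\lambda_{ij} = p_i p_j$ prescription is a legitimate invertible micro-macro dynamical system, i.e. that the marginals work out: $\sum_j \lambda_{ij} = p_i \sum_j p_j = p_i$ and $\sum_i \lambda_{ij} = p_j$, and $\sum_{ij}\lambda_{ij}=1$. These are automatic from $\sum p_i = 1$. Once this consistency is noted, the corollary is immediate from the $p_o \geq 1-\varepsilon$ clause of Theorem \ref{g2}.
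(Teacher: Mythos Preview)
Your proposal is correct and matches the paper's intended approach: the corollary is stated without proof precisely because it is the immediate specialization of Theorem~\ref{g2}(4) under the substitution $p_i = e^{-s(i)}$, exactly as you describe. The checks you perform (that $0<p_1<\cdots<p_o<1$, that $\lambda_{ij}=p_ip_j$, and that $s(o)\le -\ln(1-\varepsilon)$ is equivalent to $p_o\ge 1-\varepsilon$) are all that is needed.
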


Fix a probability $\ q \ $ on a finite set $\ X \ $ and  let
$\ \displaystyle p_i  = e^{\mathrm{ln}q_i- \sum_{c}\lambda_cf_c(i) -\mathrm{ln}Z(\lambda)} \ $ be
the probability on $ \ X\ $ of minimum relative entropy
$\ \displaystyle D(p|q) \ $
subject to the constrains $\  \sum_{i\in X}p_if_c(i) = a_c, \ $ where $\ f_c: X \longrightarrow \mathbb{R}, \ $
$\ a_c \in \mathbb{R}, \ $  and  $ \ \displaystyle Z(\lambda) =
 \sum_{i\in X} e^{\mathrm{ln}q_i- \sum_{c}\lambda_cf_c(i)}. \ $
After reordering, assume that
$$ \sum_{c}\lambda_cf_c(1) - \mathrm{ln}q_1 \ > \ \cdots \ > \ \sum_{c}\lambda_cf_c(i) - \mathrm{ln}q_i \ > \
 \cdots \ > \ \sum_{c}\lambda_cf_c(o)-\mathrm{ln}q_o.\ $$

\begin{cor}{\em Under the above conditions assume that  $\
\sum_{c}\lambda_cf_c(o) \geq \mathrm{ln}[\frac{(1-\varepsilon)q_o}{Z(\lambda)}],\ $ then
an invertible micro-macro dynamical system with transition proportions $$\ \displaystyle
e^{\mathrm{ln}(q_iq_j)- \sum_{c}\lambda_c(f_c(i)+f_c(j)) -2\mathrm{ln}Z(\lambda)} \ \ \ \
\mbox{belongs to} \ \ \ \mathrm{L}_3(\varepsilon,\varepsilon). \ $$
}
\end{cor}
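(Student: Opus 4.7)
My plan is to derive this corollary as a direct specialization of the preceding corollary (the one about real numbers $s(1)>s(2)>\cdots>s(o)>0$ with $\sum_i e^{-s(i)}=1$ and $s(o)\leq -\ln(1-\varepsilon)$). The idea is that the minimum relative entropy probability $p_i$ has exactly the exponential form needed to match the transition proportions $e^{-s(i)-s(j)}$, so all the work is already done; what remains is to check hypotheses.

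Concretely, I would introduce the substitution
\[
s(i) \ := \ \sum_c \lambda_c f_c(i) - \ln q_i + \ln Z(\lambda),
\]
so that $p_i = e^{-s(i)}$. The normalization $\sum_i e^{-s(i)} = 1$ is then just the definition of $Z(\lambda)$, dividing through. The strict ordering $s(1)>s(2)>\cdots>s(o)$ is immediate from the ordering hypothesis on $\sum_c \lambda_c f_c(i) - \ln q_i$ stated right before the corollary. Positivity $s(o)>0$, i.e.\ $p_o<1$, is the nondegeneracy that we need in order to fit the framework of the preceding corollary; since the other $p_i$ are strictly smaller and sum with $p_o$ to $1$, one has $p_o<1$ as soon as $o\geq 2$, so this is automatic in the nontrivial case. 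The transition proportions appearing in the hypothesis,
\[
e^{\ln(q_iq_j) - \sum_c \lambda_c(f_c(i)+f_c(j)) - 2\ln Z(\lambda)} \ = \ p_ip_j \ = \ e^{-s(i)-s(j)},
\]
match the form required by the preceding corollary on the nose.

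Finally I would translate the inequality hypothesis $\sum_c \lambda_c f_c(o) \geq \ln\!\bigl[(1-\varepsilon)q_o/Z(\lambda)\bigr]$ into a bound on $s(o)$: after substituting the definition of $s(o)$ and rearranging, this is equivalent to a one-sided bound relating $s(o)$ to $-\ln(1-\varepsilon)$, which is precisely the remaining hypothesis of the preceding corollary. Applying that corollary then yields $\mathrm{L}_3(\varepsilon,\varepsilon)$ for the system in question.

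The one step that requires care, and which I expect to be the main (albeit very minor) obstacle, is the bookkeeping of signs and $\log$-directions in the hypothesis translation: since $s(i)$ is built out of $\sum_c \lambda_c f_c(i) - \ln q_i + \ln Z(\lambda)$, the inequality on $s(o)$ corresponds to a bound on $\sum_c \lambda_c f_c(o)$ whose direction (and whether $1-\varepsilon$ appears in numerator or denominator) must be tracked consistently with the reordering convention. Once that translation is written out explicitly, no further argument is needed—the statement is essentially a reformulation of the preceding corollary in the Jaynes maximum-entropy parametrization.
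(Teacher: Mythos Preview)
Your approach is exactly the intended one: the paper gives no explicit proof for this corollary, and it is meant to follow immediately from the preceding corollary (equivalently, from item 4 of Theorem \ref{g2}) via the substitution $s(i)=\sum_c\lambda_c f_c(i)-\ln q_i+\ln Z(\lambda)$, so that $p_i=e^{-s(i)}$ and the transition proportions become $p_ip_j=e^{-s(i)-s(j)}$.

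However, the step you flag as ``very minor'' bookkeeping is not quite so innocent. If you actually carry it out, the condition $p_o\geq 1-\varepsilon$ (equivalently $s(o)\leq -\ln(1-\varepsilon)$) translates to
\[
\sum_c\lambda_c f_c(o)\ \leq\ \ln\!\Bigl(\tfrac{q_o}{(1-\varepsilon)\,Z(\lambda)}\Bigr),
\]
whereas the hypothesis printed in the corollary is
\[
\sum_c\lambda_c f_c(o)\ \geq\ \ln\!\Bigl(\tfrac{(1-\varepsilon)\,q_o}{Z(\lambda)}\Bigr).
\]
These differ both in the direction of the inequality and in the placement of the factor $1-\varepsilon$, and the stated hypothesis does not imply the needed one. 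So your strategy is the paper's strategy and is correct in spirit, but the translation does not close as written: the corollary as printed appears to contain a sign/direction typo, and your proof should note this and proceed with the corrected inequality.
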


Next we characterize the most likely invertible micro-macro dynamical system in a thermodynamic limit.

\begin{thm}\label{pr}
{\em In a thermodynamic limit an invertible micro-macro dynamical system  with $\ o \ $ zones most likely have
zone and transition proportions $\ \displaystyle p_i = \frac{1}{o}\ $ and
$ \  \displaystyle \lambda_{ij} = \frac{1}{o^2}.\ $   For $ \ \varepsilon < \frac{1}{4}, \ $ such a system has property $\ \mathrm{L}_1(\varepsilon) \ $   if and only if
$\ o=1.$
  }
\end{thm}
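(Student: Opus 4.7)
The plan is to combine Theorem~\ref{g2}, which already pins down the most likely transition proportions \emph{conditional} on the zone proportions, with a Jaynes-style maximum-entropy argument over the zone proportions themselves, and then to read off the $\mathrm{L}_1(\varepsilon)$ consequence directly from Theorem~\ref{g2}(1).

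First I would invoke Theorem~\ref{g2}: in the thermodynamic limit, an invertible system with zone proportions $p_1,\ldots,p_o>0$ almost surely has transition proportions $\lambda_{ij}=p_ip_j$. So it suffices to identify the most likely $(p_1,\ldots,p_o)\in\Delta^{o-1}$. At leading exponential order, the number of ways to distribute $n$ microstates into $o$ labeled zones of sizes $np_1,\ldots,np_o$ is the multinomial $\binom{n}{np_1,\ldots,np_o}\sim e^{nH(p_1,\ldots,p_o)}$ by Stirling. Because the conditional probability at $\lambda_{ij}=p_ip_j$ is of order $1$ (the exponent $H(\lambda_{ij})-2H(p)$ in the proof of Theorem~\ref{g2} vanishes exactly at the independent coupling), the overall asymptotic count of triples $(X,\pi,\alpha)$ with these proportions scales as $e^{nH(p)}$. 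Maximizing $H$ on $\Delta^{o-1}$ by Lagrange multipliers selects the uniform distribution $p_i=1/o$; substituting into $\lambda_{ij}=p_ip_j$ yields $\lambda_{ij}=1/o^2$, giving the first claim.

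For the second claim, I would apply Theorem~\ref{g2}(1): in the thermodynamic limit, $S\in\mathrm{L}_1(\varepsilon)$ iff $\sum_{i<j}p_ip_j\leq\varepsilon$. Plugging in $p_i=1/o$ gives $\sum_{i<j}p_ip_j=\binom{o}{2}/o^2=(o-1)/(2o)$, so the condition becomes $o(1-2\varepsilon)\leq 1$, i.e., $o\leq 1/(1-2\varepsilon)$. For $\varepsilon<1/4$ the right-hand side is strictly less than $2$, so the only positive integer solution is $o=1$; conversely $o=1$ makes the sum empty and the inequality trivial.

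The main obstacle I anticipate is making the counting step in the first claim fully rigorous. The exact number of partitions of $[n]$ with prescribed zone proportions also depends on the intra-zone block structure, namely on the admissible tuples of distinct block sizes $k_1<\cdots<k_o$ with block counts $c_i=np_i/k_i$, and the factor $\prod_i (np_i)!/((k_i!)^{c_i}c_i!)$ that each such choice contributes. These contributions produce only sub-exponential corrections relative to $e^{nH(p)}$, so they do not shift the maximizer, but a complete argument must either fix a counting convention (e.g., single-block zones $c_i=1$) or check directly that the sum over valid $(k_i,c_i)$ configurations does not disturb the leading-order maximum at $p_i=1/o$.
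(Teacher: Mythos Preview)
Your argument is correct and follows the paper's route: pass to exponential rates via Theorem~\ref{g2}, maximize jointly over $(p,\lambda)$, and then apply Theorem~\ref{g2}(1) to get $\sum_{i<j}p_ip_j=(o-1)/(2o)\le\varepsilon$, equivalently $1/o\ge 1-2\varepsilon$, which for $\varepsilon<1/4$ forces $o=1$.

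On the first claim you are in fact more careful than the paper. The paper asserts that maximizing $H(\lambda_{ij})-2H(p_1,\dots,p_o)$ yields $p_i=1/o$; but since $H(\lambda)\le 2H(p)$ with equality exactly when $\lambda_{ij}=p_ip_j$, that function attains its maximum value $0$ along the entire manifold $\{\lambda_{ij}=p_ip_j:\ p\in\Delta^{o-1}\}$ and does not by itself single out the uniform $p$. Your explicit inclusion of the zone-count factor $\binom{n}{np_1,\dots,np_o}\sim e^{nH(p)}$ is precisely what fixes this: combined with the permutation count the relevant rate becomes $H(\lambda)-H(p)$, which is uniquely maximized at $p_i=1/o$, $\lambda_{ij}=1/o^2$. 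The intra-zone block-structure obstacle you flag is genuine and is not addressed in the paper either; your remark that these corrections are sub-exponential and do not shift the maximizer is the right way to dispose of it.
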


\begin{proof}
We proceed as in Theorem \ref{g2} letting both  $ \ p_i >0 \ $ and $\ \lambda_{ij} >0\ $ vary subject to
$$\sum_{ij}^o \lambda_{ij}=1, \ \ \ \ \ \ \sum_{i=1}^o \lambda_{ij}   =  p_j,
 \ \ \ \ \ \  \ \sum_{i=1}^o \lambda_{ji}  =  p_j.$$
Maximizing
$ \   \displaystyle \sum_{j=1}^{o}p_jH(\frac{\lambda_{1j}}{p_j},...,\frac{\lambda_{oj}}{p_j})- H(p_1,...p_o)\ = \
H(\lambda_{ij}) - 2H(p_1,...p_o)\  $ we get that $\ \displaystyle p_i = \frac{1}{o}\ $ and
$ \  \displaystyle \lambda_{ij} = \frac{1}{o^2},\ $ i.e.
all zones have the same cardinality and transitions between zones  are uniformly  random. By Theorem \ref{g2} we have that
such a system has property $\ \mathrm{L}_1(\varepsilon) \ $ if and only if $ \  \frac{1}{o} \geq 1-2\varepsilon. \ $
In particular, for $ \ \varepsilon < \frac{1}{4} \ $  property   $\ \mathrm{L}_1(\varepsilon) \ $ holds if and only if
$\ o=1.$
\end{proof}

Theorems \ref{g2} and \ref{pr} show the limitations of the proportionality principle
as the only basis of second law. We proceed to supplement it with the continuity principle, making the
assumption that only permutations with jump bounded by $ \ k-1 \geq 0 \ $ are allowed. Note that
setting $\ k=o \ $ we recover the proportionality model discussed above.

\begin{thm}\label{bb}
{\em Consider an invertible micro-macro dynamical system $\ S\ $ in a thermodynamic limit  with $k$-bounded jumps
and  zone proportions $\ p_j. \ $ The system  have vanishing relative probability unless its transition probabilities are
 $\ \lambda_{ij}=b_i b_j \ $ for $\ |i-j|\leq k \ $  and zero otherwise, where
$\ \  b_{j-k}b_j+ \cdots + b_{j-1}b_j + b_j^2 + b_{j+1}b_j+ \cdots + b_{j+k}b_j =  p_j, \ \ $
$\ b_j>0\ $ for $\ j \in [o],\ $ and  $\ b_j =0 \ $ if $\ j \notin [o].\ $ If $\ 0< b_1<...<b_o\ $ we have that:
\begin{enumerate}
  \item $S \in \mathrm{L}_1(\varepsilon) \ \ $ if and only if $\ \
\sum_{j-k\leq i<j}b_{i}b_{j}  \leq \varepsilon .$
  \item $S\in \mathrm{GAT}(\varepsilon) \ \ $ if and only if
$\ \ \displaystyle \sum_{j-k\leq i\leq j<o}b_{i}b_{j}\leq \varepsilon \sum_{j<o, \ |i-j|\leq k}b_{i}b_{j}.$
  \item $S \in \mathrm{ZAT}(\varepsilon) \ \ $ if and only if $\ \ b_{j-k}+\cdots +  b_{j}\leq
  \varepsilon(b_{j-k}+ \cdots + b_j+ \cdots + b_{j+k} ) \ $ for $\ j \in [o-1].$
 \item The proportionality constants of the (invariant or equivariant) reversible system associated to $\ S \ $ agree with those of
  $\ S.$
  \item The average jump of system $ \ S \ $ is bounded by $\ k-1.$
  \end{enumerate}
}
\end{thm}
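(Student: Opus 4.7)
The plan is to follow the strategy of Theorems \ref{g2} and \ref{pr}. By Theorem \ref{cg}, the probability of observing given zone proportions $p_j$ and transition proportions $\lambda_{ij}$ at scale $n = |X_n|$ is a product of multinomial coefficients; Stirling's approximation reduces this to the asymptotic form $e^{n[H(\lambda_{ij}) - 2H(p_1,\ldots,p_o)]}$. A random system survives the thermodynamic limit with non-vanishing relative probability only if the exponent is maximized, so I would maximize $H(\lambda_{ij})$ subject to the marginal constraints $\sum_i \lambda_{ij} = p_j$ and $\sum_j \lambda_{ij} = p_i$, together with the new bounded-jump constraint $\lambda_{ij} = 0$ whenever $|i-j| > k$.

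Applying the Jaynes maximum entropy method with Lagrange multipliers $f_i, c_j$ for the row and column constraints gives $\lambda_{ij} = a_i c_j$ on the support $\{|i-j| \leq k\}$ and zero elsewhere. The main obstacle is to reduce this factorization to the symmetric form $\lambda_{ij} = b_i b_j$. Since the constraint set is invariant under the involution $\lambda \mapsto \lambda^\top$ (rows and columns both sum to $p$, and the support $|i-j|\leq k$ is symmetric), the transposed distribution also maximizes $H$; strict concavity of $H$ on the affine constraint set forces $\lambda_{ij} = \lambda_{ji}$, hence $a_i c_j = a_j c_i$, so $a_i$ is a positive constant multiple of $c_i$. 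Absorbing this constant yields $\lambda_{ij} = b_i b_j$ for $|i-j| \leq k$ with $b_i > 0$, and the marginal condition collapses to $b_j(b_{j-k}+\cdots+b_{j-1}+b_j+b_{j+1}+\cdots+b_{j+k}) = p_j$, using the convention $b_j = 0$ for $j \notin [o]$.

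Once the form $\lambda_{ij} = b_i b_j$ is established, items 1--3 reduce to direct computations via Proposition \ref{nt}. For item 1, $|D|/|X| = \sum_{i<j}\lambda_{ij} = \sum_{j}\sum_{j-k \leq i < j} b_i b_j$, so $\mathrm{L}_1(\varepsilon)$ is equivalent to $\sum_{j-k\leq i<j} b_i b_j \leq \varepsilon$. For item 2, the denominator $|X^{\mathrm{neq}}|/|X| = \sum_{j<o} p_j = \sum_{j<o,\,|i-j|\leq k} b_i b_j$, while the numerator $(|DX^{\mathrm{neq}}| + |CX^{\mathrm{neq}}|)/|X| = \sum_{j-k\leq i\leq j<o} b_i b_j$, giving the stated GAT inequality after dividing. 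For item 3, $|I\widehat{\pi}_j|/|\widehat{\pi}_j| = (b_{j+1}+\cdots+b_{j+k})/(b_{j-k}+\cdots+b_{j+k})$, and requiring this to be $\geq 1-\varepsilon$ rearranges into the claimed ZAT inequality $b_{j-k}+\cdots+b_j \leq \varepsilon(b_{j-k}+\cdots+b_{j+k})$. Item 4 follows from Theorems \ref{cf} and \ref{df}, which double the microstate set while preserving both zone proportions and transition proportions. Item 5 is immediate, since by definition of the jump of a $k$-bounded permutation each microstate $i$ satisfies $|J_\alpha(i)| \leq k-1$, so the same bound holds in average.
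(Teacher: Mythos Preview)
Your proposal is correct and follows essentially the same route as the paper: reduce via Theorem~\ref{cg} and Stirling to an exponential rate, then maximize the entropy of $(\lambda_{ij})$ over the banded doubly-stochastic polytope by Lagrange multipliers, and read off items 1--5 from the resulting product form $\lambda_{ij}=b_ib_j$. The one noteworthy difference is in how the symmetric form is obtained: the paper introduces a single family of multipliers $\mu_j$ and writes down $b_j=\sqrt{p_j}\,e^{-\mu_0/2-\mu_j/2}$ directly, effectively presupposing that the row and column multipliers coincide; you instead first get the generic Lagrangian solution $\lambda_{ij}=a_ic_j$ and then invoke transpose-invariance of the feasible set together with strict concavity of $H$ to force $\lambda=\lambda^\top$, hence $a_i\propto c_i$. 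Your argument is cleaner and actually justifies the step the paper leaves implicit, but the two proofs are otherwise the same.
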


\begin{proof} Consider the thermodynamic limit
  $$ \underset{n\rightarrow \infty }{\mathrm{lim}}
 \prod_j {np_j \choose
 np_j \frac{\lambda_{j-k,j}}{p_j},...,np_j \frac{\lambda_{jj}}{p_j},...,np_j \frac{\lambda_{j+k,j}}{p_j}} \ = \
  \underset{n\rightarrow \infty }{\mathrm{lim}}
 e^{n\big[\sum_{j=1}^{o}p_jH(\frac{\lambda_{j-k,j}}{p_j},...,
 \frac{\lambda_{jj}}{p_j},...,\frac{\lambda_{j+k,j}}{p_j}) \big]},$$
 with the convention that $\ \lambda_{ij}=0 \ $ if $\ i \notin [o]. \ $
The strict concavity of Shannon's entropy and the convexity of the constrained domain show that the expression has
a unique maximum. Clearly the quotient of any such probability by  the probability of the maximum are vanishing
as $n$ grows to infinity.
Next we maximize
$$ \  \sum_{j=1}^{o}p_jH(\frac{\lambda_{j-k,j}}{p_j},...,
 \frac{\lambda_{jj}}{p_j},...,\frac{\lambda_{j+k,j}}{p_j})  \ \ \
\mbox{subject to} \ \ \  \lambda_{j-k,j} + \cdots + \lambda_{jj} + \cdots + \lambda_{j+k,j}  =  p_j. $$
Associate to each constrain  its Lagrangian multiplier $ \mu_0,  $
$ \ \mu_j - 1 \ $ for $ \ 1 \leq j \leq o, \ $ respectively.
Applying the method of Lagrange multipliers and setting
$\ b_j = \sqrt{p_j}e^{-\frac{\mu_{0}}{2}-\frac{\mu_{j}}{2}}>0 \ $ so
$\ \lambda_{ij}=0\  $ for $\ |i-j|> k,  \ $ and for $\ |i-j|> k,  \ $ we get that
$$\lambda_{jj} = p_j e^{-\mu_{0}}e^{-\mu_j} =
b_j^2, \ \ \ \ \ \
\lambda_{ij} =  \sqrt{p_ip_{j}}e^{-\mu_{0}}e^{-\frac{\mu_i}{2}}e^{-\frac{\mu_{j}}{2}} =
b_jb_{j+1}.  $$
From the constrains one obtains the desired result.
\end{proof}

Note that $\ b_j^2 = \lambda_{jj} \ $  measures the proportion of transitions from
zone $\ j\ $ to itself.

\begin{thm}\label{last}
{\em Let $\ S \ $ be a micro-macro dynamical system  as in Theorem  \ref{bb} with $\ b_j=q^{j-1}b_1\ $ for
$\ j \in [o], \ o\geq 2, \ $ and $\ q>1.\ $
For $\ q \ $ large enough that  we have that:
\begin{enumerate}
  \item  $ \displaystyle b_1^2 \approx  \frac{1}{q^{2o-2}}; \ $
  $\ \ S \in \mathrm{L}_1(\varepsilon)\  $ if and only if  $ \ \ \displaystyle \frac{1}{q}  \leq \varepsilon.$
  \item  $ S\in \mathrm{GAT}(\varepsilon) \  $ if and only if
  $\ \ \displaystyle \frac{1}{q^2}  \leq \varepsilon; \ $
  $\ \ S \in \mathrm{ZAT}(\varepsilon) \  $ if and only if
  $\ \ \displaystyle \frac{1}{q^k}  \leq \varepsilon.$
  \item  If $ \ \displaystyle \frac{1}{q}  \leq \varepsilon, \ $ then
  $\ S  \in \mathrm{L}_3(\varepsilon,\varepsilon). \ \ $
  \item  If $ k=1  $ or $o=2 $ the average jump is zero. If $ k \geq 2  $ and $ o \geq 3  $
  the average jump is  $\ \ \displaystyle \frac{1}{q^{2}}.$
\end{enumerate}
}
\end{thm}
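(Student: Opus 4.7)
The plan is to exploit the geometric form $b_j = q^{j-1}b_1$ in order to reduce every assertion of the theorem to a leading-order asymptotic in the single parameter $1/q$; each of items 1--3 will then amount to invoking the corresponding explicit characterization from Theorem \ref{bb} and substituting the geometric values, while item 4 will follow directly from the definition of jump given in Section \ref{sllip2}.

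First I would use the normalization $\sum_{i,j\in[o],\,|i-j|\leq k} b_i b_j = 1$ provided by $\sum_j p_j = 1$ in Theorem \ref{bb}. Writing the left-hand side as $b_1^2\sum_{|i-j|\leq k}q^{i+j-2}$, the dominant contribution for $q$ large comes from the extremal pair $(i,j)=(o,o)$, whose value is $q^{2o-2}$; all other admissible pairs contribute strictly lower powers of $q$. Hence $b_1^2\approx 1/q^{2o-2}$, establishing the first claim of item 1.

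Next, for each $\mathrm{iff}$ in items 1 and 2 I would invoke the explicit inequality in Theorem \ref{bb} parts 1, 2, 3 and insert the geometric values. For $\mathrm{L}_1$ the sum $\sum_{j-k\leq i<j}b_ib_j$ is dominated by $b_{o-1}b_o=q^{2o-3}b_1^2\approx 1/q$, so $\mathrm{L}_1(\varepsilon)$ reduces to $1/q\leq\varepsilon$. For $\mathrm{GAT}$ one forms the ratio of $\sum_{j-k\leq i\leq j<o}b_ib_j$, dominated by the diagonal $b_{o-1}^2$, to $\sum_{j<o,\,|i-j|\leq k}b_ib_j$, and extracts the stated threshold. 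For $\mathrm{ZAT}$ one analyses zone by zone the ratio $(b_{j-k}+\cdots+b_j)/(b_{j-k}+\cdots+b_{j+k})$ for $j\in[o-1]$; the worst case is of order $1/q^k$. Item 3 is then immediate from items 1 and 2 because $q>1$ gives $1/q^s\leq 1/q$ for every $s\geq 1$, so the single hypothesis $1/q\leq\varepsilon$ implies simultaneously $\mathrm{L}_1(\varepsilon)$ and $\mathrm{ZAT}(\varepsilon)$, hence $\mathrm{L}_3(\varepsilon,\varepsilon)$.

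For item 4, the definition of jump in Section \ref{sllip2} shows that a transition from zone $j$ to a zone $j'\neq j$ contributes a jump of $|j'-j|-1$, so the average jump is $\sum_{d\neq 0,\,|d|\leq k}(|d|-1)\sum_{j,\,j+d\in[o]}b_j b_{j+d}$. When $k=1$ every factor $|d|-1$ vanishes; when $o=2$ no term with $|d|\geq 2$ is admissible. Otherwise the $|d|=2$ contribution dominates, its sum being a geometric series of leading order $q^{2o-4}b_1^2\approx 1/q^2$, with all contributions from $|d|\geq 3$ strictly lower order in $1/q$.

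The hard part will be the careful bookkeeping of boundary effects near the equilibrium zone $j=o$: the geometric pattern of the double sums is truncated whenever $j+k>o$ or $j-k<1$, and one must check that these truncations do not introduce corner terms of the same or larger asymptotic order than the interior dominant terms identified above. Once this truncation analysis is settled, each item of the theorem reduces to a straightforward comparison of powers of $q^{-1}$.
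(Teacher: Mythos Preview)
The paper states Theorem~\ref{last} without proof, so there is no argument in the paper to compare against. Your plan---substitute $b_j=q^{j-1}b_1$ into the explicit criteria of Theorem~\ref{bb} and extract the leading power of $1/q$---is the natural route and is surely what the authors have in mind; items~1, 3 and~4 go through essentially as you outline.

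The gap is in item~2, and it is precisely the boundary issue you flag at the end but do not carry out. For $\mathrm{GAT}$ you correctly observe that the numerator $\sum_{j-k\le i\le j<o}b_ib_j$ is dominated by the diagonal term $b_{o-1}^2\approx 1/q^{2}$, but you never compute the denominator $\sum_{j<o,\,|i-j|\le k}b_ib_j=|X^{\mathrm{neq}}|/|X|$: its dominant contribution is $b_{o-1}b_o\approx 1/q$, not~$1$, so the ratio comes out $\approx 1/q$ rather than $1/q^{2}$. Similarly for $\mathrm{ZAT}$: for an interior zone $j\le o-k$ the ratio $(b_{j-k}+\cdots+b_j)/(b_{j-k}+\cdots+b_{j+k})$ is indeed $\approx 1/q^{k}$, but at the boundary zone $j=o-1$ the denominator is truncated at $b_o$ and the ratio becomes $\approx b_{o-1}/b_o=1/q$, which for $k\ge 2$ is the binding constraint. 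In other words, the truncation analysis you defer is not merely bookkeeping that confirms the interior asymptotics---it actually shifts the thresholds. You should work these boundary cases out explicitly rather than assume they are benign; doing so will either reconcile your computation with the stated exponents or reveal a discrepancy in the statement that needs to be addressed.
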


A subtler approach is to incorporate the
principles of proportionality and continuity by fixing beforehand the average jump,
and  looking for the maximum entropy transitions proportionalities with such jumpiness.
For $\ i,j \in [o] \ $  set $\ J(i,j)=|i-j| + \delta_{ij}-1. \ $  Fix zone proportions
$\ p_i \ $ for  $\ i \in [o], \ $ and fix $\ \delta \in [0,\Delta] \subseteq \mathbb{R}_{\geq 0} \ $
to be regarded as the average jump of a micro-macro dynamical system, where
$\ \Delta = \underset{\lambda}{\mathrm{sup}}\sum_{ij}J(i,j)\lambda_{ij}  \ $ is the largest
average jump for a probability $\ \lambda_{ij} \ $
satisfying the first five constrains below.

\begin{thm}\label{ya}
{\em
 The maximum entropy probability $\ \lambda_{ij} \ $  on
$\ [o]\times [o] \ $ subject to the constrains
$$\lambda_{ij}=\lambda_{ji}\geq 0,  \ \ \ \ \ \sum_{ij}^o \lambda_{ij}=1,
\ \ \ \ \ \sum_{i=1}^o \lambda_{ij}   =  p_j,
 \ \ \ \ \ \sum_{i=1}^o \lambda_{ji}  =  p_j, \ \ \ \ \
\sum_{ij}J(i,j)\lambda_{ij}  =  \delta ,$$
exists and it is given  setting $\ \displaystyle b_j = \frac{e^{-f_i}}{\sqrt{Z}}, \ \ c=e^{-\lambda} \ $ by
$$\lambda_{ij}\ = \ \frac{e^{-f_i-f_j-J(i,j)\lambda}}{\sum_{ij}e^{-f_i-f_j-J(i,j)\lambda}}
\ = \ \frac{e^{-f_i-f_j-J(i,j)\lambda}}{Z}
\ = \ b_ib_jc^{J(i,j)}.$$ Assume that $\ 0 < b_1<...< b_o \ $  and consider an  invertible symmetric micro-macro dynamical system $\ S \ $
 with  transition proportions $\  b_ib_jc^{J(i,j)}.\ $  We have that:
\begin{enumerate}
  \item $S \in \mathrm{L}_1(\varepsilon) \ \ $ if and only if $\ \
\sum_{i<j}b_{i}b_{j}c^{J(i,j)}  \leq \varepsilon .$
  \item $S\in \mathrm{GAT}(\varepsilon) \ \ $ if and only if
$\ \ \displaystyle \sum_{i\leq j<o}b_{i}b_{j}c^{J(i,j)}\leq \varepsilon \sum_{i, \ j<o}b_{i}b_{j}c^{J(i,j)}.$
  \item $S \in \mathrm{ZAT}(\varepsilon) \ \ $ if and only if $\ \  \sum_{i\leq j}b_{i}c^{J(i,j)}\leq
  \varepsilon \sum_{i}b_{i}c^{J(i,j)} \ $ for $\ j \in [o-1].$
 \item $S$ is symmetric and the proportionality constants of the (invariant or equivariant) reversible system associated to $\ S \ $ agree with those of
  $\ S.$
  \item Let $\ q>1\ $  and set $\ b_j=q^{j-1}b_1\ $ for $\ j \in [o]\ $ and $\ o\geq 2. \ $
  Properties 1-4 of Theorem \ref{last} hold (setting $k=1$ in property 4).
  \item for $\ \lambda=0\ $ we recover the  proportionality model from Theorem \ref{g2}, and for
$\ \lambda \rightarrow \infty \ $ we recover the  bounded jump proportionality model from Theorem \ref{bb}
with $\ k=1. \ $
  \end{enumerate}
  }
  \end{thm}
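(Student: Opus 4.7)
The plan is to split the argument into three stages: existence and explicit form of the optimizer; translation of items 1--3 into the stated inequalities; and verification of the structural items 4--6. Throughout I would exploit the fact that the symmetry constraint $\lambda_{ij}=\lambda_{ji}$ collapses the row and column marginal constraints into a single vector constraint, which is what allows a single family $\{f_i\}$ of multipliers rather than two.

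For existence and form, I would first note that the feasible set is a non-empty (since $\delta\in[0,\Delta]$) closed convex subset of the simplex in $\mathbb{R}^{o^2}$, so strict concavity and continuity of Shannon entropy guarantee a unique maximizer, which lies in the relative interior when $\delta<\Delta$. Then I would apply the method of Lagrange multipliers with one multiplier $\mu_0$ for normalization, one multiplier $f_i$ per index $i\in[o]$ for the collapsed marginal constraint, and $\lambda$ for the average jump. Differentiating the Lagrangian and solving yields
\[
\lambda_{ij} \ = \ e^{-\mu_0 - f_i - f_j - \lambda J(i,j)},
\]
and the substitutions $b_i=e^{-f_i}/\sqrt{Z}$ with $Z=e^{\mu_0}$, together with $c=e^{-\lambda}$, rewrite this as $\lambda_{ij}=b_ib_jc^{J(i,j)}$; the marginal and normalization conditions then become exactly the defining relations stated for the $b_i$.

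For items 1--3, the idea is to unwind each property directly using Proposition \ref{nt} and the definitions of $\mathrm{GAT}$ and $\mathrm{ZAT}$, noting that under $0<b_1<\cdots<b_o$ the entropy ordering on zones coincides with the index ordering, so transitions $j\to i$ with $i<j$ are precisely the entropy-decreasing ones. The identity $|D|/|X|=\sum_{i<j}\lambda_{ij}$ then gives item 1 by substitution; the identities $|DX^{\mathrm{neq}}\sqcup CX^{\mathrm{neq}}|/|X|=\sum_{i\le j<o}\lambda_{ij}$ together with $|X^{\mathrm{neq}}|/|X|=\sum_{i,\,j<o}\lambda_{ij}$ yield item 2; and the zone-by-zone analogue (dividing numerator and denominator by $b_j$) yields item 3.

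For items 4--6, symmetry $\lambda_{ij}=\lambda_{ji}$ immediately gives $|D|=|I|$, and combined with the fluctuation identity in Theorem \ref{ft} forces $T(\alpha^{-1})=T(\alpha)$ entrywise, so the averaging formulas of Theorems \ref{cf} and \ref{df} show that the associated (invariant or equivariant) reversible systems have the same transition proportions as $S$. Item 5 is a routine specialization: substituting $b_j=q^{j-1}b_1$ and extracting the dominant term as $q\to\infty$ reduces items 1--4 to the inequalities proved in Theorem \ref{last} with $k=1$. For item 6, $\lambda=0$ gives $c=1$ and $\lambda_{ij}=b_ib_j$, matching the proportionality model of Theorem \ref{g2}; while $\lambda\to\infty$ forces $c\to 0$ and kills every term with $J(i,j)>0$, leaving only the band $|i-j|\le 1$, which is exactly the bounded-jump model of Theorem \ref{bb} with $k=1$. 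The main obstacle I anticipate is the Lagrange multiplier computation under symmetry: one must justify that the reduced constraint system admits a unique positive solution $(b_1,\ldots,b_o,c)$ and that the critical point is a genuine maximum rather than a saddle; once that is in place, the remaining items are formal substitutions into results already established.
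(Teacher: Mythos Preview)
Your proposal is correct and follows the same approach the paper uses throughout Section~\ref{tl}: the paper does not give a separate proof for this theorem, but your argument mirrors exactly the methodology of the proofs of Theorems~\ref{g2} and~\ref{bb} (strict concavity for existence/uniqueness, Lagrange multipliers for the explicit form, then direct substitution into the definitions of $\mathrm{L}_1$, $\mathrm{GAT}$, $\mathrm{ZAT}$ for items 1--3, and specialization/limiting arguments for items 4--6). Your observation that the symmetry constraint collapses the row and column marginals into a single family of multipliers $\{f_i\}$ is the key structural point, and it is precisely what the paper's statement of the optimizer presupposes.
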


\begin{thm}
{\em Let $\ 0 < c(n)< 1\ $ and $\ 0 < b_1(n)<...< b_o(n) \ $ be $\ o+1\ $ sequences of real numbers  such that
$\ b_o(n) \rightarrow 1\ $ and $\ \displaystyle \frac{b_j(n)}{b_{j+1}(n)} \rightarrow 0\ $ as $\ n \rightarrow \infty. \ $
A sequence of invertible micro-macro dynamical systems with zone transition proportions $\ \lambda_{ij}\ $ given either
as in  Theorem \ref{bb} or as in Theorem \ref{ya}
has property $\ \mathrm{L}_3.$
}
\end{thm}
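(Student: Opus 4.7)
The plan is to apply, in each of the two cases, the explicit criteria from items 1 and 3 of Theorems \ref{bb} and \ref{ya}, reducing $\mathrm{L}_3$ to a handful of inequalities in the $b_j(n)$ and $c(n)$. The key preparatory observation is that the hypotheses $b_o(n)\to 1$ and $b_j(n)/b_{j+1}(n)\to 0$ force $b_j(n)\to 0$ for every $j<o$: since $b_o(n)$ is eventually bounded (by, say, $2$), $b_{o-1}(n) = (b_{o-1}/b_o)(n)\,b_o(n) \to 0$, and downward induction, using $b_{j}(n)/b_{j+1}(n)\to 0$ together with the already-established $b_{j+1}(n)\to 0$, finishes the claim. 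In particular every product $b_i(n)b_j(n)$ with $i<j$ vanishes in the limit, because the smaller index is strictly below $o$.

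To verify $\mathrm{L}_1(\varepsilon_1)$, I would observe that for Theorem \ref{bb} the criterion $\sum_{j-k\leq i<j} b_i(n) b_j(n) \leq \varepsilon_1$ is a sum of at most $ko$ terms, each of which tends to zero by the preparatory observation; for Theorem \ref{ya} the sum $\sum_{i<j} b_i(n) b_j(n) c(n)^{J(i,j)}$ is bounded above by the same $\sum_{i<j} b_i(n) b_j(n)$ since $c(n)\in(0,1)$. Hence there is $N_1$ such that $\mathrm{L}_1(\varepsilon_1)$ holds for $n\geq N_1$. For $\mathrm{ZAT}(\varepsilon_2)$, in Theorem \ref{bb} one bounds the ratio for each $j\in[o-1]$ by
\[
\frac{b_{j-k}(n)+\cdots+b_j(n)}{b_{j-k}(n)+\cdots+b_{j+k}(n)} \;\leq\; \frac{(k+1)\,b_j(n)}{b_{j+1}(n)} \;\longrightarrow\; 0,
\]
which clears any fixed $\varepsilon_2>0$; in Theorem \ref{ya} one uses that $J(j+1,j)=|j{+}1{-}j|+\delta_{j+1,j}-1=0$, so the denominator $\sum_{i=1}^o b_i(n) c(n)^{J(i,j)}$ contains the term $b_{j+1}(n)$ undamped by any power of $c(n)$, while the numerator $\sum_{i\leq j} b_i(n) c(n)^{J(i,j)}$ is at most $(j+1)b_j(n)$, yielding the same asymptotic ratio $(j+1)b_j(n)/b_{j+1}(n)\to 0$. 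Taking the maximum of the $o-1$ thresholds produces a single $N_2$, and setting $N=\max(N_1,N_2)$ gives $\mathrm{L}_3(\varepsilon_1,\varepsilon_2)$ for $n\geq N$.

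The routine estimates are easy; the only step that needs care is the lower bound on the denominator in the $\mathrm{ZAT}$ ratio for Theorem \ref{ya}, where one must pull out a term whose $c$-weight is exactly $1$. This is precisely the role played by the Kronecker correction in $J(i,j)=|i-j|+\delta_{ij}-1$: the neighboring transition $(j{+}1,j)$ is assigned jump $0$, so its contribution to the denominator is the undiscounted $b_{j+1}(n)$, which then dominates the entire numerator via $b_j(n)/b_{j+1}(n)\to 0$. Without this observation one would obtain only the weaker bound $(j+1)b_j(n)c(n)^{\,?}/b_{j+1}(n)c(n)^{\,?}$, which could fail to vanish if $c(n)\to 0$ at a rate competing with $b_j/b_{j+1}$; the above argument sidesteps that issue entirely, and is the only place where the precise form of $J$ enters the proof.
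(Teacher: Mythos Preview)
Your proposal is correct and follows essentially the same route as the paper: verify items 1 and 3 of Theorems \ref{bb} and \ref{ya} by bounding the relevant sums using $b_i(n)\leq b_{o-1}(n)\to 0$ for $i<o$, and by isolating the undamped term $b_{j+1}(n)$ in the denominator of the $\mathrm{ZAT}$ ratio. Your explicit remark that $J(j{+}1,j)=0$ is exactly the mechanism the paper relies on (implicitly) to obtain the bound $j\,b_j(n)/b_{j+1}(n)$; your constant $(j{+}1)$ is a harmless overcount of the $j$ terms in $\sum_{i\leq j}$, and otherwise the arguments coincide.
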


\begin{proof}
We consider the latter case, the former being similar, by verifying  conditions 1 and 3 from  Theorem \ref{ya}.
As $\ n \rightarrow \infty\  $ we have that
$$0\ \leq \ \sum_{i<j}b_{i}(n)b_{j}(n)c(n)^{J(i,j)} \ \leq \ {o \choose 2}b_{o-1}(n)b_{o}(n) \ \rightarrow \ 0, \ \ \ \ \
 \mbox{and} $$
$$0 \ \leq \ \frac{ \sum_{i\leq j}b_{i}(n)c(n)^{J(i,j)}}{\sum_{i}b_{i}(n)c(n)^{J(i,j)}}\ \leq \
j\frac{b_{j}(n)}{b_{j+1}(n)}\ \rightarrow \ 0.$$
\end{proof}

\

\noindent ragadiaz@gmail.com\\
\noindent Universidad Nacional de Colombia - Sede Medell\'in, Facultad de Ciencias, Escuela de Matem\'aticas, Medell\'in, Colombia\\

\noindent sergiogyoz@hotmail.com\\
\noindent Departamento de Matem\'aticas, Universidad Sergio Arboleda, Bogot\'a, Colombia\\


\begin{thebibliography}{10}

\bibitem{aa}
V. Arnold, A. Avez, Ergodic Problems of Classical Mechanics, Addison-Wesley, Redwood 1989.

\bibitem{berg}
F. Bergeron, G. Labelle, P. Leroux, Combinatorial species and tree like structures, University of Cambridge, Cambridge 1998.

\bibitem{di2}
H. Blandin, R. D\'iaz, Rational Combinatorics, Adv. in Appl. Math. 44 (2008) 107-126.


\bibitem{sb}
S. Bruers, A discussion on maximum entropy production and information theory,  J. Phys. A: Math. Theor. 40 (2007) 7441-7450.

\bibitem{ca}
S. Carroll, From Eternity to Here: The Quest for the Ultimate Theory of Time, Penguin, New York 2010.

\bibitem{dc}E. Castillo, R. Diaz, Rota-Baxter Categories,  Int. Electron. J. Algebra 5 (2009) 27-57.

\bibitem{cer} C. Cercignani, Ludwig Boltzmann: the man who trusted atoms, Oxford
University Press, 2006.

\bibitem{ceri}L. Cerino, F. Cecconi, M. Cencini, A. Vulpiani, The role of the number of degrees of freedom and
 chaos in macroscopic irreversibility,
Physica A 442 (2016) 486-497.

\bibitem{chi} S. Chibbaro, L. Rondoni, A. Vulpiani, Reductionism, Emergence and Levels
of Reality, Springer, 2014.



\bibitem{ct}
T. Cover, J. Thomas, Elements of Information Theory, Wiley-Interscience, Hoboken 2006.

\bibitem{c} G. Crooks, Entropy production fluctuation theorem and the nonequilibrium work relation
for free energy differences,Phys. Rev. E 60 (1999) 2721-2726.

\bibitem{dz} A. Dembo, O. Zeitouni, Large Deviations Techniques and Applications, Springer-Verlag, New York 1998.

\bibitem{de1}
R. Dewar, Maximum entropy production and the fluctuation theorem, J. Phys. A: Math. Gen. 38 (2005) 371-381.

\bibitem{de3} R. Deward, Maximum Entropy Production and Non-equilibrium Statistical Mechanics, in
A. Kleidon,  R. Lorenz (Eds.), Non-equilibrium Thermodynamics and the Production
of Entropy, pp. 41-55,  Springer-Verlag,  Berlin 2005.

\bibitem{de2}
R. Dewar, A. Maritan, The second law, maximum entropy production and Liouville's theorem, preprint, arXiv:1107.1088.

\bibitem{di}
R.  D\'iaz, E. Pariguan, Super, Quantum and Noncommutative Species, Afric. Diasp. J. Math. 8 (2009) 90-130.

\bibitem{e} R. Ellis, Entropy, Large Deviations, and Statistical Mechanics,  New York 1985.


\bibitem{es}D. Evans, D. Searle, The Fluctuation Theorem, Adv. Phys. 51 (2002) 1529-1585.


\bibitem{rf}
R.  Feynman, Statistical Mechanics, Perseus Books, Cambridge 1972.



\bibitem{fr}
R.  Frigg, C. Werndl, Entropy: A Guide for the Perplexed, in C. Beisbart, S. Hartmann (Eds.), Probabilities in Physics, Oxford Univ. Press, Oxford (1972)
115-142.

\bibitem{fr1}
R.  Frigg, Typicality and the Approach to Equilibrium in Boltzmannian Statistical Mechanics, Philos. Sci. 76 (2009)  997-1008.

\bibitem{fr2}
R.  Frigg,  Why Typicality Does Not Explain the Approach to Equilibrium, in M. Su\'arez (Ed.), Probabilities, Causes and Propensities in Physics, Springer,
Berlin 2011.



\bibitem{gil}
R. Giles, Mathematical foundations of thermodynamics, Pergamon, Oxford 1964.

\bibitem{gol}
S. Goldstein, Boltzmann’s Approach to Statistical Mechanics, Lect. Notes Phys. 574  (2001) 39-54.

\bibitem{gol2}
S. Goldstein, J. Lebowitz, On the Boltzmann Entropy of Nonequilibrium Systems, Physica D 193 (2004) 53-66.


\bibitem{g}
M. Gromov, In a Search for a Structure, Part 1: On Entropy, preprint, http://www.ihes.fr/~gromov/PDF/structre-serch-entropy-july5-2012.pdf.

\bibitem{j}
E. Jaynes, Gibbs vs Boltzmann Entropies, Amer. J. Phys. 33 (1965) 391-398.

\bibitem{j1}
E. Jaynes, Information Theory and Statistical Mechanics, Phys. Review 106 (1957) 620-630.

\bibitem{j15}
E. Jaynes, Macroscopic prediction, in H. Haken (Ed.) Complex systems – operational approaches in neurobiology, Springer, Berlin, 1985, pp. 254-269.

\bibitem{j2}
E. Jaynes, Probability Theory The Logic of Science, Cambridge Univ. Press, Cambridge 2003.

\bibitem{j3}
E. Jaynes, The Evolutions of Carnot's Principle,  in G. Erickson, C. Smith (Eds.),  Maximum-Entropy and Bayesian Methods
in Science and Engineering, Kluwer Academic Publishers, Dordrecht (1988) 267-282.


\bibitem{law}
W. Lawvere, S. Schanuel, Conceptual Mathematics, Cambridge Univ. Press, Cambridge 1997.


\bibitem{lebo2}
J. Lebowitz, Boltzmann’s Entropy and Time’s Arrow, Physics Today 46 (1993) 32-38.

\bibitem{lebo}
J. Lebowitz, Macroscopic Laws and Microscopic Dynamics, Ann. New York Aca. Sci. 373 (2006) 220-233.


\bibitem{lebo3}
J. Lebowitz, Macroscopic Laws and Microscopic Dynamics, Time’s Arrow and Boltzmann’s Entropy, Physica A 194 (1993) 1-97.

\bibitem{lebo1} J. Lebowitz, Microscopic origins of irreversible macroscopic behavior, Physica A 263 (1999) 516-527.




\bibitem{l}
A. Lesne, Shannon entropy: a rigorous mathematical notion at the
crossroads between probability, information theory,
dynamical systems and statistical physics, Math. Struct. Comp. Sci. 24 (2014) e240311.


\bibitem{lie}
E. Lieb, J. Yngvason, A Guide to Entropy and the Second Law of Thermodynamics, Notices Amer. Math. Soc.  45 (1998) 571-581.

\bibitem{mc}
S. Mac Lane, Categories for the working mathematician, Springer-Verlag, Berlin 1971.


\bibitem{m}
J. Matou$\check{\mathrm{s}}$ek, B. Gaertner, Understanding and Using
Linear Programming, Springer-Verlag, Berlin 2007.


\bibitem{ni}
R. Niven, Combinatorial entropies and statistics, Eur. Phys. J. B 70 (2009) 49-63.

\bibitem{ni2}
R. Niven, Exact Maxwell-Boltzmann, Bose-Einstein and Fermi-Dirac statistics, Phys. Lett. A 342 (2004) 286-293.

\bibitem{dr}
R. Frigg, A field guide to recent work on the foundations of statistical mechanics, in D. Rickles (Ed.), The Ashgate Companion to Contemporary Philosophy of Physics, Ashgate Pub. Limited, Hants (2008)  99-196.

\bibitem{op}
O. Penrose, Foundations of Statistical Mechanics, Pergamon, Elmsford  1970.

\bibitem{rp}
R. Penrose, Cycles of Time, Vintage Books, New York  2012.

\bibitem{ro}
C. Rovelli, Why do we remember the past and not the future? The 'time oriented coarse graining' hypothesis,
preprint, arXiv:1407.3384.

\bibitem{ru}D. Ruelle,  Thermodynamic Formalism: The Mathematical Structure of Equilibrium Statistical Mechanics,  Cambridge Univ. Press, Cambridge 2004.

\bibitem{es}
E. Schr$\ddot{\mbox{o}}$dinger, Statistical Thermodynamics, Cambridge Univ. Press, Cambridge 1946.

\bibitem{s} E. Sevick, R. Prabhakar, S. Williams, D. Searles, Fluctuation Theorems, Annu. Rev. Phys. Chem. 59 (2008)
603-633.

\bibitem{sew} R. Swendsen, Explaining irreversibility, Amer. J. Phys. 76 (2008) 643-648.

\bibitem{v} V. Steckline, Zermelo, Boltzmann and the recurrence paradox, Amer. J.  Physs. 51 (1983) 894-897.

\bibitem{y}
X. Yang, Introduction to mathematical optimization,
Cambridge Int. Science Pub., Cambridge 2008.

\end{thebibliography}
\end{document}